\theoremstyle{plain}
\newtheorem{thm}{Theorem}[section]
\newtheorem{cor}[thm]{Corollary}
\newtheorem{lem}[thm]{Lemma}
\newtheorem{prop}[thm]{Proposition}
\theoremstyle{definition}
\newtheorem{defn}[thm]{Definition}
\newtheorem{rem}[thm]{Remark}
\newtheorem*{rem*}{Remark}
\newtheorem*{acks}{Acknowledgements}
\numberwithin{equation}{section}
\newcommand{\C}{\mathbb{C}}
\newcommand{\R}{\mathbb{R}}
\newcommand{\Q}{\mathbb{Q}}
\newcommand{\Z}{\mathbb{Z}}
\newcommand{\N}{\mathbb{N}}
\newcommand{\Exp}{\mathbb{E}}
\newcommand{\Prob}{\mathbb{P}}
\newcommand{\bd}{\partial}
\newcommand{\ud}{\mathbb{D}}
\newcommand{\ep}{\varepsilon}
\title{The Minkowski content measure for the Liouville quantum gravity metric}
\date{}
\author{Ewain Gwynne\thanks{\protect\url{ewain@uchicago.edu}}\; }
\author{\;Jinwoo Sung\thanks{\protect\url{jsung@math.uchicago.edu}}}
\affil{University of Chicago}
\begin{document}

\maketitle

\begin{abstract}
    A Liouville quantum gravity (LQG) surface is a natural random two-dimensional surface, initially formulated as a random measure space and later as a random metric space. We show that the LQG measure can be recovered as the Minkowski measure with respect to the LQG metric, answering a question of Gwynne and Miller \cite{gm-uniqueness}. As a consequence, we prove that the metric structure of a $\gamma$-LQG surface determines its conformal structure for every $\gamma \in (0,2)$. Our primary tool is the continuum mating-of-trees theory for space-filling SLE. In the course of our proof, we also establish a H\"older continuity result for space-filling SLE with respect to the LQG metric.
\end{abstract}

\section{Introduction}

\subsection{Overview}

Fix $\gamma \in (0,2)$ and let $U \subset \C$ be an open domain. Let $h$ be the Gaussian free field (GFF) on $U$ or a minor variant thereof. The $\gamma$-Liouville quantum gravity (LQG) surface described by $(U,h)$ is formally the two-dimensional Riemannian manifold with metric tensor 
\begin{equation}\label{eqn:lqg-def-formal} 
    e^{\gamma h} (dx^2 + dy^2)
\end{equation} 
where $dx^2 + dy^2$ is the Euclidean metric tensor. LQG was introduced by Polyakov \cite{polyakov-qg1} in the physics literature as a canonical model of two-dimensional random geometry. Since then, LQG has been identified as the scaling limit of various types of random planar maps, as described in surveys \cite{gwynne-ams-survey,ghs-mating-survey,miller-icm,sheffield-icm}. 

The expression \eqref{eqn:lqg-def-formal} does not make literal sense because the GFF $h$ does not admit pointwise values; it is defined only as a random distribution. Nevertheless, it is possible to rigorously construct the area measure and distance function corresponding to~\eqref{eqn:lqg-def-formal} through regularization and renormalization techniques.

The $\gamma$-LQG volume measure $\mu_h$ is the Gaussian multiplicative chaos measure associated with the GFF $h$, which can be constructed via various regularization methods \cite{aru-gmc-survey,berestycki-gmt-elementary,shef-kpz,kahane,rhodes-vargas-review}. In the context of LQG, it was first defined by Duplantier and Sheffield \cite{shef-kpz} to be the almost sure weak limit
\begin{equation} 
    \mu_h:= \lim_{\ep \to 0} \ep^{\gamma^2/2} e^{\gamma h_\ep(z)}\,d^2z, 
\end{equation}
where $h_\ep(z)$ is the average of $h$ on the circle $\bd B_\ep(z)$ and $d^2z$ is the Lebesgue measure on $U$. Note that for a smooth function $f:U \to \R$, the volume form associated with the Riemannian metric tensor $e^f(dx^2 + dy^2)$ is $e^f d^2z$. With probability one, $\mu_h$ is mutually singular with respect to the Lebesgue measure but has no point masses and assigns a positive mass to any open set. The circle average approximation can be replaced by other mollification methods for the GFF; see \cite{aru-gmc-survey,berestycki-gmt-elementary,kahane,rhodes-vargas-review} for further details. Notably, let $p_{\ep^2/2}(z) = \frac{1}{\pi \ep^2} \exp(-|z|^2/\ep^2)$ be the heat kernel at time $\ep^2/2$ and let 
\begin{equation}
    h_\ep^*(z) := (h * p_{\ep^2/2})(z) = \int_{U} h(w) p_{\ep^2/2}(z-w) \,d^2w.
\end{equation}
Then $\lim_{\ep \to 0}\ep^{\gamma^2/2}  e^{\gamma h_\ep^*(z)} d^2z = \mu_h$ in probability with respect to the topology of weak convergence of measures.\footnote{Most works on Gaussian multiplicative chaos require GFF to be regularized using compactly supported mollifiers. The heat kernel mollification of GFF is considered in \cite{rhodes-vargas-review} (which calls it ``white noise decomposition"), where it is shown that $\ep^{\gamma^2/2} e^{\gamma h_\ep^*(z)}d^2z$ converges to $\mu_h$ in law. One can extend this to convergence in probability by applying existing methods, for instance by checking \cite[Lemma~3.5]{berestycki-gmt-elementary} using the covariance formula $\mathrm{Cov}(h_{\ep}^*(z), h_r^*(w)) = \pi \int_{(\ep^2+r^2)/2}^\infty p_t(z-w)\,dt$. }

More recently, the $\gamma$-LQG metric $D_h$ was constructed as the scaling limit of Liouville first passage percolation (LFPP) in the case that $U = \C$ and $h$ is a whole-plane GFF. For $z,w\in \C$ and $\ep>0$, the $\ep$-LFPP metric is defined by 
\begin{equation}\label{eqn:LFPP-metric}
    D_h^\ep(z,w) := \inf_{P: z\to w} \int_0^1 e^{\xi h_\ep^*(P(t))} |P'(t)|\,dt   ,
\end{equation}
where the infimum is taken over all piecewise continuously differentiable paths $P$ from $z$ to $w$. Note that for a smooth function $f:\C \to \R$, the distance between $z$ and $w$ corresponding to the Riemannian metric tensor $e^f(dx^2 + dy^2)$ is $\inf_{P:z\to w} \int_0^1 e^{f(P(t))/2}|P'(t)|\,dt$. Here, 
\begin{equation}
    \xi := \frac{\gamma}{d_\gamma},
\end{equation}
where $d_\gamma>2$ is the dimension of $\gamma$-LQG. The constant $d_\gamma$ is obtained a priori as a scaling exponent corresponding to various approximations of the $\gamma$-LQG metric space \cite{dg-lqg-dim,dzz-heat-kernel}. Ding, Dub\'edat, Dunlap, and Falconet \cite{dddf-lfpp} showed that for a suitable choice of deterministic scaling constants $\{\mathfrak a_\ep\}_{\ep >0}$, the family of rescaled LFPP metrics $\{\mathfrak a_\ep^{-1} D_h^\ep\}_{\ep \in (0,1)}$ is tight. Building furthermore on \cite{lqg-metric-estimates,gm-confluence,local-metrics}, Gwynne and Miller \cite{gm-uniqueness} showed that the subsequential limit is unique and defined $\gamma$-LQG metric as the limit 
\begin{equation}\label{eqn:lqg-metric-def}
    D_h:= \lim_{\ep \to 0} \mathfrak a_\ep^{-1} D_h^\ep
\end{equation} 
in probability with respect to the local uniform topology on $\C \times \C$. In particular, $D_h$ almost surely induces the Euclidean topology. A posteriori, $d_\gamma$ was identified as both the Hausdorff dimension \cite{gp-kpz} and the Minkowski dimension \cite{afs-metric-ball} of the $\gamma$-LQG metric space $(\C, D_h)$. 

It is natural to ask how the measure $\mu_h$ and the metric $D_h$ are related. In this paper, we show that $\mu_h$ is almost surely equal to the Minkowski content measure with respect to $D_h$ (Theorem~\ref{thm:main-thm}). This answers~\cite[Problem 7.10]{gm-uniqueness}. Our result can be viewed as an LQG analog of \cite{lawler-rezai-nat}, which constructed the Minkowski content for Schramm--Loewner evolution (SLE) curves and showed that it is equivalent to the so-called \emph{natural parameterization} of SLE~\cite{lawler-shef-nat}.

A particular consequence of our result is that $D_h$ almost surely determines $\mu_h$. It was shown in \cite[Theorem~1.3]{afs-metric-ball} that the pointed metric measure space $(\C, 0, D_h, \mu_h)$ almost surely determines $h$ up to rotation and scaling. Therefore, our result shows that the pointed metric space $(\C, 0, D_h)$ almost surely determines $h$ modulo rotation and scaling (Corollary~\ref{cor:forgetting-parameterization}).

The primary tool in our proofs is the \emph{mating-of-trees} theorem of Duplantier, Miller, and Sheffield~\cite{dms-lqg-mating}. This theorem says that the left/right boundary length process for a space-filling SLE curve $\eta$ on an LQG surface is a correlated two-dimensional Brownian motion. Roughly speaking, this result is useful for two reasons. First, it gives a source of exact independence since the LQG surfaces traced by the curve $\eta$ during disjoint time intervals are independent. In particular, this leads to a short proof of a lower bound for the number of LQG metric balls needed to cover a given set (Proposition~\ref{prop:covering-number-lower-bound}) without needing a separate two-point estimate. Second, the mating-of-trees theory provides a convenient way of decomposing space into regions of equal LQG mass, namely the segments $\eta([x-\ep,x])$ for $x\in\ep\mathbb Z$, where $\eta$ is parameterized by $\mu_h$-mass. See Section~\ref{sec:proof-strategy} for more details on the proof method.

The mating-of-trees theory has many applications in the study of random conformal geometry, including LQG and SLE; see~\cite{ghs-map-dist} for a survey of these applications. This is the first paper to use this theory to prove properties of the LQG metric for general $\gamma\in (0,2)$. We expect that there will be more applications of the mating-of-trees theory to the LQG metric in the future. In the course of our proof, we obtain some estimates for space-filling SLE on an LQG surface, which are of independent interest. We especially highlight the H\"older continuity result (Theorem~\ref{thm:sle-holder}), which is already used in the paper~\cite{bgs-meander} to prove a result about random permutations.

Previously, Le Gall \cite{legall-measure} showed that the volume measure of the Brownian sphere is equal to a constant multiple of the Hausdorff measure associated with the gauge function $r^4 \log \log (1/r)$. This implies that $D_h$ almost surely determines $\mu_h$ for $\gamma = \sqrt{8/3}$, due to the equivalence between the Brownian sphere and the $\sqrt{8/3}$-LQG sphere established by Miller and Sheffield \cite{lqg-tbm1,lqg-tbm2,lqg-tbm3}. See also \cite[Corollary~1.4]{gm-uniqueness} for the fact that the Miller--Sheffield metric agrees with the limit of LFPP. In a sense, our result is a generalization of Le Gall's result to general $\gamma\in (0,2)$, but with Minkowski content instead of Hausdorff measure. Our proof and Le Gall's have some superficial similarities, but the main techniques are fundamentally different.

\begin{acks}
We thank the anonymous referee for useful remarks on an earlier version of this work and Greg Lawler for helpful discussions.
E.G.\ was partially supported by a Clay research fellowship. J.S.\ was partially supported by a scholarship from Kwanjeong Educational Foundation.
\end{acks}

\subsection{Main results}

We say that a random distribution $h$ on $\C$ is a \textit{whole-plane GFF plus a continuous function} if there is a coupling of $h$ with a random continuous function $f: \C \to \R$ such that $h - f$ has the law of a whole-plane GFF (see Section~\ref{sec:whole-plane-gff} for the definition of the whole-plane GFF). Our main theorem states that for each $\gamma \in (0,2)$, the Minkowski content measure (see Definition~\ref{def:minkowski-content-measure}) exists on the $\gamma$-LQG metric space $(\C,D_h)$ and is equal to the corresponding $\gamma$-LQG measure $\mu_h$.

\begin{thm}\label{thm:main-thm}
    Fix $\gamma \in (0,2)$. There exists a deterministic sequence $\{\mathfrak b_\ep\}_{\ep>0}$ such that the following is true.
    
    Let $h$ be a whole-plane GFF plus a continuous function and $D_h$ be the corresponding $\gamma$-LQG metric. Suppose $A\subset \C$ is either:
    \begin{enumerate}[(i)]
    	\item a deterministic bounded Borel set; or
		\item a random compact set, measurable with respect to the Borel $\sigma$-algebra induced by the Hausdorff distance on $\C$, that is coupled with the random distribution $h$
    \end{enumerate}
	such that $\mu_h(\bd A) = 0$ almost surely.
    Let $N_\ep(A;D_h)$ be the minimum number of $D_h$-balls of radius $\ep>0$ required to cover $A$. Then,
    \begin{equation}\label{eqn:main-theorem}
        \lim_{\ep \to 0} \mathfrak b_\ep^{-1} N_\ep(A;D_h) = \mu_h(A) \quad \text{in probability.}
    \end{equation}   
\end{thm}

We do not exclude the possibility that the limit \eqref{eqn:main-theorem} may hold almost surely. However, our methods are insufficient to prove how fast the sequence $\mathfrak b_\ep^{-1} N_\ep(A;D_h)$ converges. We expect that stronger LQG metric estimates than those currently available are necessary to prove quantitative bounds required for almost sure convergence.

A corollary of Theorem~\ref{thm:main-thm} is that the metric space structure of a $\gamma$-LQG surface is sufficient information to determine not only its metric measure space structure but also its conformal structure, in the sense that it almost surely determines the field $h$.

\begin{cor}\label{cor:forgetting-parameterization}
    Let $\gamma \in (0,2)$. Suppose $h$ is a whole-plane GFF normalized to have a mean zero on the unit circle. Let $D_h$ and $\mu_h$ be the corresponding $\gamma$-LQG metric and measure, respectively. Then the random pointed metric space $(\C, 0, D_h)$ almost surely determines the random pointed metric measure space $(\C, 0, D_h, \mu_h)$, and moreover the field $h$ up to rotation and scaling of the complex plane.
\end{cor}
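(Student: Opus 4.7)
The plan is to combine Theorem~\ref{thm:main-thm} with \cite[Theorem~1.3]{afs-metric-ball}, which asserts that the pointed metric measure space $(\C, 0, D_h, \mu_h)$ almost surely determines $h$ up to rotation and scaling. It therefore suffices to show that $\mu_h$ is almost surely a measurable function of $D_h$, at which point the corollary is immediate.

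To recover $\mu_h$ from $D_h$, I would apply Theorem~\ref{thm:main-thm} to a countable family of $D_h$-balls. Fix a countable dense subset $\{z_n\}_{n \in \N}$ of $\C$. Since each $w \in \C$ lies on exactly one $D_h$-sphere centered at $z_n$, Fubini gives $\int_0^\infty \mu_h(\bd B_r(z_n; D_h))\,dr = 0$ almost surely; taking expectation and interchanging integrals then yields $\Exp[\mu_h(\bd B_r(z_n; D_h))] = 0$ for Lebesgue-almost every $r$. Hence $S_n := \{r > 0 : \mu_h(\bd B_r(z_n; D_h)) = 0 \text{ a.s.}\}$ is a deterministic Lebesgue-conull, hence dense, subset of $(0,\infty)$. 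Choose a countable dense $S_n' \subset S_n$ for each $n$, and let $\mathcal B$ be the countable family of all finite intersections $\bigcap_i B_{s_i}(z_{n_i}; D_h)$ with each $s_i \in S_{n_i}'$. Using the inclusion $\bd \bigcap_i B_{s_i}(z_{n_i}; D_h) \subseteq \bigcup_i \bd B_{s_i}(z_{n_i}; D_h)$, each $B \in \mathcal B$ is a bounded Borel set with $\mu_h(\bd B) = 0$ almost surely.

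By Theorem~\ref{thm:main-thm}, $\mathfrak b_\ep^{-1} N_\ep(B; D_h) \to \mu_h(B)$ in probability for every $B \in \mathcal B$. A diagonal argument over the countable set $\mathcal B$, exploiting the fact that convergence in probability implies almost sure convergence along a subsequence, then produces a single deterministic sequence $\ep_k \to 0$ along which this convergence is simultaneous and almost sure for every $B \in \mathcal B$. Since $\mathcal B$ is a $\pi$-system generating the Borel $\sigma$-algebra on $\C$, the values $\{\mu_h(B) : B \in \mathcal B\}$ uniquely determine the locally finite measure $\mu_h$; and on a single event of probability one, each such value is recovered as a measurable function of $D_h$ alone. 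Thus $\mu_h$ is almost surely determined by $D_h$, and \cite[Theorem~1.3]{afs-metric-ball} completes the proof. The main delicate point is the boundary hypothesis in Theorem~\ref{thm:main-thm}: we have no a priori reason that $\mu_h(\bd B_r(z_n; D_h)) = 0$ almost surely for arbitrary deterministic $(z_n, r)$, but the Fubini argument above supplies a determining collection of radii on which the hypothesis does hold, and everything else is routine.
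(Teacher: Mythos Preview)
Your argument correctly shows that $\mu_h$ is almost surely determined by the \emph{embedded} metric $D_h:\C\times\C\to[0,\infty)$: the Fubini step, the $\pi$-system $\mathcal B$, the diagonal extraction, and the $\pi$--$\lambda$ extension are all fine. The gap is that the corollary concerns the \emph{abstract} pointed metric space $(\C,0,D_h)$, viewed only up to pointed isometry (this is precisely what ``up to rotation and scaling'' signals, and the paper states explicitly that one forgets the parametrization in $\C$). Your recipe uses fixed deterministic centers $z_n\in\C$ and radii $S_n'$ chosen from the law of the \emph{embedded} pair $(D_h,\mu_h)$; once the embedding is forgotten, you cannot locate those specific points, and for a random dense set chosen from the metric alone your Fubini argument no longer guarantees $\mu_h(\partial \mathcal B_r(z;D_h))=0$ almost surely. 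Equivalently, if $\phi$ is a pointed $D_h$-isometry, your argument applied to $\phi(z_n)$ would require the boundary hypothesis at these \emph{random} centers, which you have not established.

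The paper circumvents this by replacing metric balls with regions $U_s(z)$ bounded by geodesic loops in annuli between filled metric balls. The key point is that every $D_h$-geodesic has $D_h$-Minkowski dimension at most $1$ (one covers a geodesic of length $\ell$ by $\lceil\ell/\ep\rceil$ balls of radius $\ep$), and this holds \emph{simultaneously for all geodesics} as a deterministic consequence of $D_h$ being a length metric. By Remark~\ref{rem:small-quantum-dim} this yields $\mu_h(\partial U_s(z))=0$ for \emph{every} choice of $z$, $s$, and geodesic loop, not just a law-dependent deterministic collection. Hence the paper's $\pi$-system can be built from any countable dense subset of the abstract metric space, and the Minkowski-content recipe is genuinely isometry-equivariant. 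Your approach would be salvaged if you could show, say, that almost surely $\mu_h(\partial\mathcal B_r(z;D_h))=0$ for all $z$ and $r$ simultaneously, but absent such a statement the geodesic-loop construction is what makes the passage to the abstract metric space go through.
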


By considering $(\C, 0, D_h)$ as a random pointed metric space, we forget the parameterization of $D_h$ in the complex plane. More precisely, we consider it as a random element in the space of isometry classes of complete and locally compact length spaces endowed with the local Gromov--Hausdorff topology (as defined by Gromov in \cite{gromov-metric}). Similarly, the random pointed metric measure space $(\C, 0, D_h, \mu_h)$ in Corollary~\ref{cor:forgetting-parameterization} is measurable with respect to the local Gromov--Hausdorff--Prokhorov topology \cite{adh-ghp-distance}. It was known previously that the random pointed metric measure space $(\C, 0, D_h, \mu_h)$ almost surely determines the field $h$ up to rotation and scaling. For $\gamma = \sqrt{8/3}$, this fact was first established in \cite{lqg-tbm3} as a key component of the equivalence between the Brownian sphere and the $\sqrt{8/3}$-LQG sphere. An explicit way of reconstructing $h$ from the pointed metric measure space $(\C, 0, D_h, \mu_h)$ was given in \cite{gms-poisson-voronoi}, which was extended to all $\gamma \in (0,2)$ in \cite{afs-metric-ball}.

Our choice of scaling constants $\{\mathfrak b_\ep\}_{\ep>0}$ has the following description which depends only on $\gamma \in (0,2)$. Let $(\C, h^\gamma, 0,\infty)$ be a $\gamma$-quantum cone under the circle average embedding (see Section~\ref{sec:quantum-cone} for definitions). Let $\eta$ be a whole-plane space-filling SLE$_{\kappa'}$ curve from $\infty$ to $\infty$ with $\kappa' = 16/\gamma^2$ that is sampled independently of $h^\gamma$. Then parameterize $\eta$ by the $\gamma$-LQG measure: that is, $\eta(0) = 0$ and $\mu_h(\eta[s,t]) = t-s$ for all real $s\leq t$. (We review this setup further in Section~\ref{sec:mating-of-trees}.) Let $D_{h^\gamma}$ be the $\gamma$-LQG metric associated with $h^\gamma$. For $\ep>0$, define 
\begin{equation}\label{eqn:scaling-constant}
    \mathfrak b_\ep := \Exp[N_\ep(\eta[0,1]; D_{h^\gamma})].
\end{equation}
While we do not have an exact formula for $\mathfrak b_\ep$, the following properties justify calling the limit in \eqref{eqn:main-theorem} the $d_\gamma$-Minkowski content of $A$ with respect to the metric $D_h$ (also see Section~\ref{sec:minkowski-content-def}).

\begin{prop}\label{prop:scaling-constant-properties}
    Let $\mathfrak b_\ep$ be as in \eqref{eqn:scaling-constant}. There exist constants $0 < c_1 < c_2 < \infty$ such that for all $\ep \in (0,1)$,
    \begin{equation}\label{eqn:scaling-constant-bounds}
        c_1 \ep^{-d_\gamma} \leq \mathfrak b_\ep \leq c_2 \ep^{-d_\gamma}.
    \end{equation}
    Moreover, the function $\ep \mapsto \mathfrak b_\ep$ is regularly varying with index $-d_\gamma$; i.e., for every $r>0$ we have 
    \begin{equation}\label{eqn:scaling-constant-regularly-varying}
        \lim_{\ep \to 0} \frac{\mathfrak b_{r\ep}}{\mathfrak b_\ep}= r^{-d_\gamma}.
    \end{equation}
\end{prop}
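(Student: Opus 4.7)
The plan is to establish the bounds \eqref{eqn:scaling-constant-bounds} first, then derive the regular variation \eqref{eqn:scaling-constant-regularly-varying} from an exact scaling identity obtained via the scale invariance of the $\gamma$-quantum cone. For the bounds, the lower bound $\mathfrak b_\ep \geq c_1 \ep^{-d_\gamma}$ is immediate from the covering-number lower bound (Proposition~\ref{prop:covering-number-lower-bound}) applied to $A = \eta[0, 1]$, which has $\mu_{h^\gamma}$-mass $1$. For the upper bound, the H\"older continuity of space-filling SLE with respect to $D_{h^\gamma}$ (Theorem~\ref{thm:sle-holder}) implies that $\eta[0, 1]$ is contained in a $D_{h^\gamma}$-ball around $0$ of random radius $R$ with $\Exp[R^{d_\gamma}] < \infty$, and combining with the standard bound $\Exp[N_\ep(B_R^{D_{h^\gamma}}(0); D_{h^\gamma}) \mid R] \lesssim R^{d_\gamma} \ep^{-d_\gamma}$ on LQG ball coverings from \cite{afs-metric-ball} yields $\mathfrak b_\ep \leq c_2 \ep^{-d_\gamma}$.

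Setting $F(\ep, t) := \Exp[N_\ep(\eta[0, t]; D_{h^\gamma})]$, the main step toward regular variation is the exact identity
\[
    F(\ep, t) = \mathfrak b_{\ep\, t^{-1/d_\gamma}} \quad \text{for all } \ep > 0 \text{ and } t \in (0, 1).
\]
To derive it, let $A_b := h^\gamma_b(0) + Q \log b$, so that $t \mapsto A_{e^{-t}}$ is a Brownian motion with drift $-(Q - \gamma) < 0$ starting at $0$; then the hitting time $\tau_c := \inf\{t > 0 : A_{e^{-t}} = c\}$ is a.s.\ finite for every $c < 0$. By the strong Markov property of the $\gamma$-quantum cone at $\tau_c$, the rescaled field $\tilde h(z) := h^\gamma(e^{-\tau_c} z) - h^\gamma_{e^{-\tau_c}}(0)$ is distributed as $h^\gamma$. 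Combined with the Weyl scaling $D_{h+a} = e^{\xi a} D_h$ and $\mu_{h+a} = e^{\gamma a} \mu_h$, the coordinate-change formula, and the reparameterization of $\eta$ by quantum mass, the identity follows by taking $t = e^{\gamma c}$ and using $\gamma = \xi d_\gamma$.

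With this identity in hand, the regular variation $\mathfrak b_{r\ep}/\mathfrak b_\ep \to r^{-d_\gamma}$ for $r > 1$ is equivalent to the asymptotic linearity $F(\ep, t) \sim t\, \mathfrak b_\ep$ as $\ep \to 0$ for each fixed $t = r^{-d_\gamma} \in (0, 1)$; the case $r < 1$ then follows by inversion. Decomposing $\eta[0, 1] = \bigcup_{k=1}^n \eta[(k-1)/n, k/n]$ and applying subadditivity of $N_\ep$ gives $\mathfrak b_\ep \leq \sum_k \Exp[N_\ep(\eta[(k-1)/n, k/n]; D_{h^\gamma})]$, and for the reverse direction one picks an optimal covering of $\eta[0,1]$ and analyzes how many of its balls are shared among the cells.

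The main obstacle will be closing these to a two-sided asymptotic. The cells $\eta[(k-1)/n, k/n]$ for $k > 1$ are not distributionally identical to $\eta[0, 1/n]$, because the cone point $0$ and the circle-average embedding break translation invariance in the time parameter of $\eta$, so $\Exp[N_\ep(\eta[(k-1)/n, k/n])]$ differs from $F(\ep, 1/n)$ by random factors. We plan to handle this by rerooting the $\gamma$-quantum cone plus space-filling SLE at each $\eta((k-1)/n)$, then absorbing the random scaling $e^{\xi h^\gamma_1(\eta((k-1)/n))}$ by concentration as $\ep \to 0$. Simultaneously we must bound the ``overcounting'' from $D_{h^\gamma}$-balls shared among multiple cells in the optimal covering; every such ball meets an inter-cell boundary $\partial \eta[(k-1)/n, k/n]$, which is an arc of the left/right frontier of the space-filling SLE and has $D_{h^\gamma}$-Hausdorff dimension strictly less than $d_\gamma$ (by the SLE--LQG KPZ relation, cf.~\cite{afs-metric-ball}), so the total overcounting is $o(\mathfrak b_\ep)$. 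Combining these gives regular variation along the dense sequence $r = n^{1/d_\gamma}$, and the extension to all $r > 0$ follows from the monotonicity of $\ep \mapsto \mathfrak b_\ep$.
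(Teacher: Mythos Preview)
Your proposal contains two genuine gaps and one misconception, and it takes a different route from the paper for the regular variation.

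\textbf{Upper bound.} The step ``$\Exp[N_\ep(\mathcal B_R(0;D_{h^\gamma});D_{h^\gamma})\mid R]\lesssim R^{d_\gamma}\ep^{-d_\gamma}$'' is not a standard bound and is not in \cite{afs-metric-ball}. The radius $R=\mathrm{diam}(\eta[0,1];D_{h^\gamma})$ depends on both $h^\gamma$ and $\eta$, so conditioning on $R$ does not give you a fresh quantum cone on which to apply a scale-invariant covering estimate. The paper obtains the upper bound directly as the $p=1$ case of Proposition~\ref{prop:covering-num-moment-bound}, which is proved by dividing $\eta[0,1]$ into $\lfloor\ep^{-d_\gamma}\rfloor$ segments and applying the exit-time bound (Proposition~\ref{prop:exit-time}) to each; no ball-covering estimate is needed.

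\textbf{Stationarity.} Your assertion that ``the cells $\eta[(k-1)/n,k/n]$ for $k>1$ are not distributionally identical to $\eta[0,1/n]$'' is false at the level of the covering numbers: Corollary~\ref{cor:covering-number-relation} gives exactly $N_\ep(\eta[(k-1)/n,k/n];D_{h^\gamma})\stackrel{d}{=}N_\ep(\eta[0,1/n];D_{h^\gamma})$, so $\Exp[N_\ep(\eta[(k-1)/n,k/n])]=F(\ep,1/n)$ on the nose. The rerooting/concentration detour is unnecessary.

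\textbf{Regular variation: comparison with the paper.} With exact stationarity in hand, your sub/superadditivity strategy is a legitimate and somewhat different route. The paper does not prove $F(\ep,t)\sim t\,\mathfrak b_\ep$ directly; instead it first establishes the much stronger Proposition~\ref{prop:main-thm-qc-sle} (convergence $\mathfrak b_\ep^{-1}N_\ep(\eta[0,t])\to t$ in probability, via the L\'evy-process argument of Section~\ref{sec:minkowski-content-determinstic} which uses the independence from Proposition~\ref{prop:left-right-independent-mmspace}), and then regular variation is a one-line consequence of that plus Corollary~\ref{cor:covering-number-relation}. Your approach would bypass Section~\ref{sec:minkowski-content-determinstic} entirely, which is cleaner if regular variation is all you want.

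\textbf{Boundary overcounting.} However, your treatment of the superadditivity error is too quick. Knowing that $\partial\eta[(k-1)/n,k/n]$ has quantum Minkowski dimension $<d_\gamma$ (Lemma~\ref{lem:sle-dim-1/2}) only controls $N_\ep(\partial\eta[(k-1)/n,k/n])$; the overcounting is governed by $N_\ep(\bd_\ep\eta[(k-1)/n,k/n])$, the covering number of the $\ep$-\emph{neighborhood} of the boundary. The naive inclusion gives $N_{2\ep}(\bd_\ep A)\le N_\ep(\partial A)$, not $N_\ep(\bd_\ep A)\le N_\ep(\partial A)$, and the latter is what the additivity argument in Proposition~\ref{prop:finite-additivity} requires. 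Bridging this gap is the content of Lemma~\ref{lem:sle-nbhd-zero-content}, whose proof (via Lemma~\ref{lem:epsilon-nbhd-covering}) is not a triviality. You would still need essentially all of Section~\ref{sec:additivity}.

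Finally, the sequence $\{n^{1/d_\gamma}:n\in\N\}$ is not dense in $(1,\infty)$. You should instead prove $F(\ep,m/n)\sim(m/n)\,\mathfrak b_\ep$ for all positive rationals $m/n$ by subdividing $\eta[0,m/n]$ into $m$ cells; monotonicity of $t\mapsto F(\ep,t)$ then extends to all $r>0$.
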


As a byproduct of our proof of Theorem~\ref{thm:main-thm}, we obtain a H\"older continuity result for the space-filling SLE curve on an LQG metric space.

\begin{thm} \label{thm:sle-holder}
    Let $\gamma \in (0,2)$ and $\kappa' \in (4,\infty)$ be constants, which do not necessarily satisfy $\kappa' = 16/\gamma^2$. Let $h^\gamma$ be the field of a $\gamma$-quantum cone under the circle average embedding, and let $\eta$ be a whole-plane space-filling SLE$_{\kappa'}$ from $\infty$ to $\infty$ which is sampled independently of $h^\gamma$ and then parameterized by $\mu_{h^\gamma}$. Almost surely, $\eta$ on the metric space $(\C, D_{h^\gamma})$ is locally H\"older continuous with any exponent less than $1/d_\gamma$ and is not locally H\"older continuous with any exponent greater than $1/d_\gamma$.
\end{thm}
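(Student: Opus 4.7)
The upper bound---local H\"older continuity with every exponent $\alpha < 1/d_\gamma$---is the main content; the lower bound is a short consequence of the covering-number lower bound (Proposition~\ref{prop:covering-number-lower-bound}). Indeed, if on some fixed time interval $[0,T]$ the curve $\eta$ were H\"older continuous with exponent $\alpha > 1/d_\gamma$ and (random) constant $C$ with positive probability, then partitioning $[0,T]$ into intervals of length $\delta = (\ep/C)^{1/\alpha}$ would cover $\eta[0,T]$ by at most $O(\ep^{-1/\alpha})$ $D_{h^\gamma}$-balls of radius $\ep$. Since $1/\alpha < d_\gamma$, this would contradict a positive-probability lower bound of the form $N_\ep(\eta[0,T];D_{h^\gamma}) \geq c\ep^{-d_\gamma}$ along some sequence $\ep_n \to 0$, which is precisely what Proposition~\ref{prop:covering-number-lower-bound} provides. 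A zero-one argument (applied separately for each rational $\alpha > 1/d_\gamma$) upgrades this to the desired almost-sure statement.

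\textbf{Scale invariance and single-scale estimate.} The upper bound reduces, via Kolmogorov's continuity criterion, to the moment bound
\[
    \Exp\bigl[\operatorname{diam}_{D_{h^\gamma}}(\eta[s,s+\ep])^p\bigr] \leq C_p \ep^{p/d_\gamma}, \qquad \text{uniformly in } s\in\R,\ \ep\in(0,1),
\]
for each $p > 0$. I would obtain this from two ingredients. First, the stationarity of the mating-of-trees Brownian motion implies that the law of $\operatorname{diam}_{D_{h^\gamma}}(\eta[s,s+\ep])$ does not depend on $s$ (re-rooting the quantum cone at $\eta(s)$ yields another $\gamma$-quantum cone and preserves the intrinsic diameter of the arc). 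Second, combining the $\gamma$-quantum cone's scale invariance under $h^\gamma \mapsto h^\gamma(r\cdot) + Q\log r$ with the LQG scaling identities $\mu_{h+c} = e^{\gamma c}\mu_h$ and $D_{h+c} = e^{\xi c} D_h$---taking $c = -(1/\gamma)\log\ep$ so that the mass reparametrization matches and distances are multiplied by $\ep^{\xi/\gamma} = \ep^{1/d_\gamma}$---yields
\[
    \operatorname{diam}_{D_{h^\gamma}}(\eta[0,\ep]) \stackrel{d}{=} \ep^{1/d_\gamma}\, \operatorname{diam}_{D_{h^\gamma}}(\eta[0,1]).
\]
Everything therefore collapses to the single-scale moment bound $\Exp[\operatorname{diam}_{D_{h^\gamma}}(\eta[0,1])^p] < \infty$ for all $p$, which I would establish by decomposing $\eta[0,1]$ via the mating-of-trees structure (as a quantum surface of bounded quantum scale with Brownian left/right boundary lengths) and applying known polynomial tail estimates for $D_{h^\gamma}$-diameters of sets with prescribed LQG mass.

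\textbf{Chaining to get the upper bound.} Fix $\alpha < 1/d_\gamma$ and a compact time window $[-T,T]$. For each dyadic $\ep = 2^{-n}$, the single-scale estimate and Markov's inequality give $\Prob[\operatorname{diam}_{D_{h^\gamma}}(\eta[k\ep,(k+1)\ep]) > \ep^\alpha] \leq C_p\, \ep^{p(1/d_\gamma - \alpha)}$, and a union bound over $|k|\leq T 2^n$ produces $2T 2^n \cdot C_p\, \ep^{p(1/d_\gamma - \alpha)}$. Choosing $p$ large enough that $p(1/d_\gamma - \alpha) > 1$ makes this summable in $n$, so by Borel--Cantelli, almost surely every sufficiently small dyadic $\ep$ satisfies $\max_{k}\operatorname{diam}_{D_{h^\gamma}}(\eta[k\ep,(k+1)\ep]) \leq \ep^\alpha$. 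Any two times $s,t\in[-T,T]$ with $|t-s| \in (\ep/2,\ep]$ lie in at most two consecutive segments, so $D_{h^\gamma}(\eta(s),\eta(t)) \leq 2\ep^\alpha \leq 2^{1+\alpha}|t-s|^\alpha$, giving local H\"older continuity with exponent $\alpha$. Intersecting over a sequence $\alpha_n \uparrow 1/d_\gamma$ completes the upper bound.

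\textbf{Main obstacle.} The technical heart of the argument is the all-order moment bound on the single-scale diameter $\operatorname{diam}_{D_{h^\gamma}}(\eta[0,1])$. The difficulty is that $\eta[0,1]$ has \emph{unbounded} Euclidean diameter (the space-filling SLE can travel arbitrarily far while consuming only unit quantum mass), so one cannot simply apply diameter tail estimates for a fixed Euclidean region. One must instead exploit the quantum-surface description of $\eta[0,1]$ supplied by mating of trees, combined with tail estimates for $D_{h^\gamma}$-distances between points at prescribed quantum-boundary-length displacement, and then integrate these across the Brownian fluctuations of the boundary length process. A secondary technical point is verifying that the distributional scaling identity between $\operatorname{diam}(\eta[0,\ep])$ and $\ep^{1/d_\gamma}\operatorname{diam}(\eta[0,1])$ holds exactly, rather than modulo a random shift of the circle-average embedding, which requires a careful comparison of the several natural embeddings of the $\gamma$-quantum cone.
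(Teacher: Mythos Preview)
Your reduction of the upper bound to the single-scale moment bound $\Exp[\operatorname{diam}_{D_{h^\gamma}}(\eta[0,1])^p]<\infty$ via scale invariance (Proposition~\ref{prop:qc-mmspace-invariance}) and Kolmogorov is exactly what the paper does. The gap is in how you propose to \emph{prove} that single-scale bound, and separately in your lower-bound argument: both of your approaches implicitly assume $\kappa'=16/\gamma^2$, which the theorem explicitly does \emph{not} require.

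For the single-scale moment bound, you propose to ``decompose $\eta[0,1]$ via the mating-of-trees structure'' and use the Brownian boundary-length process. But the mating-of-trees theorem (Theorem~\ref{thm:boundary-length-BM}) and the independence in Proposition~\ref{prop:left-right-independence} hold only when $\kappa'=16/\gamma^2$; for other $\kappa'$ there is no Brownian boundary-length description. The paper instead routes through Proposition~\ref{prop:exit-time}, the superpolynomial lower-tail bound on the exit time $\tau_1$ of $\eta$ from the unit LQG ball. That proof uses only Proposition~\ref{prop:SLE-includes-Euclidean-ball} (the space-filling SLE segment contains a Euclidean ball of comparable size, valid for all $\kappa'>4$) and LQG metric estimates from \cite{lqg-metric-estimates}; no mating-of-trees input enters. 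The diameter moment then follows from $\{\operatorname{diam}(\eta[0,1])\geq 2r\}\subset\{\tau_r\leq 1\}$ and scaling (Proposition~\ref{prop:sle-diameter}).

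For the lower bound, you invoke Proposition~\ref{prop:covering-number-lower-bound}, but that result (and all of Section~\ref{sec:minkowski-content-sle-segments}) assumes $\kappa'=16/\gamma^2$; its proof relies on the independence of the events $E_{n,k}$ coming from Proposition~\ref{prop:left-right-independent-mmspace}. The paper's lower bound avoids covering numbers altogether: since $\eta$ is parametrized by $\mu_{h^\gamma}$-mass, the segment $\eta[s,t]$ has mass $t-s$, hence cannot fit inside a $D_{h^\gamma}$-ball whose $\mu_{h^\gamma}$-mass is smaller than $t-s$. The uniform ball-volume upper bound $\mu_{h^\gamma}(\mathcal B_r(z;D_{h^\gamma}))\leq C r^{d_\gamma-\zeta}$ (Lemma~\ref{lem:qc-ball-volume}) then forces $\operatorname{diam}(\eta[s,t])\geq c(t-s)^{1/(d_\gamma-\zeta)}$, ruling out H\"older exponents above $1/d_\gamma$. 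This argument uses nothing about $\eta$ beyond the parametrization and works for any $\kappa'>4$.
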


Theorem~\ref{thm:sle-holder} is used in~\cite[Section 4]{bgs-meander} to show that the dimensions of the supports of certain random permutons defined in terms of SLE-decorated LQG are almost surely equal to one. Since LQG decorated by space-filling SLE is related to many other mathematical objects~\cite{ghs-mating-survey}, we expect that the theorem will have more applications in the future.

\subsection{Notations}\label{sec:notations}
We use the following notations throughout the paper.
\begin{itemize}
\item The constant $\gamma \in (0,2)$ is fixed, and we do not consider multiple values of $\gamma$ simultaneously. When we do not specify the value of $\gamma$ (e.g., in expressions such as ``LQG metric ball" or ``quantum dimension"), we refer to the corresponding $\gamma$-LQG quantities.\vspace{4pt}

\item We write $d_\gamma$ for the dimension of $\gamma$-LQG. We also use the $\gamma$-dependent constants
\begin{equation}\label{eqn:Q-and-xi-def}
    Q = \frac{\gamma}{2} + \frac{2}{\gamma} \quad \text{and} \quad \xi = \frac{\gamma}{d_\gamma}.
\end{equation}

\item Given a random distribution $h$ on $\C$, we denote the associated $\gamma$-LQG metric as $\mu_h$ and the $\gamma$-LQG measure as $D_h$. \vspace{4pt}

\item We denote by $B_r(z)$ the Euclidean ball of radius $r$ centered at $z$. $\mathcal B_r(z;D_h):=\{w \in \C: D_h(z,w) < r\}$ is the $D_h$-metric ball of radius $r$ centered at $z$. For a set $A \subset \C$, we denote by $B_r(A) := \bigcup_{z \in A} B_r(z)$ and $\mathcal B_r(A;D_h) := \bigcup_{z\in A} \mathcal B_r(z;D_h)$ its $r$-neighborhoods in the Euclidean and LQG metrics, respectively.\vspace{4pt}

\item We say that an event $E_\ep$ indexed by $\ep>0$ occurs with \textit{superpolynomially high probability} if, for each $p>0$, we have $\Prob(E_\ep) \geq 1- \ep^p$ for sufficiently small $\ep > 0$.\vspace{4pt}

\item Let $X$ and $Y$ be random variables coupled on a probability space, taking values in measurable spaces $\mathcal X$ and $\mathcal Y$, respectively. We say that $Y$ is \textit{almost surely determined by} $X$ if there is a measurable function $F:\mathcal X \to \mathcal Y$ such that $Y = F(X)$ almost surely.
\end{itemize}

\subsection{Outline}
We review the necessary preliminaries in Section~\ref{sec:preliminaries}. With these preliminaries in hand, we present a detailed overview of our proof of the main results in Section~\ref{sec:proof-strategy}. Section~\ref{sec:tightness} is dedicated to showing the tightness of the Minkowski content approximations. In Section~\ref{sec:minkowski-content-sle-segments}, we prove a key stepping stone towards the main results:\ the convergence in \eqref{eqn:main-theorem} of the normalized covering number $\mathfrak b_\varepsilon^{-1} N_\varepsilon(A;D_h)$ to the $\gamma$-LQG area $\mu_h(A)$ when $A$ is a space-filling SLE segment of fixed LQG area. We extend this convergence to general sets $A$ and fields $h$ and complete the proofs of the main results in Section~\ref{sec:generalization}.

\section{Preliminaries}\label{sec:preliminaries}

We review a few preliminaries, including the Minkowski content measure on a metric space (Section~\ref{sec:minkowski-content-def}), the axiomatic characterization of the LQG metric (Section~\ref{sec:lqg-metric-axioms}), and the continuum mating-of-trees theory (Section~\ref{sec:mating-of-trees}). 

We also prove some extensions of known LQG results. In Section~\ref{sec:quantum-surfaces}, we show that the conformal coordinate change rule for the LQG metric~\cite{gm-coord-change} extends to certain random scalings and translations. In Section~\ref{sec:mating-of-trees}, we use these results to prove that the mating-of-trees theorem is, in a certain precise sense, compatible with the LQG metric. 

In Section~\ref{sec:proof-strategy}, we give an overview of the proof of Theorem~\ref{thm:main-thm} and a comparison with Le Gall's proofs in \cite{legall-measure}.

\subsection{Minkowski content measure} \label{sec:minkowski-content-def}

Let $(X,d)$ be a metric space. Given a set $A \subset X$, let $N_\ep(A;d)$ be the minimum number of metric balls with radius $\ep>0$ required to cover $A$. The \textit{Minkowski dimension} of a set $A$ is defined as
\begin{equation}\label{eqn:minkowski-dimension-def}
    \dim_{\mathrm M}(A;d) = \lim_{\ep \to 0} \frac{\log N_\ep(A;d)}{\log \ep^{-1}}
\end{equation}
if the limit exists. There are several equivalent descriptions of the Minkowski dimension. For instance, we can replace the covering number $N_\ep(A;d)$ with the packing number $N^{\mathrm {pack}}_\ep(A;d)$, which is the maximum possible number of disjoint metric balls with radius $\ep$ whose centers all lie in $A$. These two definitions are equivalent because $N^{\mathrm {pack}}_\ep(A;d) \leq N_\ep(A;d) \leq N^{\mathrm {pack}}_{\ep/2}(A;d)$ for every $\ep>0$. 

The Minkowski dimension of the $\gamma$-LQG metric is the Minkowski dimension of any open set with respect to the $\gamma$-LQG metric. In \cite{afs-metric-ball}, this quantity was shown to be equal to $d_\gamma$. The proof was based on the estimate \eqref{eqn:ball-volume-asymptotics} on the volume of LQG metric balls, which we utilize prominently throughout our paper.

\begin{thm}[{\cite[Theorem 1.1]{afs-metric-ball}}]\label{thm:metric-ball-volume}
    Let $h$ be a whole-plane GFF normalized so that $h_1(0) = 0$. For any compact set $K \subset \C$ and $\zeta > 0$, almost surely, 
    \begin{equation}\label{eqn:ball-volume-asymptotics}
        \sup_{\ep \in (0,1)} \sup_{z\in K} \frac{\mu_h(\mathcal B_\ep(z;D_h))}{\ep^{d_\gamma - \zeta}} < \infty \quad \text{and} \quad \inf_{\ep \in (0,1)} \inf_{z\in K} \frac{\mu_h(\mathcal B_\ep(z;D_h))}{\ep^{d_\gamma + \zeta}} >0.
    \end{equation}
    Moreover, for any bounded Borel measurable set $A \subset \C$ containing an open set, almost surely,
    \begin{equation}
        \lim_{\ep \to 0} \frac{\log N_\ep(A;D_h)}{\log \ep^{-1}} = d_\gamma .
    \end{equation}
\end{thm}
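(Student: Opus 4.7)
The plan is to reduce both parts of the theorem to quantitative one-point estimates of the $\mu_h$-volume of a $D_h$-ball, upgrade to uniform bounds over $z\in K$ by a union bound with continuity, and derive the Minkowski dimension via standard packing/covering.

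For the one-point estimate, the input is two-sided polynomial moment control on $\mu_h(\mathcal B_\ep(0;D_h))$, from which the Markov/Chernoff inequality yields, for every $\zeta,q>0$,
\[
\Prob\bigl[\mu_h(\mathcal B_\ep(0;D_h)) \notin [\ep^{d_\gamma+\zeta},\,\ep^{d_\gamma-\zeta}]\bigr] \le C_{q,\zeta}\,\ep^{q}, \qquad \ep \in (0,1).
\]
The moment control comes from combining the Weyl scalings $D_{h+c}=e^{\xi c}D_h$ and $\mu_{h+c}=e^{\gamma c}\mu_h$, the conformal coordinate-change rule, the invariance in law of the whole-plane GFF modulo additive constant under $z \mapsto \ep z$, and the regularity/tightness estimates for LFPP \cite{dddf-lfpp,lqg-metric-estimates}. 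The identity $\gamma Q = 2 + \gamma^2/2$ ensures that the joint scaling of metric and measure produces the exponent $d_\gamma$, which is why $d_\gamma$ and not some other number appears.

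To upgrade to the sup/inf over $z\in K$ in \eqref{eqn:ball-volume-asymptotics}, apply the one-point estimate at every point of a polynomially fine net of $K$ and every dyadic scale $\ep = 2^{-n}$, take a union bound with $q$ large enough to make it summable, and invoke Borel--Cantelli for uniform control along the dyadic subsequence. Continuous interpolation in $z$ and $\ep$ is handled by the trivial inclusion $\mathcal B_\ep(z;D_h) \subset \mathcal B_{\ep + D_h(z,w)}(w;D_h)$ combined with H\"older continuity of $D_h$ in its arguments; the resulting distortion is polynomial in the net spacing and is absorbed in the $\zeta$-slack.

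The Minkowski dimension then follows by packing/covering. For the upper bound on $N_\ep(A;D_h)$, the disjoint balls in a maximal $\ep/2$-packing of $A$ lie inside $\mathcal B_\ep(A;D_h)$, whose $\mu_h$-mass is a.s.\ finite; the lower bound in \eqref{eqn:ball-volume-asymptotics} forces each ball to have mass at least $c\ep^{d_\gamma+\zeta}$, yielding $N_\ep(A;D_h) \le C\ep^{-d_\gamma-\zeta}$. For the lower bound, any covering by $N_\ep$ balls of radius $\ep$ has total mass at least $\mu_h(A) > 0$ while each summand is at most $C\ep^{d_\gamma-\zeta}$, giving $N_\ep(A;D_h) \ge c\ep^{-d_\gamma+\zeta}$; taking logarithms, dividing by $\log \ep^{-1}$, and sending $\zeta \to 0$ yields the limit $d_\gamma$. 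The main obstacle is the lower half of the one-point estimate --- ruling out ``conformally thin'' $D_h$-balls, i.e., balls contained in Euclidean regions of anomalously small $\mu_h$-mass --- with polynomial rather than merely subpolynomial probability; this likely requires a multiscale bootstrap exploiting confluence of LQG geodesics and the near-independence of $D_h$ across disjoint Euclidean annuli.
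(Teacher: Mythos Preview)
This theorem is not proved in the present paper; it is quoted verbatim from \cite[Theorem~1.1]{afs-metric-ball} and used as a black box. So there is no ``paper's own proof'' to compare against here.

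That said, your outline is broadly faithful to the strategy actually carried out in \cite{afs-metric-ball}: one establishes polynomial moment bounds on $\mu_h(\mathcal B_\ep(z;D_h))$ at a fixed point, upgrades to uniformity over $K$ via a union bound on a polynomial net plus Borel--Cantelli along dyadic scales, and then reads off the Minkowski dimension by the standard packing/covering argument you describe. The scaling heuristic you give (comparing the $e^{\xi c}$ and $e^{\gamma c}$ Weyl scalings, with $\gamma/\xi = d_\gamma$) is exactly why $d_\gamma$ appears. Two small corrections: first, your proposal is a sketch, not a proof, and the actual work in \cite{afs-metric-ball} lies in turning the heuristic into genuine moment bounds, which requires comparing LQG balls to Euclidean balls via circle-average estimates (in the spirit of the paper's Lemma~\ref{lem:lqg-metric-circle-average-estimate} and the surrounding machinery from \cite{lqg-metric-estimates}); second, the lower bound in \cite{afs-metric-ball} does not use confluence of geodesics---it proceeds by showing that an LQG ball $\mathcal B_\ep(z;D_h)$ contains a Euclidean ball of controlled radius and then lower-bounding the $\mu_h$-mass of that Euclidean ball, so your speculation about needing a ``multiscale bootstrap exploiting confluence'' overstates the difficulty.
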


The \textit{Minkowski content} is a method of assigning sizes to subsets of a metric space by using the quantities used to find their Minkowski dimensions. It has several different definitions in the literature which are not equivalent. One such definition is the following: if the limit computed in \eqref{eqn:minkowski-dimension-def} equals $\delta$, then the Minkowski content of $A$ is
\begin{equation}\label{eqn:minkowski-content-simple-def}
    \mathrm{Cont}_\delta(A;d) = \lim_{\ep \to 0} \ep^\delta N_\ep(A;d)
\end{equation}
if the limit exists. Replacing $N_\ep(A;d)$ with other quantities that give rise to equivalent definitions of the Minkowski dimension, such as $N_\ep^{\mathrm{pack}}(A;d)$, do not necessarily give identical values for the Minkowski content. As such, it makes sense to introduce the following general notion of Minkowski content. 

\begin{defn}\label{def:scaling-coefficients}
    For a constant $\delta>0$ and a family of constants $b = \{b_\ep\}_{\ep>0}$, we say that $b$ is a \textit{sequence of $\delta$-dimensional rescaling coefficients} if the following two conditions are satisfied.
    \begin{enumerate}[(i)]
        \item There exist constants $0<c_1<c_2<\infty$ and $\ep_0>0$ such that $c_1 \ep^{-\delta} \leq b_\ep \leq c_2 \ep^{-\delta}$ for every $\ep \in (0,\ep_0)$.

        \item The function $\ep \mapsto b_\ep$ is regularly varying at 0 with index $-\delta$. That is, $\lim_{\ep \to 0} b_{r\ep}/b_\ep = r^{-\delta}$ for every $r>0$.
    \end{enumerate}
\end{defn}

For instance, $\{\ep^{-\delta}\}_{\ep>0}$ is a trivial sequence of $\delta$-dimensional rescaling coefficients. Proposition~\ref{prop:scaling-constant-properties} says that $\{\mathfrak b_\ep\}_{\ep>0}$ as defined in \eqref{eqn:scaling-constant} is a sequence of $d_\gamma$-dimensional rescaling coefficients. In Definition~\ref{def:scaling-coefficients}, we require condition (ii) so that $\ep^\delta b_\ep$ does not fluctuate arbitrarily as $\ep \to 0$.

\begin{defn}\label{def:general-minkowski-content}
    Let $(X,d)$ be a metric space and $b = \{b_\ep\}_{\ep>0}$ be a sequence of $\delta$-dimensional rescaling coefficients. For $A\subset X$ with $\dim_{\mathrm M}(A;d) = \delta$, the \textit{Minkowski content of $A$ with respect to coefficients }$b$ is the limit
    \begin{equation}\label{eqn:general-minkowski-content}
        \mathrm{Cont}_{b}(A;d) = \lim_{\ep \to 0} b_{\ep}^{-1} N_{\ep}(A;d).
    \end{equation}
    if it exists. In that case, we say that $A$ is \textit{Minkowski measurable with respect to coefficients $b$}.
\end{defn}

Theorem~\ref{thm:main-thm} states that with $\mathfrak b = \{\mathfrak b_\ep\}_{\ep>0}$ as in \eqref{eqn:scaling-constant}, for every bounded Borel set $A\subset \C$ with $\mu_h(\bd A) = 0$, the Minkowski content of $A$ with respect to $\mathfrak b$ exists and is almost surely equal to $\mu_h(A)$. The condition $\mu_h(\bd A)=0$ is natural; if an open set $A\subset X$ is Minkowski measurable with respect to coefficients $b$, then $\mathrm{Cont}_b(A;d) = \mathrm{Cont}_b(\overline A;d)$. This is since $N_\ep(A;d) \leq N_\ep(\overline A;D) \leq N_{(1-\zeta)\ep}(A;d)$ for every $\ep>0$ and $\zeta \in (0,1)$, and we require $b_\ep$ to vary regularly.

In general, it is difficult to a priori find the correct coefficients $\{b_\ep\}_{\ep>0}$ such that the limit \eqref{eqn:general-minkowski-content} exists. Indeed, we will use \eqref{eqn:minkowski-content-simple-def} as an ansatz for the Minkowski content in $\gamma$-LQG, and we show only in a later stage of the proof that \eqref{eqn:scaling-constant} is the correct rescaling coefficient to use.

One reason for considering the Minkowski content for the LQG metric space is that the Minkowski content is extremely useful in the context of random fractal subsets. For instance, the Minkowski content has been used to construct natural measures on the Schramm--Loewner evolution (SLE) curve and its subsets (e.g., \cite{alberts-sheffield-bdy-measure,hlls-cut-pts,lawler-bdy-minkowski,lawler-rezai-nat,zhan-sle-minkowski-content}). However, the Minkowski content has been considered traditionally in the context of fractal subsets of Euclidean spaces. When $A$ is a fractal subset of a Euclidean space $\R^n$, the Minkowski dimension of $A$ can be defined equivalently as
\begin{equation} 
    \dim_{\mathrm M}(A) = n - \lim_{\ep \to 0} \frac{\log (\mathrm{vol} \,B_\ep(A))}{\log \ep}.
\end{equation}
This is why the $\delta$-dimensional Minkowski content of $A \subset \R^n$ for $\delta < n$ is defined usually as
\begin{equation}\label{eqn:old-minkowski-content} 
    \mathrm{Cont}_\delta (A) = \lim_{\ep \to 0} \frac{\mathrm{vol}\,B_\ep(A)}{\ep^{n-\delta}}. 
\end{equation}

We eventually wish to construct a Borel measure from the Minkowski content on an LQG metric space and compare it with the LQG measure in Corollary~\ref{cor:forgetting-parameterization}. However, Minkowski content is not countably additive in general. Thus, we define a \textit{Minkowski content measure} to be any Borel measure that is compatible with some version of the Minkowski content. The following definition aligns with various definitions of similar concepts in the literature (e.g., \cite{minkowski-measure-metric-space,zhan-sle-minkowski-content}). 

\begin{defn}\label{def:minkowski-content-measure}
    Let $(X,d)$ be a locally compact metric space whose Minkowski dimension is $\delta>0$: i.e., $\dim_{\mathrm M}(U;d) = \delta$ for every totally bounded open set $U\subset X$. A \textit{Minkowski content measure} on $X$ is a Borel measure $\mu$ on $X$ that satisfies the following conditions. 
    \begin{enumerate}[(i)]
        \item The measure $\mu$ is finite on all compact subsets of $X$.
        \item There is a sequence $b = \{b_\ep\}_{\ep>0}$ of $\delta$-dimensional rescaling coefficients such that  $\mathrm{Cont}_b(K;d)$ exists and equals $\mu(K)$ for every compact set $K\subset X$ with $\mu(\bd K)=0$.
    \end{enumerate}
\end{defn}

In other words, Theorem~\ref{thm:main-thm} states that the LQG measure $\mu_h$ is a Minkowski content measure on the LQG metric space $(\C,D_h)$. In the proof of Corollary~\ref{cor:forgetting-parameterization}, we give an explicit method to recover $\mu_h$ as a Minkowski content measure on the LQG metric space $(\C,D_h)$. In particular, we give a random $\pi$-system of compact sets coupled with the GFF $h$ which a.s.\ generates the Borel $\sigma$-algebra, so that the values of Minkowski content $\mathrm{Cont}_{\mathfrak b}(K;D_h)$ for the sets $K$ in this $\pi$-system a.s.\ uniquely determine the Minkowski content measure $\mu_h$.

\subsection{Whole-plane GFF}\label{sec:whole-plane-gff}
We give a brief introduction to the whole-plane GFF insofar as it is relevant to the rest of the paper. We refer the reader to the introductory sections of~\cite{dms-lqg-mating,ig4} and the expository articles~\cite{bp-lqg-notes,shef-gff,pw-gff-notes} for further details.

The \textit{whole-plane Gaussian free field} (GFF) $h$ is a centered Gaussian process on $\C$ with covariances 
\begin{equation}\label{eqn:gff-covariance}
    \mathrm{Cov}(h(z),h(w)) = G(z,w) : = \log \frac{(\max\{|z|,1\})(\max\{|w|,1\})}{|z-w|}, \quad \forall z,w\in \C.
\end{equation}
This definition does not make literal sense since $\lim_{z\to w} G(z,w) = \infty$, but we can make sense of the whole-plane GFF as a random distribution (i.e., generalized function). Let $\mathcal M$ be the collection of signed Borel measures $\rho$ with compact supports on $\C$ and $\int_{\C \times \C} G(z,w)\,|\rho|(dz)|\rho|(dw) < \infty$. We define \cite[Section~3.1]{pw-gff-notes} the whole-plane GFF as the centered Gaussian process indexed by $\mathcal M$ with
\begin{equation}
    \mathrm{Cov}((h,\rho_1), (h,\rho_2)) = \int_{\C \times \C} G(z,w)\,\rho_1(dz)\rho_2(dw).
\end{equation}
A random distribution $h$ on $\C$ is called a \textit{whole-plane GFF plus a continuous function} if there exists a coupling of $h$ with a random continuous function $f:\C\to\R$ such that $h-f$ has the law of a whole-plane GFF.

Given a whole-plane GFF $h$, we define the \textit{circle average} on the circle $\bd B_r(z)$ as the pairing $h_r(z):= (h,\lambda_{z,r})$ where $\lambda_{z,r}$ is the uniform probability measure on the circle $\bd B_r(z)$. The following properties of the circle average process were given in~\cite[Section 3.1]{shef-kpz}.

\begin{lem}\label{lem:circle-average-continuous}
    There exists a version of whole-plane GFF $h$ such that the map $(z,r) \mapsto h_r(z)$ is a.s.\ continuous on $\C \times (0,\infty)$. Moreover, for each $z \in \C$, the process $\{B_t\}_{t\in \R}$, $B_t := h_{e^{-t}}(z)$, is a two-sided standard Brownian motion with the initial value $B_0 = h_1(z)$.
\end{lem}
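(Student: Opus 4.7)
The plan has three steps. First, verify that each $h_r(z) := (h, \lambda_{z,r})$ is well-defined: since $\lambda_{z,r}$ is a probability measure supported on a circle, an elementary computation gives $\iint -\log|u-v|\,\lambda_{z,r}(du)\,\lambda_{z,r}(dv) = -\log r < \infty$, so $\lambda_{z,r} \in \mathcal M$. Consequently the collection $\{h_r(z) : z \in \C,\,r > 0\}$ forms a centered Gaussian field with
\[
\mathrm{Cov}(h_r(z), h_s(w)) = \iint_{\C^2} G(u,v)\,\lambda_{z,r}(du)\,\lambda_{w,s}(dv).
\]
Second, I would compute this covariance in closed form and read off the law of $t \mapsto h_{e^{-t}}(z)$ for each fixed $z$. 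Third, I would bound the variance of increments and invoke Kolmogorov's continuity theorem to produce a jointly continuous modification on $\C \times (0,\infty)$.

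For the covariance, write $G(u,v) = \log\max(|u|,1) + \log\max(|v|,1) - \log|u-v|$. Since $u \mapsto \log|u-v|$ is harmonic away from $v$, the mean value property yields $\int \log|u-v|\,\lambda_{z,r}(du) = \log\max(r,|z-v|)$; iterating in the concentric case $w = z$ gives $\iint \log|u-v|\,\lambda_{z,r}(du)\,\lambda_{z,s}(dv) = \log\max(r,s)$. Setting $A_z(r) := \int \log\max(|u|,1)\,\lambda_{z,r}(du)$, one obtains the clean formula
\[
\mathrm{Cov}(h_r(z), h_s(z)) = A_z(r) + A_z(s) - \log\max(r,s).
\]
With $B_t := h_{e^{-t}}(z)$ and $r_i := e^{-t_i}$, expanding $\mathrm{Cov}(B_{t_1} - B_0, B_{t_2} - B_0)$ makes all occurrences of $A_z$ cancel, leaving $-\log\max(r_1,r_2) + \log\max(r_1,1) + \log\max(r_2,1)$. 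A short case check on the signs of $t_1, t_2$ shows this equals $\min(t_1,t_2)$ when $t_1, t_2 \geq 0$, $\min(|t_1|,|t_2|)$ when $t_1, t_2 \leq 0$, and $0$ when $t_1 t_2 < 0$. Since the process is jointly Gaussian, this is exactly the covariance of a two-sided standard Brownian motion started from $B_0 = h_1(z)$.

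For joint continuity, Gaussianity reduces higher moments of $h_r(z) - h_{r'}(z')$ to its variance. A direct computation using the covariance formula (now with $z \ne z'$) and the harmonicity arguments above shows that on any compact set $K \subset \C \times (0,\infty)$, there exist $\alpha, C_K > 0$ with
\[
\mathrm{Var}\bigl(h_r(z) - h_{r'}(z')\bigr) \leq C_K \bigl(|z-z'| + |r-r'|\bigr)^\alpha, \qquad (z,r),(z',r') \in K.
\]
Taking $p$-th moments with $p > 6/\alpha$ beats the three-dimensional parameter space, so Kolmogorov's continuity theorem, applied on a countable exhaustion of $\C \times (0,\infty)$ by compact sets, delivers the jointly continuous modification. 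The main technical obstacle I anticipate is the variance estimate itself: the mean value arguments are transparent when the two circles are disjoint from each other and each lies entirely inside or outside $\partial B_1(0)$, but overlapping circles and circles straddling $\partial B_1(0)$ require a case analysis to extract a Hölder-type bound. Compactness of $K$ keeps radii and distances to $\partial B_1(0)$ bounded away from degenerate values, so the estimate goes through uniformly in each case.
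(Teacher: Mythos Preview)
Your proposal is correct. Note that the paper does not actually prove this lemma; it is stated with a citation to \cite[Section~3.1]{shef-kpz}, and your sketch essentially reproduces the standard argument found there: compute the covariance of circle averages via the mean value property of $\log|\cdot|$, read off the two-sided Brownian motion structure, and obtain joint continuity from a H\"older-type variance bound and Kolmogorov's criterion. The covariance cancellation you exhibit for $B_t - B_0$ is exactly right, and the case check on signs of $t_1,t_2$ is complete. The only place where more work would be needed in a full write-up is the variance estimate for non-concentric circles, which you correctly flag; the reference handles this by a direct computation yielding a bound of order $|r-r'|/r + |z-z'|/r$ on compact sets, consistent with your anticipated $\alpha=1$.
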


Let $h$ be a whole-plane GFF plus a continuous function. By the \textit{circle average part} of $h$, we refer to the function $f(z) = h_{|z|}(0)$. By the \textit{lateral part} of $h$, we refer to the distribution $g = h -f$. The following is a key property of the whole-plane GFF used in this paper.

\begin{lem}[{\cite[Lemma 4.9]{dms-lqg-mating}}]\label{lem:GFF-orthogonal-decomposition}
    The circle average and lateral parts of the whole-plane GFF are independent.
\end{lem}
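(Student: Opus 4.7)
The approach is to exploit the Gaussianity of $h$ in order to reduce the independence statement to a vanishing of covariances. Since the circle-average part $f$ is determined by the family $\{h_r(0) : r > 0\}$, and since any linear functional of $g$ together with any finite vector $(h_{r_1}(0), \ldots, h_{r_n}(0))$ is jointly Gaussian, it suffices to show that $\mathrm{Cov}((g, \phi), h_r(0)) = 0$ for every admissible test function $\phi$ and every $r > 0$.

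The key identity I would establish is $(f, \phi) = (h, \bar\phi)$, where $\bar\phi$ is the rotational symmetrization of $\phi$ around $0$, viewed as a radial test function on $\C$. To see this, rewrite $(f, \phi) = \int_\C f(z) \phi(z) \, d^2 z$ in polar coordinates; since $f(z) = h_{|z|}(0)$ depends only on $|z|$, the angular integration collapses $\phi$ to $\bar\phi$. It then suffices to verify that $(g, \psi) = 0$ for every radial test function $\psi$, which follows from Fubini together with the fact that $g_\rho(0) = h_\rho(0) - f_\rho(0) = 0$ for every $\rho > 0$, since $f$ is constant on every circle centered at $0$.

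With this identity in hand, the GFF covariance formula \eqref{eqn:gff-covariance} gives
\begin{equation*}
\mathrm{Cov}((h, \phi), h_r(0)) = \int_\C \phi(z) K_r(z) \, d^2 z, \qquad K_r(z) := \int_\C G(z, w) \, \lambda_{0, r}(dw).
\end{equation*}
Because $G$ is invariant under the simultaneous rotation $(z, w) \mapsto (e^{i\theta} z, e^{i\theta} w)$ and $\lambda_{0, r}$ is itself rotationally symmetric, the kernel $K_r$ is a radial function of $z$. Replacing $\phi$ by $\bar\phi$ therefore does not alter the integral, so
\begin{equation*}
\mathrm{Cov}((h, \phi), h_r(0)) = \mathrm{Cov}((h, \bar\phi), h_r(0)) = \mathrm{Cov}((f, \phi), h_r(0)),
\end{equation*}
and subtracting yields $\mathrm{Cov}((g, \phi), h_r(0)) = 0$.

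The main subtlety will be upgrading this pointwise uncorrelatedness to genuine independence of the two processes $f$ and $g$. Joint Gaussianity immediately promotes the covariance identity to independence of $(g, \phi)$ from every finite vector $(h_{r_1}(0), \ldots, h_{r_n}(0))$, hence to independence of all finite-dimensional marginals of $g$ and of the circle-average process. One then passes to the full $\sigma$-algebras by choosing a countable separating family of test functions (and radii) and invoking the continuous version of $(z, r) \mapsto h_r(z)$ provided by Lemma \ref{lem:circle-average-continuous}, so that $f$ is measurable with respect to countably many circle averages.
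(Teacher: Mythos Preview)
The paper does not give its own proof of this lemma; it simply cites \cite[Lemma~4.9]{dms-lqg-mating}. Your argument is a correct, self-contained version of the standard proof: reduce independence of the two Gaussian pieces to vanishing covariances, and obtain the latter from the rotational invariance of the covariance kernel $G$ together with the rotational symmetry of the uniform measure on $\partial B_r(0)$. The identity $(f,\phi)=(h,\bar\phi)$ and the radiality of $K_r$ are exactly the right ingredients, and the upgrade from ``uncorrelated'' to ``independent'' via joint Gaussianity and a countable generating family is routine. One cosmetic point: the paper indexes $h$ by signed measures $\rho\in\mathcal M$ rather than smooth test functions, so to match notation you would state the covariance computation for $(h,\rho)$ with $\rho$ a compactly supported finite-energy measure; nothing in your argument changes.
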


We note that the term whole-plane GFF also refers to the random distribution considered modulo additive constant. 
Our choice of covariance kernel in~\eqref{eqn:gff-covariance} corresponds to fixing the additive constant so that the average $h_1(0)$ of $h$ over the unit circle is zero~\cite[Section 2.1.1]{vargas-dozz-notes}. The law of the whole-plane GFF is invariant under deterministic complex affine transformations of $\C$, modulo additive constant. That is, if $a\in\C\setminus \{0\}$ and $b\in\C$, then
\begin{equation} \label{eqn:gff-translate}
    h(a\cdot + b) - h_{|a|}(b) \overset{d}{=} h .
\end{equation}
In the few instances where we refer to the whole-plane GFF in this sense, we always write explicitly that we are considering it modulo additive constant.

\begin{rem}\label{rem:gff-on-uhp}
    We close our discussion of the planar GFF with a brief discussion of the free-boundary GFF on the upper half-plane $\mathbb H = \{z: \mathrm{Im}\,z>0\}$, which will be relevant in Section~\ref{sec:mating-of-trees-theorem}. This is a random distribution $\tilde h$ on $\mathbb H$ which has the law of $(h(z)+h(\bar z))/2$ where $h$ is a whole-plane GFF; it can be rigorously defined as a centered Gaussian process indexed by signed Borel measures in $\mathcal M$ which are supported on $\mathbb H$, with $\mathrm{Cov}(\tilde h(\rho_1),\tilde h(\rho_2)) = \int_{\mathbb H\times \mathbb H} [G(z,w) + G(z,\bar w)] \rho_1(dz) \rho_2(dw)$ where $G$ is the whole-plane Green's function given in \eqref{eqn:gff-covariance}. For $x\in \mathbb R$ and $r>0$, we define $\tilde h_r(x)$ to be the paring $\tilde h(\tilde \lambda_{x,r})$ where $\tilde \lambda_{x,r}$ is the uniform probability measure on the semicircle $\partial B_r(x) \cap \mathbb H$. Define $\tilde h_\parallel(z):= \tilde h(z) - \tilde h_{|z|}(0)$ be the projection of $\tilde h$ onto the space of functions that have average zero on all semi-circles centered at the origin (``lateral part"). Then, similarly to Lemma~\ref{lem:GFF-orthogonal-decomposition}, the semicircle averages $\{\tilde h_r(0)\}_{r>0}$ and the lateral part $\tilde h_\parallel$ are independent \cite[Lemma~4.2]{dms-lqg-mating}.
\end{rem}

\subsection{LQG metric axioms} \label{sec:lqg-metric-axioms}

Due to its variational formulation, it is difficult to work with the definition \eqref{eqn:lqg-metric-def} of the LQG metric. The axiomatic characterization of the LQG metric given in \cite{gm-uniqueness} is often a more tractable means of studying the LQG metric.

Before we state the LQG metric axioms, we recall the following definitions regarding metric spaces. Let $(X,D)$ be a metric space. A \textit{curve} in $X$ is a continuous function $P: [a,b] \to X$. The \textit{$D$-length} of $P$ is 
\begin{equation}
    \mathrm{len}(P;D) = \sup_T \sum_{i=1}^{|T|}D(P(t_{i-1}), P(t_{i }))
\end{equation}
where the supremum is over all partitions $T: a = t_0 < t_1 < \cdots < t_{|T|} = b$. For $Y \subset X$, the \textit{internal metric of $D$ on $Y$} is defined as 
\begin{equation}\label{eqn:internal-metric}
    D^Y(x,y) = \inf_{P \subset Y} \mathrm{len}(P;D)
\end{equation}
where the infimum is over all paths $P$ in $Y$ from $x$ to $y$. 

We say that $(X,D)$ is a length space if for each $x,y \in X$ and $\ep>0$, there exists a curve $P$ in $X$ from $x$ to $y$ with $\mathrm{len}(P;D) < D(x,y) + \ep$. We say that a metric $D$ is \textit{continuous metric} on an open domain $U \subset \C$ if it induces the Euclidean topology on $U$. In the following, we equip the space of continuous metrics on $U$ with the local uniform topology for functions $U \times U \to [0,\infty)$.

\begin{defn}\label{def:lqg-metric-axiom}
    For $\gamma \in (0,2)$, a $\gamma$-LQG metric is a Borel measurable function $h \mapsto D_h$ from the space of distributions on $\C$ to the space of the continuous metrics on $\C$ such that the following are true whenever $h$ is a whole-plane GFF plus a continuous function. Here, $Q$ and $\xi$ are constants defined in \eqref{eqn:Q-and-xi-def}.

    \begin{enumerate}[I.]
       \item \textit{Length space.} $(\C,D_h)$ is almost surely a length space. That is, the $D_h$-distance between any two points in $\C$ is the infimum over the $D_h$-lengths of curves between these two points.

       \item \textit{Locality.} Let $U\subset \C$ be a deterministic set. Then the internal metric $D_h^U$ is almost surely determined by (cf.\ Section~\ref{sec:notations}) the restriction $h|_U$.\footnote{The restriction $h|_U$ of the whole-plane GFF $h$ to $U$ can be defined precisely as the process $\{(h,\rho)\}_{\rho \in \mathcal M_U}$ where the index set $\mathcal M_U$ comprises signed Borel measures $\rho \in \mathcal M$ (recall Section~\ref{sec:whole-plane-gff}) with $\mathrm{supp}(\rho) \subset U$.}

       \item \textit{Weyl scaling.} For a continuous function $f: \C \to \R$, define 
       \begin{equation}\label{eqn:weyl-scaling}
            (e^{\xi f}\cdot D_h)(z,w) := \inf_{P: x\to y}        \int_0^{\mathrm{len}(P;D_h)} e^{\xi f(P(t))}\,dt \quad \forall z,w\in \C
       \end{equation}
       where the infimum is over all curves from $z$ to $w$ parameterized by $D_h$-length. The following holds almost surely: we have $D_{h+f} = e^{\xi f} \cdot D_h$ for every continuous function $f:\C \to \R$.

       \item \textit{Coordinate change for translation and  scaling.} For each fixed deterministic $r>0$ and $z\in \C$, almost surely
       \begin{equation}\label{eqn:coordinate-change-axiom}
           D_h(ru+z, rv+z) = D_{h(r\cdot + z) + Q\log r} (u,v) \quad \forall u,v\in \C.
       \end{equation}
    \end{enumerate}
\end{defn}

In \cite{gm-uniqueness}, it was shown that the random metric defined in \eqref{eqn:lqg-metric-def} using LFPP is a $\gamma$-LQG metric as in the sense of the above definition, and each $\gamma$-LQG metric is a deterministic constant multiple of it. Hence, it makes sense to refer to \eqref{eqn:lqg-metric-def} as \emph{the} $\gamma$-LQG metric. The paper~\cite{lqg-metric-estimates} contains an extensive list of estimates for the LQG metric deduced from the axiomatic definition, which we introduce as necessary. 

Finally, we refer the reader to \cite[Remark 1.2]{gm-confluence} for the definition of the $\gamma$-LQG metric on a proper subdomain of $\C$.

\subsection{Quantum surfaces} \label{sec:quantum-surfaces}

Recall from \eqref{eqn:Q-and-xi-def} that $Q = 2/\gamma+\gamma/2$. Consider the pair $(U,h)$ where $U\subset \C$ is an open set and $h$ is a distribution on $U$. A \textit{$\gamma$-quantum surface} (or a \textit{$\gamma$-LQG surface}) is an equivalence class of such pairs where $(U,h) \sim (\tilde U, \tilde h)$ if there exists a conformal transformation $\phi: \tilde U \to U$ such that 
\begin{equation}\label{eqn:quantum-surface-coordinate-change}
    \tilde h = h \circ \phi + Q\log |\phi'|.
\end{equation}
An \textit{embedding} of a quantum surface is a choice of representative $(U,h)$ from the equivalence class.

We often consider quantum surfaces with additional structures. As before, let $U\subset \C$ be an open set and $h$ be a distribution on $U$. Let $z_1,\dots,z_k$ be points in $\overline U$. A \textit{$\gamma$-quantum surface with $k$ marked points} is an equivalence class of the tuples $(U,h, z_1,\dots,z_k)$ where $(U,h, z_1,\dots,z_k) \sim (\tilde U, \tilde h, \tilde z_1,\dots,\tilde z_k)$ if there exists a conformal transformation $\phi: \tilde U \to U$ such that $z_j = \phi(\tilde z_j)$ for $j=1,\dots,k$ in addition to \eqref{eqn:quantum-surface-coordinate-change}. If $\eta$ is a curve in $\overline U$, then a \textit{curve-decorated $\gamma$-quantum surface} is an equivalence class of triples $(U,h,\eta)$ where $(U,h,\eta) \sim (\tilde U, \tilde h, \tilde \eta)$ if there exists a conformal transformation $\phi: \tilde U \to U$ such that $\eta = \phi \circ \tilde \eta$ in addition to \eqref{eqn:quantum-surface-coordinate-change}. We define a \textit{curve-decorated $\gamma$-quantum surface with $k$ marked points} by combining these definitions.

The equivalence relation \eqref{eqn:quantum-surface-coordinate-change} of a $\gamma$-LQG surface is chosen so that the $\gamma$-LQG measure and metric transform naturally between different embeddings of the same quantum surface. Suppose $h$ is a GFF plus a continuous function on an open set $U \subset \C$.\footnote{We say that a random distribution $h$ on an open subset $U\subset \C$ is a GFF plus a continuous function if it can be coupled with a random continuous function $f:U\to \C$ such that $h-f$ has the law of a zero-boundary GFF on $U$ (or a whole-plane GFF if $U=\C$). Note the definition of the whole-plane GFF plus a continuous function above Theorem~\ref{thm:main-thm}.} Let $\phi: \tilde U \to U$ be a fixed conformal transformation. It was established in \cite{shef-kpz} that, almost surely,
\begin{equation}\label{eqn:lqg-measure-coordinate-change}
    \mu_h(A) = \mu_{h\circ \phi + Q\log |\phi'|} (\phi^{-1} (A)) \quad \text{for every Borel } A \subset U
\end{equation}
and in \cite{gm-coord-change} that, almost surely,
\begin{equation}\label{eqn:lqg-metric-coordinate-change}
    D_h(z,w) = D_{h\circ \phi + Q\log |\phi'|}(\phi^{-1}(z),\phi^{-1}(w)) \quad \text{for every } z,w \in U.
\end{equation}

While not sufficiently emphasized in the early literature on LQG, it is necessary to include random conformal transformations in the definition of quantum surfaces to be able to compare their laws under a canonical embedding rule. For instance, there are uncountably many ways to embed a quantum surface with the disk topology into the unit disk $\ud$ if it is unmarked or has only one or two marked boundary points. To specify a canonical embedding, we need to use information about the GFF. This requires a random conformal change of coordinates since the field is random. 

Nevertheless, this random change in coordinates does not present an issue for the LQG measure. The following theorem implies that if $\phi$ is the random conformal transformation that maps a given embedding $(U, h)$ of a quantum surface to its canonical embedding $(\tilde U, \tilde h)$, then it is a.s.\ the case that the LQG measure $\mu_{\tilde h}$ is well-defined and equal to the pushforward measure $\phi_*\mu_h$.

\begin{thm}[{\cite[Theorem 1.4]{sw-all-maps}}]\label{thm:all-maps-coord-change-measure}
    Let $U\subset \C$ be a simply connected domain and $h$ be a GFF plus a continuous function on $U$. Let $\Lambda$ be the collection of all conformal maps $\phi: \tilde U \to U$ where $\tilde U\subset\C$ is any simply connected domain. It is almost surely the case that for all $\phi \in \Lambda$, the measures $\mu_{h\circ \phi + Q\log |\phi'|}$ are well-defined and the transformation rule \eqref{eqn:lqg-measure-coordinate-change} holds simultaneously for all $\phi \in \Lambda$.
\end{thm}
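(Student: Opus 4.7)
The plan is to reduce the uncountable quantifier ``for all $\phi \in \Lambda$'' to a countable dense subset via joint continuity. By the Riemann mapping theorem, any $\phi \in \Lambda$ factors as $\phi = \psi \circ \alpha$ with $\alpha: \tilde U \to \ud$ conformal and $\psi: \ud \to U$ conformal. The family $\Lambda_0$ of conformal maps $\psi: \ud \to U$ is a three-real-parameter space (parametrized by $\psi(0)\in U$ and $\arg \psi'(0) \in \R/2\pi\Z$), hence second countable under the compact-open topology. A chain rule computation shows that the transformation law \eqref{eqn:lqg-measure-coordinate-change} is compatible with composition, so if the statement holds simultaneously almost surely for all $\psi \in \Lambda_0$, then a second application to the random field $h \circ \psi + Q\log|\psi'|$ on $\ud$ (now with target $\ud$ and source an arbitrary simply connected $\tilde U$) extends the conclusion to all of $\Lambda$. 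I therefore focus on proving the simultaneous statement over $\Lambda_0$.

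Fix a countable dense subset $\{\psi_n\}_{n\geq 1} \subset \Lambda_0$. For each fixed $n$, the pointwise coordinate change rule of Duplantier--Sheffield \cite{shef-kpz} gives that almost surely $\mu_{h \circ \psi_n + Q\log|\psi_n'|}$ is well-defined and equals $(\psi_n^{-1})_* \mu_h$. Intersecting over $n$, this identity holds simultaneously for all $n$ on a full-probability event $\Omega_0$. On $\Omega_0$ I then establish joint continuity of both sides in $\psi$ with respect to the vague topology on Radon measures on $\ud$. The right-hand side $\psi \mapsto (\psi^{-1})_*\mu_h$ is straightforward: $\mu_h$ is a.s.\ a locally finite atomless Borel measure on $U$, so local uniform convergence $\psi_k \to \psi$ on compact subsets of $\ud$ implies weak convergence of the pushforwards.

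The main obstacle is the joint continuity of the left-hand side $\psi \mapsto \mu_{h\circ\psi + Q\log|\psi'|}$, simultaneously in $\psi$ and almost surely. The strategy is a robust regularization argument: writing $h^\psi := h \circ \psi + Q \log|\psi'|$ and $\mu^\psi_\ep(dz) := \ep^{\gamma^2/2} e^{\gamma (h^\psi)_\ep(z)} \, d^2 z$ for a circle-average regularization on $\ud$, one uses standard Gaussian multiplicative chaos estimates (e.g.\ the $L^p$ convergence in Kahane's theory for $p \in (1, 4/\gamma^2)$) to show that $\{\mu^\psi_\ep\}$ is $\ep$-Cauchy at a rate uniform over $\psi$ varying in any compact subfamily of $\Lambda_0$. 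The essential input is that the covariance kernel of $h^\psi$ (the pullback of the whole-plane GFF kernel by $\psi$) varies continuously in $\psi$ locally uniformly away from the diagonal, and that $Q \log|\psi'|$ varies continuously in $\psi$ on compact subsets of $\ud$. A Kolmogorov--Chentsov style argument in the $\psi$ parameter then produces a jointly continuous version $(\psi, E) \mapsto \mu^\psi(E)$ on a single full-probability event, which agrees with the pointwise GMC construction for every $\psi \in \{\psi_n\}$ and hence for all $\psi \in \Lambda_0$.

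Combining the simultaneous identity on $\{\psi_n\}$ with the joint continuity of both sides on $\Omega_0$ yields the transformation rule for every $\psi \in \Lambda_0$, and iterating the same argument (with $h$ replaced by $h^\psi$ and $U$ replaced by $\ud$) extends the conclusion to all $\phi \in \Lambda$. The hardest step is the uniform GMC convergence across the three-parameter family $\Lambda_0$; once that uniformity is established, the remainder of the proof is a standard continuity-on-a-dense-set extension.
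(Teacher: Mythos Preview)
The paper does not give its own proof of this statement; it is quoted from \cite[Theorem~1.4]{sw-all-maps}. So there is nothing in the present paper to compare against, and I assess your proposal on its own merits.

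Your reduction step is circular. You factor $\phi = \psi \circ \alpha$ with $\psi \in \Lambda_0$ (conformal maps $\ud \to U$, a three-real-parameter family) and $\alpha: \tilde U \to \ud$, and assert that a ``second application'' to the field $h^\psi$ on $\ud$ handles the $\alpha$ part. But that second application --- target $\ud$, source an arbitrary simply connected $\tilde U$ --- is exactly the original theorem with $U$ replaced by $\ud$: the source $\tilde U$ still ranges over all simply connected domains, which is infinite-dimensional. Re-running the $\Lambda_0$ argument with target $\ud$ covers only M\"obius automorphisms $\ud\to\ud$, not general $\alpha:\tilde U\to\ud$. Iterating never shrinks the family; it merely replaces $U$ by $\ud$ and leaves the hard quantifier intact. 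Hence, even granting your joint-continuity argument over the three-parameter family $\Lambda_0$, you have not established the statement for general $\phi\in\Lambda$.

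The underlying obstacle is that $\Lambda$ is genuinely infinite-dimensional --- the source domain $\tilde U$ is arbitrary --- so no density-plus-continuity argument in a finite-dimensional parameter space can exhaust it. A workable route, along the lines of \cite{sw-all-maps}, is to reverse the logic: once $\mu_h$ exists on $U$ for a given realization of $h$, one \emph{defines} $\mu_{h\circ\phi+Q\log|\phi'|} := (\phi^{-1})_*\mu_h$ for every $\phi$ as a deterministic pushforward, and then verifies that this assignment is consistent under composition and agrees with any direct GMC construction whenever the pulled-back field is one to which that construction applies. That consistency-and-agreement verification is the actual content of the theorem, and your proposal does not engage with it.
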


\subsubsection{Coordinate change for random translation and scaling}

An analog of Theorem~\ref{thm:all-maps-coord-change-measure} for the LQG metric is expected to be true but has not yet been established. (After the acceptance of this article, it was proven in \cite{devlin-lfpp-as} that, almost surely, \eqref{eqn:lqg-metric-coordinate-change} holds for all complex affine transformations simultaneously.) In the following two lemmas, we show that the transformation rule \eqref{eqn:lqg-metric-coordinate-change} holds almost surely for a certain subset of random conformal maps from $\C$ to itself. These are random translations (Lemma~\ref{lem:translation-metric-invariance}) and random scalings where the scaling factor is almost surely determined by the circle average part of the field (Lemma~\ref{lem:scaling-metric-covariance}). These correspond exactly to the random transformations that determine a canonical embedding, which appears in the continuum mating-of-trees theory, which we present in the next section.

\begin{lem}\label{lem:translation-metric-invariance}
    Suppose $h$ is a whole-plane GFF plus a continuous function and $z\in \C$ is any random point (not necessarily independent from $h$). If $h(\cdot+z)$ has the law of a whole-plane GFF plus a continuous function, then, almost surely,  
    \begin{equation}\label{eqn:translation-metric-invariance}
        D_{h(\cdot + z)}(u,v) = D_h(u+z,v+z) \quad \text{for all } u,v\in \C . 
    \end{equation}
\end{lem}
\begin{proof}
    Denote $\hat h:= h(\cdot + z)$.
    Let $\ep>0$ and suppose $P: [0,1] \to \R$ is a piecewise continuously differentiable path. Letting $\hat P(t): = P(t) + z$, almost surely,
    \begin{equation}\label{eqn:lfpp-random-translation}
        \int_0^1 e^{\xi \hat h_\ep^*(P(t))} |P'(t)|\,dt = \int_0^1 e^{\xi h_\ep^*(\hat P(t))} |\hat P'(t)|\,dt 
    \end{equation}
    for all such paths $P$. From the definition \eqref{eqn:LFPP-metric} of the $\ep$-LFPP metric, almost surely, $D_{\hat h}^\ep(u,v) = D_h^\ep(u+z,v+z)$ for all $u,v\in \C$. Since the rescaled $\ep$-LFPP metric $\mathfrak a_\ep^{-1} D_h^\ep$ converges in probability with respect to the local uniform topology on $\C \times \C$ to $D_h$ \cite[Theorem~1.1]{gm-uniqueness}, so does $\mathfrak a_\ep^{-1} D_{\hat h}^\ep$. If $\hat h$ is a whole-plane GFF plus a continuous function, then the LQG metric $D_{\hat h}$ is the limit. 
\end{proof}

We remark that we can use \eqref{eqn:translation-metric-invariance} to define $D_{h(\cdot+z)}$ even when $h(\cdot+z)$ does not have the law of a whole-plane GFF plus a continuous function.

As for random scaling, we only consider the case for which the scaling factor $r$ is measurable with respect to the circle average part of the field (recall the discussion just above Lemma~\ref{lem:GFF-orthogonal-decomposition}).

\begin{lem}\label{lem:scaling-metric-covariance}
    Let $h$ be a whole-plane GFF plus a continuous function whose circle average and lateral parts are independent (by Lemma~\ref{lem:GFF-orthogonal-decomposition}; this is the case for the whole-plane GFF). If $r>0$ is a random scaling factor which is almost surely determined by the circle average part of $h$, then almost surely
    \begin{equation}\label{eq:scaling-metric-covariance} D_h(ru,rv) = D_{h(r\cdot) + Q\log r}(u,v) \quad \forall\, u,v \in \C. \end{equation}
\end{lem}
\begin{proof}
    Let us denote the circle average part and the lateral part of $h$ as $h^{\mathrm{circ}}$ and $h^{\mathrm{lat}}$, respectively. Let $\tilde h^{\mathrm{circ}}$ be an independent and identically distributed copy of $h^{\mathrm{circ}}$, which is also independent of $h^{\mathrm{lat}}$. Let $\tilde h:= \tilde h^{\mathrm{circ}} + h^{\mathrm{lat}}$, which is a whole-plane GFF plus a continuous function independent of $h^{\mathrm{circ}}$. Let $f = h - \tilde h = h^{\mathrm{circ}} - \tilde h^{\mathrm{circ}}$. From Lemma~\ref{lem:circle-average-continuous}, $t \mapsto h_{e^{-t}}(0)$ and $t\mapsto \tilde h_{e^{-t}}(0)$ are independent random continuous functions on $\R$. Hence, $f$ is almost surely a continuous function on $\C\setminus \{0\}$.
    
    We claim that the Weyl scaling axiom holds for $f$ even though it may be discontinuous at 0. That is, almost surely, 
    \begin{equation}\label{eqn:weyl-scaling-discontinuity}
        D_h = e^{\xi f} \cdot D_{\tilde h}
    \end{equation} 
    as random continuous metrics on $\C$. To make sense of the right-hand side, we first define $e^{\xi f} \cdot D_{\tilde h}$ between points in $\C \setminus \{0\}$ as in \eqref{eqn:weyl-scaling} except that we take the infimum over all curves that stay in $\C \setminus \{0\}$. We then extend $e^{\xi f} \cdot D_{\tilde h}$ to a continuous metric on all of $\C$ if possible; this is how the LQG metric is defined for a field with logarithmic singularities in the discussion preceding \cite[Theorem~1.10]{lqg-metric-estimates}. Since $D_h$ is a.s.\ a continuous length metric on $\C$, it suffices to check that, almost surely, the $D_h$-lengths and the $e^{\xi f}\cdot D_{\tilde h}$-lengths agree for all curves in $\C \setminus \{0\}$. For $\delta>0$, let $f_\delta$ be a random function which is almost surely continuous on $\C$ and agrees with $f$ on $\C \setminus B_\delta(0)$. By the locality axiom, almost surely, the $D_h$-lengths and the $D_{\tilde h+f_\delta}$-lengths agree for all curves in $\C\setminus B_\delta(0)$. By the Weyl scaling axiom, $D_{\tilde h+f_\delta} = e^{\xi f_\delta}\cdot D_{\tilde h}$ almost surely. Letting $\delta\to 0$ proves the claim. 
    
    By the same reasoning, $D_{h(r\cdot)+Q\log r} = e^{\xi f(r\cdot)}\cdot D_{\tilde h(r\cdot) + Q\log r}$ almost surely. Since $r>0$ is independent of $\tilde h$, by the coordinate change axiom \eqref{eqn:coordinate-change-axiom} for deterministic scaling, we almost surely have 
    \begin{equation}\label{eqn:weyl-scaling-random-zooming}
        D_{\tilde h}(ru,rv) = D_{\tilde h(r\cdot)+Q\log r}(u,v) \quad \forall\, u,v\in\C.
    \end{equation}
    The lemma now follows by combining \eqref{eqn:weyl-scaling-discontinuity} and \eqref{eqn:weyl-scaling-random-zooming}.
\end{proof}

It is straightforward to check that Lemmas~\ref{lem:translation-metric-invariance} and \ref{lem:scaling-metric-covariance} are also valid when $h$ is equal to a whole-plane GFF plus a continuous function plus a finite number of logarithmic singularities of the form $-\alpha \log|\cdot-z|$ for $z\in \C$ and $\alpha < Q$. In particular, they can be applied to a $\gamma$-quantum cone. 

\subsection{Mating-of-trees theory} \label{sec:mating-of-trees}

The continuum mating-of-trees theorem is the central tool we utilize in this paper. We first review the setup for the theorem, in particular the definitions and properties of the $\gamma$-quantum cone and the whole-plane space-filling SLE$_{\kappa'}$ curve. We state the mating-of-trees theorem afterward.

\subsubsection{Quantum cone} \label{sec:quantum-cone}

In the previous section, we discussed that for some quantum surfaces, we must use information about the field to define a canonical embedding of the surface. In this paper, we consider doubly-marked quantum surfaces parameterized by the Riemann sphere: i.e., those with embeddings of the form $(\C, h,0,\infty)$. Fixing the two marked points at $0$ and $\infty$ gives an embedding of the quantum surface that is unique only up to scaling. One choice of canonical embedding for such a quantum surface is called the circle average embedding. 

\begin{defn}\label{def:circle-average-embedding}
    We say that an embedding $(\C,h,0,\infty)$ of a doubly-marked $\gamma$-LQG surface is a \textit{circle average embedding} if 
\begin{equation}\label{eqn:circle-average-embedding}
    \sup\{r>0: h_r(0) + Q\log r = 0\} = 1
\end{equation}
where $h_r(0)$ is the circle average of $h$ on $\bd B_r(0)$.
\end{defn} 

That is, given an embedding $(\mathbb C, h, z,\infty)$ of a $\gamma$-LQG surface where one marked point is at $z\in \mathbb C$ and the other marked point is at $\infty$, the circle average embedding of this LQG surface is $(\mathbb C, h(R\cdot - z) + Q\log R, 0,\infty)$ where 
\begin{equation}
    R := \sup\{r>0: h_r(z) + Q\log r = 0\}.
\end{equation}

From the perspective of circle average embedding, the quantum surface induced by a whole-plane GFF is unnatural in that it does not satisfy scale invariance. That is, if $h$ is a whole-plane GFF and $C$ is a nonzero constant, the circle average embeddings of $(\C,h,0,\infty)$ and $(\C,h+C,0,\infty)$ do not agree in law. The natural scale-invariant analog of such a surface is called the quantum cone. 

\begin{defn}\label{def:quantum-cone}
    Let $\{A_t\}_{t\in \R}$ be a real-valued stochastic process with the following distribution. 
    \begin{itemize}
        \item For $t<0$, $A_t = \widehat B_{-t} + \gamma t$ where $\{\widehat B_s\}_{s\geq 0}$ is a standard Brownian motion with $\widehat B_0 = 0$ conditioned so that $\widehat B_s + (Q-\gamma)s > 0$ for every $s>0$.
        \item For $t\geq 0$, $A_t = B_t + \gamma t$ where $\{B_s\}_{s\geq 0}$ is a standard Brownian motion with $B_0= 0$ that is sampled independently of $\{\widehat B_s\}_{s\geq 0}$.
    \end{itemize}        
    A \textit{$\gamma$-quantum cone} is a doubly-marked quantum surface whose circle average embedding $(\C,h^\gamma,0,\infty)$ has the following law.
    \begin{itemize}
        \item The circle average process $t\mapsto h^\gamma_{e^{-t}}(0)$ has the same law as the process $A$.
        \item The lateral part $h^\gamma - h^\gamma_{|\cdot|}(0)$ of $h^\gamma$ agrees in law with the lateral part of a whole-plane GFF. 
        \item The circle average and the lateral parts of $h^\gamma$ are independent.
    \end{itemize}
\end{defn}

A $\gamma$-quantum cone has the following scale invariance property.

\begin{prop}[{\cite[Proposition 4.13(i)]{dms-lqg-mating}}] \label{prop:quantum-cone-scale-invariance}
    Let $(\C,h^\gamma,0,\infty)$ be the circle average embedding of a $\gamma$-quantum cone. Let $C$ be a real constant. Then the quantum surfaces represented by $(\C,h^\gamma,0,\infty)$ and $(\C,h^\gamma + C,0,\infty)$ agree in law. That is, let 
    \begin{equation}\label{eqn:scaling-factor-circle-average-embedding}
        R_C := \sup\left\{ r: h^\gamma_r(0) + Q\log r + C = 0 \right\}
    \end{equation}
    so that $(\C, h^\gamma(R_C\cdot) + Q\log R_C + C, 0,\infty)$ is the circle average embedding of the $\gamma$-LQG surface represented by $(\C,h^\gamma+C,0,\infty)$. Then $h^\gamma \stackrel{d}{=} h^\gamma(R_C\cdot) + Q\log R_C + C$. 
\end{prop}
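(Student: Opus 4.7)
The plan is to reduce the claim to two independent distributional identities, one for the circle-average part of $h^\gamma$ and one for its lateral part, exploiting the independence structure in Definition~\ref{def:quantum-cone}. Setting $\tilde h := h^\gamma(R_C\cdot) + Q\log R_C + C/\gamma$, I would first verify that $\tilde h$ satisfies the circle-average embedding condition \eqref{eqn:circle-average-embedding}. This is immediate from \eqref{eqn:scaling-factor-circle-average-embedding}: the quantity $\tilde h_r(0) + Q\log r$ equals $h^\gamma_{R_C r}(0) + Q\log(R_C r) + C/\gamma$, so the set of $r$ at which it vanishes is the image under $r' \mapsto r'/R_C$ of the set appearing in \eqref{eqn:scaling-factor-circle-average-embedding}, whose supremum is therefore $1$. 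Matching the law of $\tilde h$ with that of $h^\gamma$ then reduces to: (i) the lateral part of $\tilde h$ has the same law as that of $h^\gamma$; (ii) the circle-average process $t \mapsto \tilde h_{e^{-t}}(0)$ has the same law as the process $A$ of Definition~\ref{def:quantum-cone}; (iii) these two parts remain independent.

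For (i), the random scale $R_C$ is measurable with respect to the circle-average process of $h^\gamma$, hence independent of the lateral part $h^\gamma - h^\gamma_{|\cdot|}(0)$. The whole-plane GFF modulo additive constant is invariant in law under the deterministic scaling $z \mapsto rz$ (the special case of \eqref{eqn:gff-translate} with $a = r$, $b = 0$), so its lateral part is scale-invariant. Conditioning on $R_C$ then yields (i) together with the scale-versus-lateral independence required in (iii).

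For (ii), set $t_0 := -\log R_C$ and $M_t := A_t - Qt$, so that $M_{t_0} = -C/\gamma$ and $t_0$ is the infimum of the level set $\{t : M_t = -C/\gamma\}$. The circle-average process of $\tilde h$ at scale $e^{-t}$ equals $\tilde A_t := A_{t + t_0} - Q t_0 + C/\gamma$. For $t \geq 0$, using $M_{t_0} = -C/\gamma$ (equivalently $B_{t_0} = (Q - \gamma) t_0 - C/\gamma$), a short computation gives $\tilde A_t = (B_{t + t_0} - B_{t_0}) + \gamma t$. When $C > 0$, the quantity $t_0$ is the first passage time of the drifted Brownian motion $\{B_s + (\gamma - Q) s\}_{s \geq 0}$ to the level $-C/\gamma$, hence a stopping time for $B$. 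The strong Markov property then identifies $\{B_{t + t_0} - B_{t_0}\}_{t \geq 0}$ as a standard Brownian motion independent of $\mathcal F_{t_0}^B \vee \sigma(\widehat B)$, giving both the correct law for $\tilde A|_{[0, \infty)}$ and the two-sided independence needed in (iii).

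The main obstacle will be the $t < 0$ portion of $\tilde A$. On the window $(-t_0, 0)$ the process $\tilde A_t$ is a time-reversed segment of $B$ running up to its first passage at $t_0$, while on $(-\infty, -t_0)$ it is assembled from the independent conditioned process $\widehat B$ together with the constants $-Qt_0 + C/\gamma$. To match the target law $\widehat B'_{-t} + \gamma t$ for a Brownian motion $\widehat B'$ under the conditioning $\widehat B'_s + (Q - \gamma) s > 0$ for all $s > 0$, I would apply a Williams-type path decomposition of Brownian motion with negative drift at a first-passage level: the time reversal of $\{B_s + (\gamma - Q) s\}_{s \in [0, t_0]}$ is a Bessel-type process started from $-C/\gamma$, which should concatenate with the independent $\widehat B$ to reproduce exactly the conditioned Brownian motion appearing in Definition~\ref{def:quantum-cone}. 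Executing this Williams decomposition and confirming the match in distribution, along with handling the complementary case $C \leq 0$ (where $t_0 \leq 0$ and the roles of the forward and backward pieces swap, or via a continuity argument in $C$), is where essentially all of the technical work lies.
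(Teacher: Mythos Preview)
The paper does not supply its own proof of this proposition; it is quoted verbatim from \cite[Proposition~4.13(i)]{dms-lqg-mating} and used as a black box. So there is no in-paper argument to compare against.

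Your approach is the standard one and matches the argument in \cite{dms-lqg-mating}. The decomposition into circle-average and lateral parts, the observation that $R_C$ is measurable with respect to the circle-average process (hence independent of the lateral part), and the reduction to a path-decomposition statement for drifted Brownian motion are all correct. Your identification of the Williams decomposition as the crux is accurate: in \cite{dms-lqg-mating} this is carried out in the equivalent horizontal-strip parameterization, where the process $A_t - Qt$ is a Brownian motion with drift $\gamma - Q < 0$ on $[0,\infty)$ and a $\mathrm{BES}^3$-type process (i.e., Brownian motion conditioned to stay positive) run backward on $(-\infty,0]$, and the claim becomes the statement that shifting time so that the process last hits $-C/\gamma$ at time $0$ preserves this two-sided law. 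That is exactly Williams' path decomposition of Brownian motion with negative drift at a last-exit (equivalently first-passage) time. Your handling of the $C \leq 0$ case by symmetry is also how it is done there: one simply runs the same argument with the roles of the forward and backward pieces interchanged, noting that for $C < 0$ the level $-C/\gamma$ is positive and hence $t_0 < 0$.
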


Since $\mu_{h^\gamma+C} = e^{\gamma C}\mu_{h^\gamma}$, this property means that the law of a $\gamma$-quantum cone is invariant under scaling its LQG measure by a positive constant. Similarly, the fact that $D_{h^\gamma+C/\gamma} = e^{\xi C}D_{h^\gamma}$ implies that the law of a quantum cone is invariant under scaling its LQG metric by a constant.
Note that the spatial scaling factor \eqref{eqn:scaling-factor-circle-average-embedding} is a random variable that depends on the circle averages of $h^\gamma$. Hence, it is only due to Lemma~\ref{lem:scaling-metric-covariance} that the scale invariance of the quantum cone extends to the LQG metric.

Another reason to consider the $\gamma$-quantum cone is that it is the $\gamma$-LQG surface one obtains by starting with a generic $\gamma$-LQG surface, choosing a marked point on it from the $\gamma$-LQG measure, and then ``zooming in" near the marked point. More precisely, we have the following lemma, which is helpful for transferring results about a whole-plane GFF to a $\gamma$-quantum cone and is used in Section~\ref{sec:additivity}.

\begin{lem}\label{lem:zooming-in}
    Let $dh$ denote the law of a whole-plane GFF with $h_1(0) = 0$. Let let $(z,h)$ be a pair sampled from the probability measure $Z^{-1}\boldsymbol{1}_\ud(z) \mu_h(dz)dh$ where $Z = \Exp[\mu_h(\ud)]$ is the normalization constant. Then, the field under the circle average embedding of the quantum surface $(\C, h+C, z,\infty)$ --- i.e., $h(R_C\cdot) + Q\log R_C + C$ where $R_C$ is defined as in \eqref{eqn:scaling-factor-circle-average-embedding} --- converges locally in total variation distance as $C\to \infty$ to $h^\gamma$, the field of the $\gamma$-quantum cone under circle average embedding.
\end{lem}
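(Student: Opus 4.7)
The plan is to make the circle average embedding of $(\C, h+C, z, \infty)$ explicit and then invoke the two cited results from \cite{dms-lqg-mating}. Starting from a pair $(z,h)$ sampled from $\boldsymbol{1}_\ud(z)\,\mu_h(dz)\,dh$ (normalized), I first translate so that $z$ is sent to the origin and then scale by a random factor $R_C > 0$; by the equivalence relation \eqref{eqn:quantum-surface-coordinate-change}, the resulting embedded field is
\begin{equation*}
    \Phi_C(u) := h(R_C u + z) + Q\log R_C + C.
\end{equation*}
Solving the embedding condition \eqref{eqn:circle-average-embedding} for $\Phi_C$, and using that the circle average at radius $r$ around $0$ equals $h_{r R_C}(z) + Q\log R_C + C$, a change of variables $r' = rR_C$ yields the implicit formula
\begin{equation*}
    R_C = \sup\bigl\{ r > 0 : h_r(z) + Q\log r = -C \bigr\}.
\end{equation*}
Since $h_r(z) + Q\log r \to -\infty$ as $r \to 0$ (the logarithmic term dominates the Gaussian fluctuations of the circle average process, which by Lemma~\ref{lem:circle-average-continuous} behaves as a Brownian motion in $-\log r$), almost surely $R_C \to 0$ as $C \to \infty$. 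The claim thus reduces to identifying the scaling limit of the GFF zoomed in near a $\mu_h$-typical point in $\ud$.

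The second step is to apply Proposition~4.13(ii) of \cite{dms-lqg-mating}, which identifies the scaling limit of the rooted field $h(R_C \cdot + z) + Q\log R_C$ (with the rescaling $R_C$ above) as $C \to \infty$ to be the field $h^\gamma$ of a $\gamma$-quantum cone in its circle average embedding. This yields the desired convergence in law. To strengthen the convergence to local total variation, the third step is to apply Lemma~A.10 of \cite{dms-lqg-mating}, which supplies a coupling/absolute continuity statement between the rooted GFF measure and the quantum cone measure on a fixed compact set once the radial part has been matched. Combined, the two results deliver precisely the conclusion of the lemma.

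The main obstacle is bookkeeping rather than conceptual. The random scaling $R_C$ depends on the circle averages of $h$ near $z$, and the reweighting by $\mu_h(dz)$ biases the law of those circle averages in exactly the way needed to produce the drifted Brownian motion $t \mapsto \widehat B_{-t} + \gamma t$ (conditioned to stay above the $Q$-line) that defines the circle average part of $h^\gamma$ in Definition~\ref{def:quantum-cone}. Verifying that the scaling convention here matches the one in \cite{dms-lqg-mating} and that the lateral part of the field is unaffected in the limit (so it converges to an independent whole-plane GFF lateral part, as required by Definition~\ref{def:quantum-cone}) is where the bulk of the routine work lies; the substantive probabilistic input is entirely absorbed into the two DMS references.
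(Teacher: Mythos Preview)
Your proposal is correct and matches the paper's approach exactly: the paper does not give a proof but simply states that the lemma follows from combining Lemma~A.10 with Proposition~4.13(ii) in \cite{dms-lqg-mating}, and you have fleshed out precisely how those two results fit together.
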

\begin{proof}
    By \cite[Lemma~A.10]{dms-lqg-mating}, we can sample $(z,\tilde h)$ from $Z^{-1}\boldsymbol{1}_\ud(z) \mu_h(dz) dh$ by first sampling $(z,h)$ from $Z^{-1}\boldsymbol{1}_\ud(z)dzdh$, then letting $\tilde h = h -\gamma \log |\cdot-z| + \gamma \log \max(|\cdot|,1)$. Conditioned on $z\in \ud$, the field under the circle average embedding of $(\C, \tilde h + C, z, \infty)$ converges locally in total variation distance to $h^\gamma$ as $C\to \infty$ as in \cite[Proposition~4.13(ii)]{dms-lqg-mating}. It follows that the desired convergence also holds without conditioning on $z$.
\end{proof}

\subsubsection{Space-filling SLE}\label{sec:space-filling-sle}

The whole-plane space-filling SLE$_{\kappa'}$ curve from $\infty$ to $\infty$, defined for $\kappa'>4$, is a random space-filling continuous curve which intersects itself but does not cross itself. Such a curve was initially constructed for the chordal version (0 to $\infty$ in $\mathbb H$) in \cite[Theorem~1.16]{ig4}, and extended to the whole-plane version from $\infty$ to $\infty$ in \cite[Footnote~4]{dms-lqg-mating}. We refer to these sources and \cite[Section 3.6]{ghs-mating-survey} for more detailed descriptions of the curve.
Here, we summarize the construction and structure of the space-filling SLE curve as given in these references.

The space-filling SLE$_{\kappa'}$ curve is defined in terms of \textit{flow lines}, which are SLE$_{16/\kappa'}$-type curves coupled with a GFF introduced in the context of imaginary geometry \cite{ig1,ig4}. In particular, the whole-plane space-filling SLE$_{\kappa'}$ curve is the Peano curve tracing between dual space-filling trees formed by flow lines.

Here is the detailed construction. Let $\kappa = 16/\kappa'$. Starting from a whole-plane GFF $h$, for each $z\in \C$ and $\theta \in \R$, we can define the flow line starting from $z$ with angle $\theta$ as a random curve a.s.\ determined by $h$ with the law of a whole-plane SLE$_\kappa(2-\kappa)$ curve from $z$ to $\infty$ \cite[Theorems~1.4]{ig4}. We consider the angles $\pm \frac{\pi}{2}$; denote the flow line starting from $z$ with angle $\pm \frac{\pi}{2}$ as $\eta_z^{\pm}$, and orient the flow line from $z$ to $\infty$. For every $z,w\in \C$, almost surely, $\eta_z^+$ and $\eta_w^+$ merge and so do $\eta_w^-$ and $\eta_w^-$. Moreover, for each $z\in \C$, $\eta_z^+$ and $\eta_z^-$ almost surely do not cross each other \cite[Theorem~1.7]{ig4}. Thus, given a dense countable subset $\{z_j\}_{j\in \N}$ of $\C$, the unions of flow lines $\{\eta_{z_j}^+\}_{j\in \N}$ and $\{\eta_{z_j}^-\}_{j\in \N}$ form dual trees rooted at $\infty$ with leaves $\{z_j\}_{j\in \N}$ \cite[Theorem~1.10]{ig4}. To define concretely the Peano curve between these dual trees, define a total order on $\{z_j\}_{j\in \N}$ by saying that $z_j$ comes before $z_k$ if $z_k$ lies in the connected component of $\C \setminus (\eta_{z_j}^+ \cup \eta_{z_j}^-)$ whose boundary consists of the left side of $\eta_{z_j}^-$ and the right side of $\eta_{z_j}^+$ (when orienting these flow lines from $z_j$ to $\infty$). There almost surely exists a unique space-filling curve $\eta$ which traces the points $\{z_j\}_{j\in \N}$ in this order, which is visualized in Figure~\ref{fig:space-filling-sle}. The curve $\eta$ is continuous when parameterized by the Lebesgue measure on $\C$. Moreover, $\eta$ almost surely does not depend on the choice of the dense countable set $\{z_j\}_{j\in \N}$; it is a measurable function of the GFF which generates the flow lines \cite[Theorem~1.16]{ig4}. This $\eta$ is the whole-plane space-filling SLE$_{\kappa'}$ curve from $\infty$ to $\infty$. Here are a few basic properties of this curve following from the definition, which were collected in \cite[Section~3.6.4]{ghs-mating-survey}.

\begin{figure}
    \centering
    \includegraphics[scale=.9]{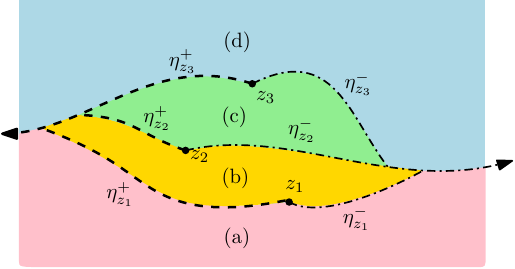}
    \caption{Definition of whole-plane space-filling SLE. Almost surely, the flow lines $\{\eta_{z_j}^+\}_{j\in \N}$ and $\{\eta_{z_j}^-\}_{j\in \N}$ each merge to form two trees. The flow lines in this illustration determine the ordering $z_1<z_2<z_3$, the order in which the space-filling SLE curve $\eta$ visits these three points. In particular, $\eta$ fills the four colored regions in the order (a)$\to$(b)$\to$(c)$\to$(d).}
    \label{fig:space-filling-sle}
\end{figure}

\begin{itemize}
    \item For each fixed $z\in \C$, almost surely, $\eta$ visits $z$ only once. If $z = \eta(t)$, then $\bd \eta(-\infty,t] = \bd \eta[t,\infty) = \eta_z^+ \cup \eta_z^-$.

    \item Let $\eta$ be parameterized by Lebesgue measure with $\eta(0) = 0$. Then $\eta^R: t\mapsto \eta(-t)$ has the same law as $\eta$. This property is referred to as \textit{reversibility}.

    \item Let $\phi$ be a deterministic conformal transformation of the Riemann sphere $\C \cup \{\infty\}$ which fixes $\infty$. That is, $\phi$ is a composition of scaling, rotation, and translation. Then $\phi \circ \eta$ has the same law as $\eta$ up to reparameterization.

    \item For $\kappa'\geq 8$, $\eta(-\infty,0]$ and $\eta[0,\infty)$ are both homeomorphic to the closed half-plane $\overline{\mathbb H}$. Note that $\eta_0^- \cup \eta_0^+ = \eta(-\infty,0] \cap \eta[0,\infty)$. Conditioned on $\eta_0^- \cup \eta_0^+$, the conditional law of $\eta|_{[0,\infty)}$ is that of a chordal SLE$_{\kappa}$ curve from 0 to $\infty$ in $\eta[0,\infty)$, and the conditional law of the time reversal of $\eta|_{(-\infty,0]}$ is that of a chordal SLE$_{\kappa}$ curve from 0 to $\infty$ in $\eta(-\infty,0]$. Moreover, the two curves are conditionally independent given $\eta_0^- \cup \eta_0^+$. See also~\cite[Footnote 4]{dms-lqg-mating}. 
    
    \item For $\kappa' \in (4,8)$, the interiors of $\eta(-\infty,0]$ and $\eta[0,\infty)$ are both  infinite chains of Jordan domains. Again $\eta_0^- \cup \eta_0^+ = \eta(-\infty,0] \cap \eta[0,\infty)$. Conditioned on $\eta_0^- \cup \eta_0^+$, the curve $\eta|_{[0,\infty)}$ and the time reversal of $\eta|_{(-\infty,0]}$ are conditionally independent concatenations of chordal SLE$_{\kappa}$ curves in the connected components of $\eta[0,\infty)$ and $\eta(-\infty,0]$, respectively. The two curves are conditionally independent given $\eta_0^- \cup \eta_0^+$. See also~\cite[Footnote 4]{dms-lqg-mating}.
\end{itemize}

The following proposition states that every segment of a whole-plane space-filling SLE$_{\kappa'}$ curve contains a Euclidean ball of comparable size with high probability. In \cite{ghm-kpz}, this estimate was used to show that $\eta$ parameterized according to the Lebesgue measure is locally H\"older continuous with any exponent less than $1/2$ and is not locally H\"older continuous with any exponent greater than $1/2$ with respect to the Euclidean metric (cf.\ Theorem~\ref{thm:sle-holder}). This is a key estimate in the proof of Theorem~\ref{thm:main-thm}. 

\begin{prop}[{\cite[Proposition 3.4 and Remark 3.9]{ghm-kpz}}]\label{prop:SLE-includes-Euclidean-ball}
    Fix $\kappa' > 4$ and let $\eta$ be a whole-plane space-filling SLE$_{\kappa'}$ curve from $\infty$ to $\infty$. For each $r \in (0,1)$ and $R>0$, the following happens with superpolynomially high probability as $\ep \to 0$: for each $\delta \in (0,\ep]$ and every $a<b$ such that $\eta[a,b] \subset B_R(0)$ and $\mathrm{diam} \,\eta[a,b] \geq \delta^{1-r}$, the set $\eta[a,b]$ contains a Euclidean ball of radius at least $\delta$.
\end{prop}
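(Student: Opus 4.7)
The plan is to reduce the claim to a union bound over pairs of discretized times along $\eta$, and then prove a uniform per-pair one-point estimate using the conformal Markov structure of space-filling SLE.

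First, I would parameterize $\eta$ by Lebesgue area and restrict attention to time windows $[a,b] \subset [-T_R, T_R]^2$, where $T_R$ is bounded by $\mathrm{Leb}(B_R(0))$ plus a correction term which is finite on a preliminary high-probability event. I would then discretize time to the lattice $\ep^M \Z$ for a constant $M = M(r)$ large enough that a weak modulus-of-continuity estimate for $\eta$ (much weaker than the sharp $1/2$-H\"older continuity and derivable directly from the flow-line construction) transfers the bad event from any continuous pair $(a,b)$ to its nearest lattice pair with only constant-factor changes in both the diameter and the radius of the largest inscribed Euclidean ball. Since there are at most polynomially many such lattice pairs in $\ep^{-1}$, it suffices to show the bad event has superpolynomially small probability for each fixed lattice pair, uniformly over $\delta \in (0, \ep]$.

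For each fixed lattice pair $(a,b)$, the segment $\eta([a,b])$ is bounded by initial arcs of the four flow lines of angles $\pm \pi/2$ emanating from $\eta(a)$ and $\eta(b)$, running up to the points where the angle-matched lines merge. The key single-scale estimate I would prove is the following: for a chordal SLE$_\kappa$-type curve $\gamma$ (with $\kappa = 16/\kappa'$) in a simply connected domain, the probability that $\gamma$ sweeps through a region of diameter at least $\delta^{1-r}$ without $\partial \eta([a,b])$ enclosing a Euclidean ball of radius $\delta$ somewhere inside is superpolynomially small in $\delta$. I would establish this by iterating a scale-invariant constant-probability estimate: at each of $\asymp r \log \delta^{-1}$ dyadic scales between $\delta$ and $\delta^{1-r}$, conformal invariance and the domain Markov property of chordal SLE give a uniform positive probability that the curve completes a macroscopic loop inside a given annulus at that scale, and each such loop encloses a Euclidean ball of the scale's radius inside $\eta([a,b])$. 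Approximate independence across scales converts the product of per-scale failure probabilities into a superpolynomial bound.

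The principal obstacle is that $\partial \eta([a,b])$ is not a single SLE curve but a concatenation of pieces of four flow lines of one whole-plane GFF, which are coupled via imaginary geometry rather than independent. I would address this by revealing the flow lines sequentially and using \cite[Theorem 1.1]{ig4}: conditional on the flow lines revealed so far, each remaining flow line is a chordal SLE$_\kappa(\rho)$ process in the complementary domain, so the scale-invariant loop argument applies to each such conditional curve. A secondary subtlety arises in the range $\kappa' \in (4,8)$, where $\eta([a,b])$ consists of an SLE-type backbone together with filled-in complementary bubbles; here I would argue that the largest such bubble either already contains a Euclidean $\delta$-ball (finishing the proof) or has diameter much smaller than $\delta^{1-r}$ (so it does not affect the diameter lower bound), which follows from standard SLE$_{\kappa'}$ bubble-size estimates.
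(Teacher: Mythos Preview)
The paper does not prove this proposition; it is quoted from \cite[Proposition~3.4 and Remark~3.9]{ghm-kpz} and used as a black box. There is therefore no proof in the present paper to compare your sketch against.

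On the substance: your high-level structure (parameterize by Lebesgue area, restrict to a bounded time window, discretize, union bound, then prove a uniform one-point estimate via the flow-line description of $\partial\eta[a,b]$) is reasonable and is broadly the shape of the argument in \cite{ghm-kpz}. The central step as written is unclear, however. You say a chordal SLE$_\kappa$-type flow line ``completes a macroscopic loop'' at each dyadic scale with uniformly positive probability, but the flow lines here have $\kappa = 16/\kappa' < 4$ and are therefore simple curves; a single simple arc does not form a loop, so the mechanism by which a ball gets enclosed inside $\eta[a,b]$ must involve the interaction of several boundary arcs (or a curve-plus-chord construction), and you have not specified which. Your treatment of the range $\kappa'\in(4,8)$ also needs adjustment: the relevant complication is not bubbles cut off by an SLE$_{\kappa'}$ backbone, but rather that the boundary flow lines (now SLE$_\kappa$ with $\kappa\in(2,4)$) can hit one another, so $\partial\eta[a,b]$ is a more intricate object than four disjoint simple arcs.
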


\subsubsection{Translation and scale invariance of quantum cone decorated with space-filling SLE}

Let $\gamma \in (0,2)$ and $\kappa'\in (4,\infty)$. Let $(\C,h^\gamma,0,\infty,\eta)$ be the circle average embedding of a $\gamma$-quantum cone decorated by an independent whole-plane space-filling SLE$_{\kappa'}$ curve $\eta$ from $\infty$ to $\infty$. Reparameterize $\eta$ by the $\gamma$-LQG measure $\mu_{h^\gamma}$. That is, $\eta(0) = 0$ and $\mu_{h^\gamma}([s,t]) = t-s$ for all $s<t$.\footnote{The a.s.\ continuity of the reparameterized curve follows from Proposition~\ref{prop:SLE-includes-Euclidean-ball} combined with the fact that every bounded open subset of $\C$ a.s.\ has positive $\mu_{h^\gamma}$-mass.} This is the default parameterization of $\eta$ that we consider in the rest of this paper. 

We already saw in Proposition~\ref{prop:quantum-cone-scale-invariance} that the circle average embedding of a $\gamma$-quantum cone is invariant under adding a deterministic constant to the field. This operation also preserves the law of the independent space-filling SLE$_{\kappa'}$ curve $\eta$ decorating the quantum cone.

\begin{lem}\label{lem:scale-invariance-quantum-cone-sle}
    For each fixed constant $C\in \R$, the circle average embedding of $(\C, h^\gamma + C/\gamma, 0,\infty,\eta(e^C\cdot))$ agrees in law with $(\C,h^\gamma,0,\infty,\eta)$.
\end{lem}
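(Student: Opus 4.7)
My plan is to realize the circle-average re-embedding of the tuple $(\C, h^\gamma + C/\gamma, 0, \infty, \eta(e^C \cdot))$ via an explicit conformal scaling, then verify the identity in law by combining Proposition~\ref{prop:quantum-cone-scale-invariance} for the field with the conformal invariance of whole-plane space-filling SLE$_{\kappa'}$ for the curve. Let $R_C$ be the random scaling factor defined in Proposition~\ref{prop:quantum-cone-scale-invariance}. The conformal map $\phi(w) = R_C w$ takes the circle-average embedding to the given embedding, so by applying the equivalence relation for curve-decorated quantum surfaces, the re-embedded field is
\[ \tilde h := h^\gamma(R_C \cdot) + Q\log R_C + C/\gamma \]
and the re-embedded curve is $\tilde\eta(t) := R_C^{-1}\eta(e^C t)$. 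Proposition~\ref{prop:quantum-cone-scale-invariance} immediately gives the marginal identity $\tilde h \stackrel{d}{=} h^\gamma$.

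The main task is to upgrade this to the joint identity $(\tilde h, \tilde\eta) \stackrel{d}{=} (h^\gamma, \eta)$. The key structural fact is that $R_C$ is measurable with respect to the circle-average part of $h^\gamma$, so by Lemma~\ref{lem:GFF-orthogonal-decomposition} it is independent of the lateral part of $h^\gamma$, and by construction it is independent of $\eta$. I would condition on the circle-average part of $h^\gamma$: then $R_C$ becomes deterministic, so the scale invariance (up to time reparameterization) of whole-plane space-filling SLE$_{\kappa'}$ under conformal automorphisms of the Riemann sphere fixing $\infty$, listed among the basic properties in Section~\ref{sec:space-filling-sle}, implies that the conditional law of $R_C^{-1}\eta$ is that of $\eta$ modulo reparameterization, and independent of the lateral part. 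Integrating the conditioning out, $(\tilde h, R_C^{-1}\eta)$ has the joint law of $(h^\gamma, \eta)$ up to a reparameterization of the curve.

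It remains to check that the specific time change $t \mapsto e^C t$ produces exactly the $\mu_{\tilde h}$-parameterization of $\tilde\eta$ demanded by the convention fixed at the start of Section~\ref{sec:mating-of-trees}. Since $\eta$ is $\mu_{h^\gamma}$-parameterized and $\mu_{h^\gamma + C/\gamma} = e^C \mu_{h^\gamma}$, this reduces to a direct bookkeeping computation using the LQG measure coordinate change~\eqref{eqn:lqg-measure-coordinate-change} applied to $\phi(w) = R_C w$. The principal obstacle throughout is that $R_C$ is random and jointly coupled to the field, so the deterministic coordinate change axiom (Definition~\ref{def:lqg-metric-axiom}.IV) cannot be invoked directly; the Gaussian independence structure of the cone provided by Lemma~\ref{lem:GFF-orthogonal-decomposition}, combined with Proposition~\ref{prop:quantum-cone-scale-invariance}, is precisely what reduces the problem to a deterministic scaling analysis and then lets one assemble the joint law from the marginal statements.
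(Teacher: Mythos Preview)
Your proposal is correct and follows essentially the same approach as the paper's proof. The only simplification the paper makes is to observe directly that $\eta$ modulo reparameterization is independent of \emph{all} of $h^\gamma$ (not just its circle-average part), so $R_C^{-1}\eta$ is immediately independent of $\tilde h$ without needing to condition on the circle-average part and integrate out; your closing remark about the metric coordinate-change axiom is also slightly misplaced, since this lemma only requires the LQG \emph{measure} coordinate change, which holds simultaneously for all conformal maps by Theorem~\ref{thm:all-maps-coord-change-measure}.
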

\begin{proof}
    Let $R_C$ be as in \eqref{eqn:scaling-factor-circle-average-embedding}, so that for $\tilde h := h^\gamma(R_C\cdot) + Q\log R_C + C/\gamma$, the circle average embedding of $(\C, h^\gamma + C/\gamma, 0,\infty, \eta)$ is $(\C, \tilde h, 0,\infty, R_C^{-1} \eta)$. Recall from proposition~\ref{prop:quantum-cone-scale-invariance} that $\tilde h \stackrel{d}{=} h^\gamma$. Since $\eta$ modulo reparameterization is independent from $h^\gamma$, it is also independent from $R_C$. By the scale invariance of whole-plane space-filling SLE$_{\kappa'}$ (see previous section), $R_C^{-1} \eta$ modulo reparameterization agrees in law with $\eta$ modulo reparameterization and is independent of $\tilde h$. Therefore, the joint law of $(\tilde h, R_C^{-1} \eta)$ agrees with that of $(h^\gamma, \eta)$ with the curves viewed modulo reparameterization. Since $\mu_{\tilde h} = e^C \mu_{h^\gamma}(R_C\cdot)$, the $\mu_{\tilde h}$-parameterization of $R_C^{-1} \eta$ is given by $R_C^{-1}\eta(e^C\cdot)$.
\end{proof}

Another important property of $(\C,h^\gamma,0,\infty,\eta)$ is that for each fixed $t\in \R$, the law of the circle average embedding is invariant under re-centering this quantum surface at $\eta(t)$ (i.e., translating by $-\eta(t)$).

\begin{lem}[{\cite[Lemma 8.3]{dms-lqg-mating}}]\label{lem:stationary-configuration}
    The law of $(\C,h^\gamma,0,\infty,\eta)$ as a path-decorated quantum surface is invariant under shifting a fixed amount of $\gamma$-LQG area. That is, for each $t \in \R$, the circle average embedding of $(\C, h^\gamma, \eta(t),\infty, \eta(\cdot+t))$ agrees in law with $(\C, h^\gamma, 0,\infty, \eta)$.
\end{lem}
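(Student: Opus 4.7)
The plan is to deduce the stationarity from the description of the $\gamma$-quantum cone as a zoom-in around a typical LQG-area sample from a whole-plane GFF (Lemma~\ref{lem:zooming-in}), combined with the fact that $\eta$ is parameterized by $\mu_{h^\gamma}$-area. Both $\eta(0)=0$ and $\eta(t)$ should appear as typical LQG-area samples in the re-embedded picture, so shifting by a fixed amount of area must preserve the law.

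Concretely, I would start with a whole-plane GFF $h$ normalized so that $h_1(0)=0$, together with an independent whole-plane space-filling SLE$_{\kappa'}$ curve $\hat\eta$ parameterized by $\mu_h$-area. Fix a deterministic bounded open set $U\subset\C$ (e.g., $\ud$); conditional on $\hat\eta$, sample a point $z$ from the probability measure $\mu_h|_U/\mu_h(U)$, and let $T$ be the unique time with $\hat\eta(T)=z$. Because $\hat\eta$ is area-parameterized, conditional on $\hat\eta$ the random time $T$ is Lebesgue-uniform on $I:=\{s:\hat\eta(s)\in U\}$. Applying Lemma~\ref{lem:zooming-in} with the marked point $\hat\eta(T)$ (and carrying along the independent decorating curve $\hat\eta$), together with the additional constants $C\to\infty$ used to achieve the circle-average embedding, and using the scale invariance Lemma~\ref{lem:scale-invariance-quantum-cone-sle} to accommodate the fact that the curve gets simultaneously rescaled and time-reparameterized by $e^C$, the embedded and re-rooted curve-decorated surface converges in local total variation to $(\C,h^\gamma,0,\infty,\eta)$.

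The crux is then to observe that the same convergence holds with $T$ replaced by $T+t$ for the fixed shift $t$. The Lebesgue measure on $I$ is translation-invariant, so up to the event $\{T+t\notin I\}$, whose probability can be made arbitrarily small by taking $U$ large, the conditional laws of $T$ and $T+t$ agree. Hence re-rooting at $\hat\eta(T+t)$ also converges (in the same sense) to the same quantum-cone-plus-SLE law. On the other hand, in the unembedded $(h,\hat\eta)$ coordinates, re-rooting at $\hat\eta(T+t)$ differs from re-rooting at $\hat\eta(T)$ only by the deterministic time shift $t$ in the parameterization of $\hat\eta$, since $\hat\eta$ is area-parameterized and this shift preserves that property. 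Matching the two limits identifies the laws of $(\C,h^\gamma,0,\infty,\eta)$ and $(\C,h^\gamma,\eta(t),\infty,\eta(\cdot+t))$ as curve-decorated quantum surfaces, which is the desired statement.

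The main obstacle is the bookkeeping of the random rescalings introduced by circle-average embedding: the zoom-in centers at $\hat\eta(T)$ and at $\hat\eta(T+t)$ produce distinct random scaling factors, and the curve gets reparameterized via a time change $s\mapsto e^C s$ together with spatial rescaling. One needs Lemma~\ref{lem:scale-invariance-quantum-cone-sle} to verify that these rescalings act compatibly with the area parameterization, so that the limiting curve remains parameterized by $\mu_{h^\gamma}$-area and the ``shift by $t$'' operation survives the limit. A secondary technical point is quantifying the probability that $T+t$ fails to lie in $I$ in terms of the size of $U$, so that taking $U\uparrow\C$ genuinely produces the translation-invariance one needs.
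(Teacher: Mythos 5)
The paper does not give its own proof of this statement: it is quoted directly from \cite{dms-lqg-mating} (Lemma 8.3 there), and the argument in that reference is indeed of the type you propose, namely re-rooting at a quantum-typical point via the zoom-in description of the quantum cone. So your overall strategy is aligned with the cited proof; the problem is in the execution, where there is a genuine scaling gap.

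The gap is in how the fixed shift $t$ interacts with the constant $C$. When you pass from $h$ to $h+C$, quantum areas are multiplied by $e^{\gamma C}$ (i.e.\ $\mu_{h+C}=e^{\gamma C}\mu_h$), and areas are preserved under the conformal change of coordinates to the circle-average embedding. Hence a shift by $t$ units of the \emph{limiting} parameterization (by $\mu_{h^\gamma}$-mass) corresponds in your pre-limit picture to a shift by only $e^{-\gamma C}t$ units of $\mu_h$-area, not by $t$. With your fixed pre-limit shift $T\mapsto T+t$, the relation between the two re-rooted configurations becomes, after embedding, ``re-root at time $e^{\gamma C}t$'' with $e^{\gamma C}t\to\infty$, so the shift relation does not survive the limit. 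In particular, the final step ``matching the two limits identifies the laws of $(\C,h^\gamma,0,\infty,\eta)$ and $(\C,h^\gamma,\eta(t),\infty,\eta(\cdot+t))$'' does not follow: all you obtain is that both pre-limit sequences converge to the \emph{same} quantum-cone law, which says nothing about the law of the cone re-rooted at the fixed time $t$. The correct implementation shifts by $t$ units of $\mu_{h+C}$-area, i.e.\ by $e^{-\gamma C}t$ of $\mu_h$-area; then your ``take $U$ large'' step (which you leave unproved, and which is the nontrivial estimate $|I\setminus(I+t)|/|I|\to 0$, not just a union-bound on $\{T+t\notin I\}$) becomes unnecessary, since the total variation distance between the uniform laws on $I$ and on $I+e^{-\gamma C}t$ tends to $0$ automatically for a fixed finite-measure set $I$. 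What remains, and what your proposal only gestures at, is the real content: showing that the operation ``re-root at area-time $t$ and pass to the circle-average embedding'' is compatible with the local total-variation convergence of Lemma~\ref{lem:zooming-in}, which is delicate because the re-embedding involves a random scaling factor that is not a local functional of the field.

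Two smaller points. First, Lemma~\ref{lem:zooming-in} is stated for the size-biased law $\mathbf 1_\ud(z)\mu_h(dz)\,dh$ normalized, with $U=\ud$ fixed; your setup samples $z$ from normalized $\mu_h|_U$ under the unweighted GFF law with $U$ large, which does not match the lemma's hypotheses (the fix is simply to work with the weighted law and fixed $\ud$, since the conditional uniformity of $T$ on $I$ holds there as well). Second, carrying the curve through the limit is fine in principle (given $h$ and $z$, the recentered curve modulo parameterization is still an independent whole-plane space-filling SLE by translation invariance), but the reparameterization bookkeeping is exactly where the $e^{\gamma C}$ factor above enters, so it needs to be stated carefully rather than absorbed into an appeal to Lemma~\ref{lem:scale-invariance-quantum-cone-sle}.
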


We emphasize that Lemmas~\ref{lem:scale-invariance-quantum-cone-sle} and \ref{lem:stationary-configuration} hold for any $\gamma\in (0,2)$ and $\kappa'>4$, including when $\kappa' \neq 16/\gamma^2$. The key fact behind both the proof of Lemma~\ref{lem:scale-invariance-quantum-cone-sle} presented above and the proof of Lemma~\ref{lem:stationary-configuration} in \cite{dms-lqg-mating} is Lemma~\ref{lem:zooming-in}, which implies that we get a $\gamma$-quantum cone if we zoom in at a point sampled according to the $\gamma$-LQG measure on a space-filling SLE$_\kappa'$ curve $\eta$. The only property of $\eta$ used here is that it is almost surely a continuous space-filling curve. With the additional property that the law of $\eta$ is scale-invariant, we have that the law of $\eta$ is preserved when zooming in according to an independent field. Both properties are true regardless of the value of $\kappa'>4$. (The condition $\kappa'=16/\gamma^2$ is necessary to identify the law of $\eta(-\infty,t]$ and $\eta[t,\infty)$ as independent quantum surfaces for each $t\in \R$; see Proposition~\ref{prop:left-right-independence}.)

Notice that both lemmas state the invariance in law of the circle average embedding of a path-decorated quantum surface (either after adding a constant to the field or re-centering the quantum cone). In our applications of these results, we consider the laws of random variables defined almost surely in terms of the pointed curve-decorated metric measure space structure of these quantum surfaces: e.g., $N_\ep(\eta[s,t];D_{h^\gamma})$ where $s<t$ are fixed numbers. The scaling and stationarity properties of such random variables do not follow trivially from Lemmas~\ref{lem:scale-invariance-quantum-cone-sle} and \ref{lem:stationary-configuration} because mapping one embedding of a quantum surface to its circle average embedding involves a random scaling factor that is determined by the circle average part of the field. Nevertheless, we can use Lemmas~\ref{lem:translation-metric-invariance} and \ref{lem:scaling-metric-covariance} to translate Lemmas~\ref{lem:scale-invariance-quantum-cone-sle} and \ref{lem:stationary-configuration} in terms of the laws of pointed curve-decorated metric measure spaces.

\begin{rem}\label{rem:uihpq-convergence}
    Consider the equivalence relation on pointed curve-decorated metric measure spaces where $(X,x,d,\mu,\eta) \sim (X', x', d', \mu',\eta')$ if there exists an isometry $f: X \to X'$ such that $f(x) = x'$, $f_*\mu = \mu'$, and $f\circ \eta = \eta'$. We identify a pointed curve-decorated metric measure space with the equivalence class that it belongs to. The pointed Gromov--Hausdorff--Prokhorov--uniform (GHPU) metric introduced in \cite{gwynne-miller-uihpq} is a natural choice of metric on the space of above equivalence classes of noncompact pointed curve-decorated metric measure spaces. The precise definition of this metric is not essential for our purposes; instead, we will only need the corresponding Borel $\sigma$-algebra.

    Let $h$ and $\tilde h$ be two instances of whole-plane GFF plus a continuous function, and let $\eta$ and $\tilde \eta$ be random continuous space-filling curves. Suppose $(h,\eta) \stackrel{d}{=} (\tilde h, \tilde \eta)$ with respect to the product of the following two topologies: the weak-* topology for distributions on $\C$ with respect to smooth and compactly supported test functions\footnote{This is equivalent to the topology corresponding to considering the whole-plane GFF as a stochastic process indexed by signed Borel measures $\mathcal M$ (recall Section~\ref{sec:whole-plane-gff}) by It\^o's isometry for the GFF \cite[Section~1.7]{bp-lqg-notes}.}, and the local uniform topology on functions $\C\to \C$. Note that $\mu_h$ is almost surely determined by $h$ \cite{shef-kpz,rhodes-vargas-review} and $D_h$ is a.s.\ determined by $h$ \cite{gm-uniqueness}; analogous statements hold for $\tilde h$, $\mu_{\tilde h}$, and $D_{\tilde h}$. Hence, if $\eta$ (resp.\ $\tilde \eta$) is parameterized by $\mu_h$ (resp.\ $\mu_{\tilde h}$) and $\eta(0) = \tilde \eta(0)$ a.s., then $(D_h,\mu_h,\eta) \stackrel{d}{=}(D_{\tilde h},\mu_{\tilde h},\tilde \eta)$ with respect to the product of the local uniform topology on functions $\C \times \C\to \R$, the weak-* topology on signed Borel measures on $\C$, and the local uniform topology on functions $\R \to \C$. It is straightforward to check from the definition of the local GHPU metric that the following map is continuous: the map from the tuple $(d,\mu,\eta)$ of a continuous metric, a signed Borel measure, and a continuous curve whose space is assigned the above product topology, to the pointed curve-decorated metric measure space $(\C,0,d,\mu,\eta)$ whose space is assigned the local GHPU topology. Therefore, $(\C,0,D_h,\mu_h,\eta) \stackrel{d}{=} (\C,0,D_{\tilde h}, \mu_{\tilde h}, \tilde \eta)$ w.r.t.\ the local GHPU topology. This conclusion continues to hold when $h$ has finitely many singularities of the form $-\alpha \log |\cdot-z|$ with $\alpha < Q$ since $D_h$ is almost surely a continuous metric determined by $h$ \cite[Theorem 1.10]{lqg-metric-estimates}. (For $\alpha>Q$, almost surely, $D_h(z,w)=\infty$ for every $w\in \C\setminus\{z\}$.)
\end{rem}

\begin{prop}\label{prop:qc-mmspace-invariance}
    Let $(\C, h^\gamma,0,\infty,\eta)$ be a $\gamma$-quantum cone in the circle average embedding that is decorated with an independent whole-plane space-filling SLE$_{\kappa'}$ curve, which is then parameterized so that $\eta(0) = 0$ and $\mu_{h^\gamma}(\eta[a,b]) = b-a$ for every $a<b$. The following statements hold w.r.t\ the local GHPU topology on pointed curve-decorated metric measure spaces.
    \begin{enumerate}[(i)]
        \item For each fixed $s>0$, 
        \begin{equation}\label{eqn:qc-mmspace-scaling-invariance}
            (\C, 0, s^{1/d_\gamma}D_{h^\gamma}, s\mu_{h^\gamma}, \eta(s\cdot)) \stackrel{d}{=} (\C, 0, D_{h^\gamma}, \mu_{h^\gamma}, \eta).
        \end{equation}        
        \item For each fixed $t\in \R$,
        \begin{equation}\label{eqn:qc-mmspace-translation-invariance}
        (\C, \eta(t),D_{h^\gamma}, \mu_{h^\gamma}, \eta(\cdot+t)) \stackrel{d}{=} (\C, 0,D_{h^\gamma},\mu_{h^\gamma},\eta).
    \end{equation} 
    \end{enumerate}
\end{prop}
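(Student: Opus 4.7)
The plan is to reduce both parts to Lemmas~\ref{lem:scale-invariance-quantum-cone-sle} and~\ref{lem:stationary-configuration}, which give the desired invariances in law at the level of the \emph{circle-average-embedded} field-and-curve pair, and then to transfer these to equalities in law of pointed curve-decorated metric measure spaces via Remark~\ref{rem:uihpq-convergence}. The subtlety is that the map from an arbitrary embedding of a curve-decorated quantum surface to its circle-average embedding is a random translation-and-scaling, whereas the coordinate-change axiom for the LQG metric covers only deterministic conformal maps. Lemmas~\ref{lem:translation-metric-invariance} and~\ref{lem:scaling-metric-covariance} (together with Theorem~\ref{thm:all-maps-coord-change-measure} for the measure and the obvious rule for the curve) are exactly what is needed to ensure that these random conformal changes of coordinates induce isomorphisms of the associated pointed curve-decorated metric measure space.

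For part (i), I set $c = (\log s)/\gamma$, so that the Weyl scaling axiom and the rule $\mu_{h+c} = e^{\gamma c}\mu_h$ give $D_{h^\gamma + c} = e^{\xi c} D_{h^\gamma} = s^{1/d_\gamma} D_{h^\gamma}$ and $\mu_{h^\gamma+c} = s \mu_{h^\gamma}$. Hence the left-hand side of~\eqref{eqn:qc-mmspace-scaling-invariance} is exactly the pointed curve-decorated metric measure space induced by the embedding $(\C, h^\gamma + c, 0, \infty, \eta(s\cdot))$. Lemma~\ref{lem:scale-invariance-quantum-cone-sle}, applied with $C = \gamma c = \log s$, says that the circle-average embedding of this curve-decorated surface agrees in law with $(\C, h^\gamma, 0, \infty, \eta)$. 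The passage from $(\C, h^\gamma + c, \ldots)$ to its circle-average embedding is a scaling $z \mapsto R_C^{-1} z$ with $R_C$ measurable with respect to the circle averages of $h^\gamma + c$, which by Lemma~\ref{lem:scaling-metric-covariance} (and the corresponding pushforward rules for $\mu$ and $\eta$) is an isomorphism of pointed curve-decorated metric measure spaces. Invoking Remark~\ref{rem:uihpq-convergence} to pass from equality in law of field-and-curve pairs to equality in law of the associated pointed curve-decorated metric measure spaces then yields~\eqref{eqn:qc-mmspace-scaling-invariance}.

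Part (ii) is entirely analogous, using Lemma~\ref{lem:stationary-configuration} in place of Lemma~\ref{lem:scale-invariance-quantum-cone-sle}. The re-centering from the $\eta(t)$-marked embedding to its $0$-marked translate is a translation by $-\eta(t)$, handled by Lemma~\ref{lem:translation-metric-invariance}, and the further passage to the circle-average embedding is a scaling whose scaling factor is measurable with respect to the circle averages of the translated field, handled by Lemma~\ref{lem:scaling-metric-covariance}. The main obstacle across both parts is the one flagged at the outset: a general coordinate-change theorem for the LQG metric under random conformal maps is not available, so one must leverage the particular structure of the random maps arising in the circle-average embedding procedure---that each is a composition of a translation and a scaling whose scaling factor depends only on the circle-average part of the relevant field---to invoke the results of Section~\ref{sec:quantum-surfaces}. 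Once that is arranged, the proof reduces to straightforward bookkeeping of how $D_h$, $\mu_h$, and the $\mu_h$-parameterization of $\eta$ respond to each individual transformation.
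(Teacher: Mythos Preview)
Your proposal is correct and follows essentially the same approach as the paper's proof: both reduce to Lemmas~\ref{lem:scale-invariance-quantum-cone-sle} and~\ref{lem:stationary-configuration}, use Lemmas~\ref{lem:translation-metric-invariance} and~\ref{lem:scaling-metric-covariance} (plus Theorem~\ref{thm:all-maps-coord-change-measure}) to show that the random recentering-and-scaling to the circle-average embedding is an isomorphism of pointed curve-decorated metric measure spaces, and invoke Remark~\ref{rem:uihpq-convergence} to pass from equality in law of field-and-curve pairs to equality in law of the associated spaces. The only point worth making explicit is that the fields to which Lemma~\ref{lem:scaling-metric-covariance} is applied---namely $h^\gamma + (\log s)/\gamma$ in part~(i) and $h^\gamma(\cdot+\eta(t))$ in part~(ii)---have a $-\gamma\log|\cdot|$ singularity rather than being a whole-plane GFF plus a continuous function, so one needs the extension of that lemma noted just after its proof; the paper records this in a parenthetical for part~(ii).
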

\begin{proof}
    Given a conformal map $\phi: C\to \C$ and an embedding $(\C,h,x,\infty,\eta)$ of a path-decorated quantum surface, denote 
    \begin{equation}
        \phi_* h:= h\circ \phi^{-1} + Q\log|(\phi^{-1})'|
    \end{equation}
    and
    \begin{equation}
        \phi_*(\C,h,x,\infty,\eta):= (\C, \phi_*h, \phi(x),\infty,\phi\circ \eta).
    \end{equation}
    That is, $\phi_*(\C, 0,\infty,h,\eta)$ is the pushforward of the embedding $(\C, 0,\infty,h,\eta)$ under $\phi$ using \eqref{eqn:quantum-surface-coordinate-change}. For a continuous metric $d$ on $\C$ and a Borel measure $\mu$ on $\C$, denote their pushforwards under $\phi$ as $\phi_* d$ and $\phi_* \mu$, respectively.
    \begin{enumerate}[(i)]
        \item Let $s >0$ be fixed and denote $\tilde h := h^\gamma + (\log s)/\gamma$. Let $r := R_{\log s}$ be defined as in \eqref{eqn:scaling-factor-circle-average-embedding} and define $\phi:\C\to \C$ by $\phi(z) = r^{-1}z$ so that $\phi_*(\C, \tilde h, 0,\infty,\eta(s\cdot))$ is the circle average embedding. Setting $C = \log s$ in Lemma~\ref{lem:scale-invariance-quantum-cone-sle} gives $(\C, \phi_*\tilde h, 0,\infty,\phi\circ \eta(s\cdot))\stackrel{d}{=}(\C,h^\gamma,0,\infty,\eta)$. As discussed in Remark~\ref{rem:uihpq-convergence}, this implies 
        \begin{equation}\label{eqn:qc-mmspace-scaling-proof-law}
            (\C,0,D_{\phi_*\tilde h}, \mu_{\phi*\tilde h},\phi\circ \eta(s\cdot)) \stackrel{d}{=} (\C, 0, D_{h^\gamma}, \mu_{h^\gamma},\eta)
        \end{equation}
        w.r.t.\ the local GHPU topology.
        
        Since $r$ is a.s.\ determined by the circle average part of $\tilde h$, Lemma~\ref{lem:scaling-metric-covariance} implies $D_{\phi_*\tilde h} = \phi_*D_{\tilde h}$ almost surely. By Theorem~\ref{thm:all-maps-coord-change-measure}, $\mu_{\phi_*\tilde h}=\phi_*(\mu_{\tilde h})$ almost surely. Hence,
        \begin{equation}\label{eqn:qc-mmspace-scaling-proof-as}
        \begin{aligned}
            (\C,0,D_{\phi_*\tilde h}, \mu_{\phi_*\tilde h}, \phi\circ \eta(s\cdot)) &= (\C,0,\phi_*D_{\tilde h}, \phi_*\mu_{\tilde h}, \phi\circ\eta(s\cdot)) \\ &= (\C,0,D_{\tilde h}, \mu_{\tilde h},\eta(s\cdot)) \\ &= (\C,0,s^{1/d_\gamma} D_{h^\gamma}, s\mu_{h^\gamma}, \eta(s\cdot)) 
        \end{aligned}
        \end{equation}
        almost surely as pointed curve-decorated metric measure spaces. 
        We obtain \eqref{eqn:qc-mmspace-scaling-invariance} by combining \eqref{eqn:qc-mmspace-scaling-proof-law} and \eqref{eqn:qc-mmspace-scaling-proof-as}.  

        \item The proof is similar to part (i). Let $t\in \R$ be fixed. Let $r>0$ be the random constant such that the circle average embedding of $(\C,h^\gamma,\eta(t),\infty,\eta(\cdot+t))$ is its pushforward under $\phi(z) := r(z-\eta(t))$. Lemma~\ref{lem:stationary-configuration} gives $(\C,\phi_*h^\gamma, 0,\infty,\phi\circ \eta(\cdot+t))\stackrel{d}{=}(\C,h^\gamma,0,\infty,\eta)$, which implies 
        \begin{equation}\label{eqn:qc-mmspace-translation-proof-law}
            (\C,0,D_{\phi_*h^\gamma},\mu_{\phi_*h^\gamma}, \phi\circ \eta(\cdot+t)) \stackrel{d}{=} (\C,0,D_{h^\gamma},\mu_{h^\gamma}, \eta)
        \end{equation}
        w.r.t.\ the local GHPU topology.

        Since $r$ is a.s.\ determined by the circle average part of $h^\gamma(\cdot +\eta(t))$, Lemmas~\ref{lem:translation-metric-invariance} and \ref{lem:scaling-metric-covariance} imply $D_{\phi_* h^\gamma} = \phi_* D_{h^\gamma}$ almost surely. (It follows from the proof of Lemma~\ref{lem:stationary-configuration} in \cite{dms-lqg-mating} that $h^\gamma(\cdot+\eta(t))$ is a whole-plane GFF plus a continuous function plus $-\gamma\log|\cdot|$.) By Theorem~\ref{thm:all-maps-coord-change-measure}, $\mu_{\phi_* h^\gamma} = \phi_* \mu_{h^\gamma}$ almost surely. Hence,
        \begin{equation}\label{eqn:qc-mmspace-translation-proof-as}
        \begin{aligned}
            (\C,0,D_{\phi_* h^\gamma}, \mu_{\phi_* h^\gamma}, \phi\circ \eta(\cdot+t)) &= (\C,0,\phi_*D_{h^\gamma}, \phi_*\mu_{h^\gamma}, \phi\circ\eta(\cdot+t)) \\ &= (\C,\eta(t),D_{h^\gamma}, \mu_{h^\gamma},\eta(\cdot+t)) 
        \end{aligned}
        \end{equation}
        almost surely as pointed curve-decorated metric measure spaces. 
        We obtain \eqref{eqn:qc-mmspace-translation-invariance} by combining \eqref{eqn:qc-mmspace-translation-proof-law} and \eqref{eqn:qc-mmspace-translation-proof-as}.
    \end{enumerate}\vspace{-20pt}
\end{proof}

From Proposition~\ref{prop:qc-mmspace-invariance}, we immediately obtain the following stationarity and scaling result for the number of LQG metric balls needed to cover a space-filling SLE$_{\kappa'}$ segment.

\begin{cor}\label{cor:covering-number-relation}
    Let $(\C,h^\gamma,0,\infty,\eta)$ be as in Proposition~\ref{prop:qc-mmspace-invariance}. For any $s>0$, $t\in\R$, and $\ep>0$,
    \begin{equation}
        N_\ep(\eta[t,t+s];D_{h^\gamma}) \stackrel{d}{=} N_\ep(\eta[0,s];D_{h^\gamma}) \stackrel{d}{=} N_{\ep s^{-1/d_\gamma}}(\eta[0,1];D_{h^\gamma}).
    \end{equation}
\end{cor}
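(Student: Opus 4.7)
The plan is to deduce both equalities as essentially immediate consequences of Proposition~\ref{prop:qc-mmspace-invariance}. The key observation is that for any deterministic $a<b$ and any fixed $\ep>0$, the covering number $N_\ep(\eta[a,b]; D_{h^\gamma})$ is a Borel-measurable functional of the pair consisting of the curve $\eta$ and the metric $D_{h^\gamma}$, and hence of the underlying pointed curve-decorated metric measure space in the local GHPU topology. This measurability is standard: the set $\eta[a,b]$ is compact and depends continuously on $\eta$ in the local uniform topology, and the minimum number of radius-$\ep$ balls needed to cover a compact set is a measurable function of the metric in the local uniform topology on $\C\times\C$.

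For the first equality, I would apply the translation invariance \eqref{eqn:qc-mmspace-translation-invariance}. Rewriting $N_\ep(\eta[t,t+s]; D_{h^\gamma}) = N_\ep(\eta(\cdot+t)[0,s]; D_{h^\gamma})$, the right-hand side depends only on the pair $(D_{h^\gamma}, \eta(\cdot+t))$, so the distributional identity
\[(\C, \eta(t), D_{h^\gamma}, \mu_{h^\gamma}, \eta(\cdot+t)) \stackrel{d}{=} (\C, 0, D_{h^\gamma}, \mu_{h^\gamma}, \eta)\]
gives $N_\ep(\eta[t,t+s]; D_{h^\gamma}) \stackrel{d}{=} N_\ep(\eta[0,s]; D_{h^\gamma})$ at once.

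For the second equality, I would apply the scaling invariance \eqref{eqn:qc-mmspace-scaling-invariance} combined with the elementary scaling of covering numbers: for any metric $D$ and constant $c>0$, $N_r(A; cD) = N_{r/c}(A; D)$, because a ball of radius $r$ in $cD$ coincides with a ball of radius $r/c$ in $D$. Extracting the covering number on both sides of \eqref{eqn:qc-mmspace-scaling-invariance} gives a distributional identity relating the covering number of $\eta[0, s]$ at radius $\ep$ in $D_{h^\gamma}$ to the covering number of $\eta[0, 1]$ at an appropriately rescaled radius in $D_{h^\gamma}$; choosing the parameter in the proposition so that the segment on one side is $\eta[0,s]$ and on the other is $\eta[0,1]$, one reads off that the rescaled radius is $\ep\,s^{-1/d_\gamma}$. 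The exponent is consistent with the $d_\gamma$-dimensional heuristic, since a set of $\mu_{h^\gamma}$-mass $s$ should require order $s/\ep^{d_\gamma}$ balls of $D_{h^\gamma}$-radius $\ep$ to cover, matching the covering number of $\eta[0,1]$ at radius $\ep s^{-1/d_\gamma}$.

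There is no substantive obstacle here; the corollary is immediate from Proposition~\ref{prop:qc-mmspace-invariance}, and the only care required is bookkeeping of the exponents in the scaling identity.
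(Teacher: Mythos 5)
Your proposal is correct and is exactly how the paper obtains the corollary: the paper gives no separate proof, treating it as immediate from Proposition~\ref{prop:qc-mmspace-invariance} via the (isometry-invariant, measurable) covering-number functional, with part (ii) yielding the first identity and part (i) together with $N_r(A;cD)=N_{r/c}(A;D)$ yielding the second. One caution on the exponent bookkeeping you deferred: the curve parameterized by $s\mu_{h^\gamma}$ is $\eta(\cdot/s)$, so the scaling identity should be read with that time change (equivalently, apply \eqref{eqn:qc-mmspace-scaling-invariance} with $s$ replaced by $1/s$); extracting the covering number of the curve's image of $[0,s]$ then gives $N_\ep(\eta[0,s];D_{h^\gamma}) \stackrel{d}{=} N_{\ep s^{-1/d_\gamma}}(\eta[0,1];D_{h^\gamma})$ directly, whereas a literal extraction with the curve $\eta(s\cdot)$ as displayed puts the rescaled metric on the $\eta[0,s]$ side and flips the radius to $\ep s^{1/d_\gamma}$ --- your dimensional sanity check ($N_\ep(\eta[0,s])\approx s\,\ep^{-d_\gamma}$) correctly singles out $\ep s^{-1/d_\gamma}$ as the right answer.
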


\subsubsection{Mating-of-trees theorem}\label{sec:mating-of-trees-theorem}

The above results are valid for any choices of $\gamma\in (0,2)$ and $\kappa'>4$. In contrast, the following independence property requires an exact relation between $\gamma$ and $\kappa'$. As in Proposition~\ref{prop:qc-mmspace-invariance}, let $(\C, h^\gamma,0,\infty,\eta)$ be the circle average embedding of a $\gamma$-quantum cone decorated with an independent whole-plane space-filling SLE$_{\kappa'}$ curve, which is then parameterized so that $\eta(0) = 0$ and $\mu_{h^\gamma}(\eta[a,b]) = b-a$ for every $a<b$.

\begin{prop}[{\cite[Theorem 1.9]{dms-lqg-mating}}]\label{prop:left-right-independence}
     Suppose $\kappa' = 16/\gamma^2$. Denote the interiors of $\eta(-\infty,0]$ and $\eta[0,\infty)$ as $U_-$ and $U_+$, respectively. Then the $\gamma$-LQG surfaces represented by $(U_-, h^\gamma|_{U_-},0,\infty)$ and $(U_+, h^\gamma|_{U_+},0,\infty)$ are independent and identically distributed quantum surfaces called $\frac{3\gamma}{2}$-quantum wedges (also known as quantum wedges with weight $2-\frac{\gamma^2}{2}$).
\end{prop}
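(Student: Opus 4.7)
The proposition is a restatement of \cite[Theorem 1.9]{dms-lqg-mating}, so my plan would be to follow the Duplantier--Miller--Sheffield strategy: reduce the statement to Sheffield's chordal conformal welding theorem for SLE$_\kappa$ with $\kappa = 16/\kappa' = \gamma^2$, via the imaginary-geometry construction of space-filling SLE as a flow-line Peano curve.

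First, I would use the structural property recalled in Section~\ref{sec:space-filling-sle}: conditional on $\eta_0^- \cup \eta_0^+$, the restriction $\eta|_{[0,\infty)}$ (respectively, the reversal of $\eta|_{(-\infty,0]}$) is a chordal SLE$_\kappa$ curve, or a concatenation of such, from $0$ to $\infty$ inside $U_+$ (resp.\ $U_-$), and these two conditional curves are independent given $\eta_0^- \cup \eta_0^+$. This reduces the problem to identifying the quantum surface law of each $(U_\pm, h^\gamma|_{U_\pm}, 0, \infty)$ and then cutting it by an independent chordal SLE, which is precisely the setting of the quantum zipper.

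Next, I would identify the quantum wedge weight of each $(U_\pm, h^\gamma|_{U_\pm}, 0, \infty)$ by analyzing two successive flow-line cuts of the original $\gamma$-quantum cone along $\eta_0^+$ and $\eta_0^-$. Since the flow lines come from an imaginary-geometry GFF sampled independently of $h^\gamma$, the quantum zipper machinery of \cite[Sections 4 and 7]{dms-lqg-mating} applies: cutting a $\gamma$-quantum cone along a flow line of a given angle produces a quantum wedge whose weight is explicitly determined by that angle. With $\kappa' = 16/\gamma^2$, the angle $\pi/2$ is exactly calibrated so that after the two successive cuts, each piece $(U_\pm, h^\gamma|_{U_\pm}, 0, \infty)$ is a $\frac{3\gamma}{2}$-quantum wedge, equivalently a quantum wedge of weight $2 - \gamma^2/2$. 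This identification is the main technical hurdle, since it requires tracking boundary quantum lengths through two cuts and matching the SLE$_\kappa(\rho)$ boundary data to quantum wedge parameters in the imaginary-geometry coupling.

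Finally, I would invoke Sheffield's chordal conformal welding theorem inside each $U_\pm$: cutting a suitable quantum wedge along an independent chordal SLE$_\kappa$ yields two independent quantum wedges of prescribed weights. Combined with the conditional independence from the first step and the wedge-weight identification from the second, this gives the desired unconditional independence of the left and right sides of $\eta$ as i.i.d.\ $\frac{3\gamma}{2}$-quantum wedges. The relation $\kappa' = 16/\gamma^2$ is essential throughout: it ensures that the flow-line angle $\pi/2$ corresponds exactly to the welding boundary conditions needed for Sheffield's theorem, and the independence fails for any other pairing of $\kappa'$ with $\gamma$.
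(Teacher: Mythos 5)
The paper does not prove this proposition at all: it is imported verbatim as a black box from \cite[Theorem 1.9]{dms-lqg-mating}, and the only original work nearby is the reformulation in terms of curve-decorated metric measure spaces (Proposition~\ref{prop:left-right-independent-mmspace}), which is proved by combining the cited theorem with the coordinate-change lemmas of Section~\ref{sec:quantum-surfaces}. So there is no in-paper argument to compare against; your proposal is essentially an outline of the Duplantier--Miller--Sheffield proof itself, leaning on their quantum zipper machinery, and as such it is a citation-plus-sketch rather than an independent proof --- which is in fact how the present paper treats the statement.

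As a summary of the DMS route, your second step is the one that matters and is broadly accurate: the $\gamma$-quantum cone has weight $4-\gamma^2$, and cutting it along the two angle-$\pm\frac{\pi}{2}$ flow lines $\eta_0^\pm$ of the independent imaginary-geometry field (which together form $\bd U_+=\bd U_-$) produces, by the cutting/welding theorems, two independent wedges of weight $2-\frac{\gamma^2}{2}$ each, i.e.\ $\frac{3\gamma}{2}$-wedges. However, your first and third steps are not needed for this statement and the logic there is somewhat inverted. The proposition concerns only the surfaces $(U_\pm, h^\gamma|_{U_\pm},0,\infty)$, not the law of $\eta$ restricted to them, so the conditional chordal SLE$_\kappa$ structure of $\eta|_{[0,\infty)}$ given $\eta_0^+\cup\eta_0^-$ is irrelevant here; and invoking Sheffield's chordal welding theorem \emph{inside} each $U_\pm$ cannot yield the independence of $U_-$ from $U_+$ --- that independence is precisely the output of the whole-plane flow-line cutting statement for the cone, not something deduced by combining conditional curve laws with chordal welding within each piece. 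Since the real content (tracking the weights through the cuts in the whole-plane imaginary-geometry coupling) is exactly what you defer to \cite[Sections 4 and 7]{dms-lqg-mating}, the honest conclusion is that your proposal reproduces the shape of the DMS argument but does not supply a proof beyond the citation, which is also all the present paper claims to do.
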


As in Lemmas~\ref{lem:scale-invariance-quantum-cone-sle} and \ref{lem:stationary-configuration}, what Proposition~\ref{prop:left-right-independence} means is that the fields under the canonical embeddings of $(U_-, h^\gamma|_{U_-},0,\infty)$ and $(U_+, h^\gamma|_{U_+},0,\infty)$ as described in \cite[Sections~4.2~and~4.4]{dms-lqg-mating} are independent and identically distributed. Likewise, we can rephrase this statement in terms of curve-decorated metric measure spaces, stated precisely in Proposition~\ref{prop:left-right-independent-mmspace}. The key idea is that canonical embedding of a $\frac{3\gamma}{2}$-quantum wedge is defined in terms of the field average, similarly to the circle average embedding of a quantum cone. The subtlety lies in choosing the correct topology for the $\frac{3\gamma}{2}$-quantum wedge considered as a metric measure space. Once this is done, we can extend Lemma~\ref{lem:scaling-metric-covariance} to show that the LQG metric is preserved when we reparameterize the quantum wedge to its canonical embedding.

Below, we introduce the precise definition of the $\frac{3\gamma}{2}$-quantum wedge and its canonical embedding as well as the topology on the metric measure space necessary to establish Proposition~\ref{prop:left-right-independent-mmspace}, but we only need the proposition itself for the proofs of our main results. Upon first reading, we suggest that the reader skip to the statement of Proposition~\ref{prop:left-right-independent-mmspace}.

Recall from Section~\ref{sec:space-filling-sle} the two regimes for the topology of $U_\pm$ depending on the value of $\gamma$. For $\gamma \in (0,\sqrt 2]$, the two domains $U_\pm$ are each almost surely homeomorphic to the upper half plane $\mathbb H$. The canonical embedding of the $\frac{3\gamma}{2}$-quantum wedge in this case is $(\mathbb H, h, 0,\infty)$ where $\sup\{r>0: h_r(0) + Q\log r = 0\} = 1$, similar to Definition~\ref{def:circle-average-embedding} of the circle average embedding of a quantum cone. Under this embedding, the Gaussian field $h$ for the $\frac{3\gamma}{2}$-quantum cone has the following law defined in terms of the semicircle averages and the lateral part (recall Remark~\ref{rem:gff-on-uhp}).
\begin{itemize}
    \item The semicircle average process $t\mapsto h_{e^{-t}}(0)$ has the same law as the following process $A$.
    \begin{itemize}
    \item For $t<0$, $A_t = \widehat B_{-2t} + \frac{3\gamma}{2} t$ where $\{\widehat B_s\}_{s\geq 0}$ is a standard Brownian motion with $\widehat B_0 = 0$ conditioned so that $\widehat B_{2s} + (Q-\frac{3\gamma}{2})s > 0$ for every $s>0$.
    \item For $t\geq 0$, $A_t = B_{2t} + \frac{3\gamma}{2} t$ where $\{B_s\}_{s\geq 0}$ is a standard Brownian motion with $B_0= 0$ that is sampled independently of $\{\widehat B_s\}_{s\geq 0}$.
\end{itemize}        
    \item The lateral part $h - h_{|\cdot|}(0)$ of $h$ agrees in law with the lateral part of a free-boundary GFF on $\mathbb H$. 
    \item The semicircle average and the lateral parts of $h$ are independent.
\end{itemize}

When $\gamma \in (\sqrt 2, 2)$, a $\frac{3\gamma}{2}$-quantum wedge is a concatenation of countably many connected components, each of which is a quantum disk (i.e., a simply connected quantum surface) with two marked points on its boundary. They are attached to other components via their marked points. We call each component of this quantum wedge a \textit{bead}, and a quantum surface with the same topology as the quantum wedge a beaded quantum surface. In the setup of Proposition~\ref{prop:left-right-independence}, the space-filling SLE$_{\kappa'}$ curve $\eta$ fills up one component of the domain $U_\pm$ at a time, inducing a chronological order on them. We define the canonical embedding of the $\frac{3\gamma}{2}$-quantum cone in this regime by specifying the embedding of each component. We embed each bead to $(\mathbb H, h, 0, \infty)$ so that $h_r(0) + Q\log r$ achieves its maximum at $r=1$.\footnote{We cannot use the circle average embedding because $h_r(0) + Q\log r \to -\infty$ almost surely as $r\to 0$ and as $r\to \infty$. The canonical embedding in \cite[Section~4.4]{dms-lqg-mating} is given on the strip $\R + [0,i\pi]$; to avoid introducing additional notations, we give an equivalent description under the LQG coordinate change rule corresponding to the conformal map $z\mapsto e^z$.} Under this embedding, the $\frac{3\gamma}{2}$-quantum wedge has the following law.
\begin{enumerate}
    \item Sample a Poisson point process $\Lambda$ with intensity measure $du \otimes dh$, where $du$ is the Lebesgue measure on $(0,\infty)$ and $dh$ is an infinite measure on distributions on $\mathbb H$ with the following description.
    \begin{itemize}
        \item Let $d\tilde h$ be the probability measure on the space of distributions on $\mathbb H$ corresponding to the Gaussian field $\tilde h$ sampled in the following way.
        \begin{itemize}
            \item The semicircle average process $t\mapsto \tilde h_{e^{-t}}(0)$ has the same law as the following process $A$.
            \begin{itemize}
            \item For $t<0$, $A_t = \widehat B_{-2t} + \frac{3\gamma}{2} t$ where $\{\widehat B_s\}_{s\geq 0}$ is a standard Brownian motion with $\widehat B_0 = 0$ conditioned so that $\widehat B_{2s} + (Q-\frac{3\gamma}{2})s > 0$ for every $s>0$.
            \item For $t\geq 0$, $A_t = B_{2t} + \frac{3\gamma}{2} t$ where $\{B_s\}_{s\geq 0}$ is a standard Brownian motion with $B_0= 0$ that is sampled independently of $\{\widehat B_s\}_{s\geq 0}$.
        \end{itemize}        
            \item The lateral part $\tilde h - \tilde h_{|\cdot|}(0)$ of $\tilde h$ agrees in law with the lateral part of a free-boundary GFF on $\mathbb H$. 
            \item The semicircle average and the lateral parts of $\tilde h$ are independent.
        \end{itemize}
        \item Sample $(m,h)$ from the infinite law $m^{1-4/\gamma^2}dm \otimes d\tilde h$, where $dm$ is the Lebesgue measure on $(0,\infty)$. Then, $dh$ agrees with the infinite law of the distribution $h = \tilde h + (2/\gamma)\log m$. Here, the sum $\tilde h + (2/\gamma)\log m$ refers to the distribution on $\mathbb H$ obtained by adding the constant function $(2/\gamma)\log m$ to the distribution $\tilde h$. The number $(2/\gamma)\log m$ corresponds to the value of the semicircle average $h_1(0)$ under the canonical embedding described above.
    \end{itemize}
    \item To each point $(u,h)$ in the p.p.p. $\Lambda$, correspond to it the quantum surface $(\mathbb H,h,0,\infty)$. We concatenate these components according to the first coordinate $u$ in increasing order to sample the $\frac{3\gamma}{2}$-quantum wedge. They are concatenated at the marked points 0 and $\infty$ so that removing the point corresponding to $0$ (resp.\ $\infty$) of the component $(u,h)\in \Lambda$ disconnects it from all components $(u',h') \in \Lambda$ with $u'<u$ (resp.\ $u<u'$).
\end{enumerate}

We deduce from these definitions that reparameterizing the $\frac{3\gamma}{2}$-quantum wedge appearing in Proposition~\ref{prop:left-right-independence} by its canonical embedding preserves the LQG metric. First, we map the quantum wedge (or its bead) conformally to $\mathbb H$, sending the two marked points to 0 and $\infty$. We can choose this conformal map only using the flow lines that cut out the quantum wedge, which then preserves the LQG metric a.s.\ because these flow lines are independent of the quantum wedge (recall \eqref{eqn:lqg-metric-coordinate-change}). Now, we merely need to scale each component by a random factor that is almost surely determined by the semicircle averages of the field. Since the semicircle average part of the quantum wedge is a continuous function that is independent of the lateral part, we can apply the Weyl scaling axiom as in the proof of Lemma~\ref{lem:scaling-metric-covariance} to conclude that the LQG metric is preserved under this scaling. 

Therefore, we can restate Proposition~\ref{prop:left-right-independence} in the language of random curve-decorated measure metric spaces as below. We omit a detailed proof, which is analogous to that of Proposition~\ref{prop:qc-mmspace-invariance} except for the additional consideration that conditioned on $\eta(-\infty,0]\cap \eta[0,\infty) = \eta_0^+ \cup \eta_0^-$, the curve $\eta|_{[0,\infty)}$ and the time reversal of $\eta|_{(-\infty,0]}$ are conditionally independent and identically distributed (as discussed in Section~\ref{sec:space-filling-sle}). Recall that $D_h^U$ denotes the internal metric on $U$, which was defined in \eqref{eqn:internal-metric}. 

\begin{prop}\label{prop:left-right-independent-mmspace}
	Let $\kappa' = 16/\gamma^2$ and $U_\pm$ be as in Proposition~\ref{prop:left-right-independence}. Then the random curve-decorated metric measure spaces represented by $(U_-,D_{h^\gamma}^{U_-},\mu_{h^\gamma}|_{U_-}, \eta|_{(-\infty,0]})$ and $(U_+,D_{h^\gamma}^{U_+},\mu_{h^\gamma}|_{U_+},\eta|_{[0,\infty)})$ are independent and identically distributed.
\end{prop}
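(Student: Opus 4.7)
The strategy is to lift the quantum-surface independence of Proposition~\ref{prop:left-right-independence} to the level of curve-decorated metric measure spaces, in the same spirit as the passage from Lemmas~\ref{lem:scale-invariance-quantum-cone-sle} and~\ref{lem:stationary-configuration} to Proposition~\ref{prop:qc-mmspace-invariance}. The argument proceeds in three stages: first, enhancing the independence to incorporate the decorating SLE curve; second, transferring to the metric and measure structures via coordinate change rules; and third, concluding in the topology described in Remark~\ref{rem:uihpq-convergence}.

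For the first stage, I would use the fact recalled in Section~\ref{sec:space-filling-sle} that conditional on the shared interface $\eta_0^+\cup\eta_0^-$, the restrictions $\eta|_{(-\infty,0]}$ and $\eta|_{[0,\infty)}$ are conditionally independent concatenations of chordal SLE$_\kappa$ curves in $U_-$ and $U_+$, together with the fact that $\eta$ is independent of $h^\gamma$ modulo reparametrization. Combined with Proposition~\ref{prop:left-right-independence}, this upgrades the iid property to the level of canonically embedded \emph{curve-decorated} quantum surfaces $(U_\pm, h^\gamma|_{U_\pm}, 0, \infty, \eta|_\pm)$. For the second stage, let $\psi_\pm: U_\pm \to \tilde U_\pm$ be the random conformal maps to the canonical domains and set $\tilde h_\pm := h^\gamma|_{U_\pm}\circ\psi_\pm^{-1} + Q\log|(\psi_\pm^{-1})'|$ and $\tilde \eta_\pm := \psi_\pm\circ\eta|_\pm$. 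The measures transform correctly by Theorem~\ref{thm:all-maps-coord-change-measure}. For the internal metrics, I would argue that $\psi_{\pm *} D_{h^\gamma}^{U_\pm} = D_{\tilde h_\pm}^{\tilde U_\pm}$ almost surely by decomposing $\psi_\pm$ as a reference conformal map depending measurably on $U_\pm$ alone (and fixing the marked points $0,\infty$) composed with a random scaling that is a measurable function of certain circle averages of $\tilde h_\pm$, analogous to~\eqref{eqn:scaling-factor-circle-average-embedding}. Conditional on $U_\pm$ the reference map is deterministic, so the coordinate change rule~\eqref{eqn:lqg-metric-coordinate-change} of~\cite{gm-coord-change} applies; the scaling factor is then handled by an analog of Lemma~\ref{lem:scaling-metric-covariance} for the quantum wedge. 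For $\gamma\in(\sqrt 2,2)$, where $U_\pm$ is a chain of Jordan components, the same decomposition applies componentwise, together with the Poissonian description of the wedge embedding from~\cite{dms-lqg-mating}.

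With these ingredients in hand, the third stage is routine: $(\tilde U_\pm, D_{\tilde h_\pm}^{\tilde U_\pm}, \mu_{\tilde h_\pm}, \tilde\eta_\pm)$ are iid curve-decorated metric measure spaces, and the conformal maps $\psi_\pm$ realize almost sure isometries between these and $(U_\pm, D_{h^\gamma}^{U_\pm}, \mu_{h^\gamma}|_{U_\pm}, \eta|_\pm)$; passing to the local GHPU topology through the measurability discussion of Remark~\ref{rem:uihpq-convergence} then yields the proposition. The main obstacle is the metric coordinate change in the second stage: the rule of~\cite{gm-coord-change} is proven only for deterministic conformal maps, and the random extensions already established in the paper (Lemmas~\ref{lem:translation-metric-invariance} and~\ref{lem:scaling-metric-covariance}) only cover translations and scalings of the whole plane. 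Justifying the coordinate change for the random uniformizing map $\psi_\pm$ therefore requires a careful conditioning argument that isolates the region-dependent reference map from the field-dependent scaling factor, and will likely be the longest part of the proof.
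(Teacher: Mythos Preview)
Your proposal is correct and follows essentially the same route as the paper: upgrade Proposition~\ref{prop:left-right-independence} to curve-decorated surfaces via the conditional independence of $\eta|_{(-\infty,0]}$ and $\eta|_{[0,\infty)}$ given $\eta_0^+\cup\eta_0^-$, then push the metric and measure through the uniformizing map by first conditioning on the domain and then applying an analog of Lemma~\ref{lem:scaling-metric-covariance} for the residual scaling. The obstacle you flag is less severe than you suggest, because $h^\gamma$ is independent of $\partial U_\pm = \eta_0^+\cup\eta_0^-$ (since $\eta$ is sampled independently of $h^\gamma$), so after conditioning on the boundary the reference conformal map is genuinely deterministic relative to the field and~\eqref{eqn:lqg-metric-coordinate-change} applies directly.
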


\begin{rem}
When $\gamma \in (0,\sqrt 2]$, the $\frac{3\gamma}{2}$-quantum wedge decorated with an independent space-filling SLE curve is simply connected. Hence, it is measurable with respect to the Borel $\sigma$-algebra generated by the pointed Gromov--Hausdorff--Prokhorov--uniform topology pointed at 0. (The LQG metric extends continuously to the boundary of the quantum wedge \cite[Proposition~1.6]{hm-metric-gluing}.) We use this $\sigma$-algebra in the above proposition as we did for the quantum cone in Proposition~\ref{prop:qc-mmspace-invariance}.

We require an alternative $\sigma$-algebra for $\gamma \in (\sqrt{2},2)$, since the $\frac{3\gamma}{2}$-quantum wedge is a beaded surface in this regime. The different beads lie at an infinite distance from each other by the definition of the internal metric. We use the Borel $\sigma$-algebra with respect to the following metric topology on the space of equivalence classes of curve-decorated beaded metric measure spaces (with the property that the curve enters the beads in chronological order and does not re-enter any bead after entering a subsequent bead) modulo measure-and-curve-preserving isometries. It is given by an extension of the Prokhorov--uniform metric on curve-decorated beaded domains defined in \cite[Section~2.2.5]{gwynne-miller-char}, where we use the GHPU metric instead.
\begin{enumerate}
    \item Let $\mathbb{M}^{\mathrm{GHPU}}$ be the space of equivalence classes of compact metric measure spaces decorated with a continuous curve modulo measure-preserving, curve-preserving isometries. Let $d^{\mathrm{GHPU}}$ be the GHPU metric on $\mathbb{M}^{\mathrm{GHPU}}$ defined in \cite[Section~1.3]{gwynne-miller-uihpq}.
    \item Given a curve-decorated beaded metric measure space $\mathcal S$, for $t\geq 0$, let $\mathcal K_t \in \mathbb{M}^{\mathrm{GHPU}}$ be the bead of $\mathcal S$ with the property that the sum of the measures of the previous beads (not including the bead itself) is at least $t$, equipped with the curve restricted to this bead. We view $\mathcal K$ as a function $[0,\infty) \to \mathbb{M}^{\mathrm{GHPU}}$ defined for almost every $t$. 
    to it a function $\mathcal K:[0,\infty) \to \mathbb{M}^{\mathrm{GHPU}}$ defined as 
    \item Let $\mathbb{M}_{\mathrm{bead}}^{\mathrm{GHPU}}$ be the set of all Borel measurable functions $\mathcal{K}:[0,\infty) \to \mathbb{M}_{\mathrm{bead}}^{\mathrm{GHPU}}$ which are defined almost everywhere. Define a metric on $\mathbb{M}_{\mathrm{bead}}^{\mathrm{GHPU}}$ by 
    \begin{equation}
        d_{\mathrm{bead}}^{\mathrm{GHPU}} (\mathcal{K}, \widetilde{\mathcal{K}}) = \int_0^\infty e^{-t} \left( 1 \wedge d^{\mathrm{GHPU}}(\mathcal K_t, \widetilde{\mathcal{K}}_t) \right) dt.
    \end{equation}
\end{enumerate}
Observe that the total contribution to $d_{\mathrm{bead}}^{\mathrm{GHPU}}$ of beads of LQG measure less than $\ep$ is bounded and tends to zero as $\ep \to 0$. 
\end{rem}

When $\gamma^2 = \kappa = 16/\kappa'$, define the process $\{L_t\}_{t\in \R}$ (resp.\ $\{R_t\}_{t\in \R}$) to be the change in the left (resp.\ right) quantum boundary length of $\eta(-\infty,t]$ with respect to 0. Here is the precise construction of this process. Recall the countable dense set of points $\{z_k\}_{k\in \N}$ that we used to define the whole-plane space-filling SLE$_{\kappa'}$ curve $\eta$. If $z_k = \eta(t)$, then the left (resp.\ right) boundary of $\eta(-\infty,t]$ is $\eta_{z_k}^+$ (resp.\ $\eta_{z_k}^-$), which is an SLE$_{\kappa}$-type curve. Given a GFF-type field $h$ and an independent SLE$_{\kappa}$-type curve $\tilde \eta$, we can define the \textit{$\gamma$-LQG length measure} $\nu_h$ on $\tilde \eta$ \cite{shef-kpz,shef-zipper, benoist-lqg-chaos}. The quantum length of the entire flow line $\eta_{z_k}^+$ is infinite, but it a.s.\ merges with $\eta_0^+$, so it makes sense to define the difference $L_t$ between the quantum boundary lengths of $\eta_{z_k}^+$ and $\eta_0^+$ as illustrated in Figure~\ref{fig:peanosphere-bm}. $R_{\tau_k}$ is defined analogously using $\eta_{z_k}^-$ and $\eta_0^-$. A priori, this defines the processes $L$ and $R$ on a countable dense subset of (random) times $t$. 

\begin{figure}
    \centering
    \includegraphics[scale=.9]{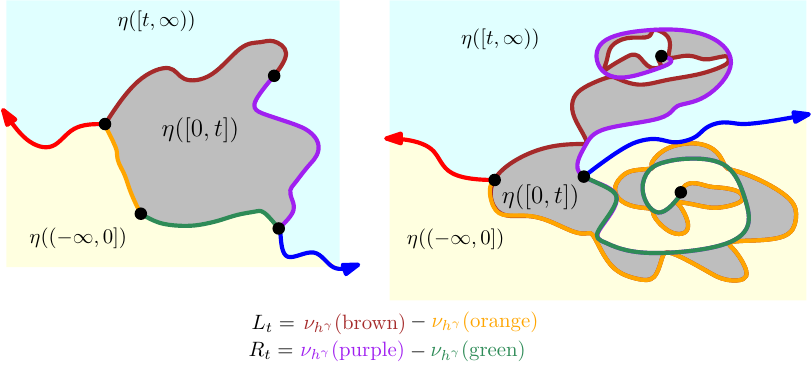}
    \caption{Illustration of the quantum boundary length process $(L,R)$ when $\kappa'\geq 8$ (left) and $\kappa' \in (4,8)$ (right). Suppose $z_k = \eta(t)$, where $t>0$. The left (resp.\ right) boundary of $\eta(-\infty,0]$, i.e., the flow line $\eta_0^+$ (resp.\ $\eta_0^-)$, is the union of red and orange (resp.\ green and blue) curves. The left (resp.\ right) boundary of $\eta(-\infty,t]$, i.e., the flow line $\eta_{z_k}^+$ (resp.\ $\eta_{z_k}^-$), is the union of red and brown (resp.\ purple and blue) curves.}
    \label{fig:peanosphere-bm}
\end{figure}

The following main theorem of mating-of-trees theory states that, almost surely, the processes $L$ and $R$ can be continuously extended to all real times and that they together give a complete description of the quantum surface $(\C,h^\gamma,0,\infty,\eta)$. The term mating-of-trees comes from the observation that the collections of flow lines $\{\eta_z^+\}_{z\in \C}$ and $\{\eta_z^-\}_{z\in \C}$ are trees that we can ``mate" to obtain the curve-decorated quantum surface $(h^\gamma,\eta)$.

\begin{thm}[{\cite[Theorems 1.9 and 1.11]{dms-lqg-mating}, \cite{kappa8-cov}}]\label{thm:boundary-length-BM}
    Let $(\C, h^\gamma,0,\infty,\eta)$ be the circle average embedding of a $\gamma$-quantum cone decorated by an independent space-filling SLE$_{\kappa'}$ curve, which is then reparemeterized according to the $\gamma$-LQG measure $\mu_{h^\gamma}$. Let $L_t$ (resp.\ $R_t$) denote the $\gamma$-LQG length $\nu_{h^\gamma}$ of the left (resp.\ right) boundary of $\eta(-\infty,t]$ minus that of $\eta(-\infty,0]$. Then $(L,R)$ evolves as a correlated two-dimensional Brownian motion. In particular, there is a non-random constant $a>0$ such that
    \begin{equation}
        \mathrm{Var}(L_t) = a|t|, \;\;\; \mathrm{Var}(R_t) = a|t|, \;\;\; \text{and} \;\;\; \mathrm{Cov}(L_t,R_t) = -a\cos\left(\frac{4\pi}{\kappa'}\right)|t| \;\;\; \text{for} \;\;\; t \in \R.
    \end{equation}
    Moreover, the pair $(L,R)$ almost surely determines both $h^\gamma$ and $\eta$ up to a rigid rotation of $\C$ about the origin.
\end{thm}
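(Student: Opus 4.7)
The plan is to establish the Brownian motion property of $(L,R)$ by combining the stationarity from Lemma~\ref{lem:stationary-configuration} with the independence at time $0$ from Proposition~\ref{prop:left-right-independent-mmspace}, then to extract the exact covariance structure from scale invariance and an imaginary geometry computation, and finally to reconstruct $(h^\gamma,\eta)$ from the quotient topology encoded by the paths of $(L,R)$.

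First I would verify that the processes $L,R$, which a priori are defined on the countable random set of times $\{t_k : z_k = \eta(t_k)\}$, admit almost sure continuous extensions to $\R$. The key point is that for $s$ close to $t$, the flow lines $\eta_{\eta(s)}^\pm$ and $\eta_{\eta(t)}^\pm$ agree except on short arcs emanating from $\eta(s)$ and $\eta(t)$; the $\gamma$-LQG length $\nu_{h^\gamma}$ of these arcs tends to $0$ because of the regularity of the quantum length measure on SLE$_\kappa$-type curves. Next, I would argue that $(L,R)$ has stationary independent increments. Stationarity is immediate from Lemma~\ref{lem:stationary-configuration}: the increment $(L_{s+t}-L_s, R_{s+t}-R_s)$ is an almost sure functional of the shifted configuration $(\C, h^\gamma, \eta(s), \infty, \eta(\cdot+s))$, whose law equals that of $(\C,h^\gamma,0,\infty,\eta)$ after re-embedding. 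For independent increments on, say, $[-T,0]$ and $[0,T]$, I would note that the increments over $[-T,0]$ are measurable with respect to the curve-decorated quantum surface $(U_-, h^\gamma|_{U_-}, 0,\infty, \eta|_{(-\infty,0]})$ together with the information along $\partial U_-$, and similarly for $[0,T]$; Proposition~\ref{prop:left-right-independent-mmspace} then yields independence. Iterating via the shift property, all disjoint increments are independent. A continuous two-dimensional L\'evy process whose law is invariant under the scaling in Proposition~\ref{prop:qc-mmspace-invariance} (which sends $t \mapsto st$ and multiplies quantum lengths by $s^{1/2}$) must be a mean-zero Brownian motion with $\mathrm{Var}(L_t)=a|t|$ and $\mathrm{Var}(R_t)=a|t|$ for some common constant $a>0$; the equality of the two variances follows from the symmetry of the construction under swapping the roles of the $+$ and $-$ flow lines (i.e., a reflection of $\C$).

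The main obstacle is identifying the covariance constant as $-a\cos(4\pi/\kappa')$. I would approach this via imaginary geometry: the left and right boundaries of $\eta((-\infty,0])$ are flow lines of the underlying GFF with angle difference $\pi$, and those of $\eta((-\infty,t])$ meet the originals at $\eta(0)$ and $\eta(t)$ at angles whose sine is $\cos(4\pi/\kappa')$ up to the normalization relating SLE angles and the GFF constant $\chi=2/\sqrt\kappa - \sqrt\kappa/2$. Concretely, the change in left boundary length and the change in right boundary length along a short segment of $\eta$ share a common portion of flow line whose oriented quantum length contributes oppositely to $L$ and $R$; the proportion of this common contribution is governed by the cosine of the SLE angle between the left-going and right-going flow lines, giving the claimed covariance. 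Carrying this out rigorously requires a careful accounting of $\nu_{h^\gamma}$-mass on the shared boundary pieces, and I expect this cosine computation to be the most delicate part of the argument.

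Finally, for the second assertion — that $(L,R)$ a.s.\ determines $(h^\gamma,\eta)$ up to rotation — I would follow the peanosphere construction. Define an equivalence relation on $\R$ where $s \sim_L t$ iff $L_s = L_t = \min_{[s\wedge t, s\vee t]} L$, and analogously $\sim_R$ using $R$; the quotient of $\R$ by the smallest equivalence containing $\sim_L \cup \sim_R$ carries a natural topology making it homeomorphic to the Riemann sphere. The projection of Lebesgue measure on $\R$ gives a measure on the quotient, and the image of the identity map $t \mapsto [t]$ is a space-filling curve. Uniformizing the quotient as $(\C \cup\{\infty\}, 0, \infty)$ produces a doubly-marked LQG surface with a space-filling curve parameterized by area; since $(\C,0,\infty)$ has a one-parameter family of conformal automorphisms (rotations, once the scale is fixed by the circle-average embedding), this reconstructs $(h^\gamma,\eta)$ up to a rotation of $\C$ about the origin.
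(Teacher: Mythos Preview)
The paper does not prove this theorem at all: it is quoted as a black box from \cite[Theorems~1.9 and~1.11]{dms-lqg-mating} and \cite{kappa8-cov}, and no argument is given beyond the construction of $(L,R)$ in the paragraphs preceding the statement. So there is no ``paper's own proof'' to compare against; your proposal is an attempt to reproduce the original DMS argument, not anything in this paper.

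On the substance of your sketch: the overall architecture (continuity, stationary independent increments from the shift and the wedge independence, Brownian scaling from the area/length scaling $s \mapsto s^{1/2}$, and peanosphere reconstruction for measurability) is indeed the skeleton of the DMS proof. But two points deserve caution. First, there is a mild circularity in invoking Proposition~\ref{prop:left-right-independent-mmspace}: in this paper that proposition is itself derived from \cite[Theorem~1.9]{dms-lqg-mating}, which is part of the very statement you are trying to prove, so in a self-contained argument you would need to establish the wedge independence directly rather than cite it. Second, your treatment of the covariance is only a heuristic. The identification $\mathrm{Cov}(L_t,R_t) = -a\cos(4\pi/\kappa')|t|$ is genuinely the hard part; in \cite{dms-lqg-mating} it is obtained not by the flow-line-angle computation you describe but via a quantum-surface argument (cone points and the law of the wedge decomposition), and the case $\kappa'=8$ required the separate paper \cite{kappa8-cov}. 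Your ``shared boundary pieces contribute with a cosine weight'' picture does not obviously translate into a rigorous computation of the correlation, and you should not expect it to close without substantial additional input.
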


\subsection{Proof strategy}\label{sec:proof-strategy}

For the proof of Theorem~\ref{thm:main-thm}, we consider the configuration $(\C,h^\gamma,0,\infty,\eta)$ of a $\gamma$-quantum cone decorated with an independent whole-plane space-filling SLE$_{\kappa'}$, where $\kappa' = 16/\gamma^2$. The proof follows the following outline.

\begin{enumerate}
    \item In Section~\ref{sec:tightness}, we prove that $\{\mathfrak b_\ep^{-1} N_\ep(\eta[s,t];D_{h^\gamma})\}_{\ep \in (0,1)}$ is tight for each $s<t$ using estimates for the space-filling SLE and the LQG metric. 
    Hence, the infinite-dimensional random vector $(\mathfrak b_\ep^{-1} N_\ep (\eta[s,t];D_{h^\gamma}): s,t\in \Q, s<t)$ has a weak subsequential limit. Denote it as $(X_{[s,t]}: s<t)$.
    
    \item In Section~\ref{sec:additivity}, we show that $X_{[s,t]}$ is additive: i.e., $X_{[r,t]} = X_{[r,s]} + X_{[s,t]}$ a.s.\ for every rational $r<s<t$. 
    Hence, we can define a ``Minkowski content process" $\{Y_t\}_{t\in \Q}$ where $X_{[s,t]} = Y_t- Y_s$. Roughly speaking, this step holds because the Minkowski dimension of $\bd\eta[s,t]$ is less than $d_\gamma$.

    \item In Section~\ref{sec:minkowski-content-determinstic}, we prove that $\{Y_t\}_{t\in \Q}$ can be extended a continuous process on $\R$ with independent and stationary increments. We also show that $X_{[s,t]} = Y_t-Y_s >0$ a.s.\ for every $s<t$.
    By Blumenthal's zero-one law, a strictly increasing Brownian motion cannot be random. Thus, $t\mapsto Y_t$ must be a deterministic linear function.
    
    \item Our choice \eqref{eqn:scaling-constant} of $\mathfrak b_\ep$ ensures that the slope of $Y_t$ is exactly 1.    
    Any subsequence of $\mathfrak b_\ep^{-1} N_\ep (\eta[s,t];D_{h^\gamma})$ has a further subsequence which converges weakly to $t-s$, so $\lim_{\ep \to 0} \mathfrak b_\ep^{-1} N_\ep (\eta[s,t];D_{h^\gamma}) = t-s$ in probability (Proposition~\ref{prop:main-thm-qc-sle}).
    
    \item In Section~\ref{sec:generalization}, we extend to general sets and other GFF-type fields.
\end{enumerate}

We remark that our proof is similar in a superficial sense to Le Gall's proof in \cite{legall-measure} that the volume measure on the Brownian sphere $\mathbf m_\infty$ is a Hausdorff measure. In his proof, Le Gall considers the natural projection map $\mathbf p: [0,1] \to \mathbf m_\infty$ and computes the Hausdorff measures of segments $\mathbf p[s,t]$ of this space-filling curve. However, the curve $\mathbf p$ is distinct from the space-filing SLE$_{\kappa'}$ curve $\eta$ appearing in our proof, and correspondingly, the two proofs use a different set of tools (Brownian snake vs.\ mating-of-trees).

As for the possibility of constructing the $\gamma$-LQG measure as a Hausdorff measure for general $\gamma \in (0,2)$, a key step in such a construction would be to identify the suitable gauge function. This comes down to finding an up-to-constant estimate for the volume of LQG metric balls for general $\gamma \in (0,2)$. In \cite{legall-measure}, Le Gall proves the estimate $\mathrm{Vol}(\mathcal B_r(x)) = \Theta(h(r))$ as $r\to 0$ for the Brownian sphere, where $h(r) = r^4 \log\log(1/r)$; from this estimate, $h(r)$ is identified as the correct gauge function. On the other hand, for general $\gamma \in (0,2)$, the best available estimate for the $\gamma$-LQG volume of $\gamma$-LQG metric balls is from \cite{afs-metric-ball}, which states $\mu_h(\mathcal B_r(z; D_h)) = r^{d_\gamma + o(1)}$ as $r\to 0$, (We have used this fact extensively in our paper.) Le Gall's proof of the up-to-constant metric ball volume estimate strongly relies on the Brownian snake encoding of the Brownian sphere; currently, we do not have an alternative method to improve the estimate for general $\gamma$.

\section{Tightness of Minkowski content approximations} \label{sec:tightness}
	
Let $(\C, h^\gamma, 0,\infty)$ be the circle average embedding of a $\gamma$-quantum cone and let $\eta$ be an independent whole-plane space-filling SLE$_{\kappa'}$ curve from $\infty$ to $\infty$. Let $\eta$ be parameterized by $\mu_{h^\gamma}$. As discussed in Section~\ref{sec:minkowski-content-def}, we use 
\begin{equation}\label{eqn:minkowski-content-ansatz}
    \lim_{\ep \to 0} \ep^{d_\gamma} N_\ep(\eta[s,t];D_{h^\gamma})
\end{equation}
as an ansatz for the Minkowski content of $\eta[s,t]$ with respect to the $\gamma$-LQG metric $D_{h^\gamma}$. The first step is to show that this limit exists along subsequences. 

The goal of this section is to show that for each fixed $s<t$, the family of random variables 
\begin{equation}\label{eqn:tight-family-mc-approx}
\{\ep^{d_\gamma} N_\ep(\eta[s,t];D_{h^\gamma})\}_{\ep \in (0,1)}
\end{equation}
is tight, so that a subsequential limit in distribution exists for $\ep^{d_\gamma} N_\ep(\eta[s,t];D_{h^\gamma})$ as $\ep \to 0$. Moreover, the tightness of \eqref{eqn:tight-family-mc-approx} is also used to show $\mathfrak{b}_\ep\asymp \ep^{d_\gamma}$ (Corollary~\ref{cor:normalization-constant-comparison}); this is why we first use $\ep^{d_\gamma}$ in \eqref{eqn:minkowski-content-ansatz} rather than $\mathfrak{b}_\ep$ as in the statement of Theorem~\ref{thm:main-thm}. 

\begin{rem*}
    All results in this section hold for any fixed $\gamma \in (0,2)$ and $\kappa' \in (4,\infty)$, even when $\gamma^2 \neq 16/\kappa'$.
\end{rem*}

\subsection{Exit time of space-filling SLE from an LQG metric ball}

The key estimate in showing the tightness of \eqref{eqn:tight-family-mc-approx} is an upper bound for the probability that the $D_{h^\gamma}$-diameter of a space-filling SLE$_{\kappa'}$ segment $\eta[s,t]$ is large (Proposition~\ref{prop:sle-diameter}). More precisely, we prove an equivalent estimate, which is an upper bound for the probability that the exit time
\begin{equation} 
   \tau_1 := \inf\{t>0: \eta(t) \notin \mathcal{B}_1(0;D_{h^\gamma})\}
\end{equation}
is small.

\begin{prop}\label{prop:exit-time}
    The lower tail of the exit time $\tau_1$ is superpolynomially small. That is, for each $p>0$, 
    \begin{equation}
        \Prob \{\tau_1 \leq \ep\} = o(\ep^p) \quad \text{as } \ep\to 0^+.
    \end{equation}
\end{prop}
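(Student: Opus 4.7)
Since $\eta$ is parametrized by $\mu_{h^\gamma}$, we have $\tau_1 = \mu_{h^\gamma}(\eta[0,\tau_1])$, so it suffices to show that $\mu_{h^\gamma}(\eta[0,\tau_1]) > \ep$ with superpolynomially high probability. The set $\eta[0,\tau_1]$ is a compact connected subset of $\overline{\mathcal{B}_1(0; D_{h^\gamma})}$ containing both $0$ and the exit point $\eta(\tau_1) \in \bd \mathcal{B}_1(0; D_{h^\gamma})$. The plan is to extract from $\eta[0,\tau_1]$ a Euclidean ball of polynomially small radius and then bound its LQG mass from below, combining Proposition~\ref{prop:SLE-includes-Euclidean-ball} with standard LQG metric and GMC estimates.

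Fix $p > 0$ and a small $\zeta > 0$, then choose exponents $r \in (0,1)$, $\alpha_1 \in (0, 1/(\gamma Q + \zeta))$, and $\beta_2 \in (0, \alpha_1(1-r))$, and set $\delta = \ep^{\alpha_1}$, $c = \ep^{\beta_2}$. The inequalities $\alpha_1(\gamma Q + \zeta) < 1$ and $\beta_2 < \alpha_1(1-r)$, together with a sufficiently large $\beta_1 > 0$ to be chosen, will drive the argument. Define a good event $G_\ep$ as the intersection of four sub-events, each designed to have probability at least $1 - \ep^{p+1}$: (a) $\mathcal{B}_1(0;D_{h^\gamma}) \subset B_R(0)$ with $R = \ep^{-\beta_1}$, from polynomial upper-tail bounds on the Euclidean diameter of an LQG metric ball in a quantum cone established in \cite{lqg-metric-estimates}; (b) $B_c(0) \subset \mathcal{B}_1(0;D_{h^\gamma})$, from quantitative H\"older-type comparisons of $D_{h^\gamma}$ with the Euclidean metric; (c) the event of Proposition~\ref{prop:SLE-includes-Euclidean-ball} applied with parameters $r$, $R$, and threshold $\ep^{\alpha_1}$; and (d) the uniform LQG volume lower bound $\mu_{h^\gamma}(B_\delta(z)) \geq \delta^{\gamma Q + \zeta}$ for every $z \in B_R(0)$, obtained from negative-moment estimates for Gaussian multiplicative chaos combined with a $\delta$-net union bound. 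A union bound then gives $\Prob(G_\ep^c) \leq \ep^p$ for all sufficiently small $\ep$.

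On $G_\ep$ the deterministic contradiction is immediate. Since $\eta[0,\tau_1]$ is connected and contains both $0$ and a point of $\bd \mathcal{B}_1(0;D_{h^\gamma}) \subset \C \setminus B_c(0)$, its Euclidean diameter is at least $c$. As $\delta^{1-r} = \ep^{\alpha_1(1-r)} \leq \ep^{\beta_2} = c$, event (c) provides a Euclidean ball $B_\delta(z) \subset \eta[0,\tau_1] \subset B_R(0)$, and event (d) yields $\tau_1 = \mu_{h^\gamma}(\eta[0,\tau_1]) \geq \mu_{h^\gamma}(B_\delta(z)) \geq \delta^{\gamma Q + \zeta} = \ep^{\alpha_1(\gamma Q + \zeta)} > \ep$, contradicting $\tau_1 \leq \ep$. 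Hence $\{\tau_1 \leq \ep\} \subset G_\ep^c$ and $\Prob(\tau_1 \leq \ep) \leq \ep^p$, as required.

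The main obstacle I anticipate is item (b): obtaining a polynomial tail bound $\Prob(B_c(0) \not\subset \mathcal{B}_1(0;D_{h^\gamma})) \leq c^{q}$ for arbitrarily large $q > 0$ under the quantum cone field $h^\gamma$. Although this should follow from scaling of the quantum cone combined with H\"older-continuity estimates for the LQG metric in \cite{lqg-metric-estimates, local-metrics}, it is not stated directly in the preliminaries and requires combining several known inputs. The remaining probabilistic items are close to standard, and the residual work is the bookkeeping of the exponents $(\alpha_1, \beta_1, \beta_2, r, \zeta)$ to ensure that all four probability bounds and the two deterministic inequalities $\delta^{1-r} \leq c$ and $\delta^{\gamma Q + \zeta} > \ep$ can be realized simultaneously.
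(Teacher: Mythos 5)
There is a genuine gap, and it is exactly at the point you flag as the ``main obstacle,'' item (b) — but the problem is worse than you anticipate, and it propagates through your exponent bookkeeping. The probability $\Prob\{B_c(0)\not\subset\mathcal B_1(0;D_{h^\gamma})\}$ does \emph{not} decay like $c^q$ for arbitrarily large $q$: it decays like a fixed power of $c$ (essentially $c^{(Q-\gamma)^2/2}$, as in Lemma~\ref{lem:lqg-euclidean-balls-mutual-inclusion}(ii), and this exponent is sharp — forcing the circle average $h_c(0)$ to be of order $(Q-\gamma)\log(1/c)$ has exactly this Gaussian cost and makes $D_{h^\gamma}(0,\partial B_c(0))\gtrsim 1$). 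Consequently, to make (b) hold with probability $1-\ep^{p+1}$ you are forced to take $\beta_2\gtrsim 2(p+1)/(Q-\gamma)^2$, growing linearly in $p$; then the constraint $\delta^{1-r}\le c$ from (c) forces $\alpha_1\ge\beta_2/(1-r)\gtrsim p$. But to conclude $\mu_{h^\gamma}(B_\delta(z))>\ep$ from any worst-case bound $\mu_{h^\gamma}(B_\delta(z))\ge\delta^{\alpha_0}$ you need $\alpha_1\alpha_0<1$ with $\alpha_0\ge\gamma Q$, i.e.\ $\alpha_1<1/(\gamma Q)$, a $\gamma$-dependent constant. These requirements are incompatible once $p$ is moderately large, so your scheme can only yield a fixed polynomial bound on $\Prob\{\tau_1\le\ep\}$, never superpolynomial decay. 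Separately, event (d) is false as stated: because of ``thin points'' of the field, the uniform-in-$z$ lower bound for $\mu_{h^\gamma}(B_\delta(z))$ has exponent at least $\gamma Q+2\gamma+o(1)$, not $\gamma Q+\zeta$, and the per-point failure probability $\approx\delta^{\zeta^2/(2\gamma^2)}$ is far too large for your $\delta$-net union bound over $\ep^{-2(\alpha_1+\beta_1)}$ points to close; fixing the exponent does not rescue the argument because of the contradiction above. (Minor further issues: Proposition~\ref{prop:SLE-includes-Euclidean-ball} is stated for fixed $R$, not $R=\ep^{-\beta_1}$, and item (a) requires understanding $h^\gamma$ outside $\ud$, where it is not absolutely continuous with respect to a GFF; both are repairable but need arguments.)

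The missing idea is the compensation that the paper's proof is built around: the events ``$\mathcal B_1(0;D_{h^\gamma})$ is Euclidean-small'' and ``Euclidean balls inside $\eta[0,\tau_1]$ have small mass'' are strongly negatively correlated, and you cannot treat them via independent worst-case truncations. The paper first replaces $\tau_1$ by $\tau_\ep$ using the exact scaling \eqref{eqn:exit-time-scaling} (so everything happens inside $\ud$, where $h^\gamma$ agrees in law with $h-\gamma\log|\cdot|$), and then uses Lemma~\ref{lem:lqg-metric-circle-average-estimate} to show that $D_{h^\gamma}(0,z)\ge\ep/2$ forces some circle average $h_{e^{-k}}(0)$ or $h_{e^{-k}}(z)$ to be large; the Euclidean ball $B_r(w)$ produced by Proposition~\ref{prop:SLE-includes-Euclidean-ball} lies in the corresponding annulus, where Lemma~\ref{lem:circle-avg-compare} transfers the large circle average to $h_r(w)$, and Lemma~\ref{lem:lqg-mass-from-circle-avg} then gives a mass lower bound $\approx r^{\gamma Q}e^{\gamma h_r(w)}$ in which the thick-field factor $e^{\gamma h_r(w)}$ exactly offsets the smallness of $r$. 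It is this conditional boost — not a uniform volume estimate — that produces superpolynomial decay; if you want to salvage your outline, you must replace (d) by a mass bound tied to the circle averages that the event $\{\tau_1\le\ep,\ \eta(\tau_1)\in\partial\mathcal B_1\}$ itself forces to be large.
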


The key input for the proof of Proposition~\ref{prop:exit-time} is the following relation between the LQG distance and the circle averages of GFF. Recall that $\xi = \gamma/d_\gamma$ is the $\gamma$-dependent factor which appears in the definition \eqref{eqn:LFPP-metric} of the LQG metric.

\begin{lem}[{\cite[Proposition 3.15]{lqg-metric-estimates}}]\label{lem:lqg-metric-circle-average-estimate}
    Let $h$ be a whole-plane GFF normalized so that $h_1(0) = 0$. Then, with superpoynomially high probability as $C\to \infty$, at a rate which is uniform in the choice of $z\in \C\setminus \{0\}$,
    \begin{equation}
        D_{h - \gamma \log |\cdot|}(0,z;B_{4|z|}(0)) \leq C \int_{-\log \frac{|z|}{2}}^\infty \left( e^{\xi h_{e^{-t}}(0) - \xi(Q-\gamma)t} + e^{\xi h_{e^{-t}}(z) - \xi Q t} \right)dt.
    \end{equation}
\end{lem}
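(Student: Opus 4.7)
The plan is to construct an explicit path from $0$ to $z$ staying in $B_{4|z|}(0)$ and bound its $D_{h-\gamma\log|\cdot|}$-length by the right-hand side. Use a dyadic decomposition: for $k\geq 1$, set $A_k^{(0)} := B_{2^{-k+1}|z|}(0)\setminus B_{2^{-k}|z|}(0)$ and $A_k^{(z)} := B_{2^{-k+1}|z|}(z)\setminus B_{2^{-k}|z|}(z)$. The path is a three-piece concatenation: travel outward through the $A_k^{(0)}$ starting near $0$; cross the shell $B_{2|z|}(0)\setminus B_{|z|/2}(0)$ with a single segment; travel inward through the $A_k^{(z)}$ to reach $z$. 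The shells around $0$ lie in $B_{|z|}(0)$ and those around $z$ lie in $B_{3|z|/2}(z)\subset B_{5|z|/2}(0)$, so the path remains in $B_{4|z|}(0)$.

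The core estimate is a scale-by-scale diameter bound for each annulus. Applying the coordinate change axiom of the LQG metric (Definition 2.6(IV)) to the scaling $\phi(w)=rw$ and then Weyl scaling to absorb the constant contribution of $-\gamma\log r$ gives
\begin{equation*}
  D_{h-\gamma\log|\cdot|}(ru,rv) \;=\; r^{\xi(Q-\gamma)}\,e^{\xi h_r(0)}\, D_{\tilde h^{(r)}-\gamma\log|\cdot|}(u,v),
\end{equation*}
where $\tilde h^{(r)} := h(r\cdot)-h_r(0)$ is distributionally equal to a whole-plane GFF normalized as in the statement. Hence $\mathrm{diam}(A_k^{(0)};D_{h-\gamma\log|\cdot|}) \le r^{\xi(Q-\gamma)} e^{\xi h_r(0)}$ times the $D_{\tilde h^{(r)}-\gamma\log|\cdot|}$-diameter of $B_2\setminus B_1$, which has fixed law with superpolynomial upper-tail decay by the moment estimates of \cite{lqg-metric-estimates}. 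An analogous argument near $z$ shows $\mathrm{diam}(A_k^{(z)};D_{h-\gamma\log|\cdot|}) \lesssim r^{\xi Q}e^{\xi h_r(z)}$: in $B_{|z|/2}(z)$ the perturbation $-\gamma\log|\cdot|$ is bounded, so it contributes only a multiplicative constant via Weyl scaling and the logarithmic singularity at $0$ no longer appears. The crossing segment in the equatorial shell is handled the same way at a single scale $r \sim |z|$, producing a boundary term absorbed into the lower limit $-\log(|z|/2)$ of the integrals.

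A union bound over the countably many dyadic scales produces a single random constant $C$ with superpolynomial tails such that all annular bounds hold simultaneously. Summing over $k$ and comparing the geometric series with the Riemann integral yields
\begin{equation*}
  \sum_{k\geq 1} (2^{-k}|z|)^{\xi(Q-\gamma)} e^{\xi h_{2^{-k}|z|}(0)} \;\asymp\; \int_{-\log(|z|/2)}^{\infty} e^{\xi h_{e^{-t}}(0)-\xi(Q-\gamma)t}\,dt,
\end{equation*}
and likewise for the integral centered at $z$. Combining the three pieces gives the claimed bound.

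The main obstacle is making $C$ uniform in $z\in\C\setminus\{0\}$. Rotational invariance of the whole-plane GFF modulo additive constant reduces to $z\in(0,\infty)$, and the scaling identity above reduces the distributional content of each annular estimate to a single reference scale. The remaining step is to pass from a countable dense net of $z$-values (where a union bound suffices) to arbitrary $z$, using the joint continuity of $(z,r)\mapsto h_r(z)$ from Lemma~\ref{lem:circle-average-continuous} together with continuity of the LQG metric in its arguments; this ensures both sides of the inequality vary continuously in $z$, so the bound at the net of reference points transfers to all of $\C\setminus\{0\}$ at a cost of a factor $2$ (say) in the constant.
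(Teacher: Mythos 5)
This lemma is not proved in the paper at all: it is imported verbatim from \cite[Proposition~3.15]{lqg-metric-estimates}, so there is no internal argument to compare yours against; what you have written is an attempt to reprove the cited estimate. Your skeleton (dyadic annuli around $0$ and around $z$, the scaling identity $D_{h-\gamma\log|\cdot|}(ru,rv)=r^{\xi(Q-\gamma)}e^{\xi h_r(0)}D_{\tilde h^{(r)}-\gamma\log|\cdot|}(u,v)$, which is correct, and a geometric-series-to-integral comparison) is the right general shape, but the probabilistic core has a genuine gap. You bound each annulus by its $D$-diameter and assert that the normalized diameter has ``superpolynomial upper-tail decay by the moment estimates of \cite{lqg-metric-estimates}''. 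That is false: sup-type LQG quantities such as diameters of open sets have only polynomially decaying upper tails, because a single exceptionally thick point in the annulus (thickness slightly above $Q$ at some scale) inflates the diameter, and such points occur with polynomial probability. This is precisely why the moment bounds of \cite{lqg-metric-estimates} come with finite thresholds, and why this paper itself restricts to $p<(\tfrac{4}{\gamma^2}-1)d_\gamma$ and $p<\tfrac{4d_\gamma}{\gamma^2}$ when invoking them in Lemmas~\ref{lem:lqg-euclidean-balls-mutual-inclusion} and~\ref{lem:lattice-point-near-exit}. Superpolynomial (lognormal-type) tails hold only for min-type crossing quantities, e.g.\ the distance across a sub-annulus and the length of a path around it; a correct proof must build the path from such quantities, not from diameters, and must also use internal distances in slightly enlarged annuli so that the concatenated path actually stays in $B_{4|z|}(0)$.

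Even granting a superpolynomial per-scale estimate, your union bound does not close the argument: by the very scaling identity you use, the normalized errors at the infinitely many dyadic scales are identically distributed, so their failure probabilities do not decay in $k$ and the union bound over all $k\geq 1$ is vacuous — it cannot produce a single constant $C$ with any nontrivial tail, let alone a superpolynomial one. The standard fixes are either to allow the per-scale allowance to grow in $k$ and absorb the growth using the structure of the right-hand side, or to treat only $O(\log C)$ scales by a union bound and control the contribution of the remaining deep scales by a separate argument (compare how this paper handles the tail $\int_{k_1}^\infty$ of the analogous integral in Lemma~\ref{lem:large-circle-avg} via the exponential functional of Brownian motion). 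Finally, the uniformity in $z$ cannot be obtained by your net-plus-continuity step: a bound that holds with high probability for each fixed $z$ in a countable net cannot be transferred to uncountably many $z$ by continuity unless you first establish an almost-sure simultaneous statement; uniformity should instead come from the rotation and scaling covariance of the two sides of the inequality (the additive normalization constant cancels in the ratio), which makes the failure probability essentially independent of $z$ without any net argument.
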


Let us first give a heuristic argument for Proposition~\ref{prop:exit-time}. To begin, $\bd\mathcal B_1(0;D_{h^\gamma})$ is macroscopic with high probability (i.e., it has constant-order Euclidean diameter). Since $D_{h^\gamma}(0,\eta(\tau_1)) =1$, there has to be some not too small $r>0$ such that either $h_r(0)$ or $h_r(\eta(\tau_1))$ is large, along the lines of Lemma~\ref{lem:lqg-metric-circle-average-estimate}. By Proposition~\ref{prop:SLE-includes-Euclidean-ball}, $\eta[0,\tau_1] \cap B_r(0)$ and $\eta[0,\tau_1] \cap B_r(\eta(\tau_1))$ each contain a macroscopic Euclidean ball. By comparing the LQG volumes of these Euclidean balls with $e^{\gamma h_r(0)}$ and $e^{\gamma h_r(\eta(\tau_1))}$, we conclude that with high probability, $\tau_1 = \mu_{h^\gamma}(\eta[0,\tau_1])$ cannot be too small.

There are three main points in making this heuristic rigorous. First, the laws of $h^\gamma$ and $h-\gamma \log|\cdot|$ agree only when restricted to the unit disk $\ud$. To this end, we analyze 
\begin{equation} \label{eqn:sle-dist-time}
   \tau_\ep := \inf\{t>0: \eta(t) \notin \mathcal{B}_\ep(0;D_{h^\gamma})\}
\end{equation}
instead of $\tau_1$, because $\mathcal B_{\ep}(0;D_{h^\gamma})\subset \ud$ with increasingly high probability as $\ep \to 0$. Observe from Proposition~\ref{prop:qc-mmspace-invariance} that for each $\ep>0$, 
\begin{equation}\label{eqn:exit-time-scaling}
    \tau_\ep \stackrel{d}{=} \ep^{d_\gamma}\tau_1.
\end{equation}
Second, the point $z$ in Lemma~\ref{lem:lqg-metric-circle-average-estimate} has to be deterministic, whereas the point $\eta(\tau_1)$ is random. The idea to get around this issue is to take a union bound over $z \in \ud \cap \ep^s \Z^2$ for a large constant $s$ and show that there is some point in $\ud \cap \ep^s\Z^2$ which is close to $\eta(\tau_\ep)$ in both Euclidean and LQG distances. Third, we can compare $\mu_{h^\gamma}(B_r(w))$ and $h_r(w)$ only for fixed $w$ and $r$. We again need to take a union bound over Euclidean balls $B_r(w)$, polynomially many in $\ep$, that are possibly contained in $\eta[0,\tau_1]$.

\begin{figure}
    \centering
    \includegraphics[scale=.9]{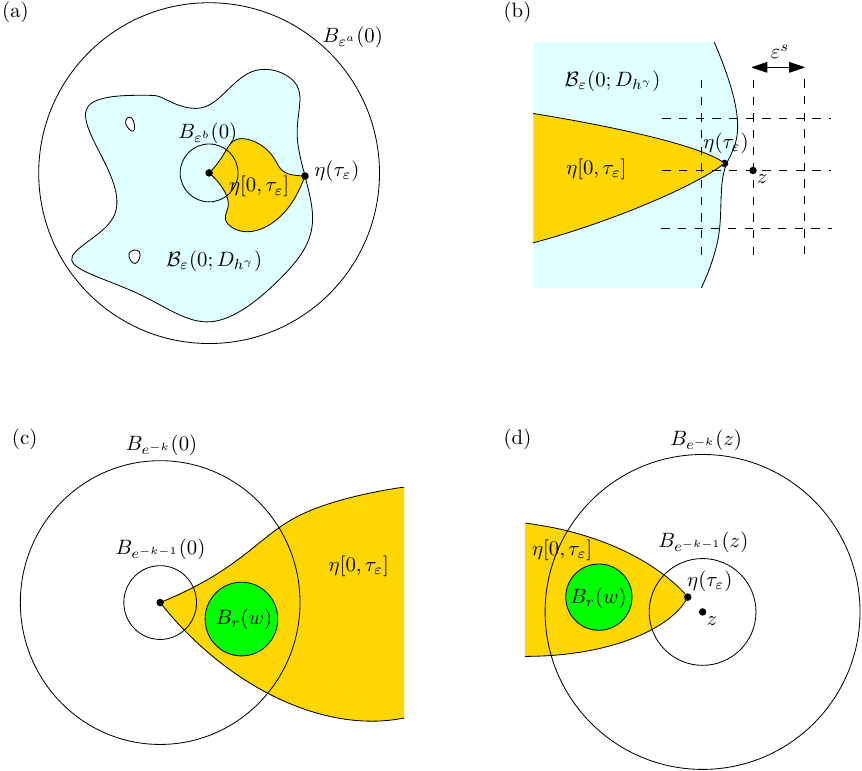}
    \caption{Proof of Proposition~\ref{prop:exit-time}. Let $h = h^\gamma + \gamma \log |\cdot|$ and $p>0$. The constants $a,b,s,N,q,\zeta$ are defined in \eqref{eqn:exit-time-constants-1} and \eqref{eqn:exit-time-constants-2} in terms of $p$. \textbf{(a)~Step~\ref{exit-time-proof:step-1}:} With probability $1-O(\ep^{p/3})$, the boundary of $\mathcal B_\ep(0;D_{h^\gamma})$ is contained within the Euclidean annulus $B_{\ep^a}(0) \setminus \overline{B_{\ep^b}(0)}$. In particular, $\ep^b < |\eta(\tau_\ep)| < \ep^a$. \textbf{(b)~Step~\ref{exit-time-proof:step-2}:} With probability $1-O(\ep^{p-Q/\xi})$, there exists $z \in \ep^s \Z^2$ for which $|\eta(\tau_\ep)-z|<\ep^s$ and $D_{h^\gamma}(\eta(\tau_\ep),z) < \ep/2$. Let $k_0 = \lfloor -\log(|z|/2)\rfloor$ and $k_1 = k_0 + N\log \ep^{-1}$. Conditioned on the previous events, with superpolynomially high probability, at least one of the following two cases holds. \textbf{(c)~Steps~\ref{exit-time-proof:step-3} and \ref{exit-time-proof:step-4}, Case 1:} There exists an integer $k \in [k_0, k_1]$ such that $\gamma h_{e^{-k}}(0) \geq \gamma (Q-\gamma)k + (d_\gamma + \frac{1}{3}) \log \ep$. For this $k$, there exists a Euclidean ball $B_{e^{-k(1+\zeta)}}(w)$ contained in the intersection of $\eta[0,\tau_\ep]$ and $B_{e^{-k}}(0) \setminus \overline{B_{e^{-k-1}}(0)}$ with $w\in e^{-k(1+\zeta)}\Z^2$. \textbf{(d)~Steps~\ref{exit-time-proof:step-3} and \ref{exit-time-proof:step-4}, Case 2:} There exists an integer $k \in [k_0, k_1]$ such that $\gamma h_{e^{-k}}(z) \geq \gamma Qk + (d_\gamma + \frac{1}{3}) \log \ep$. For this $k$, there exists a Euclidean ball $B_{e^{-k(1+\zeta)}}(w)$ contained in the intersection of $\eta[0,\tau_\ep]$ and $B_{e^{-k}}(z) \setminus \overline{B_{e^{-k-1}}(z)}$ with $w\in e^{-k(1+\zeta)}\Z^2$. \textbf{Steps~\ref{exit-time-proof:step-5} and \ref{exit-time-proof:step-6} (not visualized):} We obtain a lower bound of $\mu_{h^\gamma}(B_r(w))$ in terms of $h_r(w)$, which we compare with either $h_{e^{-k}}(0)$ (Case~1) or $h_{e^{-k}}(z)$ (Case~2). For sufficiently large $p$, on the event that all of the previous conditions hold, $\tau_\ep \geq \mu_{h^\gamma}(B_r(w)) \geq \ep^{d_\gamma + 1}$. By \eqref{eqn:exit-time-scaling}, this implies $\Prob\{\tau_1 \geq \ep\} = 1-O(\ep^p)$ for any given $p>0$. }
    \label{fig:exit-time-proof}
\end{figure}

\begin{proof}[Proof of Proposition~\ref{prop:exit-time}] 
If we show that $\tau_\ep = \mu_{h^\gamma}(\eta[0,\tau_\ep]) > \ep^{d_\gamma+1}$ with superpolynomially high probability as $\ep\to 0$, then the proposition follows by \eqref{eqn:exit-time-scaling}. 

Below, we describe an event consisting of six steps on which $\mu_{h^\gamma}(\eta[0,\tau_\ep]) > \ep^{d_\gamma+1}$. These events are stated using constants $0<a<b<s$, $0 < N < s-b$, $\zeta \in (0,1)$, and $q>2$. Eventually, these constants will be chosen in terms of a single constant $p>0$, which we will eventually allow to be arbitrarily large. We shall then verify that the stated event holds with probability $1 - O(\ep^{p/3})$ as $\ep \to 0$ using the lemmas stated and proven just after the main body of the proof.

Throughout, $h$ refers to the field $h^\gamma + \gamma \log |\cdot|$, whose restriction to the unit disk agrees in law with the corresponding restriction of a whole-plane GFF.

\begin{enumerate}
    \item \label{exit-time-proof:step-1} \textit{The points which lie at $D_{h^\gamma}$-distance $\ep$ from the origin --- i.e., $\partial \mathcal{B}_\ep(0;D_{h^\gamma})$ --- are within the Euclidean annulus $\overline{B_{\ep^a}(0)} \setminus B_{\ep^b}(0)$. In particular, $\ep^b \leq |\eta(\tau_\ep)| \leq \ep^a$. (Figure~\ref{fig:exit-time-proof}(a))}

    \item \label{exit-time-proof:step-2} \textit{For $\ep^b \leq |\eta(\tau_\ep)| \leq \ep^a$, there exists a point $z \in \ep^s \mathbb{Z}^2$ satisfying $\ep^b \leq |z| \leq \ep^a$ which is close to $\eta(\tau_\ep)$ in both Euclidean and LQG distances: $|\eta(\tau_\ep) - z| < \ep^s$ and $D_{h^\gamma}(\eta(\tau_\ep),z) \leq \ep/2$ precisely. The latter condition implies $D_{h^\gamma}(0,z) \geq \ep/2$ since $D_{h^\gamma}(0,\eta(\tau_\ep))=\ep$. (Figure~\ref{fig:exit-time-proof}(b))}
    
    \item \label{exit-time-proof:step-3} \textit{Given the point $z$ found in the previous step, let $k_0 = \lfloor -\log(|z|/2) \rfloor$. There exists an integer $k \in [k_0, k_0 + N\log \ep^{-1}]$ such that at least one of the circle averages $h_{e^{-k}}(0)$ or $h_{e^{-k}}(z)$ is bounded below in terms of the logarithm of the LQG distance $D_{h^\gamma}(0,z)$. Specifically, at least one of the following two cases is true.}
    \begin{itemize}
        \item \textit{Case 1: There exists $k \in [k_0, k_0 + N\log \ep^{-1}]$ such that \begin{equation}e^{\gamma h_{e^{-k}}(0)} \geq 2^{d_\gamma} \ep^{\frac{1}{3}} e^{\gamma (Q-\gamma) k} \left( D_{h^\gamma}(0,z) \right)^{d_\gamma} .\end{equation}}

        \item \textit{Case 2: There exists $k \in [k_0, k_0 + N\log \ep^{-1}]$ such that \begin{equation}e^{\gamma h_{e^{-k}}(z)} \geq 2^{d_\gamma} \ep^{\frac{1}{3}} e^{\gamma Q k} \left(D_{h^\gamma}(0,z)\right)^{d_\gamma} .\end{equation}}
    \end{itemize}
    \textit{Since $D_{h^\gamma}(0,z) \geq \ep/2$ by our choice of $z$, we have} \[e^{\gamma h_{e^{-k}}(0)} \geq e^{\gamma (Q-\gamma) k} \ep^{d_\gamma+\frac{1}{3}} \quad \textit{in Case 1,} \quad e^{\gamma h_{e^{-k}}(z)} \geq e^{\gamma Q k} \ep^{d_\gamma + \frac{1}{3}} \quad \textit{in Case 2.}\]
    
    \item \label{exit-time-proof:step-4} \textit{Given the integer $k$ found in the previous step, let $r = e^{-k(1+\zeta)}$. There exists a point $w\in r\Z^2$ such that:  
    \begin{itemize}
        \item In Case 1, $B_r(w) \subset \eta[0,\tau_\ep] \cap (B_{e^{-k}}(0) \setminus \overline{B_{e^{-k-1}}(0)})$. (Figure~\ref{fig:exit-time-proof}(c))
        \item In Case 2, $B_r(w) \subset \eta[0,\tau_\ep] \cap (B_{e^{-k}}(z) \setminus \overline{B_{e^{-k-1}}(z)})$. (Figure~\ref{fig:exit-time-proof}(d))
    \end{itemize}}
    
    \item \label{exit-time-proof:step-5} \textit{For $z$, $k$, and $w$ as in the previous three steps, we have the following comparison of circle averages.}
    \begin{itemize}
        \item \textit{Case 1: $|h_r(w) - h_{e^{-k}}(0)| \leq kq\zeta$. Therefore, $e^{\gamma h_r(w)} \geq e^{\gamma (Q-\gamma - q\zeta})k \ep^{d_\gamma+\frac{1}{3}}$.}
    
        \item \textit{Case 2: $|h_r(w) - h_{e^{-k}}(z)| \leq kq\zeta$. Therefore, $e^{\gamma h_r(w)} \geq e^{\gamma (Q -q \zeta})k \ep^{d_\gamma+\frac{1}{3}}$.}
    \end{itemize}
     
    \item \label{exit-time-proof:step-6} \textit{The LQG measure of the Euclidean ball $B_r(w)$ is bounded below by}
    \begin{equation}\label{eqn:euclidean-ball-volume-lower-bound} 
        \mu_h (B_r(w))\geq \ep^{\frac{1}{3}} r^{\gamma Q} e^{\gamma h_r(w)}.  
    \end{equation}
\end{enumerate}

These are the six steps comprising the event on which $\tau_\ep > \ep^{d_\gamma + 1}$ with the choice of constants stated below (\eqref{eqn:exit-time-constants-1} and \eqref{eqn:exit-time-constants-2}). Let us first verify that on the stated event, we have the correct lower bound on $\tau_\ep$ as claimed. We begin by substituting the lower bound for $h_r(w)$ claimed in Step~\ref{exit-time-proof:step-5} into \eqref{eqn:euclidean-ball-volume-lower-bound}. Recall $r = e^{-k(1+\zeta)}$ and $h^\gamma = h - \gamma \log |\cdot|$.
\begin{itemize}
    \item Case 1: Since $B_r(w) \subset B_{e^{-k}}(0)$, we have $\mu_{h^\gamma}(B_r(w)) \geq e^{\gamma^2 k} \mu_h(B_r(w))$. Hence,
    \begin{equation} 
        \mu_{h^\gamma}(B_r(w)) \geq e^{\gamma^2 k} \mu_h(B_r(w)) \geq e^{\gamma^2 k}\ep^{\frac{1}{3}} (e^{-k(1+\zeta)})^{\gamma Q} e^{\gamma (Q-\gamma - q\zeta})k \ep^{d_\gamma+\frac{1}{3}} 
    \end{equation}
    \item Case 2: Since $B_r(w) \subset \ud$, we have $\mu_{h^\gamma}(B_r(w)) \geq \mu_h(B_r(w))$. Hence,
    \begin{equation} \mu_{h^\gamma}(B_r(w)) \geq \mu_h(B_r(w)) \geq \ep^{\frac{1}{3}} (e^{-k(1+\zeta)})^{\gamma Q} e^{\gamma (Q - q\zeta})k \ep^{d_\gamma+\frac{1}{3}}. \end{equation}
\end{itemize}
Collecting exponents, we conclude in both cases that 
\begin{equation} \label{eqn:exit-time-lower-end}
    \tau_\ep \geq \mu_{h^\gamma}(B_r(w)) \geq \ep^{d_\gamma+\frac{2}{3}} e^{-\gamma(Q+q)\zeta k}.
\end{equation}

We now set
\begin{equation}\label{eqn:exit-time-constants-1}
    a = \frac{1}{\xi^2 p} \asymp{p^{-1}} , \quad b = \frac{p}{(Q-\gamma)^2} \asymp p , \quad s = \left(2+\frac{\gamma Q}{Q^2/2-2}\right)b \asymp p  
\end{equation}
where $\asymp$ means equality up to $\gamma$-dependent multiplicative constants when $p$ is large. We also choose 
\begin{equation}\label{eqn:exit-time-constants-2}
    N = \sqrt[4]{1 + \frac{6s}{p}}\sqrt{\xi p} \asymp p^{1/2} , \quad q = N^7 \asymp p^{7/2} , \quad  \zeta = N^{-10} \asymp p^{-5} .
\end{equation}
Note that $s-b> b= \frac{p}{(Q-\gamma)^2}>N$ for sufficiently large $p$. Since we only consider $k \leq (b+N) \log \ep^{-1} = O(N^2\log \ep^{-1})$ in Step \ref{exit-time-proof:step-3},
\begin{equation} 
    e^{-\gamma(Q+q)\zeta k} \geq \ep^{O(N^{-1})} \geq \ep^{1/3} 
\end{equation}
for all $0< \ep < 1/2$ given that $p$ is sufficiently large. Hence, the lower bound for $\tau_\ep$ from~\eqref{eqn:exit-time-lower-end} satisfies
\begin{equation}\label{eqn:exit-time-conclusion}
    \tau_\ep > \ep^{d_\gamma + \frac{2}{3}} e^{-\gamma(Q+q)\zeta k} \geq \ep^{d_\gamma + 1}
\end{equation}
for all large $p$.

It remains to estimate the probability of the above event. We compute this probability step-by-step given the constants in \eqref{eqn:exit-time-constants-1} and \eqref{eqn:exit-time-constants-2}.

\begin{enumerate}
    \item In Lemma~\ref{lem:lqg-euclidean-balls-mutual-inclusion}, we show that for sufficiently large $p$, Step~\ref{exit-time-proof:step-1} occurs with probability $1-O(\ep^{p/3})$ as $\ep \to 0$. 

    \item In Lemma~\ref{lem:lattice-point-near-exit}, we show that given our choices of $a$, $b$, and $s$, truncated on the event $\ep^b \leq |\eta(\tau_\ep)| \leq \ep^a$, the probability that Step~\ref{exit-time-proof:step-2} fails is bounded above by $O(\ep^{p-Q/\xi})$ as $\ep \to 0$.

    \item In Lemma~\ref{lem:large-circle-avg}, we show that for each fixed $z \in B_{1/4}(0) \setminus \{0\}$, Step~\ref{exit-time-proof:step-3} occurs with superpolynomially high probability as $\ep \to 0$ at a rate uniform in the choice of $z$. A union bound over $O(\ep^{-2s})$-many points of $z \in \ep^s \Z^2$ with $\ep^b \leq |z| \leq \ep^a$ gives a superpolynomially high probability for this step.

    \item Consider $\ep>0$ sufficiently small so that $\ep^s < \frac{1}{2e} \ep^{b+N}$ (this holds for all small enough $\ep$ since we chose our parameters so that $s > b+N$). If $\ep^b \leq |z| \leq \ep^a$ and $\eta(\tau_\ep) \in B_{\ep^s}(z)$, then $\eta[0,\tau_\ep] \cap (B_{e^{-k}}(0) \setminus \overline{B_{e^{-k-1}}(0)})$ and $\eta[0,\tau_\ep] \cap (B_{e^{-k}}(z) \setminus \overline{B_{e^{-k-1}}(z)})$ are both nonempty and the Euclidean diameters of the two intersections are at least $e^{-k-1}$. For the latter set, this is true because $\eta[0,\tau_\ep]$ is a connected set and 
    \[\eta(0) = 0 \notin B_{2e^{-k_0-1}}(z)\quad \text{and}\quad \eta(\tau_\ep) \in B_{\ep^s}(z)\subset B_{e^{-k_0+N\log \ep - 1}}(z),\] 
    where we used $\ep^s < \frac{1}{2e} \ep^{b+N} < \frac{|z|}{2e} \ep^N < e^{-k_0-1} \ep^N$. A similar argument works for the former set. 

    Hence, Proposition~\ref{prop:SLE-includes-Euclidean-ball} implies that for each lattice point $z \in \ep^s \Z^2 \cap (B_{\ep^a}(0) \setminus \overline{B_{\ep^b}(0)})$ and integer $k \in [k_0, k_0 +N \log \ep^{-1}]$, the following holds with superpolynomially high probability: on the event that $\eta(\tau_\ep) \in B_{\ep^s}(z)$, the sets $\eta[0,\tau_\ep] \cap (B_{e^{-k}}(0) \setminus \overline{B_{e^{-k-1}}(0)})$ and $\eta[0,\tau_\ep] \cap (B_{e^{-k}}(z) \setminus \overline{B_{e^{-k-1}}(z)})$ each contain a Euclidean ball of radius $e^{-(k+1)(1+\frac{\zeta}{2})}$. For sufficiently small $\ep$ (hence sufficiently large $k_0$), we can always find within each of these Euclidean balls a smaller Euclidean ball $B_r(w)$ where $r= e^{-k(1+\zeta)}$ and $w \in r\Z^2$. Now take a union bound over all such pairs $(z,k)$. The number of these pairs is at most some negative power of $\ep$, so Step \ref{exit-time-proof:step-4} holds with superpolynomially high probability in $\ep$.

    \item In Lemma~\ref{lem:circle-avg-compare}, we show that given an integer $k_0\geq 2$, for each
    \begin{equation} \label{eqn:grid-annulus}
        z \in \ep^s\Z^2 \cap \{z\in \C : |z| \in (2e^{-k_0-1}, 2e^{-k_0}] \}  ,
    \end{equation}
    the following event holds with probability $1-O(e^{-(q^2/2 - 2) \zeta k_0})$: the comparisons in Step \ref{exit-time-proof:step-5} hold for all possible choices of $k$ and $w$. There are $O(\ep^{-2s}e^{2k_0})$-many points $z$ satisfying~\eqref{eqn:grid-annulus} for each positive integer $k_0$. Since $|z| \in [\ep^a, \ep^b]$ and $k_0 = \lfloor -\log(|z|/2) \rfloor$, we only need to consider $k_0$ satisfying $a\log \ep^{-1} + \log(2/e) \leq k_0 \leq b\log \ep^{-1} + \log 2$. Taking a union bound over these $k_0$ and $z$, Step \ref{exit-time-proof:step-5} holds with probability
    \begin{equation} \label{eqn:circle-avg-union-error} 
        1 - O\left( \sum_{k_0 = \lfloor a\log \ep^{-1} \rfloor}^{\lceil b\log \ep^{-1} \rceil} \ep^{-2s} e^{2k_0} e^{- (q^2/2 - 2) \zeta k_0} \right) = 1 - O\left( \ep^{((\frac{q^2}{2}-2)\zeta - 2)a - 2s} \right) 
    \end{equation}
    provided that $(\frac{q^2}{2}-2)\zeta - 2>0$. The constants $q$ and $\zeta$ chosen in \eqref{eqn:exit-time-constants-2} satisfy this condition for all sufficiently large $p$. Moreover, with the further choice of constants $a$ and $s$ in \eqref{eqn:exit-time-constants-1}, the error term in~\eqref{eqn:circle-avg-union-error} satisfies
    \begin{equation}
        1 - O\left( \ep^{((\frac{q^2}{2}-2)\zeta - 2)a - 2s} \right) \geq 1- O\left(\ep^{\frac{N^4}{3}a - 2s}\right) \geq 1- O(\ep^{p/3}). 
    \end{equation}

    \item In Lemma~\ref{lem:lqg-mass-from-circle-avg}, we show that for each possible choice of $z$, $k$, and $w$, Step~\ref{exit-time-proof:step-6} holds with superpolynomially high probability as $\ep \to 0$ with rate uniform over all such choices. Now take a union bound over these $z$, $k$, and $w$, of which there are only polynomially many in $\ep$.
\end{enumerate}

To summarize, the probabilities of the events we truncate on each step are:
\begin{itemize}
    \item Steps~\ref{exit-time-proof:step-1} and \ref{exit-time-proof:step-5}: $1-O(\ep^{p/3})$;
    \item Step~\ref{exit-time-proof:step-2}: $1-O(\ep^{p-Q/\xi})$;
    \item Steps~\ref{exit-time-proof:step-3}, \ref{exit-time-proof:step-4}, and \ref{exit-time-proof:step-6}: superpolynomially high in $\ep$.
\end{itemize}
Since the choice of $p>0$ was arbitrary, we conclude that $\tau_\ep > \ep^{d_\gamma+1}$ holds with superpolynomially high probability in $\ep$ as $\ep \to 0$.
\end{proof}

We now state and prove the lemmas used in the proof of Proposition~\ref{prop:exit-time}, which are all based on standard LQG estimates. First, we record a property of whole-plane GFF that we will use repeatedly. It is a straightforward consequence of Lemmas~\ref{lem:circle-average-continuous} and \ref{lem:GFF-orthogonal-decomposition}.

\begin{lem}[{\cite[Lemma 3.4]{dg-critical-lqg}}]\label{lem:domain-markov-circle-average}
    Let $h$ be a whole-plane GFF normalized so that $h_1(0) = 0$. For each deterministic $r>0$, the field $(h-h_r(0))|_{B_r(0)}$ is independent from the circle average $h_r(0)$.
\end{lem}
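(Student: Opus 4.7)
The plan is to use the orthogonal decomposition of the whole-plane GFF from Lemma~\ref{lem:GFF-orthogonal-decomposition} together with the Brownian-motion description of the radial circle average process from Lemma~\ref{lem:circle-average-continuous}. Write $h = f + g$, where $f(z) := h_{|z|}(0)$ is the circle average part and $g := h - f$ is the lateral part, and recall that $f$ and $g$ are independent. Note that the random variable $h_r(0)$ is measurable with respect to $f$ alone, since $h_r(0) = f(z)$ for any (equivalently, every) $z$ with $|z|=r$.

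Now decompose the restricted field as
\begin{equation*}
    (h - h_r(0))|_{B_r(0)} \;=\; (f - h_r(0))|_{B_r(0)} \,+\, g|_{B_r(0)} .
\end{equation*}
The lateral summand $g|_{B_r(0)}$ is a measurable function of $g$, hence independent of $h_r(0)$ by Lemma~\ref{lem:GFF-orthogonal-decomposition}. For the circle-average summand, I would invoke Lemma~\ref{lem:circle-average-continuous}: the process $B_t := h_{e^{-t}}(0)$ is a two-sided standard Brownian motion with $B_0 = 0$, and for $z \in B_r(0)$ one has $-\log|z| \geq -\log r$, so
\begin{equation*}
    f(z) - h_r(0) \;=\; B_{-\log|z|} - B_{-\log r}.
\end{equation*}
By the independent-increments property of Brownian motion, the $\sigma$-algebra generated by $\{B_t - B_{-\log r} : t \geq -\log r\}$ is independent of $B_{-\log r} = h_r(0)$. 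Therefore $(f - h_r(0))|_{B_r(0)}$, being a measurable function of those increments (via $z \mapsto -\log|z|$ on $B_r(0) \setminus \{0\}$), is independent of $h_r(0)$.

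To conclude, I would combine the two independence statements: since $f$ and $g$ are jointly independent and $h_r(0)$ is a function of $f$ alone, the pair $\bigl((f-h_r(0))|_{B_r(0)},\, g|_{B_r(0)}\bigr)$ is independent of $h_r(0)$, and hence so is their sum $(h-h_r(0))|_{B_r(0)}$. There is no substantive obstacle here; the only minor subtlety is checking that the distributional identity $\{f(z) : z \in B_r(0)\} = \{B_t : t \geq -\log r\}$ interacts correctly with the two-sided Brownian motion at the reference time $-\log r$, which is immediate from the independent-increments property.
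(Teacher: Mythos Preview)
Your argument is precisely the one the paper has in mind: the paper does not spell out a proof but simply notes that the lemma is ``an easy consequence of Lemmas~\ref{lem:circle-average-continuous} and~\ref{lem:GFF-orthogonal-decomposition},'' and you have unpacked exactly that deduction.

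One small caveat about your closing remark. The claim that $\{B_t - B_{-\log r} : t \geq -\log r\}$ is independent of $B_{-\log r}$ is only immediate from independent increments when $-\log r \geq 0$, i.e.\ $r \leq 1$. For $r > 1$ one has $-\log r < 0$, and then $B_{-\log r} = -(B_0 - B_{-\log r})$ is itself one of the increments in that family (take $t = 0$), so it is measurable with respect to that $\sigma$-algebra rather than independent of it. A direct covariance check confirms $\mathrm{Cov}(h_s(0) - h_r(0),\, h_r(0)) = \log s - \log r \neq 0$ for $1 < s < r$, so the statement as literally written for all $r>0$ actually fails when $r>1$. This is harmless for the paper, which only invokes the lemma at radii in $(0,1)$ (see the proofs of Lemmas~\ref{lem:lqg-euclidean-balls-mutual-inclusion} and~\ref{lem:lattice-point-near-exit}), but your assertion that ``there is no substantive obstacle'' overlooks this restriction on the range of $r$.
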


The following lemma calculates the probability of the event $\{\ep^b \leq |\eta(\tau_\ep)| \leq \ep^a\}$ appearing in Step~\ref{exit-time-proof:step-1}.

\begin{lem}\label{lem:lqg-euclidean-balls-mutual-inclusion}
	Let $\mathcal{B}_\ep(0;D_{h^\gamma})$ be the LQG metric ball of radius $\ep$ and $B_r(0)$ be the Euclidean ball of radius~$r$, both centered at the origin. 
	\begin{enumerate}[(i)]
	    \item For each $p>0$, there exists a constant $C_p>0$ depending only on $p$ such that for every $\ep,r \in (0,1)$,
    	\begin{equation}
    	    \Prob \left\{\mathcal{B}_\ep(0;D_{h^\gamma}) \subset B_r(0)\right\} \geq 1 - C_p \ep^{p} r^{- (Q-\gamma)\xi p - \frac{1}{2}\xi^2 p^2}.
    	\end{equation}    	
        In particular, for each sufficiently large $p$, as $\ep \to 0$,
        \begin{equation}
    	    \Prob \left\{\mathcal{B}_\ep(0;D_{h^\gamma}) \subset B_{\ep^{1/(\xi^2 p)}}(0)\right\} \geq 1 - C_p \ep^{p/2- (Q-\gamma)/\xi} = 1 - O(\ep^{p/3}).
    	\end{equation}   
     
        \item There exists a constant $C>0$ such that for every $\ep \in (0,1)$ and $r \in (0,\frac{1}{2})$,
	    \begin{equation}
            \Prob \left\{B_r(0) \subset \mathcal{B}_\ep(0;D_{h^\gamma}) \right\} \geq 1 - C \ep^{-\frac{Q-\gamma}{\xi}} r^{\frac{(Q-\gamma)^2}{2}}.
        \end{equation}
        In particular, for each sufficiently large $p$, as $\ep \to 0$,
        \begin{equation}
            \Prob \left\{B_{\ep^{p/(Q-\gamma)^2}}(0) \subset \mathcal{B}_\ep(0;D_{h^\gamma}) \right\} \geq 1 - C \ep^{p/2 - (Q-\gamma)/\xi} = 1-O(\ep^{p/3}).
        \end{equation}
	\end{enumerate}
\end{lem}
\begin{proof}
    Let $h$ refer to the field $h^\gamma + \gamma \log |\cdot|$, whose restriction to $\ud$ agrees in law with the corresponding restriction of a whole-plane GFF.
    \begin{enumerate}[(i)]
	   \item If $\mathcal{B}_\ep(0;D_{h^\gamma}) \not\subset B_r(0)$, then $D_{h^\gamma}(0,\partial B_r(0)) < \ep$. By the Markov inequality,
	   \begin{equation}
		  \Prob\{ \mathcal B_\ep(0;D_{h^\gamma}) \not\subset B_r(0) \} \leq \Prob \left\{D_{h^\gamma}(0,\partial B_r(0)) < \ep \right\} \leq \ep^p\, \Exp \left[ \left( D_{h^\gamma}(0,\partial B_r(0)) \right)^{-p} \right].
	   \end{equation}
	   By the Weyl scaling and locality properties of the LQG metric, $e^{-\xi h_r(0)}D_{h^\gamma}(0,\partial B_r(0))$ is a.s.\ determined by $(h-h_r(0))|_{B_r(0)}$, which is independent of $h_r(0)$ by Lemma~\ref{lem:domain-markov-circle-average}. By this independence, \cite[Proposition 3.12]{lqg-metric-estimates}, and the fact that $h_r(0)\sim N(0,\log(1/r))$, there exists a constant $C_p>0$ such that 
	   \begin{equation}\begin{aligned}
	       &\Exp \left[ \left( D_{h^\gamma}(0,\partial B_r(0)) \right)^{-p} \right] \\&= r^{(\gamma-Q)\xi p}\, \Exp\left[ e^{-\xi p h_r(0)} \right] \Exp\left[ \left( r^{\xi(\gamma-Q)} e^{-\xi h_r(0)} D_{h^\gamma}(0,\partial B_r(0)) \right)^{-p} \right] \\& \leq C_p r^{-(Q-\gamma)\xi p -\frac{1}{2}\xi^2 p^2}.  \end{aligned}
        \end{equation}

        \item Let $0 < u < (\frac{4}{\gamma^2}-1)d_\gamma$. If $B_r(0) \not\subset \mathcal{B}_\ep(0;D_{h^\gamma})$, then $\sup_{w \in B_r(0)} D_{h^\gamma} (0,w) > \ep$, which implies $\sup_{w \in B_r(0)} D_{h^\gamma}^{B_{2r}(0)} (0,w) > \ep$. (Recall the definition \eqref{eqn:internal-metric} of the internal metric.) From \cite[Proposition 3.17]{lqg-metric-estimates}, there exists a constant $\widetilde C_u>0$ depending only on $u$ such that
        \begin{equation}  
            \Exp\left[ \left((2r)^{(\gamma-Q)\xi} e^{-\xi h_{2r}(0)} \sup_{w \in B_r(0)} D_h^{B_{2r}(0)} (0,w) \right)^u \right]  \leq \widetilde C_u.
        \end{equation}
        From the independence of $h_{2r}(0)$ and $(h-h_{2r}(0))|_{B_{2r}(0)}$ (Lemma~\ref{lem:domain-markov-circle-average}) and the fact that $h_{2r}(0) \sim N(0,\log(1/(2r)) )$, we have
        \begin{equation}\begin{aligned}
            & \Prob \left\{\sup_{w \in B_r(0)} D_{h^\gamma}^{B_{2r}(0)}(0,w) > \ep \right\} 
            \leq \ep^{-u}\, \Exp\left[ \left(\sup_{w \in B_r(0)} D_h^{B_{2r}(0)}(0,w)\right)^u \right] \\
            &= \ep^{-u} \, \Exp\left[ \left((2r)^{(Q-\gamma)\xi} e^{\xi h_{2r}(0)}\right)^u \right]\Exp\left[ \left((2r)^{(\gamma-Q) \xi} e^{-\xi h_{2r}(0)} \sup_{w \in B_r(0)} D_h^{B_{2r}(0)} (0,w) \right)^u \right] \\
            &\leq (2^u\widetilde C_u)\ep^{-u} r^{(Q-\gamma)\xi u -\frac{1}{2} \xi^2 u^2}.
        \end{aligned}\end{equation}
	We obtain the lemma by choosing $u = \frac{1}{\xi}(Q-\gamma) = \frac{1}{2}(\frac{4}{\gamma^2} - 1) d_\gamma$.
	\end{enumerate}\vspace{-20pt}
\end{proof}

The following lemma computes the probability of the event in Step~\ref{exit-time-proof:step-2}, in which we find a point $z$ on the lattice $\ep^s \Z^2$ which is close to $\eta(\tau_\ep)$ in both Euclidean and LQG distances. Again, recall the definition of the internal metric in \eqref{eqn:internal-metric}.

\begin{lem}\label{lem:lattice-point-near-exit}
	Suppose $0<a<b$ are given. For $s>b$, let $E_{\ep,s}$ be the event that for all $z \in \ep^s\mathbb{Z}^2$ with $\ep^b \leq |z| \leq \ep^a$, we have
	\begin{equation}
		\sup_{w \in B_{\ep^s}(z)} D_{h^\gamma}^{B_{2\ep^s}(z)}(z,w) \leq \frac{\ep}{2}.
	\end{equation}
	Then, as $\ep \to 0$,
	\begin{equation}
		\Prob(E_{\ep,s}) = 1- O\big( \ep^{(\frac{Q^2}{2}-2)s - \gamma Q b - \frac{Q}{\xi}} \big).
	\end{equation}
\end{lem}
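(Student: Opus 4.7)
The plan is to apply Markov's inequality pointwise and union bound over the lattice. For each fixed $z \in \ep^s\Z^2$ with $\ep^b \leq |z| \leq \ep^a$, I introduce the rescaled field $\tilde h(u) := h^\gamma(\ep^s u + z) + Q\log \ep^s$. Since $s > b$, the ball $B_{2\ep^s}(z)$ lies inside $\ud \setminus \{0\}$ for small $\ep$, so the $-\gamma\log|\cdot|$ singularity of the quantum cone plays no role here. By the deterministic version of the coordinate change axiom (Definition~\ref{def:lqg-metric-axiom}~IV), which extends to the quantum cone as noted after Lemma~\ref{lem:scaling-metric-covariance}, we have almost surely
\begin{equation*}
\sup_{w \in B_{\ep^s}(z)} D_{h^\gamma}^{B_{2\ep^s}(z)}(z,w) = \sup_{u \in B_1(0)} D_{\tilde h}^{B_2(0)}(0,u).
\end{equation*}

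Next, by Weyl scaling, $D_{\tilde h}^{B_2(0)}(0,u) = e^{\xi \tilde h_2(0)} D_{\tilde h - \tilde h_2(0)}^{B_2(0)}(0,u)$. Since $h^\gamma|_\ud$ agrees in law with $(h-\gamma\log|\cdot|)|_\ud$ for a whole-plane GFF $h$ with $h_1(0)=0$, Lemma~\ref{lem:domain-markov-circle-average} extends to show that $\tilde h_2(0)$ is independent of $(\tilde h - \tilde h_2(0))|_{B_2(0)}$. \cite[Proposition~3.17]{lqg-metric-estimates} then gives $\Exp[(\sup_{u\in B_1(0)} D_{\tilde h - \tilde h_2(0)}^{B_2(0)}(0,u))^p] \leq C_p$ for every $p > 0$. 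Combined with the Gaussian MGF computation using $\tilde h_2(0) = h^\gamma_{2\ep^s}(z) + Qs\log\ep \sim N(-\gamma\log|z| + Qs\log\ep,\ -\log(2\ep^s))$, this yields
\begin{equation*}
\Exp\left[\Big(\sup_{u\in B_1(0)} D_{\tilde h}^{B_2(0)}(0,u)\Big)^p\right] \leq C_p' \cdot |z|^{-\gamma\xi p}\ \ep^{\xi p s(Q - \xi p/2)}.
\end{equation*}

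Applying Markov's inequality with the optimal choice $p = Q/\xi$ (the maximizer of $\xi ps(Q - \xi p/2) - p$ up to lower order in $1/s$) gives
\begin{equation*}
\Prob\left\{\sup_{w \in B_{\ep^s}(z)} D_{h^\gamma}^{B_{2\ep^s}(z)}(z,w) > \ep/2\right\} \leq C\,|z|^{-\gamma Q}\,\ep^{sQ^2/2 - Q/\xi}.
\end{equation*}
The number of lattice points $z \in \ep^s\Z^2$ with $|z| \leq \ep^a$ is $O(\ep^{-2s})$, and $|z|^{-\gamma Q} \leq \ep^{-\gamma Q b}$ on the annulus. A union bound therefore gives $\Prob(E_{\ep,s}^c) = O(\ep^{(Q^2/2-2)s - \gamma Q b - Q/\xi})$, as claimed.

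The main subtle point is justifying the Gaussian MGF computation and the application of \cite[Proposition~3.17]{lqg-metric-estimates} to the quantum cone field: both reduce to corresponding whole-plane GFF statements because on $B_{2\ep^s}(z)$ the log-singularity at $0$ contributes only a bounded harmonic (hence smooth) shift $-\gamma\log|z|$ to the mean, and the locality/Weyl scaling axioms let us absorb this shift together with the circle average into the single Gaussian factor $e^{\xi p \tilde h_2(0)}$.
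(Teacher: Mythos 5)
Your proof is correct and is essentially the paper's argument: both rest on the circle-average independence (Lemma~\ref{lem:domain-markov-circle-average} together with translation invariance of the whole-plane GFF modulo additive constant), a moment bound for the normalized internal sup-distance from \cite{lqg-metric-estimates}, the Gaussian moment generating function of the circle average, Markov's inequality with the same exponent $p=Q/\xi$, and a union bound over the $O(\ep^{-2s})$ lattice points. The only differences are cosmetic: you rescale to unit size via the coordinate change axiom and keep the $-\gamma\log|\cdot|$ singularity inside the Gaussian mean, extracting $|z|^{-\gamma Q}\le \ep^{-\gamma Q b}$ at the end, whereas the paper stays at scale $\ep^s$, removes the singularity at the outset via the crude Weyl bound $D_{h^\gamma}^{B_{2\ep^s}(z)}\le (\ep^b/2)^{-\xi\gamma}D_h^{B_{2\ep^s}(z)}$, and invokes the scale-uniform moment bound of \cite[Proposition~3.9]{lqg-metric-estimates}; both routes give the identical per-point bound and final exponent. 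Two small inaccuracies to fix: the moment bound for $\sup_{u\in B_1(0)}D^{B_2(0)}(0,u)$ does \emph{not} hold for every $p>0$ (moments of such sup/diameter quantities are finite only for $p$ below a threshold, which is why the paper restricts to $p\in(0,4d_\gamma/\gamma^2)$ and checks that $Q/\xi=(2+\tfrac{\gamma^2}{2})d_\gamma/\gamma^2$ lies below it); your choice $p=Q/\xi$ is admissible, so nothing breaks, but the blanket claim should be removed. Also, on $B_{2\ep^s}(z)$ the function $-\gamma\log|\cdot|$ is not exactly the constant $-\gamma\log|z|$ (only its circle average is), but since $s>b$ its oscillation on the ball is $O(\ep^{s-b})$, which Weyl scaling converts into a harmless $1+o(1)$ factor; this deserves a sentence rather than being folded silently into ``a shift to the mean.''
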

\begin{proof}
	Recall the notation $h = h^\gamma + \gamma \log |\cdot|$. Consider a fixed $z \in \ep^s \Z^2$ with $\ep^b \leq |z| \leq \ep^a$ for now. For $\ep > 0$ is small enough that $|z| - 2\ep^s \geq \ep^b - 2\ep^s \geq \ep^b/2$, by the Weyl scaling axiom, we have
	\begin{equation} \begin{aligned}
        \sup_{w \in B_{\ep^s}(z)} D_{h^\gamma}^{B_{2\ep^s}(z)}(z,w) &\leq (|z|-2\ep^s)^{-\xi \gamma} \sup_{w \in B_{\ep^s}(z)} D_h^{B_{2\ep^s}(z)} (z,w) \\ &\leq \bigg(\frac{\ep^b}{2}\bigg)^{-\xi \gamma}\sup_{w \in B_{\ep^s}(z)} D_h^{B_{2\ep^s}(z)} (z,w). \end{aligned}
    \end{equation}
	Let $p \in (0,4d_\gamma/\gamma^2)$. By Lemma~\ref{lem:domain-markov-circle-average} and the translation invariance of the law of the whole-plane GFF modulo the additive constant as in~\eqref{eqn:gff-translate}, $e^{-\xi h_{2\ep^s}(z)} \sup_{w \in B_{\ep^s}(z)} D_h^{B_{2\ep^s}(z)} (z,w)$ is independent of $h_{2\ep^s}(z)$. By this independence and the fact that $h_{2\ep^s}(z) \sim N(0,\log(2\ep^s))$, we have 
	\begin{equation}\begin{aligned}
        & \Prob \left\{\sup_{w \in B_{\ep^s}(z)} D_{h^\gamma}^{B_{2\ep^s}(z)}(z,w) > \frac{\ep}{2} \right\} 
		\leq \Prob\left\{ \sup_{w \in B_{\ep^s}(z)} D_h^{B_{2\ep^s}(z)}(z,w) > \frac{\ep}{2} \bigg(\frac{\ep^b}{2}\bigg)^{\xi \gamma} \right\}\\
		&\leq \bigg(\frac{2^{1+\xi \gamma}}{\ep^{1+\xi \gamma b}}\bigg)^p \Exp\bigg[ \bigg(\sup_{w \in B_{\ep^s}(z)} D_h^{B_{2\ep^s}(z)}(z,w)\bigg)^p \bigg] \\
		&= \bigg(\frac{2^{1+\xi \gamma}}{\ep^{1+\xi \gamma b}}\bigg)^p \Exp\left[ \left((2\ep^s)^{\xi Q} e^{\xi h_{2\ep^s}(z)}\right)^p \right]\Exp\bigg[ \bigg((2\ep^s)^{-\xi Q} e^{-\xi h_{2\ep^s}(z)} \sup_{w \in B_{\ep^s}(z)} D_h^{B_{2\ep^s}(z)} (z,w) \bigg)^p \bigg]\\
        &= \bigg(\frac{2^{1+\xi \gamma}}{\ep^{1+\xi \gamma b}}\bigg)^p (2\ep^s)^{\xi Q p - \frac{1}{2}\xi^2 p^2}\Exp\bigg[ \bigg((2\ep^s)^{-\xi Q} e^{-\xi h_{2\ep^s}(z)} \sup_{w \in B_{\ep^s}(z)} D_h^{B_{2\ep^s}(z)} (z,w) \bigg)^p \bigg].
	\end{aligned}\end{equation}	By \cite[Proposition 3.9]{lqg-metric-estimates}, there exists a constant $C_p>0$ depending only on $p$ and $\gamma$ such that
	\begin{equation}  
        \Exp\left[ \left((2\ep^s)^{-\xi Q} e^{-\xi h_{2\ep^s}(z)} \sup_{w \in B_\ep^s(z)} D_h^{B_{2\ep^s}(z)} (z,w) \right)^p \right]  \leq C_p.
    \end{equation}
	By plugging this into the previous inequality, we get that there exists another constant $\widetilde C_p>0$ depending only on $p$ and $\gamma$ such that 
	\begin{equation} 
        \Prob \left\{\sup_{w \in B_\ep^s(z)} D_{h^\gamma}^{B_{2\ep^s}(z)}(z,w) > \frac{\ep}{2} \right\}  \leq \widetilde C_p \ep^{p(-1-\xi \gamma b + \xi Q s - \frac{1}{2}\xi^2 sp)}.
    \end{equation}
	The constant $\widetilde C_p$ does not depend on the choice of $z \in \overline{B_{\ep^a}(0)} \setminus B_{\ep^b}(0)$. Choose $p = Q/\xi = (2+\frac{\gamma^2}{2})\frac{d_\gamma}{\gamma^2} < \frac{4d_\gamma}{\gamma^2}$. The lemma follows by taking a union bound over $O(\ep^{-2s})$-many points $z\in \ep^s \Z^2$ such that $\ep^b \leq |z| \leq \ep^a$.
\end{proof}

The following lemma is used in Step~\ref{exit-time-proof:step-3}. This is the key step in the proof of Proposition~\ref{prop:exit-time}, in which we combine Lemma~\ref{lem:lqg-metric-circle-average-estimate} with $D_{h^\gamma}(0,\tau_\ep) = \ep$ to find an integer $k$ such that either $h_{e^{-k}}(0)$ or $h_{e^{-k}}(z)$ is large.

\begin{lem}\label{lem:large-circle-avg}
Let $N>0$ be given. Let $z\in B_{1/4}(0)\setminus \{0\}$ be a fixed point and let $k_0$ be the positive integer such that $2e^{-k_0-1} < |z| \leq 2e^{-k_0}$. Denote $k_1 = \lfloor k_0 + N\log C\rfloor$ and $h = h^\gamma + \gamma \log |\cdot|$. Then, with superpolynomially high probability as $C\to \infty$, at a rate uniform over the choice of $z$, there exists an integer $k \in [k_0, k_1]$ such that either
\begin{equation}\label{eqn:large-circle-avg}
    e^{\gamma h_{e^{-k}}(0)} \geq C^{-1} e^{\gamma (Q-\gamma) k} (D_{h^\gamma}^{\ud}(0,z))^{d_\gamma} \quad \text{or} \quad e^{\gamma h_{e^{-k}}(z)} \geq C^{-1} e^{\gamma Q k} (D_{h^\gamma}^{\ud}(0,z))^{d_\gamma}.
\end{equation}
\end{lem}
\begin{proof}
    From Lemma~\ref{lem:lqg-metric-circle-average-estimate}, it holds with superpolynomially high probability as $C\to \infty$, at a rate which is uniform in the choice of $z \in B_{1/4}(0) \setminus \{0\}$, that
\begin{equation} \label{eqn:use-circle-avg-int}
 D_{h^\gamma}^\ud (0,z) \leq D_{h^\gamma}^{B_{4|z|}(0)}(0,z) \leq C \int_{-\log\frac{|z|}{2}}^\infty \left( e^{\xi h_{e^{-t}}(0) - \xi (Q-\gamma) t} + e^{\xi h_{e^{-t}}(z) - \xi Qt} \right) dt . 
\end{equation}
    The main idea of the proof is to bound the above integral by a Riemann sum approximation.
    
    By Lemma~\ref{lem:circle-average-continuous}, $t\mapsto h_{e^{-t}}(z)$ is a standard Brownian motion for $t\geq k_0$ given the initial value $h_{e^{-k_0}}(z)$. For each integer $k \in [k_0, k_1-1]$, 
    by the reflection principle,
    \begin{equation}  \Prob \left\{\sup_{t \in [k,k+1]} |h_{e^{-t}}(z) - h_{e^{-k}}(z)| > \xi^{-1} \log C \right\} = O\left( \frac{e^{-\frac{(\log C)^2}{2\xi^2}}}{\log C}\right) \end{equation}  
    as $C\to \infty$. The constant here is uniform over $z$ and $k$.
    On the complementary event,
    \begin{equation} \label{eqn:exit-time-circle-average-z}  e^{\xi h_{e^{-t}}(z) - \xi Q t} \leq C e^{\xi h_{e^{-t}}(z) - \xi Q k} \quad \text{for all } t \in [k,k+1]. \end{equation}
    Since $Q-\gamma = (2/\gamma) - (\gamma/2) > 0$, the same lower bound applies to the probability that
    \begin{equation} \label{eqn:exit-time-circle-average-0}  e^{\xi h_{e^{-t}}(0) - \xi (Q-\gamma) t} \leq C e^{\xi h_{e^{-t}}(0) - \xi (Q-\gamma) k} \quad \text{for all } t \in [k,k+1]. \end{equation} 
    On the intersection of the events \eqref{eqn:exit-time-circle-average-z} and \eqref{eqn:exit-time-circle-average-0}, 
    \begin{equation} \label{eqn:exit-time-circle-average-integral-macroscopic} 
        \int_{k}^{k+1} \left( e^{\xi h_{e^{-t}}(0) - \xi (Q-\gamma)t} + e^{\xi h_{e^{-t}}(z) - \xi Q t} \right) dt \leq C \left( e^{\xi h_{e^{-k}}(0) - \xi (Q-\gamma) k} + e^{\xi h_{e^{-k}}(z) - \xi Q k} \right). 
    \end{equation} 
    
    We now deal with the integral from $k_1$ to infinity. Since $s \mapsto h_{e^{-s-k_1}}(z) - h_{e^{-k_1}}(z)$ for $s\geq 0$ is a standard Brownian motion, 
    \begin{equation}
        \int_0^\infty e^{\xi (h_{e^{-s-k_1}}(z) - h_{e^{-k_1}}(z)) - \xi Q s}\,ds \stackrel{d}{=} \frac{2}{\xi^2} Z_{\frac{2Q}{\xi^2}}, 
    \end{equation}
    where $Z_{\nu}$ is a Gamma random variable of index $\nu$ \cite{dufresne-exp-bm,yor-exp-bm}. That is, for $c\geq 0$,
    \begin{equation} 
        \Prob\{Z_\nu > c\} = \int_c^\infty \frac{s^{\nu-1} e^{-s}}{\Gamma(\nu)}\,ds.  
    \end{equation}
    In particular, there exists a number $u>0$ depending only on $\gamma$ such that
    \begin{equation}\label{eqn:exit-time-circle-average-microscopic-z}
        \Prob\left\{ \int_{k_1}^\infty e^{\xi h_{e^{-t}}(z) - \xi Qt}\,dt \leq C e^{\xi h_{e^{-k_1}}(z) - \xi Q k_1} \right\} = 1 - \Prob\left\{Z_\frac{2Q}{\xi^2} > \frac{\xi^2}{2}C\right\} = 1 - O\left(e^{-u  C }\right)
    \end{equation} 
    as $C \to \infty$ with the rate uniform on $z$. Similarly, 
    \begin{equation}\label{eqn:exit-time-circle-average-microscopic-0}
        \begin{aligned} \Prob\left\{ \int_{k_1}^\infty e^{\xi h_{e^{-t}}(0) - \xi (Q-\gamma)t}\,dt \leq C e^{\xi h_{e^{-k_1}}(z) - \xi (Q-\gamma) k_1} \right\} &= 1 - \Prob\left\{Z_\frac{2(Q-\gamma)}{\xi^2} > \frac{\xi^2}{2}C\right\} \\&= 1 - O\left(e^{-uC }\right) \end{aligned}
    \end{equation}
    as $C \to \infty$ with the rate uniform on $z$.
    
    Let $u>0$ be a constant to be determined later. Now take a union bound over the events~\eqref{eqn:exit-time-circle-average-z} and~\eqref{eqn:exit-time-circle-average-0} for all integers $k \in [k_0, k_0+uN\log C)$ as well as the events in~\eqref{eqn:exit-time-circle-average-microscopic-z} and \eqref{eqn:exit-time-circle-average-microscopic-0}. Recalling~\eqref{eqn:use-circle-avg-int} and~\eqref{eqn:exit-time-circle-average-integral-macroscopic}, we find that with superpolynomially high probability as $C \to \infty$, at a rate uniform in $z$,
    \begin{equation}\label{eqn:exit-time-circle-average-riemann-sum} \begin{aligned} D_{h^\gamma}^\ud(0,z) &\leq C \int_{-\log\frac{|z|}{2}}^\infty \left( e^{\xi h_{e^{-t}}(0) - \xi (Q-\gamma) t} + e^{\xi h_{e^{-t}}(z) - \xi Qt} \right) dt  \\&\leq  C^2 \sum_{k=k_0}^{k_0+uN\log C} \left( e^{\xi h_{e^{-k}}(0) - \xi (Q-\gamma) k} + e^{\xi h_{e^{-k}}(z) - \xi Qk} \right). \end{aligned} \end{equation}
    Given~\eqref{eqn:exit-time-circle-average-riemann-sum}, there must exist an integer $k \in [k_0,k_0+uN\log C]$ satisfying either
    \begin{equation}\label{eqn:large-circle-avg-prelimiary}
        e^{\gamma h_{e^{-k}}(0)} \geq C^{-2d_\gamma} e^{\gamma (Q-\gamma) k} \left(\frac{D_{h^\gamma}^\ud(0,z)}{N\log C}\right)^{d_\gamma} \;\; \text{or} \;\;\; e^{\gamma h_{e^{-k}}(z)} \geq C^{-2d_\gamma} e^{\gamma Q k} \left(\frac{D_{h^\gamma}^\ud(0,z)}{N\log C}\right)^{d_\gamma}.
    \end{equation}
    
    We now let $u = 3d_\gamma$ and replace $C$ in \eqref{eqn:large-circle-avg-prelimiary} by $C^{1/u}$. For sufficiently large $C$, since $N>0$ is a fixed constant, the right hand sides of \eqref{eqn:large-circle-avg-prelimiary} are bounded below by $C^{-1} e^{\gamma (Q-\gamma) k} (D_{h^\gamma}^{\ud}(0,z))^{d_\gamma}$ and $C^{-1} e^{\gamma Q k} (D_{h^\gamma}^{\ud}(0,z))^{d_\gamma}$, respectively. Hence, the probability that there exists an integer $k\in [k_0, k_0+N\log C]$ satisfying \eqref{eqn:large-circle-avg} is bounded below by $1-C^{\rho /u}$ as $C\to \infty$ for every $\rho>0$ as claimed.
\end{proof}

The final two lemmas are standard estimates on the circle averages of GFF and the LQG measure. In Step~\ref{exit-time-proof:step-5}, we obtain a lower bound on the circle average $h_r(w)$ on the boundary of the Euclidean ball $B_r(w)$ found in Step~\ref{exit-time-proof:step-4}. This is done by comparing $h_r(w)$ with either $h_{e^{-k}}(0)$ (Case~1) or $h_{e^{-k}}(z)$ (Case~2) using the following lemma.

\begin{lem}\label{lem:circle-avg-compare}
    Let $h$ be a whole-plane GFF normalized so that $h_1(0) = 0$. Let $\zeta \in (0,1)$ and $q>2$ be given constants. Let $z \in B_{2e^{-2}}(0)$ and denote $k_0 = \lfloor -\log(|z|/2)\rfloor$. For all $C > C_0(q, \zeta)$, the following event is true with probability $1 - O(e^{-(q^2/2 -2)\zeta k_0})$, where the rate is otherwise uniform over all choices of $z$. For every integer $k \geq k_0$, writing $r = e^{-k(1+\zeta)}$:
    \begin{enumerate}[(i)]
        \item\label{item:exit-time-circle-avg-comparison-0} For every $w \in r\Z^2$ such that $B_{r}(w) \subset B_{e^{-k}}(0) \setminus \overline{B_{e^{-k-1}}(0)}$, we have \begin{equation}|h_r(w) - h_{e^{-k}}(0)| \leq k q \zeta.\end{equation}
        \item\label{item:exit-time-circle-avg-comparison-z} For every $w \in r\Z^2$ such that $B_{r}(w) \subset B_{e^{-k}}(z) \setminus \overline{B_{e^{-k-1}}(z)}$, we have \begin{equation}|h_r(w) - h_{e^{-k}}(z)| \leq k q \zeta.\end{equation}
    \end{enumerate}
\end{lem}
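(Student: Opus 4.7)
The plan is to express $h_r(w) - h_{e^{-k}}(c)$ (with $c = 0$ in case (i) or $c = z$ in case (ii)) as a centered Gaussian, compute its variance exactly, and apply a Gaussian tail bound followed by a union bound over $k \geq k_0$ and admissible $w$. The hypothesis $z \in B_{2e^{-2}}(0)$ gives $k_0 \geq 2$, so for every $k \geq k_0$ the relevant circles $\partial B_r(w)$ and $\partial B_{e^{-k}}(c)$ lie inside $\ud$. There, the whole-plane GFF covariance simplifies to $G(x,y) = -\log|x-y|$.

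With these reductions, a standard computation for circle averages of the unit-disk GFF gives $\mathrm{Var}(h_{e^{-k}}(c)) = k$ and $\mathrm{Var}(h_r(w)) = -\log r = k(1+\zeta)$. The covariance is obtained from the mean-value property for $\log$: since $B_r(w) \subset B_{e^{-k}}(c)$, every point $x \in \partial B_r(w)$ lies inside the circle $\partial B_{e^{-k}}(c)$, so
\[ \int -\log|x-y|\, \lambda_{c,e^{-k}}(dy) = -\log e^{-k} = k, \]
and averaging $x$ over $\lambda_{w,r}$ gives $\mathrm{Cov}(h_r(w), h_{e^{-k}}(c)) = k$. Therefore
\[ \mathrm{Var}(h_r(w) - h_{e^{-k}}(c)) = k(1+\zeta) + k - 2k = k\zeta, \]
and the standard Gaussian tail bound gives $\Prob\{|h_r(w) - h_{e^{-k}}(c)| > kq\zeta\} \leq 2e^{-kq^2\zeta/2}$.

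Next, for each $k$ the admissible $w \in r\Z^2$ lie in an annulus of Euclidean area $O(e^{-2k})$ with lattice spacing $r = e^{-k(1+\zeta)}$, giving at most $O(e^{2k\zeta})$ admissible points. A union bound over both cases (i) and (ii), all integers $k \geq k_0$, and all admissible $w$ yields
\[ \sum_{k \geq k_0} O(e^{2k\zeta}) \cdot 2e^{-kq^2\zeta/2} = O\bigl(e^{-(q^2/2 - 2)k_0 \zeta}\bigr), \]
where the geometric series converges precisely because $q > 2$. The main subtle point is arranging that the variance equals $k\zeta$ exactly rather than $k\zeta + O(1)$: the sharp exponent $q^2/2$ in the tail bound depends on this, and it in turn relies on all circles being inside $\ud$ so that the unit-disk normalization of the whole-plane GFF contributes no boundary terms to the covariance computation.
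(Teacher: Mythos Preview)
Your proof is correct. The paper's own proof simply cites \cite[Lemma~4.5]{afs-metric-ball} for part~(i) and then deduces part~(ii) from~(i) via the translation invariance~\eqref{eqn:gff-translate} of the whole-plane GFF modulo additive constant (noting that $h_r(w) - h_{e^{-k}}(z)$ depends only on $h$ modulo additive constant). Your argument unpacks what that external lemma does---exact variance $k\zeta$, Gaussian tail, union bound over $O(e^{2k\zeta})$ lattice points per scale---and treats (i) and (ii) symmetrically rather than reducing one to the other; the content is the same, yours is just self-contained.
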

\begin{proof} 
    \cite[Lemma 4.5]{afs-metric-ball} proves that the event (i) holds for every $k\geq k_0$ with probability
    \begin{equation}\label{eqn:exit-time-circle-average-comparison-probability}
        1 - O\left(\sum_{k=k_0}^\infty e^{k\zeta(2-q^2/2)}\right) = 1-O(e^{-(q^2/2-2)\zeta k_0}).
    \end{equation}
    The random variable $ \left| h_r(w) - h_{e^{-k}}(z) \right|$ depends on $h$ viewed modulo additive constant. By the translation invariance~\eqref{eqn:gff-translate} of the law of the GFF modulo additive constant, it follows that (ii) also holds with the same probability computed in \eqref{eqn:exit-time-circle-average-comparison-probability} regardless of the choice of $z$.
\end{proof}

In Step~\ref{exit-time-proof:step-6}, we give a lower bound on the LQG measure $\mu_h(B_r(w))$ in terms of the circle average $h_r(w)$.

\begin{lem}\label{lem:lqg-mass-from-circle-avg}
    Let $h$ be a whole-plane GFF normalized so that $h_1(0) = 0$. For each fixed $w \in \C$ and $r>0$, it holds with superpolynomially high probability as $C\to \infty$, with the rate uniform over $w$ and $r$, that
    \begin{equation}
        \mu_h(B_r(w)) \geq C^{-1} r^{\gamma Q} e^{\gamma h_r(w)}.
    \end{equation}
\end{lem}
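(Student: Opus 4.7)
The plan is to reduce the statement to a standard negative-moment bound for the Gaussian multiplicative chaos (GMC) measure over the unit ball by exploiting the exact scaling and translation symmetries of the whole-plane GFF modulo additive constant, together with the coordinate-change rule for the LQG measure.

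First, I would use the coordinate change rule \eqref{eqn:lqg-measure-coordinate-change} applied to the affine map $\phi(u) = ru+w$, which has $|\phi'| = r$, to write
\begin{equation*}
\mu_h(B_r(w)) \;=\; \mu_{h \circ \phi + Q\log r}(B_1(0)) \;=\; r^{\gamma Q}\, e^{\gamma h_r(w)}\, \mu_{h(r\cdot+w) - h_r(w)}(B_1(0)),
\end{equation*}
where in the last step I pulled out the constant $h_r(w)$ using $\mu_{h+c} = e^{\gamma c}\mu_h$. Setting $X := \mu_{h(r\cdot+w)-h_r(w)}(B_1(0))$, the desired inequality becomes simply $X \geq C^{-1}$.

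Next, by the translation/scale invariance \eqref{eqn:gff-translate} of the whole-plane GFF modulo additive constant applied with $a = r$, $b = w$, the field $h(r\cdot+w) - h_r(w)$ has the law of the whole-plane GFF normalized so that its circle average over $\partial B_1(0)$ vanishes, i.e.\ exactly the law of $h$ itself. Consequently $X \stackrel{d}{=} \mu_h(B_1(0))$, and in particular the law of $X$ is the same for every choice of $(w,r)$. Thus it suffices to prove the uniform (in $C$ only) bound
\begin{equation*}
\Prob\{\mu_h(B_1(0)) < C^{-1}\} \;=\; O(C^{-p}) \quad \text{as } C \to \infty, \quad \text{for every } p>0.
\end{equation*}

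Finally, this follows from Markov's inequality once we know that $\mu_h(B_1(0))$ has finite negative moments of all orders. This is a classical estimate for sub-critical GMC measures associated with log-correlated Gaussian fields, established, e.g., in the references given in Section~1 on GMC (see \cite{rhodes-vargas-review,aru-gmc-survey}); it can also be deduced directly from the Kahane convexity inequalities comparing the whole-plane GFF with an exactly scale-invariant log-correlated field. Granted $\Exp[\mu_h(B_1(0))^{-p}] < \infty$ for every $p>0$, Markov gives $\Prob(X < C^{-1}) \leq C^{-p}\Exp[X^{-p}]$, proving the lemma with the rate depending only on $p$ (hence uniform in $w$ and $r$). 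The only mild point to be careful about is the translation step, but it is entirely formal once one uses \eqref{eqn:gff-translate} and the fact that circle averages commute with affine changes of coordinates.
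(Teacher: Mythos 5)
Your proof is correct and follows essentially the same route as the paper: the paper also reduces to the case $w=0$, $r=1$ via the identity $r^{-\gamma Q}e^{-\gamma h_r(w)}\mu_h(B_r(w)) \overset{d}{=} \mu_h(B_1(0))$, obtained from the coordinate change formula \eqref{eqn:lqg-measure-coordinate-change} and the invariance \eqref{eqn:gff-translate}, and then cites the standard negative-moment/lower-tail estimate for $\mu_h(B_1(0))$ (e.g.\ \cite[Lemma 4.6]{shef-kpz} or \cite[Theorem 2.12]{rhodes-vargas-review}). Your Markov-inequality step is just the explicit form of that standard estimate, so there is nothing to add.
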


\begin{proof}
The case when $w=0$ and $r=1$ is a standard estimate for the LQG measure. See, e.g., \cite[Lemma 4.6]{shef-kpz}  or~\cite[Theorem 2.12]{rhodes-vargas-review}.
The case of a general $w\in\C$ and $r > 0$ follows from the case when $w=0$ and $r=1$ since $r^{-\gamma Q} e^{-\gamma h_r(w)} \mu_h(B_r(w)) \overset{d}{=} \mu_h(B_1(0))$ due to the scale and translation invariance of the law of $h$, \eqref{eqn:gff-translate}, and the LQG coordinate change formula \eqref{eqn:lqg-measure-coordinate-change}.
\end{proof}

\subsection{Proof of tightness}

From Proposition~\ref{prop:exit-time}, we can not only deduce that \eqref{eqn:tight-family-mc-approx} is tight, but also that it is uniformly bounded in $L^p$ for every $p\geq 1$. This stronger result is necessary later, specifically for Proposition~\ref{prop:minkowski-content-bm}. Recall the notation $N_\ep(A) = N_\ep(A;D_{h^\gamma})$ for the number of $D_{h^\gamma}$-balls of radius $\ep$ needed to cover the set $A$.

\begin{prop}\label{prop:covering-num-moment-bound}
    For every $p\geq 1$, there exists a constant $C_p < \infty$ such that for every $s<t$,
    \begin{equation*}
        \sup_{0 < \ep < |t-s|^{1/d_\gamma}} \Exp[(\ep^{d_\gamma} N_\ep(\eta[s,t]))^p] \leq C_p |t-s|^p.
    \end{equation*}
    In particular, for each fixed $s<t$, the random variables $\ep^{d_\gamma} N_\ep(\eta[s,t])$ for $\ep \in (0,1)$ are tight.
\end{prop}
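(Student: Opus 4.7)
The approach combines the scaling relation from Corollary~\ref{cor:covering-number-relation}, the exit-time tail bound from Proposition~\ref{prop:exit-time}, and the translation stationarity from Proposition~\ref{prop:qc-mmspace-invariance}(ii) applied at deterministic times. I will first use Corollary~\ref{cor:covering-number-relation} to rewrite
\[
    \ep^{d_\gamma}N_\ep(\eta[s,t])
    \stackrel{d}{=} (t-s)\,\delta^{d_\gamma}N_\delta(\eta[0,1]),
    \qquad \delta := \ep(t-s)^{-1/d_\gamma},
\]
so the hypothesis $\ep<(t-s)^{1/d_\gamma}$ becomes $\delta<1$. The claim thus reduces to showing $\sup_{0<\delta<1}\Exp[(\delta^{d_\gamma}N_\delta(\eta[0,1]))^p]\leq C_p$.

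The base case is $\Exp[N_1(\eta[0,1])^p]<\infty$ for every $p\geq 1$. For $K\in\N$, take deterministic partition points $t_k := k/K$ and let
\[
    \sigma_k := \inf\{t>t_k : \eta(t)\notin \mathcal B_1(\eta(t_k);D_{h^\gamma})\} - t_k.
\]
Applying Proposition~\ref{prop:qc-mmspace-invariance}(ii) at the deterministic time $t_k$ and writing $\sigma_k$ as a measurable functional of $(\C,\eta(t_k),D_{h^\gamma},\mu_{h^\gamma},\eta(\cdot+t_k))$ yields $\sigma_k\stackrel{d}{=}\tau_1$ marginally (no joint independence is needed). On the event $\{\sigma_k\geq 1/K\text{ for all }k\}$, the balls $\mathcal B_1(\eta(t_k);D_{h^\gamma})$ cover $\eta[0,1]$, so $N_1(\eta[0,1])\leq K$. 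A union bound together with Proposition~\ref{prop:exit-time} gives
\[
    \Prob\{N_1(\eta[0,1])>K\} \leq K\cdot \Prob\{\tau_1<1/K\} \leq C_q K^{1-q}
\]
for every $q>0$ and all sufficiently large $K$. Choosing $q>p+1$ and integrating $\Exp[N_1(\eta[0,1])^p]=p\int_0^\infty x^{p-1}\Prob\{N_1(\eta[0,1])>x\}\,dx$ yields finiteness of every moment.

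To extend to arbitrary $\delta\in(0,1)$, I will use dyadic subdivision. For $\delta=2^{-k/d_\gamma}$ with $k\in\N$, partition $[0,1]$ into $2^k$ intervals of length $2^{-k}$ and use subadditivity of the covering number:
\[
    N_\delta(\eta[0,1]) \leq \sum_{j=0}^{2^k-1} N_\delta(\eta[j2^{-k},(j+1)2^{-k}]).
\]
By Corollary~\ref{cor:covering-number-relation}, each summand has the same law as $N_{\delta\cdot 2^{k/d_\gamma}}(\eta[0,1])=N_1(\eta[0,1])$, and Minkowski's inequality (which does not require independence) gives
\[
    \|N_\delta(\eta[0,1])\|_p \leq 2^k\,\|N_1(\eta[0,1])\|_p = \delta^{-d_\gamma}\,\|N_1(\eta[0,1])\|_p.
\]
For general $\delta\in(0,1)$, monotonicity of $\delta\mapsto N_\delta$ interpolates between consecutive dyadic values at the cost of a factor of $2$. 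The desired $p$-th moment bound follows, and tightness is then immediate from Markov's inequality.

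The one delicate point is that Proposition~\ref{prop:qc-mmspace-invariance} provides stationarity only at deterministic times, not at stopping times, which precludes a direct strong-Markov-style analysis of successive exit times. The remedy is to rely on union bounds over the fixed partition points $t_k=k/K$ in the base case; the superpolynomial decay in Proposition~\ref{prop:exit-time} is more than sufficient to absorb the $K$ union-bound terms.
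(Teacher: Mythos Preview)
Your proof is correct and follows essentially the same approach as the paper. The paper separates out the base case $\Exp[(N_1(\eta[0,1]))^p]<\infty$ as a lemma with exactly your union-bound argument over the deterministic partition $t_k=k/K$, and then extends to general $\ep\in(0,1)$ by partitioning $[0,1]$ into $\lfloor\ep^{-d_\gamma}\rfloor+1$ intervals of length $\ep^{d_\gamma}$ (so that each rescaled covering number has exactly the law of $N_1(\eta[0,1])$) and applying Jensen's inequality; your dyadic subdivision with Minkowski's inequality and monotone interpolation is a cosmetic variant of this step.
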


Proposition~\ref{prop:covering-num-moment-bound} is a consequence of the following lemma together with a scaling argument.

\begin{lem} \label{lem:radius1-bound}
For each $p\geq 1$, the number of LQG balls of radius 1 needed to cover $\eta[0,1]$ satisfies
\begin{equation}
\Exp[(N_1(\eta[0,1]))^p] < \infty .
\end{equation} 
\end{lem}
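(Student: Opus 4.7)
The plan is to reduce the finite moments of $N_1(\eta[0,1])$ to the superpolynomial lower-tail bound on $\tau_1$ in Proposition~\ref{prop:exit-time}, using the scaling and stationarity of the quantum cone decorated with space-filling SLE established in Proposition~\ref{prop:qc-mmspace-invariance}. For each positive integer $n$, partition $[0,1]$ into the subintervals $I_k := [(k-1)/n, k/n]$ for $k = 1, \dots, n$. If every segment $\eta(I_k)$ has $D_{h^\gamma}$-diameter less than $1$, then $\eta(I_k) \subseteq \mathcal{B}_1(\eta((k-1)/n); D_{h^\gamma})$ for each $k$, and hence $N_1(\eta[0,1]) \leq n$. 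Therefore
\begin{equation*}
    \{N_1(\eta[0,1]) > n\} \;\subseteq\; \bigcup_{k=1}^n \bigl\{ \mathrm{diam}_{D_{h^\gamma}}(\eta(I_k)) \geq 1 \bigr\},
\end{equation*}
reducing the problem to controlling the upper tail of these segment diameters.

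Set $Y_T := \mathrm{diam}_{D_{h^\gamma}}(\eta[0,T])$. The key observation is that $\{Y_T > 2\} \subseteq \{\tau_1 \leq T\}$: if $s, s' \in [0,T]$ satisfy $D_{h^\gamma}(\eta(s), \eta(s')) > 2$, then since $\eta(0) = 0$, the triangle inequality forces at least one of $D_{h^\gamma}(0, \eta(s))$ or $D_{h^\gamma}(0, \eta(s'))$ to exceed $1$, whence $\tau_1 \leq \max(s,s') \leq T$. Proposition~\ref{prop:exit-time} thus yields $\Prob\{Y_T > 2\} = o(T^p)$ for every $p > 0$ as $T \to 0$. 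Combining this with the scale invariance $Y_T \stackrel{d}{=} T^{1/d_\gamma} Y_1$ from Proposition~\ref{prop:qc-mmspace-invariance}(i) (this is the scaling consistent with Corollary~\ref{cor:covering-number-relation}), setting $T = (2/R)^{d_\gamma}$ gives
\begin{equation*}
    \Prob\{Y_1 > R\} \;=\; \Prob\{Y_T > 2\} \;=\; o(R^{-p}) \quad \text{for every } p > 0.
\end{equation*}

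By the stationarity in Proposition~\ref{prop:qc-mmspace-invariance}(ii) applied at the fixed time $t = (k-1)/n$, each diameter $\mathrm{diam}_{D_{h^\gamma}}(\eta(I_k))$ has the same law as $Y_{1/n} \stackrel{d}{=} n^{-1/d_\gamma} Y_1$. A union bound then gives
\begin{equation*}
    \Prob\{N_1(\eta[0,1]) > n\} \;\leq\; n\, \Prob\bigl\{Y_1 \geq n^{1/d_\gamma}\bigr\} \;=\; o(n^{-p})
\end{equation*}
for every $p > 0$, which immediately yields $\Exp[N_1(\eta[0,1])^p] < \infty$ for all $p \geq 1$. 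The only substantive step is the triangle-inequality reduction from diameter to exit time in the second paragraph; the remainder consists of routine applications of scaling, stationarity, and a union bound.
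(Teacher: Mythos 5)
Your proof is correct and follows essentially the same route as the paper's: partition $[0,1]$ into $n$ equal $\mu_{h^\gamma}$-mass segments, use the stationarity and scaling of Proposition~\ref{prop:qc-mmspace-invariance} to reduce each segment's bad event to the lower tail of $\tau_1$, apply Proposition~\ref{prop:exit-time}, and conclude with a union bound. The only cosmetic difference is that you pass through segment diameters and a factor-of-$2$ triangle inequality, whereas the paper notes directly that the event $\eta[k/K,(k+1)/K]\subset\mathcal B_1(\eta(k/K);D_{h^\gamma})$ is, after translation, exactly $\{\tau_1>1/K\}$; both versions are fine.
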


\begin{proof}
Let $K$ be a positive integer. For each integer $0 \leq k < K$, define the event
    \begin{equation} 
        E_{k,K} = \left\{ \eta \Big[ \frac{k}{K}, \frac{k+1}{K} \Big] \subset \mathcal B_1\big(\eta(k/K);D_{h^\gamma}\big) \right\}. 
    \end{equation}
    Clearly,
    \begin{equation}  
        \bigcap_{k=0}^{K-1} E_{k,K} \subset \left\{ N_1(\eta[0,1]) \leq K\right\}.  
    \end{equation}
    By Proposition~\ref{prop:qc-mmspace-invariance},
    \begin{equation}  
        \Prob(E_{k,K}) = \Prob (E_{0,K}) = \Prob\{\tau_1 > 1/K\} \quad \text{ for every } k . 
    \end{equation}
    Then
    \begin{equation} 
        \Prob \{ N_1(\eta[0,1]) > K \} \leq \sum_{k=0}^{K-1} \Prob(E_{k,K}^\complement) = K \cdot \Prob \{\tau_1 \leq 1/K\} .
    \end{equation}
    By Proposition~\ref{prop:exit-time}, it follows that $\Prob\{N_1(\eta[0,1])>K\}$ decays faster than any negative power of $K$ as $K\rightarrow \infty$. Therefore,
    \begin{equation}  
    \Exp[(N_1(\eta[0,1]))^p] \leq \sum_{K=1}^\infty ((K+1)^p - K^p) \cdot \Prob \{N_1(\eta[0,1]) \geq K\} <\infty. 
    \end{equation}
\end{proof}

\begin{proof}[Proof of Proposition~\ref{prop:covering-num-moment-bound}]
    It suffices to find a constant $C_p<\infty $ such that $\sup_{0 < \ep < 1} \Exp[(\ep^{d_\gamma} N_\ep(\eta[0,1])^p] \leq C_p$. Indeed, by Corollary~\ref{cor:covering-number-relation},
    \begin{equation} 
        \sup_{0 < \ep < |t-s|^{1/d_\gamma}} \Exp[(\ep^{d_\gamma} N_\ep(\eta[s,t]))^p] \stackrel{d}{=} |t-s|^p \left( \sup_{0 < u < 1} \Exp[(u^{d_\gamma} N_u(\eta[0,1])^p] \right).  
    \end{equation}
    We thus restrict our attention to uniform $L^p$-bounds for $\{\ep^{d_\gamma} N_\ep(\eta[0,1])\}_{0 < \ep < 1}$.
    
    For each $\ep \in (0,1)$,
    \begin{equation} 
        N_\ep(\eta[0,1]) \leq \sum_{k=0}^{\lfloor \ep^{-d_\gamma} \rfloor } N_\ep(\eta [k\ep^{d_\gamma}, (k+1)\ep^{d_\gamma} ] ). 
    \end{equation}
    By Corollary~\ref{cor:covering-number-relation}, for every integer $k$,
    \begin{equation} 
        N_1(\eta[0,1]) \stackrel{d}{=} N_\ep(\eta[k\ep^{d_\gamma}, (k+1)\ep^{d_\gamma}]). 
    \end{equation}
    Hence, by Jensen's inequality,
    \begin{equation} \label{eqn:pth-moment-jensen}
        \begin{aligned} \Exp\left[(N_\ep(\eta[0,1]))^p\right] &\leq (\ep^{-d_\gamma}+1)^{p-1} \sum_{k=0}^{\lfloor \ep^{-d_\gamma} \rfloor } \Exp \left[(N_\ep(\eta [k\ep^{d_\gamma}, (k+1)\ep^{d_\gamma} ] ))^p\right] \\ &\leq (\ep^{-d_\gamma}+1)^p \Exp[(N_1(\eta[0,1]))^p].  \end{aligned}
    \end{equation}
    By Lemma~\ref{lem:radius1-bound}, $\Exp[(N_1(\eta[0,1]))^p]$ is a finite constant depending only on $p$ and $\gamma$. Multiplying both sides of~\eqref{eqn:pth-moment-jensen} by $\ep^{d_\gamma p}$, we obtain $\Exp[(\ep^{d_\gamma}N_\ep(\eta[0,1]))^p] \leq 2\Exp[(N_1(\eta[0,1]))^p]=:C_p$. 
\end{proof}

\subsection{Further LQG metric estimates for space-filling SLE}

We record two consequences of Proposition~\ref{prop:exit-time}, which we do not use elsewhere in this paper but are of independent interest.
We first show that for each $s<t$, the $D_{h^\gamma}$-diameter of $\eta[s,t]$, which we denote by $\mathrm{diam}(\eta[s,t];D_{h^\gamma})$, has finite positive moments of all orders. The other result is the first half of Theorem~\ref{thm:sle-holder}, that for every exponent less than $1/d_\gamma$, the space-filling SLE$_{\kappa'}$ curve $\eta$ parameterized by $\mu_{h^\gamma}$ is almost surely locally H\"older continuous with respect to $D_{h^\gamma}$.

For completeness, we also prove that all negative moments of $\mathrm{diam}(\eta[s,t];D_{h^\gamma})$ are finite and that $\eta$ is almost surely not locally H\"older continuous for exponents greater than $1/d_\gamma$. These complementary results follow from the following extension of Theorem~\ref{thm:metric-ball-volume} to the $\gamma$-quantum cone. 

\begin{lem}\label{lem:qc-ball-volume}
    Let $h^\gamma$ be the field of a $\gamma$-quantum cone under the circle average embedding. For $\zeta \in (0,1)$ and $k\geq 1$, there exists a constant $C_{k,\zeta}$ such that for all $r\in (0,1)$,
    \begin{equation}\label{eqn:qc-ball-vol-moments}
        \Exp[(\mu_{h^\gamma}(\ud \cap \mathcal B_r(0;D_{h^\gamma})))^k] \leq C_{k,\zeta} r^{kd_\gamma - \zeta}.
    \end{equation}
    Moreover, for any compact set $K \subset \ud$ and $\zeta>0$, we almost surely have that 
    \begin{equation}\label{eqn:qc-ball-vol-sup}
        \sup_{r\in(0,1)} \sup_{z\in K} \frac{\mu_{h^\gamma}(\ud \cap \mathcal B_r(z;D_{h^\gamma}))}{r^{d_\gamma - \zeta}}<\infty.
    \end{equation}
\end{lem}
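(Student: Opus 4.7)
The plan is to combine the scale invariance of the $\gamma$-quantum cone (Proposition~\ref{prop:qc-mmspace-invariance}(i)), the Euclidean-containment estimate Lemma~\ref{lem:lqg-euclidean-balls-mutual-inclusion}(i), standard Gaussian multiplicative chaos moment estimates, and the whole-plane GFF estimate Theorem~\ref{thm:metric-ball-volume}. Throughout I would use the elementary observation $h^\gamma|_{\ud} \stackrel{d}{=} (h - \gamma \log|\cdot|)|_{\ud}$, where $h$ is a whole-plane GFF normalized with $h_1(0) = 0$: this follows from Definition~\ref{def:quantum-cone}, since the circle-average drifts agree on $\ud$ and the lateral parts have the same law independently of the circle-average part (Lemma~\ref{lem:GFF-orthogonal-decomposition}).

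For the moment bound~\eqref{eqn:qc-ball-vol-moments}, I would first apply Proposition~\ref{prop:qc-mmspace-invariance}(i) with $s = r^{-d_\gamma}$. Since $\mu_{h^\gamma}(\mathcal B_r(0;D_{h^\gamma}))$ is an isometry invariant of the pointed metric measure space $(\C,0,D_{h^\gamma},\mu_{h^\gamma})$, this yields the distributional identity
\[ \mu_{h^\gamma}(\mathcal B_r(0;D_{h^\gamma})) \stackrel{d}{=} r^{d_\gamma}\,\mu_{h^\gamma}(\mathcal B_1(0;D_{h^\gamma})), \]
which reduces the problem (with room to spare on $\zeta$) to showing $\Exp[\mu_{h^\gamma}(\mathcal B_1(0;D_{h^\gamma}))^k] < \infty$ for each $k \geq 1$. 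I would establish this finiteness by combining Lemma~\ref{lem:lqg-euclidean-balls-mutual-inclusion}(i), which gives superpolynomial-in-$R$ decay of $\Prob\{\mathcal B_1(0;D_{h^\gamma}) \not\subset B_R(0)\}$, with standard GMC moment bounds for $\mu_{h^\gamma}$ on bounded Euclidean regions. Decomposing the ball mass along dyadic Euclidean annuli $B_{2^{n+1}}(0) \setminus B_{2^n}(0)$ and applying H\"older's inequality termwise should let me absorb the polynomially growing annular moments into the fast-decaying containment probability, yielding a summable series.

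For the a.s.~bound~\eqref{eqn:qc-ball-vol-sup}, I would split $K = K_1 \cup K_2$ with $K_1 = K \setminus B_\delta(0)$ and $K_2 = K \cap \overline{B_\delta(0)}$ for a small $\delta > 0$. On $K_1$, which is compact in $\ud \setminus \{0\}$, the function $-\gamma \log|\cdot|$ is continuous and bounded, so $h^\gamma$ locally has the law of a whole-plane GFF plus a continuous function; Theorem~\ref{thm:metric-ball-volume} together with a Weyl-scaling comparison between $D_h$ and $D_{h^\gamma}$ then gives the desired bound uniformly in $z \in K_1$ and $r \in (0,1)$. For $z \in K_2$, I would use the spatial scale invariance Proposition~\ref{prop:quantum-cone-scale-invariance} to transport unit-scale estimates to each dyadic Euclidean annulus around $0$, combined with a finite-net covering argument in $z$ at each scale and Borel--Cantelli along dyadic values of $r$; the special point $z = 0$ is handled directly by~\eqref{eqn:qc-ball-vol-moments} via Markov's inequality and Borel--Cantelli.

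The main obstacle will be obtaining uniformity over $z$ near $0$ in~\eqref{eqn:qc-ball-vol-sup}, since Theorem~\ref{thm:metric-ball-volume} is available only for the whole-plane GFF (with no log singularity), and naive absolute-continuity arguments fail uniformly as $z \to 0$. The scale invariance of the quantum cone is the essential tool in that region: it ensures that at every scale near $0$ the local geometry is identical in distribution, allowing me to patch annular estimates into a uniform bound, with the $\zeta$ loss in the exponent absorbing the standard Markov--Borel--Cantelli cost of passing from moments to a pointwise-in-$(z,r)$ almost sure statement.
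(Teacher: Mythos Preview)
Your scale-invariance reduction for~\eqref{eqn:qc-ball-vol-moments} is correct and elegant: Proposition~\ref{prop:qc-mmspace-invariance}(i) does give $\mu_{h^\gamma}(\mathcal B_r(0;D_{h^\gamma})) \stackrel{d}{=} r^{d_\gamma}\,\mu_{h^\gamma}(\mathcal B_1(0;D_{h^\gamma}))$, and since $\mu_{h^\gamma}(\ud\cap\mathcal B_r)\leq \mu_{h^\gamma}(\mathcal B_r)$ this would even yield the bound with no $\zeta$-loss, provided $\Exp[\mu_{h^\gamma}(\mathcal B_1(0;D_{h^\gamma}))^k]<\infty$.

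The gap is in how you propose to establish this finiteness. Your plan is to decompose along Euclidean annuli and apply H\"older against ``standard GMC moment bounds for $\mu_{h^\gamma}$ on bounded Euclidean regions''. But for a GFF-type field, the $\gamma$-GMC mass of a bounded open set has finite $p$th moment only for $p<4/\gamma^2$; for $\gamma>\sqrt 2$ this threshold is already below $2$. So the annular moments you need are not ``polynomially growing'' --- they are infinite once $k\geq 4/\gamma^2$, and the H\"older step collapses. (A secondary issue: Lemma~\ref{lem:lqg-euclidean-balls-mutual-inclusion}(i) is stated for $\ep,r\in(0,1)$, so the superpolynomial containment of $\mathcal B_1$ in $B_R$ for large $R$ is not literally available there, though this is fixable.)

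The point is that finiteness of all positive moments of $\mu_{h^\gamma}(\mathcal B_1(0;D_{h^\gamma}))$ is genuinely a statement about the interaction between the LQG \emph{metric} and the LQG \emph{measure}: metric balls avoid the thick points that make Euclidean-ball GMC masses heavy-tailed. This is exactly what the paper imports from \cite[Lemmas~3.16 and~3.18]{afs-metric-ball}, whose proofs compare $\mu_h(\mathcal B_r)$ to circle averages and use that a region of large mass also has large $D_h$-diameter. No amount of Euclidean truncation plus raw GMC moments will substitute for that input. For~\eqref{eqn:qc-ball-vol-sup}, the paper then simply reruns the \cite{afs-metric-ball} passage from moments to the almost sure supremum bound; once~\eqref{eqn:qc-ball-vol-moments} is in hand (with the cited lemmas giving moment bounds uniformly over centers in $\ud$), your near-$0$/away-from-$0$ split is unnecessary.
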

\begin{proof}
Recall that if $h$ is a whole-plane GFF with $h_1(0) = 0$, then $(h - \gamma \log|\cdot|)|_\ud$ agrees in law with $h^\gamma|_\ud$. Hence, \eqref{eqn:qc-ball-vol-moments} follows from the proofs of \cite[Lemmas 3.16 and 3.18]{afs-metric-ball}. The proof of \eqref{eqn:qc-ball-vol-sup} from \eqref{eqn:qc-ball-vol-moments} is analogous to that of \eqref{eqn:ball-volume-asymptotics} in \cite{afs-metric-ball}.
\end{proof}

We now show that the $D_{h^\gamma}$-diameter of $\eta[s,t]$ has finite moments of all orders.
\begin{prop} \label{prop:sle-diameter}
    Let $s<t$. For each $p\in \R$, there exists a constant $C_p>0$ only depending on $p$ such that 
    \begin{equation}
        \Exp[(\mathrm{diam}(\eta[s,t];D_{h^\gamma}))^p] \leq C_p |t-s|^{p/d_\gamma}.
    \end{equation}
\end{prop}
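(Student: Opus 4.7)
The plan is to reduce to the case $s=0$, $t=1$ using the scaling and stationarity of the quantum-cone-decorated-by-SLE configuration. Combining parts~(i) and~(ii) of Proposition~\ref{prop:qc-mmspace-invariance} with the observation that $D_{h^\gamma}$-diameters scale linearly under rescaling of the metric yields the distributional identity
\[
\mathrm{diam}(\eta[s,t]; D_{h^\gamma}) \stackrel{d}{=} (t-s)^{1/d_\gamma}\,\mathrm{diam}(\eta[0,1]; D_{h^\gamma}).
\]
It therefore suffices to prove $\Exp[\mathrm{diam}(\eta[0,1]; D_{h^\gamma})^p] < \infty$ for every $p \in \R$, after which one takes $C_p$ equal to this expectation.

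For $p > 0$, I would bound the diameter by twice the maximum $D_{h^\gamma}$-distance from $\eta(0) = 0$. The event $\{\sup_{u \in [0,1]} D_{h^\gamma}(0,\eta(u)) > r\}$ coincides with $\{\tau_r \leq 1\}$, where $\tau_r$ is the exit time from \eqref{eqn:sle-dist-time}. The scaling relation \eqref{eqn:exit-time-scaling} and the superpolynomial lower tail in Proposition~\ref{prop:exit-time} then give
\[
\Prob\{\mathrm{diam}(\eta[0,1];D_{h^\gamma}) > 2r\} \leq \Prob\{\tau_1 \leq r^{-d_\gamma}\},
\]
which decays faster than any polynomial in $r^{-1}$ as $r \to \infty$. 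Integrating this tail produces finite moments of every positive order.

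For $p < 0$, I need $\Prob\{\mathrm{diam}(\eta[0,1];D_{h^\gamma}) \leq r\}$ to decay faster than any polynomial as $r \to 0$. Since $\eta(0) = 0$, a diameter bound of $r$ forces $\eta[0,1] \subset \mathcal{B}_r(0;D_{h^\gamma})$ and hence $\mu_{h^\gamma}(\mathcal{B}_r(0;D_{h^\gamma})) \geq 1$. By Lemma~\ref{lem:lqg-euclidean-balls-mutual-inclusion}(i) applied with Euclidean radius one, $\Prob\{\mathcal{B}_r(0;D_{h^\gamma}) \not\subset \ud\}$ decays polynomially of any order, and on the complementary event $\mu_{h^\gamma}(\ud \cap \mathcal{B}_r(0;D_{h^\gamma})) \geq 1$. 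Markov's inequality combined with the moment bound \eqref{eqn:qc-ball-vol-moments} of Lemma~\ref{lem:qc-ball-volume} then gives
\[
\Prob\{\mu_{h^\gamma}(\ud \cap \mathcal{B}_r(0;D_{h^\gamma})) \geq 1\} \leq C_{k,\zeta}\, r^{k d_\gamma - \zeta}
\]
for any $k \in \N$ and $\zeta \in (0,1)$, so taking $k$ large yields the required decay. The one subtlety is that Lemma~\ref{lem:qc-ball-volume} only controls the volume of $\ud \cap \mathcal{B}_r(0;D_{h^\gamma})$ rather than the entire metric ball, so one must first reduce to the event $\mathcal{B}_r(0;D_{h^\gamma}) \subset \ud$ via Lemma~\ref{lem:lqg-euclidean-balls-mutual-inclusion}(i); once this reduction is made, the rest is routine.
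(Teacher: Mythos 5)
Your argument is correct and follows essentially the same route as the paper's proof: the same reduction to $\eta[0,1]$ via Proposition~\ref{prop:qc-mmspace-invariance}, the exit-time tail bound (Proposition~\ref{prop:exit-time} with the scaling \eqref{eqn:exit-time-scaling}) for positive moments, and the combination of Lemma~\ref{lem:lqg-euclidean-balls-mutual-inclusion}(i) with the ball-volume moment bound \eqref{eqn:qc-ball-vol-moments} for negative moments. The only cosmetic point is that Lemma~\ref{lem:lqg-euclidean-balls-mutual-inclusion}(i) is stated for Euclidean radius in $(0,1)$, so one should apply it with, say, radius $1/2$ rather than $1$, exactly as the paper does.
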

\begin{proof}
    By Proposition~\ref{prop:qc-mmspace-invariance}, for each $s<t$,
    \begin{equation}
        \mathrm{diam}(\eta[s,t];D_{h^\gamma}) \stackrel{d}{=} \mathrm{diam}(\eta[0,t-s];D_{h^\gamma}) \stackrel{d}{=} (t-s)^{1/d_\gamma} \mathrm{diam}(\eta[0,1];D_{h^\gamma}).
    \end{equation}
    Hence, it suffices to show that $\Exp[ (\mathrm{diam}(\eta[0,1];D_{h^\gamma}))^p ]<\infty$ for all $p$.

    Suppose $p>0$. For $r>0$, let $\tau_r$ be as in~\eqref{eqn:sle-dist-time}. Note that for $r>0$, if $\tau_r > 1$ then $\eta[0,1] \subset \mathcal B_r(0;D_{h^\gamma})$ and hence $\mathrm{diam}(\eta[0,1];D_{h^\gamma}) < 2r$. By \eqref{eqn:exit-time-scaling},
    \begin{equation}
        \begin{aligned} \Exp[ ((\mathrm{diam}(\eta[0,1];D_{h^\gamma}))^p ] &= \int_0^\infty \Prob\{ \mathrm{diam}(\eta[0,1];D_{h^\gamma}) \geq x^{1/p}\} \,dx \\
        &\leq \int_0^\infty \Prob \{ \tau_{(x^{1/p})/2} \leq 1\}\,dx = \int_0^\infty \Prob\{\tau_1 \leq 2^{-d_\gamma} x^{d_\gamma/p}\}\,dx 
        \end{aligned}
    \end{equation}
    This integral is finite since by Proposition~\ref{prop:exit-time}, $\Prob\{\tau_1 \leq 2^{-d_\gamma} x^{d_\gamma/p}\}$ decays superpolynomially as $x\to 0$.
    
    Now suppose $p<0$. If $\mathrm{diam}(\eta[0,1];D_{h^\gamma}) < r$, then $\mu_{h^\gamma}(\mathcal B_r(0;D_{h^\gamma})) \geq 1$. Then, by Proposition~\ref{prop:qc-mmspace-invariance},
    \begin{equation}\label{eqn:diameter-negative-exp}
        \begin{aligned}
        &\Exp[ ((\mathrm{diam}(\eta[0,1];D_{h^\gamma}))^p ] = \int_0^\infty \Prob\{((\mathrm{diam}(\eta[0,1];D_{h^\gamma}))^p>x\}\,dx  \\&= \int_0^\infty \Prob\{ \mathrm{diam}(\eta[0,1];D_{h^\gamma})< x^{1/p}\} \,dx 
        \leq \int_0^\infty \Prob \{ \mu_{h^\gamma}(\mathcal B_{x^{1/p}}(0;D_{h^\gamma})) \geq 1 \}\,dx  \\
        &\leq 1 + \int_1^\infty \Prob\{ \mu_{h^\gamma}(\ud \cap \mathcal B_{x^{1/p}}(0;D_{h^\gamma})) \geq 1\}\,dx + \int_1^\infty \Prob\{\mathcal B_{x^{1/p}}(0;D_{h^\gamma}) \not \subset \ud\}\,dx.
        \end{aligned}
    \end{equation}
    From \eqref{eqn:qc-ball-vol-moments} and Lemma~\ref{lem:lqg-euclidean-balls-mutual-inclusion} (with $r=1/2$, say, $\ep  = x^{1/p}$, and $k$ in place of $p$), for each $k\geq 1$ and each $\zeta > 0$, there are constants $C_{k,\zeta}  , C_k > 0$ such that for all $x>1$,
    \begin{equation}
        \Prob\{ \mu_{h^\gamma}(\ud \cap \mathcal B_{x^{1/p}}(0;D_{h^\gamma})) \geq 1\} \leq C_{k,\zeta} x^{(kd_\gamma - \zeta)/p} \quad \text{and} \quad 
        \Prob\{\mathcal B_{x^{1/p}}(0;D_{h^\gamma}) \not\subset \ud \}  \leq C_k x^{k/p}.
    \end{equation}
    We conclude that the right-hand side of \eqref{eqn:diameter-negative-exp} is finite by choosing a sufficiently large $k$. 
\end{proof}

Proposition~\ref{prop:sle-diameter} gives a quick proof of Theorem~\ref{thm:sle-holder}, on the H\"older continuity of the whole-plane space-filling SLE$_{\kappa'}$ curve $\eta$ with respect to the LQG metric $D_{h^\gamma}$.

\begin{proof}[Proof of Theorem~\ref{thm:sle-holder}]
    Let us first show that almost surely, $\eta$ is locally H\"older continuous for any exponent less than $1/d_\gamma$. We proved in Proposition~\ref{prop:sle-diameter} that for each $p> 0$, there exists a constant $C_p>0$ such that for all $s<t$,
    \begin{equation}
        \Exp[(D_{h^\gamma}(\eta(s),\eta(t)))^p] \leq \Exp[(\mathrm{diam}(\eta[s,t];D_{h^\gamma}))^p] \leq C_p |t-s|^{p/d_\gamma}.
    \end{equation}
    The claim now follows from the Kolmogorov continuity theorem as we let $p\to \infty$.\footnote{The Kolmogorov continuity theorem is usually stated for a stochastic process $X_t$ taking values on a fixed metric space $(S,d)$. However, the H\"older continuity part of the theorem merely requires that the real-valued random variables $d(X_s,X_t)$ are measurable and have appropriate uniform moments.}

    On the other hand, for every $\zeta\in (0,d_\gamma)$, Lemma~\ref{lem:qc-ball-volume} (plus the fact that $\eta$ is parameterized by $\mu_{h^\gamma}$-mass) implies that there a.s.\ exists a random $c_\zeta>0$ such that $\mathrm{diam}(\eta[s,t];D_{h^\gamma}) > c_\zeta (t-s)^{1/(d_\gamma - \zeta)}$ for all $s<t$ such that $\eta[s,t]\subset \ud$. In particular, $\eta$ is almost surely not H\"older continuous with exponent greater than $1/d_\gamma$ in any neighborhood of 0. By Proposition~\ref{prop:qc-mmspace-invariance}, $\eta$ is almost surely not H\"older continuous with any exponent greater than $1/d_\gamma$ in any neighborhood of $t \in \Q$, and therefore not in any bounded open interval.
\end{proof}

\section{Minkowski content of space-filling SLE segments}\label{sec:minkowski-content-sle-segments} 

\begin{rem*}
    We assume $\kappa' = 16/\gamma^2$ throughout this section, as the results in this section rely on Proposition~\ref{prop:left-right-independent-mmspace}. Recall the shorthand
    $N_\ep(\eta[s,t]) = N_\ep(\eta[s,t];D_{h^\gamma})$.
\end{rem*}

We have shown in Proposition~\ref{prop:covering-num-moment-bound} that for each $s<t$, the random variables $\ep^{d_\gamma} N_\ep(\eta[s,t])$ admit subsequential limits in law as $\ep\rightarrow 0$. We need to rule out further the possibility that the subsequential limit is zero, which is the purpose of Proposition~\ref{prop:covering-number-lower-bound}. Unlike in many results regarding fractal dimensions, where the lower bound is more difficult to prove than the upper bound, this proposition has a much shorter proof than Proposition~\ref{prop:covering-num-moment-bound}. This is thanks to the independence of the metric measure space structure on disjoint space-filling SLE segments, which comes from Proposition~\ref{prop:left-right-independent-mmspace}.

\begin{prop}\label{prop:covering-number-lower-bound}
    There exists a deterministic constant $c = c(\gamma)>0$ such that 
    \begin{equation}\label{eqn:covering-number-lower-bound}
        \lim_{\ep \to 0} \Prob\{\ep^{d_\gamma} N_\ep(\eta[0,1]) > c\} = 1.
    \end{equation}
\end{prop}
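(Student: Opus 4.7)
The plan is to combine a mass-based lower bound on the covering number with the independence of space-filling SLE segments coming from mating-of-trees, and then conclude via a concentration argument. By Corollary~\ref{cor:covering-number-relation}, $\ep^{d_\gamma} N_\ep(\eta[0,1]) \stackrel{d}{=} T^{-1} N_1(\eta[0, T])$ with $T := \ep^{-d_\gamma}$, so it suffices to find $c = c(\gamma) > 0$ with $\Prob\{N_1(\eta[0, T]) > c T\} \to 1$ as $T \to \infty$.

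Given any cover $\{\mathcal B_1(z_i)\}_{i=1}^N$ of $\eta[0,T]$ with $N = N_1(\eta[0, T])$, I first replace each $z_i$ with a point $y_i = \eta(s_i) \in \mathcal B_1(z_i) \cap \eta[0, T]$ (discarding any ball whose intersection with $\eta[0,T]$ is empty), using $\mathcal B_1(z_i) \subset \mathcal B_2(y_i)$ by the triangle inequality. Fix $M > 0$ and set $G := \{s \in [0, T] : \mu_{h^\gamma}(\mathcal B_2(\eta(s); D_{h^\gamma})) \leq M\}$. Since $\eta(G) \subset \bigcup_{i : s_i \in G} \mathcal B_2(y_i)$,
\begin{equation*}
    |G| = \mu_{h^\gamma}(\eta(G)) \leq \sum_{i : s_i \in G} \mu_{h^\gamma}(\mathcal B_2(y_i)) \leq N M,
\end{equation*}
which gives the key bound $N_1(\eta[0, T]) \geq |G|/M$. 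By Proposition~\ref{prop:qc-mmspace-invariance}(ii), $\mu_{h^\gamma}(\mathcal B_2(\eta(s); D_{h^\gamma})) \stackrel{d}{=} \mu_{h^\gamma}(\mathcal B_2(0; D_{h^\gamma}))$ for every $s \in [0, T]$, and the latter is a.s.\ finite. Choose $M$ so that $p_M := \Prob\{\mu_{h^\gamma}(\mathcal B_2(0; D_{h^\gamma})) > M\} < 1/4$; Fubini then yields $\Exp[|G|/T] = 1 - p_M > 3/4$.

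It remains to show that $|G|/T > 1/2$ with probability tending to $1$ as $T \to \infty$, which together with the preceding bound gives $N_1(\eta[0, T]) > T/(2M)$ with $c := 1/(2M)$. For this I plan to use the mating-of-trees independence: Proposition~\ref{prop:left-right-independent-mmspace} applied at each integer time, together with stationarity, gives that the curve-decorated quantum surfaces on the integer segments $\eta[k, k+1]$ are i.i.d.\ across $k$. Using the superpolynomial exit time bound from Proposition~\ref{prop:exit-time} together with the volume moment estimates from Lemma~\ref{lem:qc-ball-volume}, the event $\{\mu_{h^\gamma}(\mathcal B_2(\eta(s); D_{h^\gamma})) > M\}$ can be approximated, up to an error that vanishes uniformly in $s$ as $L \to \infty$, by an event depending only on $\eta|_{[s - L, s + L]}$. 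For $|s - u| > 2L$ the localized events are then measurable with respect to $\eta$ on disjoint time intervals and hence independent, so applying the weak law of large numbers to the iid subsequences indexed by residues modulo $2L + 2$ yields $|G|/T \to 1 - p_M$ in probability, completing the proof.

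The main obstacle is the localization step: even when $\mathcal B_2(\eta(s); D_{h^\gamma})$ has small mass, the space-filling curve $\eta$ can exit and re-enter this ball from $\mu_{h^\gamma}$-times far from $s$, so the event that the mass exceeds $M$ is not automatically a function of $\eta|_{[s-L, s+L]}$. Controlling the probability of such late re-entries, via the exit time estimate combined with moment bounds for ball volumes, is the technical heart of the argument.
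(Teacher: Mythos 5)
Your proposal is incomplete: the step you yourself flag as ``the technical heart'' is a genuine gap, and it is not a routine one. The independence available in this setting is Proposition~\ref{prop:left-right-independent-mmspace}, which is independence of the \emph{internal} curve-decorated metric measure structures $(U_{t\pm}, D_{h^\gamma}^{U_{t\pm}}, \mu_{h^\gamma}|_{U_{t\pm}}, \eta)$ on the two sides of a single time $t$. It is \emph{not} independence of the embedded curve $\eta$ restricted to disjoint time intervals: as maps into $\C$ these restrictions are highly dependent (they must fit together into one space-filling curve, and both are functions of the same field), so the sentence ``the localized events are measurable with respect to $\eta$ on disjoint time intervals and hence independent'' has no justification. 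To use Proposition~\ref{prop:left-right-independent-mmspace} your event must be determined by the internal structure of a single segment, and $\{\mu_{h^\gamma}(\mathcal B_2(\eta(s);D_{h^\gamma}))>M\}$ is not of this form for two separate reasons: the metric ball is defined via the global metric $D_{h^\gamma}$ (geodesics may leave the segment, and the internal ball only sits inside the global ball, which bounds the mass from the wrong side for the ``good time'' event), and the ball may contain points visited by $\eta$ at times far from $s$, so its mass is not a function of data indexed by $[s-L,s+L]$. Controlling these two effects uniformly in $s$ is exactly what your sketch defers to Proposition~\ref{prop:exit-time} and Lemma~\ref{lem:qc-ball-volume}, but neither gives it directly, and no argument is supplied. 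There is also a smaller slip earlier: the inclusion $\eta(G)\subset\bigcup_{i:s_i\in G}\mathcal B_2(y_i)$ is unjustified, since a good time $\eta(s)$ may be covered only by balls centered at bad points $y_i$; this is fixable (if $\eta(s)\in\mathcal B_2(y_i)$ then $\mathcal B_2(y_i)\subset\mathcal B_4(\eta(s))$, so define $G$ with radius-$4$ balls), but as written the key bound $N_1(\eta[0,T])\geq |G|/M$ does not follow.

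For comparison, the paper's proof avoids localization entirely by choosing an event that is intrinsically one-segment: $E_{n,k}=\{\eta[\tfrac{k-1}{n},\tfrac{k}{n}]\text{ contains a }D_{h^\gamma}\text{-ball of radius }rn^{-1/d_\gamma}\}$. Containing a metric ball is detectable from the internal metric of the segment (a global ball inside the segment is an internal ball), so the indicators are i.i.d.\ Bernoulli by Propositions~\ref{prop:qc-mmspace-invariance} and~\ref{prop:left-right-independent-mmspace}; their success probability $p_r$ tends to $1$ by Proposition~\ref{prop:SLE-includes-Euclidean-ball}; and disjoint contained balls give a packing lower bound $N_{rn^{-1/d_\gamma}}(\eta[0,1])\geq\sum_k\mathbf 1_{E_{n,k}}$, so the law of large numbers finishes the proof. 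If you want to salvage your mass-based route, the cleanest repair is to replace your event by one of this ``internal and monotone'' type rather than trying to approximate the global-ball-mass event by time-localized data.
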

\begin{proof}
    For $r>0$, let $p_r$ be the probability that $\eta[0,1]$ contains a $D_{h^\gamma}$-ball of radius $r$. By Proposition~\ref{prop:SLE-includes-Euclidean-ball}, $\eta[0,1]$ almost surely contains a Euclidean ball, which in turn contains a $D_h$-ball since $D_h$ is a continuous metric. Hence, $p_r\to 1$ as $r\to 0$.  
    
    Fix $r>0$ such that $p_r>0$. For this $r$, define for each positive integer $n$ and integer $k\in[1,n]$ the event
    \begin{equation}
        E_{n,k}:= \left\{\eta\left[\frac{k-1}{n},\frac{k}{n}\right] \text{ contains a } D_{h^\gamma}\text{-ball of radius } r n^{-1/d_\gamma}\right\}. 
    \end{equation}
    By Proposition~\ref{prop:qc-mmspace-invariance}, $\Prob(E_{n,k}) = p_r$ for every $n$ and $k$. Furthermore, $E_{n,1},\dots,E_{n,n}$ are independent by Proposition~\ref{prop:left-right-independent-mmspace}. To see this, for fixed $t\in \R$, define $U_{t-}$ and $U_{t+}$ to be the interiors of $\eta(-\infty,t]$ and $\eta[t,\infty)$, respectively. Then, $E_{n,k}$ is almost surely determined by $(U_{t-}, D_{h^\gamma}^{U_{t-}}, \mu_{h^\gamma}|_{U_{t_-}}, \eta|_{(-\infty,t]})$ if $t \geq \frac{k}{n}$ and by $(U_{t+}, D_{h^\gamma}^{U_{t+}}, \mu_{h^\gamma}|_{U_{t_+}}, \eta|_{[t,\infty)})$ if $t \leq \frac{k-1}{n}$. These two curve-decorated metric measure spaces are independent by Proposition~\ref{prop:left-right-independent-mmspace} combined with the translation invariance of Proposition~\ref{prop:qc-mmspace-invariance}. Choosing $t = k/n$, we see that the random vectors $(\mathbf{1}_{E_{n,1}}, \dots,\mathbf{1}_{E_{n,k}})$ and $(\mathbf{1}_{E_{n,k+1}},\dots,\mathbf{1}_{E_{n,n}})$ are independent. Since this is true for every $k$, we conclude that $\mathbf{1}_{E_{n,1}},\dots,\mathbf{1}_{E_{n,n}}$ are i.i.d.\ Bernoulli random variables with success probability $p_r$.
    
    We now argue that for every $n$,
    \begin{equation}\label{eqn:covering-number-lower-bound-disjoint-balls}
        N_{rn^{-1/d_\gamma}}(\eta[0,1]) \geq \sum_{k=1}^n \mathbf{1}_{E_{n,k}}.
    \end{equation}
    Indeed, if $\sum_{k=1}^n \mathbf{1}_{E_{n,k}} = m$, then there are distinct points $z_1,\dots,z_m \in \eta[0,1]$ such that $D_{h^\gamma}(z_j,z_k) \geq 2rn^{-1/d_\gamma}$ for every $j\neq k$. Such $z_j$ and $z_k$ cannot be within a single $D_{h^\gamma}$-ball of radius $rn^{-1/d_\gamma}$, so $N_{rn^{-1/d_\gamma}}(\eta[0,1]) \geq m$. Hence, \eqref{eqn:covering-number-lower-bound-disjoint-balls} holds. By the law of large numbers,
    \begin{equation}\label{eqn:lower-bd-lim}
        \lim_{n\to \infty} \Prob\left\{ n^{-1} N_{rn^{-1/d_\gamma}}(\eta[0,1]) \geq p_r\right\} = 1.
    \end{equation} 
    
    Let $\ep \in (0,1)$ and set $n_\ep := \lfloor \ep^{-d_\gamma} \rfloor$. Since $N_{r\ep}(\eta[0,1])$ increases as $\ep$ decreases to 0,
    \begin{equation} \label{eqn:lower-bd-scale}
	   (r\ep)^{d_\gamma} N_{r\ep}(\eta[0,1]) \geq r^{d_\gamma} (n_\ep \ep^{d_\gamma}) \frac{N_{r n_\ep^{-1/d_\gamma}}(\eta[0,1])}{n_\ep} . 
    \end{equation} 
	Since $d_\gamma$ is positive, we have $n_\ep \ep^{d_\gamma} \to 1$ as $\ep\to 0$. By combining this with~\eqref{eqn:lower-bd-lim} and \eqref{eqn:lower-bd-scale}, we get
	\[ \lim_{\ep \to 0} \mathbb P\{ (r \ep)^{d_\gamma} N_{r \ep}(\eta[0,1]) \geq r^{d_\gamma} p_r \} = 1. \]
	Since $r$ is a constant which depends only on $\gamma$, this implies \eqref{eqn:covering-number-lower-bound} with $c = r^{d_\gamma} p_r$. 
\end{proof}

Another possibility that we have yet to rule out is that different subsequential limits of $\ep^{d_\gamma} N_\ep(\eta[s,t])$ may take different values. To this end, we replace the rescaling coefficients from $\ep^{-d_\gamma}$ to
\begin{equation}
    \mathfrak b_\ep := \Exp[N_\ep(\eta[0,1])]
\end{equation}
as introduced in \eqref{eqn:scaling-constant}, so that any subsequential limit of $\mathfrak b_\ep^{-1} N_\ep(\eta[0,1])$ has mean 1. Indeed, $\{\mathfrak b_\ep\}_{\ep>0}$ is a sequence of $d_\gamma$-dimensional rescaling coefficients (recall Definition~\ref{def:scaling-coefficients}).

\begin{cor} \label{cor:normalization-constant-comparison}
    There exists a deterministic constant $C>0$ such that for every $\ep \in (0,1)$,
    \[ C^{-1} \ep^{-d_\gamma} < \mathfrak b_\ep < C \ep^{-d_\gamma}. \]
\end{cor}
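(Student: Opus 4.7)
The plan is to derive both inequalities directly from the two main propositions of this section: the upper bound from Proposition~\ref{prop:covering-num-moment-bound} and the lower bound from Proposition~\ref{prop:covering-number-lower-bound}. The only subtlety is that Proposition~\ref{prop:covering-number-lower-bound} gives a lower bound in probability that is asymptotic as $\ep \to 0$, so the lower bound for $\mathfrak{b}_\ep$ needs to be extended uniformly to all of $\ep \in (0,1)$ by handling the bounded-away-from-zero range separately.

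For the upper bound, I would apply Proposition~\ref{prop:covering-num-moment-bound} with $s=0$, $t=1$, and $p=1$. This immediately yields
\[
\mathfrak{b}_\ep = \Exp[N_\ep(\eta[0,1])] \leq C_1 \ep^{-d_\gamma}\quad \text{for all } \ep \in (0,1).
\]

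For the lower bound, fix the constant $c=c(\gamma)>0$ produced by Proposition~\ref{prop:covering-number-lower-bound}. Since $N_\ep(\eta[0,1]) \geq 0$, I would use the trivial inequality $\Exp[\ep^{d_\gamma} N_\ep(\eta[0,1])] \geq c\cdot \Prob\{\ep^{d_\gamma} N_\ep(\eta[0,1]) > c\}$, which by Proposition~\ref{prop:covering-number-lower-bound} yields $\liminf_{\ep\to 0} \ep^{d_\gamma}\mathfrak{b}_\ep \geq c$. Hence there exists $\ep_0 \in (0,1)$ such that $\mathfrak{b}_\ep \geq (c/2)\ep^{-d_\gamma}$ for all $\ep \in (0,\ep_0]$. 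For the remaining range $\ep \in [\ep_0,1)$, I would use the deterministic lower bound $N_\ep(\eta[0,1])\geq 1$ (since $\eta[0,1]$ is nonempty), which gives $\mathfrak{b}_\ep \geq 1 \geq \ep_0^{d_\gamma}\cdot \ep^{-d_\gamma}$. Setting $C := \max(C_1,\,2/c,\,\ep_0^{-d_\gamma})$ then yields the corollary.

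The main obstacle is essentially none: the two propositions of this section do all the work, and the argument is pure bookkeeping to combine an asymptotic lower bound with a deterministic one on a compact range of $\ep$. The only conceptual point worth emphasizing is that the Markov-style conversion from $\Prob\{\ep^{d_\gamma} N_\ep > c\} \to 1$ into $\Exp[\ep^{d_\gamma} N_\ep] \geq c/2$ is valid because of nonnegativity and gives no quantitative dependence on $\ep$ beyond what Proposition~\ref{prop:covering-number-lower-bound} already provides.
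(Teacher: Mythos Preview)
Your proof is correct and follows exactly the approach indicated in the paper, which simply cites Proposition~\ref{prop:covering-num-moment-bound} for the upper bound and Proposition~\ref{prop:covering-number-lower-bound} for the lower bound. You have merely filled in the routine details (the $p=1$ moment bound, the conversion from probability to expectation, and the handling of $\ep\in[\ep_0,1)$) that the paper leaves implicit.
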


The upper bound follows immediately from Proposition~\ref{prop:covering-num-moment-bound} and the lower bound from Proposition~\ref{prop:covering-number-lower-bound}. Note that this corollary is the first part of Proposition~\ref{prop:scaling-constant-properties}.

The following proposition is the main result of this section.

\begin{prop}\label{prop:main-thm-qc-sle}
    Let $\kappa' = 16/\gamma^2$. Then, for each fixed $s<t$, 
    \begin{equation}\label{eqn:sle-minkowski-content}
        \lim_{\ep \to 0} \mathfrak b_\ep^{-1} N_\ep(\eta[s,t]) = t-s \quad \text{in probability. }
    \end{equation}
\end{prop}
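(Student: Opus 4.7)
The plan is to pass to a subsequential weak limit of the joint vector $(\mathfrak b_\ep^{-1}N_\ep(\eta[s,t]))_{s<t\in\Q}$, use the mating-of-trees structure to show this limit assembles into an increasing L\'evy process $Y_t$ with continuous paths, conclude via L\'evy theory that $Y_t=\alpha t$ deterministically, and identify $\alpha=1$ from the normalization $\mathfrak b_\ep=\Exp[N_\ep(\eta[0,1];D_{h^\gamma})]$. Since the limit is deterministic, weak convergence along any subsequence upgrades to convergence in probability of the original family, yielding \eqref{eqn:sle-minkowski-content}.

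First I would establish joint tightness of $(\mathfrak b_\ep^{-1}N_\ep(\eta[s,t]))_{s<t\in\Q}$ using Proposition~\ref{prop:covering-num-moment-bound} together with Corollary~\ref{cor:normalization-constant-comparison}, extract a subsequential limit $X=(X_{[s,t]})$, and read off from Proposition~\ref{prop:qc-mmspace-invariance} that $X_{[s,t]}\stackrel{d}{=}X_{[0,t-s]}$. Next I would show that for rational $r<s<t$,
\begin{equation*}
X_{[r,t]}=X_{[r,s]}+X_{[s,t]}\quad\text{a.s.,}
\end{equation*}
and that the $X_{[s_i,t_i]}$ over pairwise disjoint rational intervals are mutually independent. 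Subadditivity is immediate from $N_\ep(A\cup B)\leq N_\ep(A)+N_\ep(B)$. For the reverse inequality and for independence, the key quantity to control is the number of $D_{h^\gamma}$-balls of radius $\ep$ that straddle the cutpoint $\eta(s)$, i.e., whose centers lie within $D_{h^\gamma}$-distance $\ep$ of the flow lines $\eta^+_{\eta(s)}\cup\eta^-_{\eta(s)}$. These are SLE$_\kappa$-type curves with $\kappa=\gamma^2$, whose LQG Minkowski dimension is strictly less than $d_\gamma$, so the number of such straddling balls divided by $\mathfrak b_\ep$ tends to $0$ in probability. For independence I would additionally replace $N_\ep(\eta[s,t])$ by the internal-metric covering number relative to the interior of $\eta[s,t]$; this internal version factors as a measurable function of the two independent quantum wedges supplied by Proposition~\ref{prop:left-right-independent-mmspace}, while the same straddling bound controls the discrepancy between the internal and global covering numbers.

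Once additivity is in place, I would set $Y_t:=X_{[0,t]}$ for rational $t\geq 0$ and $Y_t:=-X_{[t,0]}$ for rational $t<0$, so that $Y_t-Y_s=X_{[s,t]}$ has stationary and independent increments over rational times. Proposition~\ref{prop:covering-num-moment-bound} and Fatou give $\Exp[|Y_t-Y_s|^p]\leq C_p|t-s|^p$ for every $p\geq 1$, so Kolmogorov--Chentsov extends $Y$ to a continuous process on $\R$, and Proposition~\ref{prop:covering-number-lower-bound} yields strict monotonicity $Y_t-Y_s>0$ a.s.\ for $s<t$. Thus $Y$ is a continuous increasing L\'evy process. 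Any such process is necessarily of the form $\alpha t+\sigma B_t$, and strict monotonicity forces $\sigma=0$. Writing $Y_t=\alpha t$, the uniform $L^p$ bound from Proposition~\ref{prop:covering-num-moment-bound} provides uniform integrability, hence
\begin{equation*}
\alpha=\Exp[Y_1]=\lim_{\ep\to 0}\mathfrak b_\ep^{-1}\Exp[N_\ep(\eta[0,1];D_{h^\gamma})]=1
\end{equation*}
by the very definition \eqref{eqn:scaling-constant} of $\mathfrak b_\ep$. So every subsequential limit of $\mathfrak b_\ep^{-1}N_\ep(\eta[s,t])$ equals the deterministic value $t-s$, which upgrades weak subsequential convergence to convergence in probability.

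The main obstacle is the straddling estimate underlying both additivity and independence. Proposition~\ref{prop:left-right-independent-mmspace} only provides independence of the \emph{internal} LQG geometry across the cut at $\eta(s)$, whereas $N_\ep$ is defined using the global metric $D_{h^\gamma}$; balls whose centers lie within $D_{h^\gamma}$-distance $\ep$ of $\eta^\pm_{\eta(s)}$ simultaneously account for the gap between internal and global covering numbers and for the overcount $N_\ep(\eta[r,s])+N_\ep(\eta[s,t])-N_\ep(\eta[r,t])$. Making the sub-$d_\gamma$ LQG Minkowski dimension of the flow lines $\eta^\pm_{\eta(s)}$ quantitative enough to show these straddling terms are $o(\mathfrak b_\ep)$ in probability, uniformly in $\ep$, is the real technical work; the remaining L\'evy identification step is fairly standard.
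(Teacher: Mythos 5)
Your outline reproduces the paper's strategy almost verbatim (subsequential limits of the joint vector over rational intervals, additivity and independence via internal-metric covering numbers of $\eta(I)\setminus\bd_\ep\eta(I)$ together with Proposition~\ref{prop:left-right-independent-mmspace}, extension to a continuous L\'evy process, strict monotonicity forcing $Y_t=\alpha t$, and $\alpha=1$ from uniform integrability and the definition of $\mathfrak b_\ep$). However, the step you explicitly defer --- the ``straddling'' estimate, i.e.\ that the number of $\ep$-balls needed to cover the set of points within $D_{h^\gamma}$-distance $\ep$ of $\bd\eta[s,t]$ is $o(\ep^{-d_\gamma})$ --- is precisely the main technical content of this proposition in the paper (Lemma~\ref{lem:sle-nbhd-zero-content}), and your proposed route to it is not adequate as stated. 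You invoke the assertion that the flow lines $\eta^\pm_{\eta(s)}$ are SLE$_\kappa$-type curves ``whose LQG Minkowski dimension is strictly less than $d_\gamma$'' as if it were an available fact; no such result is cited or known off the shelf for the LQG \emph{metric}, and it is exactly what must be proved. The paper does not prove it by SLE dimension technology at all: it uses the mating-of-trees theorem (Theorem~\ref{thm:boundary-length-BM}), the identification of the times at which $\eta$ hits the left/right boundary of $\eta(-\infty,0]$ with the running minima (zero set) of a Brownian motion, the arcsine law, and the $L^p$ moment bounds of Proposition~\ref{prop:covering-num-moment-bound} via Corollary~\ref{cor:covering-number-relation}, to get $\lim_{\ep\to0}\ep^{\delta}N_\ep(\bd\eta[s,t])=0$ for every $\delta>d_\gamma/2$ (Lemma~\ref{lem:sle-dim-1/2}).

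Even granting a dimension bound for $\bd\eta[s,t]$ itself, there is a second genuine difficulty you flag but do not resolve: additivity and independence require covering the full $\ep$-neighborhood $\bd_\ep\eta[s,t]$ by balls of the \emph{same} radius $\ep$, and the trivial comparison $N_\ep(\bd_{a\ep}\eta[s,t])\le N_{(1-a)\ep}(\bd\eta[s,t])$ only works for $a<1$, which is not enough to rule out balls meeting both $\eta[r,s]\setminus\bd_\ep\eta[r,s]$ and $\eta[s,t]\setminus\bd_\ep\eta[s,t]$. The paper closes this gap with a separate argument (Lemma~\ref{lem:epsilon-nbhd-covering}): sampling $\lfloor\ep^{-\zeta}N_\ep(A)\rfloor$ i.i.d.\ points from $\mu_h$ restricted to a slightly larger neighborhood, using the uniform ball-volume estimates of Theorem~\ref{thm:metric-ball-volume} to show these points $\ep$-cover the neighborhood with high probability, and then transferring the statement from the whole-plane GFF to the quantum cone via the zooming-in Lemma~\ref{lem:zooming-in} together with Lemmas~\ref{lem:translation-metric-invariance} and~\ref{lem:scaling-metric-covariance}. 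Without supplying both of these ingredients (the $d_\gamma/2$ dimension bound and the neighborhood-versus-boundary covering comparison at scale $\ep$), your proof of additivity and of the independence of increments is incomplete; the remaining L\'evy-theoretic identification is, as you say, standard and matches the paper.
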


Here is an overview of the proof of Proposition~\ref{prop:main-thm-qc-sle}. Let 
\begin{equation}
    \mathcal I_\Q = \{[s,t]: s,t\in \Q, s<t\}
\end{equation}
be the collection of all closed intervals with rational endpoints. Suppose we are given an arbitrary sequence of $\ep$-values decreasing to 0.

\begin{enumerate} 

\item By Proposition~\ref{prop:covering-num-moment-bound}, we can find a subsequence $\ep_n$ for which the sequence of $\R^{\mathcal I_\Q}$-valued random variables $(\mathfrak b_{\ep_n}^{-1} N_{\ep_n}(\eta(I)): I \in \mathcal I_\Q)$ converges in distribution with respect to the product topology on $\R^{\mathcal I_\Q}$. Denote this subsequential weak limit by $(X_I: I \in \mathcal I_\Q)$.

\item We show in Proposition~\ref{prop:finite-additivity} that $X_I$ is finitely additive: i.e., for each rational $r<s<t$, $X_{[r,s]} + X_{[s,t]} = X_{[r,t]}$ almost surely. We are thus able to construct the ``Minkowski content process" $\{Y_t\}_{t\in \Q}$ where $Y_t = X_{[0,t]}$ (resp.\ $-X_{[t,0]})$ for $t\geq 0$ (resp.\ $t<0$), so that $X_{[s,t]} = Y_t - Y_s$ for every $[s,t]\in \mathcal I_\Q$.

\item We show in Proposition~\ref{prop:minkowski-content-bm} that $\{Y_t\}_{t\in \Q}$ can be extended to a continuous process on $\R$ with independent and stationary increments: i.e., a Brownian motion with drift. 

\item From Proposition~\ref{prop:covering-number-lower-bound}, almost surely, $Y_s < Y_t$ for all rational $s<t$. Hence, $Y_t = a t$ for some deterministic constant $a>0$. In fact, $a=1$ because we chose $\mathfrak b_\ep$ so that $\Exp[\mathfrak b_\ep^{-1} N_\ep(\eta[0,1])] = 1$ for every $\ep>0$.

\item In conclusion, we have the following convergence in distribution:
\[ \lim_{n\to \infty} \mathfrak b_{\ep_n}^{-1} N_{\ep_n}(\eta[s,t]) = t-s \quad \text{ for all } [s,t] \in \mathcal I_\Q  \]
Since $t-s$ is a deterministic constant, the convergence holds in probability. We started with an arbitrary sequence of $\ep$ decreasing to 0, so \eqref{eqn:sle-minkowski-content} holds for every rational $s<t$. This extends to all real $s<t$ by the following simple observation: if $[s_1,t_1] \subset [s_2,t_2]$, then $N_\ep(\eta[s_1,t_1]) \leq N_\ep(\eta[s_2,t_2])$ for all $\ep>0$.
\end{enumerate}

\subsection{Finite additivity}\label{sec:additivity}

Given a closed and bounded set $A \subset \C$, for $\ep>0$, define
\begin{equation}
    \bd_\ep A := \{z \in A: D_{h^\gamma}(z, \bd A) < \ep\}
\end{equation}
to be the the intersection of $A$ with the $\ep$-neighborhood of $\bd A$ in the LQG metric. 
Denote  
\begin{equation}
    N_\ep^\circ(A) := N_\ep(A \setminus \bd_\ep A).
\end{equation}
Note that every ball counted in $N_\ep^\circ(A)$ is centered at a point in the interior of $A$. Also, 
\begin{equation}
    N_\ep^\circ(A) \leq N_\ep(A) \leq N_\ep^\circ(A) + N_\ep(\bd_\ep A).
\end{equation}
The following lemma is the main technical input in the proof of finite additivity.

\begin{lem}\label{lem:sle-nbhd-zero-content}
    For each fixed $s<t$,
    \begin{equation}\label{eqn:sle-boundary-blowup-covering}
        \lim_{\ep \to 0} \ep^{d_\gamma} N_\ep(\bd_\ep\eta[s,t]) = 0 \quad \text{almost surely.}
    \end{equation}
    Consequently,
    \begin{equation}
        \lim_{\ep \to 0} |\ep^{d_\gamma} N_\ep(\eta[s,t]) - \ep^{d_\gamma} N_\ep^\circ(\eta[s,t])| = 0 \quad \text{almost surely.}
    \end{equation}
\end{lem}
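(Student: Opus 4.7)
The plan is to reduce the claim to a polynomial-rate upper bound on the LQG volume of a thin LQG-neighborhood of the topological boundary $\bd\eta[s,t]$, and then exploit the fact that this boundary is contained in finitely many SLE-type flow lines whose LQG Minkowski dimension is strictly less than $d_\gamma$.

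First I would convert the covering number into an LQG-volume estimate. Since $\eta[s,t]$ is almost surely compact, the uniform ball-volume bounds of Theorem~\ref{thm:metric-ball-volume} and Lemma~\ref{lem:qc-ball-volume} furnish, for each $\zeta>0$, a.s.\ random constants $c,C>0$ with $c\ep^{d_\gamma+\zeta} \leq \mu_{h^\gamma}(\mathcal B_\ep(z;D_{h^\gamma})) \leq C\ep^{d_\gamma-\zeta}$ uniformly for $z$ in a fixed compact neighborhood of $\eta[s,t]$ and $\ep\in(0,1)$. Taking a maximal disjoint $D_{h^\gamma}$-packing of $\bd_\ep\eta[s,t]$ by balls of radius $\ep/2$ (which is contained in $\mathcal B_{3\ep/2}(\bd\eta[s,t];D_{h^\gamma})$) and summing the lower ball-volume estimates would yield
\[
\ep^{d_\gamma}N_\ep(\bd_\ep\eta[s,t]) \;\leq\; C'\ep^{-\zeta}\,\mu_{h^\gamma}\bigl(\mathcal B_{3\ep/2}(\bd\eta[s,t];D_{h^\gamma})\bigr).
\]
It therefore suffices to produce a polynomial decay rate $\mu_{h^\gamma}(\mathcal B_\ep(\bd\eta[s,t];D_{h^\gamma})) = O(\ep^\alpha)$ for some deterministic $\alpha>0$.

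Next I would analyse $\bd\eta[s,t]$ via imaginary geometry. The construction of $\eta$ shows that $\bd\eta[s,t]$ lies in the union of the four flow lines $\eta^+_{\eta(s)} \cup \eta^-_{\eta(s)} \cup \eta^+_{\eta(t)} \cup \eta^-_{\eta(t)}$, each an SLE$_\kappa(2-\kappa)$-type curve with $\kappa=\gamma^2\in(0,4)$. By Beffara's theorem, each has Euclidean upper Minkowski dimension $1+\gamma^2/8<2$. Combining this with the KPZ-type computation of the LQG Hausdorff dimension of SLE$_\kappa$ curves in~\cite{gp-kpz}, together with the Euclidean-versus-LQG ball-comparison estimates of~\cite{lqg-metric-estimates} (an extension of Lemma~\ref{lem:lqg-euclidean-balls-mutual-inclusion} to arbitrary centers, via a union bound), one obtains $D_{h^\gamma}$-upper Minkowski dimension at most some $d'<d_\gamma$ for each flow line. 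Hence $N_\ep(\bd\eta[s,t];D_{h^\gamma}) \leq \ep^{-d'-\zeta}$ for small $\ep$, and applying the upper ball-volume bound to such a cover gives
\[
\mu_{h^\gamma}\bigl(\mathcal B_\ep(\bd\eta[s,t];D_{h^\gamma})\bigr) \;\leq\; N_\ep(\bd\eta[s,t];D_{h^\gamma})\cdot C(2\ep)^{d_\gamma-\zeta} \;\leq\; C''\ep^{d_\gamma-d'-2\zeta},
\]
and choosing $\zeta$ small closes the estimate.

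The principal technical obstacle will be upgrading the LQG Hausdorff-dimension result for SLE flow lines to a strict upper Minkowski bound, uniformly over centers in a random compact set. The naive use of the worst-case LFPP exponent $1/(\xi(Q-\gamma))$ associated with the $-\gamma\log|\cdot|$ singularity at $0$ of the quantum cone suffices only when $\gamma<\sqrt{8/5}$, so the argument must exploit the better typical-point exponent $1/(\xi Q)$, for which the strict inequality $d'<d_\gamma$ reduces to the elementary $1+\gamma^2/8 < 2+\gamma^2/2$. The second corollary statement then follows by combining $N_\ep(\eta[s,t]) \leq N_\ep^\circ(\eta[s,t]) + N_\ep(\bd_\ep\eta[s,t])$ with~\eqref{eqn:sle-boundary-blowup-covering}.
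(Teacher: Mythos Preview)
Your strategy is sound in outline and, modulo the obstacle you flag, would succeed; but it takes a genuinely different route from the paper, and the obstacle you flag is exactly the step the paper replaces by a cleaner argument.

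The paper also splits the proof into (i) bounding the LQG Minkowski dimension of $\bd\eta[s,t]$ strictly below $d_\gamma$, and (ii) passing from $N_\ep(\bd\eta[s,t])$ to $N_\ep(\bd_\ep\eta[s,t])$ at polynomial cost. For (ii) the paper's Lemma~\ref{lem:epsilon-nbhd-covering} is essentially your packing/volume argument rephrased as a random-sampling construction, so there is no real difference there. The substantive divergence is in (i). You propose Euclidean dimension of the four flow lines plus a KPZ conversion to get some $d'<d_\gamma$; the paper instead exploits the mating-of-trees encoding: the times at which $\eta$ touches each of the four boundary arcs are exactly the running-minimum times of a one-dimensional Brownian motion (namely $L$ or $R$), so by the arcsine law only $O(K^{1/2})$ of the $K$ intervals $[k/K,(k+1)/K]$ meet the boundary. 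Combining this with Proposition~\ref{prop:covering-num-moment-bound} and choosing $K\asymp\ep^{-d_\gamma}$ gives $N_\ep(\bd\eta[s,t])=O(\ep^{-\delta})$ for every $\delta>d_\gamma/2$ directly, with no KPZ input at all. This is Lemma~\ref{lem:sle-dim-1/2}.

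Two technical points in your write-up deserve attention. First, the lower ball-volume bound $\mu_{h^\gamma}(\mathcal B_\ep(z;D_{h^\gamma}))\geq c\ep^{d_\gamma+\zeta}$ is not supplied by Lemma~\ref{lem:qc-ball-volume} (which only gives the upper direction), and Theorem~\ref{thm:metric-ball-volume} is for the whole-plane GFF, not the cone field; moreover $\eta[s,t]$ need not lie in $\ud$. The paper handles this transfer explicitly via the zooming-in Lemma~\ref{lem:zooming-in} in the proof of Lemma~\ref{lem:sle-nbhd-zero-content}; you would need to do something similar. Second, your acknowledged obstacle---upgrading the LQG \emph{Hausdorff} dimension of an SLE$_\kappa$ flow line to an upper \emph{Minkowski} bound on the cone---is a genuine gap: the results in~\cite{gp-kpz} require checking hypotheses for the specific (random-endpoint) flow lines here, and the singularity of $h^\gamma$ at $0$ complicates the uniform Euclidean/LQG ball comparison you need. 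None of this is needed in the paper's approach, which is why the mating-of-trees route is preferable here.
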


We first explain how Lemma~\ref{lem:sle-nbhd-zero-content} implies that the Minkowski content is additive over finitely many disjoint space-filling SLE segments, and then prove the lemma in the rest of the subsection. We need the following classic result on weak convergence in the proof of finite additivity.
\begin{lem}[{\cite[Theorems 2.7 and 3.1]{billingsley-weak-convergence}}] \label{lem:diag-weak-convergence}
    Suppose $(S,d)$ is a metric space and $(Y_n, Z_n)$ is a sequence of $S\times S$-valued Borel-measurable random variables. If $Y_n \stackrel{d}{\longrightarrow} Y$ and $d(Y_n, Z_n) \stackrel{p}{\longrightarrow} 0$ as $n\to \infty$, then $(Y_n,Z_n) \stackrel{d}{\longrightarrow} (Y,Y)$.
\end{lem}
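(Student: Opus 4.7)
The plan is to verify the weak convergence $(Y_n, Z_n) \stackrel{d}{\to} (Y,Y)$ via the bounded Lipschitz characterization of weak convergence, equipping $S \times S$ with the product metric $\rho((y_1,z_1),(y_2,z_2)) = d(y_1,y_2) \vee d(z_1,z_2)$. I will show that for every bounded Lipschitz function $f: S\times S \to \R$,
\begin{equation*}
    \lim_{n\to\infty}\Exp[f(Y_n, Z_n)] = \Exp[f(Y,Y)],
\end{equation*}
which suffices since bounded Lipschitz functions form a convergence-determining class on any metric space.

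First I would split
\begin{equation*}
    \Exp[f(Y_n, Z_n)] - \Exp[f(Y,Y)] = \bigl(\Exp[f(Y_n, Z_n)] - \Exp[f(Y_n, Y_n)]\bigr) + \bigl(\Exp[f(Y_n, Y_n)] - \Exp[f(Y,Y)]\bigr).
\end{equation*}
For the second bracket, observe that the diagonal map $\Delta: S \to S\times S$, $y \mapsto (y,y)$, is continuous (it is $\rho$-Lipschitz with constant $1$). Hence $f\circ \Delta$ is a bounded continuous function on $S$, and the hypothesis $Y_n \stackrel{d}{\to} Y$ together with the Portmanteau theorem gives $\Exp[f\circ\Delta(Y_n)] \to \Exp[f\circ\Delta(Y)]$, i.e.\ the second bracket tends to $0$.

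For the first bracket, letting $L$ denote the Lipschitz constant and $M=\|f\|_\infty$, I would bound pointwise
\begin{equation*}
    |f(Y_n, Z_n) - f(Y_n, Y_n)| \leq \min\bigl(2M,\; L\, d(Y_n, Z_n)\bigr).
\end{equation*}
The right-hand side is dominated by the constant $2M$ and tends to $0$ in probability since $d(Y_n,Z_n)\stackrel{p}{\to} 0$. By the dominated convergence theorem (applied to convergence in probability of uniformly bounded random variables), the expectation of the right-hand side tends to $0$, so the first bracket also vanishes in the limit.

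There is no serious obstacle; the only points that need care are the use of the bounded Lipschitz class as convergence-determining (which holds for an arbitrary metric space $S$) and the legitimacy of dominated convergence under convergence in probability (which is standard, since a uniformly bounded sequence converging in probability converges in $L^1$). Combining the two displays yields $\Exp[f(Y_n,Z_n)] \to \Exp[f(Y,Y)]$ for every bounded Lipschitz $f$, completing the proof.
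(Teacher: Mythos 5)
Your proof is correct. The paper itself does not prove this lemma; it cites Billingsley, where the intended route is the continuous mapping theorem applied to the diagonal map $\Delta(y)=(y,y)$ (giving $(Y_n,Y_n)\stackrel{d}{\to}(Y,Y)$) followed by the converging-together theorem in $S\times S$, since the product distance between $(Y_n,Z_n)$ and $(Y_n,Y_n)$ is exactly $d(Y_n,Z_n)\stackrel{p}{\to}0$. Your argument compresses both of those steps into a single direct computation with bounded Lipschitz test functions: the second bracket is the mapping-theorem step, and the first bracket is precisely the converging-together step, handled by the bound $|f(Y_n,Z_n)-f(Y_n,Y_n)|\le \min(2M, L\,d(Y_n,Z_n))$ and bounded convergence in probability. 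The only facts you lean on are that bounded Lipschitz functions are convergence-determining on an arbitrary metric space (true, via the usual Lipschitz approximation $x\mapsto \max(0,1-k\,d(x,F))$ of indicators of closed sets in the portmanteau argument) and that $d(Y_n,Z_n)$ and $f(Y_n,Z_n)$ are measurable, which follows from the assumed joint Borel measurability of $(Y_n,Z_n)$ and continuity of $d$ and $f$. What your approach buys is a short, self-contained proof with no separability or citation overhead; what the citation route buys is reuse of two standard named theorems. Either is perfectly adequate here.
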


\begin{prop}[Finite additivity]\label{prop:finite-additivity}
    Suppose that for some sequence $\{\ep_n\}_{n\in\N}$ of positive numbers decreasing to zero, $(\mathfrak b_{\ep_n}^{-1} N_{\ep_n}(\eta(I)): I \in \mathcal I_\Q)$ converges weakly to $(X_I: I \in \mathcal I_\Q)$ as $n\to \infty$. Then, for every rational $r<s<t$, we have $X_{[r,s]} + X_{[s,t]} = X_{[r,t]}$ almost surely.
\end{prop}
\begin{proof}
    Assume that Lemma~\ref{lem:sle-nbhd-zero-content} holds. Then, by Corollary~\ref{cor:normalization-constant-comparison}, we have 
    \begin{equation}
        \lim_{n\to \infty} |\mathfrak b_{\ep_n}^{-1} N_{\ep_n}(\eta(I)) - \mathfrak b_{\ep_n}^{-1} N_{\ep_n}^\circ(\eta(I))|=0 \quad \text{almost surely\ for all } I \in \mathcal I_\Q.
    \end{equation}
    Combining this with Lemma~\ref{lem:diag-weak-convergence}, we obtain
    \begin{equation} \label{eqn:covering-num-joint-convergence}
        ((\mathfrak b_{\ep_n}^{-1} N_{\ep_n}(\eta(I)), \mathfrak b_{\ep_n}^{-1} N_{\ep_n}^\circ(\eta(I))): I \in \mathcal I_\Q) \stackrel{d}{\longrightarrow} ((X_I, X_I): I \in \mathcal I_\Q) \quad \text{as } n\to \infty.
    \end{equation}
    
    Let $r<s<t$ be any triple of rationals. Fix $\ep>0$ for now. Note that every metric ball counted in $N_\ep^\circ(\eta[r,t])$ has its center in $\eta[r,t]$. The idea is to classify these balls based on whether their centers are in $\eta[r,s]$ or $\eta[s,t]$. Let $m = N_\ep^\circ(\eta[r,t])$ and let $z_1,\dots,z_k \in \eta[r,s]$, $z_{k+1},\dots,z_m \in \eta[s,t]$ be any collection of points such that
    $\bigcup_{j=1}^m \mathcal B_\ep(z_j; D_{h^\gamma})$ covers $\eta[r,t]\setminus \bd_\ep \eta[r,t]$. If $j\leq k$, then $\mathcal B_\ep(z_j; D_{h^\gamma}) \cap (\eta[s,t] \setminus \bd_\ep \eta[s,t]) = \varnothing$. Hence, $\bigcup_{j=k+1}^m B_\ep(z_j; D_{h^\gamma})$ covers $\eta[s,t] \setminus \bd_\ep \eta[s,t]$ and $N_\ep^\circ(\eta[s,t]) \leq m-k$. Similarly, $N_\ep^\circ(\eta[r,s]) \leq k$. Therefore,
    \begin{equation}
        N_\ep^\circ(\eta[r,s]) + N_\ep^\circ(\eta[s,t]) \leq N_\ep^\circ(\eta[r,t]) \leq N_\ep(\eta[r,t]) \leq N_\ep(\eta[r,s]) + N_\ep(\eta[s,t]).
    \end{equation}
    Since this inequality holds for every $\ep>0$, we have from \eqref{eqn:covering-num-joint-convergence} that $X_{[r,t]} = X_{[r,s]} + X_{[s,t]}$ almost surely.
\end{proof}

We now begin the proof of Lemma~\ref{lem:sle-nbhd-zero-content}. The first step in the proof of \eqref{eqn:sle-boundary-blowup-covering} is to show that the Minkowski dimension of $\partial \eta[s,t]$ is strictly less than $d_\gamma$. In fact, we prove that it is at most $d_\gamma/2$. 

\begin{lem}\label{lem:sle-dim-1/2}
    For each fixed $s<t$ and $\delta>d_\gamma/2$,
    \begin{equation}\lim_{\ep \to 0} \ep^{\delta} N_\ep(\partial \eta[s,t]) = 0 \quad \text{almost surely.}\end{equation}
\end{lem}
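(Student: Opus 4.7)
The plan is to exploit the fact that $\partial \eta[s,t]$ is made up of finitely many SLE$_\kappa$-type flow-line arcs (with $\kappa = 16/\kappa' = \gamma^2 \in (0,4)$) and to show that each such arc has $D_{h^\gamma}$-Minkowski dimension at most $d_\gamma/2$. First I would identify $\partial \eta[s,t]$ as a subset of the union of four truncated flow lines of the form $\eta_{\eta(s)}^{\pm}$ and $\eta_{\eta(t)}^{\pm}$; by Theorem~\ref{thm:boundary-length-BM}, their total $\gamma$-LQG quantum length $\ell$ is controlled by the oscillations of $(L,R)$ on $[s,t]$ and is almost surely finite.

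The key estimate is a diameter-from-quantum-length bound: for any sub-arc $I$ of quantum length $\delta$, any $\zeta>0$, and any $p>0$,
\begin{equation*}
    \Prob\bigl\{\mathrm{diam}(I; D_{h^\gamma}) > \delta^{2/d_\gamma - \zeta}\bigr\} = O(\delta^p) \quad \text{as } \delta \to 0.
\end{equation*}
The exponent $2/d_\gamma$ is dictated by Weyl scaling: adding a constant $C$ to the field rescales quantum length by $e^{\gamma C/2}$ and $D_{h^\gamma}$-distances by $e^{\xi C} = e^{\gamma C/d_\gamma}$. I would make this rigorous by invoking Proposition~\ref{prop:left-right-independent-mmspace} to view the quantum surfaces on either side of the arc as independent $\tfrac{3\gamma}{2}$-quantum wedges, then combining the Weyl scaling and locality axioms with moment bounds from~\cite{lqg-metric-estimates} applied on a single wedge.

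Given this estimate, for arbitrary $\zeta > 0$ set $\delta = \ep^{d_\gamma/(2 - \zeta d_\gamma)}$, so that $\delta^{2/d_\gamma - \zeta} = \ep$. Partition each boundary arc into $\lceil \ell/\delta \rceil$ sub-arcs of quantum length $\delta$, and apply the diameter bound together with a union bound: outside an event of polynomially small probability in $\ep$, every sub-arc has $D_{h^\gamma}$-diameter at most $\ep$, hence is covered by a single $D_{h^\gamma}$-ball of radius $\ep$. This gives $N_\ep(\partial \eta[s,t]) = O(\ep^{-d_\gamma/2 - O(\zeta)})$ outside such an event, and Borel--Cantelli along a geometric subsequence of $\ep$-values promotes this to $\ep^\delta N_\ep(\partial \eta[s,t]) \to 0$ almost surely for every $\delta > d_\gamma/2$.

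The hard part will be the diameter-from-quantum-length estimate. Proposition~\ref{prop:sle-diameter} handles the analogous question for space-filling SLE$_{\kappa'}$ parameterized by quantum area, but here the quantum length is a boundary GMC rather than a bulk one, so a fresh argument is required. The natural route is to use Proposition~\ref{prop:left-right-independent-mmspace} to reduce to a single $\tfrac{3\gamma}{2}$-quantum wedge with its distinguished boundary SLE$_\kappa$-type curve; the scale-invariance of the wedge under adding a constant to the field, combined with a Lemma~\ref{lem:qc-ball-volume}-style moment bound adapted to the wedge, should then give the required tail estimate with arbitrary polynomial decay.
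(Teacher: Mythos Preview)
Your approach is quite different from the paper's, and the central estimate you need---the diameter-from-quantum-length bound for an SLE$_\kappa$ flow-line arc---is not actually proven in your proposal. You correctly identify it as ``the hard part'' and give a plausible sketch (pass to a $\tfrac{3\gamma}{2}$-quantum wedge, invoke scale invariance, appeal to moment bounds), but this is not a proof: you would be establishing a new H\"older continuity result for non-space-filling SLE$_\kappa$ with respect to the LQG metric, which is comparable in difficulty to Proposition~\ref{prop:exit-time} and Theorem~\ref{thm:sle-holder} themselves. The reference to a ``Lemma~\ref{lem:qc-ball-volume}-style moment bound'' is too loose---that lemma controls LQG \emph{mass} of metric balls, not LQG \emph{diameter} of boundary arcs, and the quantum wedge carries a boundary log-singularity that has to be handled.

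The paper sidesteps all of this. It never parameterizes $\partial\eta[s,t]$ by quantum length. Instead it uses the mating-of-trees encoding to observe that the times $u\in[0,1]$ at which $\eta(u)$ lies on (say) the left boundary $\partial_0^L$ are exactly the running-minimum times of the Brownian motion $L$, hence form a set with the law of the Brownian zero set. Partitioning $[0,1]$ into $K\approx\ep^{-d_\gamma}$ equal subintervals $I_k$, the arcsine law gives $\Prob\{\eta(I_k)\cap\partial_0^L\neq\varnothing\}=O(k^{-1/2})$, so only $O(\sqrt{K})$ of the segments $\eta(I_k)$ meet the boundary. Each such segment satisfies the covering bound $\Exp[(N_\ep(\eta(I_k)))^p]^{1/p}=O(1)$ from Proposition~\ref{prop:covering-num-moment-bound}, which was already established for tightness. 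H\"older's inequality then gives $\Exp[\ep^\delta N_\ep(\partial_0^L)]=O(\ep^{\delta-d_\gamma/2-o(1)})$. This reuses the area-parameterized space-filling SLE estimates you already have, rather than requiring a separate theory for the boundary flow lines.
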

\begin{proof}
    By Proposition~\ref{prop:qc-mmspace-invariance}, for each $s<t$ and $\ep>0$, 
    \begin{equation} 
        N_\ep(\partial \eta[s,t]) \stackrel{d}{=} N_{\ep (t-s)^{-1/d_\gamma}}(\partial \eta[0,1]). 
    \end{equation}
    Hence, it suffices to prove the lemma for $[s,t] = [0,1]$ only. To prove the lemma in this case, we will first use Theorem~\ref{thm:boundary-length-BM} and an elementary Brownian motion estimate to bound the number of $\ep^{-d_\gamma}$-length space-filling SLE segments needed to cover $\partial \eta[0,1]$. We will then conclude by combining this bound with  Proposition~\ref{prop:covering-num-moment-bound}.
    
    Without loss of generality, we may re-scale the LQG boundary length measure by a $\gamma$-dependent constant factor so that for the boundary length process $(L,R)$ appearing in Theorem~\ref{thm:boundary-length-BM}, the variance parameter $a$ equals 1. Let $\partial^L$ and $\partial^R$ denote the left and right boundaries of $\eta[0,1]$, respectively. Further decompose $\partial^L$ and $\partial^R$ into
    \begin{equation} 
        \partial_0^q := \partial^q  \cap \eta(-\infty,0] \quad\text{and}\quad \partial_1^q  := \partial^q  \cap \eta[1,\infty) \quad \text{where } q\in \{L,R\}. 
    \end{equation}
    That is, in Figure~\ref{fig:peanosphere-bm} with $t=1$, the orange boundary corresponds to $\partial_0^L$, the green boundary to $\partial_0^R$, the brown boundary to $\partial_1^L$, and the purple boundary to $\partial_1^R$.
    
    Let us first show that $\lim_{\ep \to 0} \ep^{\delta} N_\ep(\partial_0^L) = 0$ almost surely. Let $K$ be a positive integer, which shall be determined later. Denote $I_k = [\frac{k}{K}, \frac{k+1}{K}]$. We start with the following trivial inequality: 
    \begin{equation} N_\ep(\partial_0^L) \leq \sum_{k=0}^{K-1} N_\ep( \partial_0^L \cap \eta(I_k)  ) \leq \sum_{k=0}^{K-1} \mathbf{1}\{\partial_0^L \cap \eta(I_k) \neq \varnothing\} \cdot N_\ep( \eta(I_k) ) . \end{equation}    
    We now consider the following geometric encoding of $\eta$ by $(L,R)$ described in \cite[Section 4.2.3]{ghs-mating-survey}: the times $t \in [0,1]$ such that $\eta(t) \in \partial_0^L $ (resp.\ $\partial_0^R$) are precisely those at which $L_t$ (resp.\ $R_t$) attains a running minimum. Recall that $L_t$ is a standard Brownian motion. A well-known result of P.\ L\'evy states that the process $L_t - \min_{0\leq s \leq t} L_s$ is a reflected Brownian motion, whereas the set $\{t\in [0,1]: \eta(t) \in \partial_0^L\}$ has the law of the zero set of Brownian motion on $[0,1]$ (e.g., see \cite[Theorem 2.34]{morters-peres-bm}). Then, by the arcsine law for the last time that a Brownian motion $B_t$ changes sign, for any nonnegative integer $k$ regardless of $K$,
    \begin{equation} \label{eqn:arctan}
    \begin{aligned}
         \Prob \{\partial_0^L \cap \eta(I_k) \neq \varnothing \} & = \Prob\{ B_t = 0 \text{ for some } t\in I_k \} = \Prob\left\{ B_t = 0 \text{ for some } t \in \left[\frac{k}{k+1},1\right]\right\} \\
         &= 1- \frac{2}{\pi} \arcsin \sqrt{\frac{k}{k+1}} = 1 - \frac{2}{\pi} \arctan \sqrt{k}.
    \end{aligned}
    \end{equation}
    For $p>1$, by H\"older's inequality followed by Corollary~\ref{cor:covering-number-relation} and~\eqref{eqn:arctan},
    \begin{equation} \begin{aligned} 
        \Exp[N_\ep (\partial_0^L)] &\leq \sum_{k=0}^{K-1} \Prob (\partial_0^L \cap \eta(I_k) \neq \varnothing )^{1-\frac{1}{p}} \cdot  \Exp\big[\big(N_\ep( \eta(I_k) )\big)^p\big]^{\frac{1}{p}} \\
        & =\sum_{k=0}^{K-1} \left(1 - \frac{2}{\pi} \arctan \sqrt k \right)^{1-\frac{1}{p}} \cdot \frac{K}{\ep^{d_\gamma}} \Exp\left[\left((\ep K^{-1/d_\gamma})^{d_\gamma} N_{\ep K^{-1/d_\gamma}}(\eta[0,1])\right)^p\right]^{\frac{1}{p}}.
    \end{aligned} \end{equation}
    We now take 
    \begin{equation}
        K = \lceil\ep^{-d_\gamma}\rceil .
    \end{equation}
    From Proposition~\ref{prop:covering-num-moment-bound}, there exists a constant $C_p>0$ such that 
    \begin{equation} 
        \sup_{0 < \ep < 1} \Exp\left[\left(\ep K^{1/d_\gamma})^{d_\gamma} N_{\ep K^{-1/d_\gamma}}(\eta[0,1])\right)^p\right]^{\frac{1}{p}} \leq C_p. 
    \end{equation}
    Since $1 - \frac{2}{\pi} \arctan \sqrt k = O(1/\sqrt{k})$ as $k\to \infty$,
    \begin{equation}  
        \begin{aligned} \Exp[ \ep^{\delta} N_\ep(\partial_0^L)] &\leq 2 C_p \ep^{\delta} \sum_{k=0}^{K} \left(1 - \frac{2}{\pi}\arctan \sqrt k\right)^{1-\frac{1}{p}} \\ &=O\left( \ep^{\delta} K^{1 - \frac{1}{2}(1-\frac{1}{p})} \right) = O\left(\ep^{\delta - \frac{d_\gamma}{2}(1+\frac{1}{p}))}\right)  \quad \text{as } \ep \to 0.  \end{aligned}
    \end{equation}
    Since $\delta > d_\gamma/2$, the right-hand side tends to 0 as $\ep \to 0$ for sufficiently large $p$. This proves the claim that $\lim_{\ep \to 0} \ep^{\delta} N_\ep(\partial_0^L) = 0$ almost surely.
    
    Replacing $L_t$ with $R_t$ gives $\lim_{\ep \to 0} \ep^{\delta} N_\ep(\partial_0^R) = 0$ almost surely. By the reversibility of the whole-plane space-filling SLE $\eta$ and Proposition~\ref{prop:qc-mmspace-invariance},
    \begin{equation}
        (\C, \eta(1), D_{h^\gamma}, \mu_{h^\gamma}, \eta(1 - \cdot)) \stackrel{d}{=} (\C, 0, D_{h^\gamma}, \mu_{h^\gamma}, \eta).
    \end{equation}
    Consequently, $N_\ep(\partial_0^L) \stackrel{d}{=} N_\ep(\partial_1^L)$ and $N_\ep(\partial_0^R) \stackrel{d}{=} N_\ep(\partial_1^R)$. Since $\bd \eta[0,1] = \bd_0^L \cup \bd_0^R \cup \bd_1^L \cup \bd_1^R$, we therefore obtain the lemma statement.
\end{proof}

Let us first sketch how to deduce \eqref{eqn:sle-boundary-blowup-covering} from Lemma~\ref{lem:sle-dim-1/2}. First, note that it suffices to show that for some fixed $\zeta \in (0,d_\gamma/2)$, with probability tending to 1 as $\ep \to 0$, we have $N_\ep(\bd_\ep\eta[s,t]) \leq \ep^{-\zeta} N_\ep(\bd\eta[s,t])$. The idea is to sample a collection $\mathcal X_\ep$ of $\lfloor \ep^{-\zeta}N_\ep(\bd\eta[s,t])\rfloor$ i.i.d.\ points in $\mathcal \bd_{3\ep/2}\eta[s,t]$ from the LQG measure $\mu_{h^\gamma}$, and show that $\bigcup_{x\in\mathcal X_\ep} \mathcal B_\ep(x;D_{h^\gamma})$ covers $\bd_\ep\eta[s,t]$.

To this end, we sample another set of i.i.d.\ points $\widetilde W_\ep$ from $\mu_{h^\gamma}$, conditionally independent of $\mathcal X_\ep$ given $h^\gamma$ and $\eta$, such that $\bd_\ep \eta[s,t] \subset \bigcup_{w\in \widetilde{\mathcal{W}}_\ep} \mathcal B_{\ep/2}(w;D_{h^\gamma}) \subset \bd_{2\ep} \eta[s,t]$. (Because the Minkowski dimension of $\gamma$-LQG is finite, we only need polynomially many points in $\ep$ in $\widetilde{\mathcal W}_\ep$ \cite[Theorem~A.3]{gwynne-geodesic-network}.) For each $w\in \widetilde{\mathcal{W}}_\ep$, the conditional probability given $h^\gamma$, $\eta$, and $x \in \mathcal X_\ep$ that $\mathcal B_{\ep/2}(w;D_{h^\gamma}) \subset \mathcal B_{\ep}(x;D_{h^\gamma})$ is given by $\mu_{h^\gamma}(\mathcal B_{\ep/2}(x;D_{h^\gamma}))/\mu_{h^\gamma}(\bd_{2\ep} \eta[s,t])$. Using Theorem~\ref{thm:metric-ball-volume} (the volume estimate for LQG metric balls), we show that this number is no more than $\ep^{-\zeta}/|\mathcal X_\ep|$. Since the points in $\mathcal X_\ep$ are sampled conditionally i.i.d., the conditional probability that $\mathcal B_{\ep/2}(w;D_{h^\gamma}) \subset \bigcup_{x\in \mathcal X_\ep} \mathcal B_\ep(x;D_{h^\gamma})$ is no less than $1-(1-\ep^{-\zeta}/|\mathcal X_\ep|)^{|\mathcal X_\ep|}$, which tends to 1 superpolynomially fast as $\ep \to 0$. Since the cardinality of $\widetilde{\mathcal W}_\ep$ is polynomial in $\ep$, by taking a union bound over points in this set, we conclude that $\bd_\ep \eta[s,t] \subset \bigcup_{w\in \widetilde{\mathcal{W}}_\ep} \mathcal B_{\ep/2}(w;D_{h^\gamma}) \subset \bigcup_{x\in \mathcal X_\ep} \mathcal B_\ep(x;D_{h^\gamma})$ with probability increasing to 1 as $\ep \to 0$.

What we have above are essentially the statement and the proof of Lemma~\ref{lem:epsilon-nbhd-covering}, except that they are for the whole-plane GFF $h$ instead of the $\gamma$-quantum cone field $h^\gamma$. The reason that we prove \eqref{eqn:epsilon-nbhd-covering} for $h$ first is that most results about the Minkowski dimension of $\gamma$-LQG, such as Theorem~\ref{thm:metric-ball-volume}, are stated in terms of $h$ instead of $h^\gamma$. After proving Lemma~\ref{lem:epsilon-nbhd-covering}, we transfer this result to the setting of a $\gamma$-quantum cone using Lemma~\ref{lem:zooming-in} to complete the proof of Lemma~\ref{lem:sle-nbhd-zero-content}. Recall the notation $\mathcal B_\ep(A;D_h) := \bigcup_{z \in A} \mathcal B_\ep(z;D_h)$.

\begin{lem}\label{lem:epsilon-nbhd-covering}
    Let $h$ be a whole-plane field whose law is absolutely continuous with that of the whole-plane GFF normalized to have mean zero on $\bd \ud$. Let $K\subset \C$ be a fixed compact set. Let $A \subset K$ be a random set, not necessarily independent from $h$. Then, for each $\zeta >0$, 
    \begin{equation}\label{eqn:epsilon-nbhd-covering} 
        N_\ep(\mathcal B_\ep(A; D_h); D_h) \leq \ep^{-\zeta}N_\ep(A; D_h)
    \end{equation}
    holds with probability tending to 1 as $\ep \to 0$.
\end{lem}

\begin{proof}
    Let $\mathcal X_\ep$ be a collection of $\lfloor \ep^{-\zeta} N_{\ep}(A;D_h) \rfloor$ points in $\mathcal B_{2\ep}(A;D_h)$ sampled, given $h$ and $A$, conditionally i.i.d.\ from $\mu_h|_{\mathcal B_{2\ep}(A;D_h)}$ normalized to be a probability measure. We claim that $\bigcup_{x\in \mathcal X_\ep} \mathcal B_\ep(x;D_h)$ covers $\mathcal B_\ep(A;D_h)$ with probability increasing to 1 as $\ep \to 0$.
    
    To show that $\bigcup_{x\in \mathcal X_\ep} \mathcal B_\ep(x;D_h)$ covers $\mathcal B_\ep(A;D_h)$, we consider an $\frac{\ep}{2}$-cover of $\mathcal B_\ep(A;D_h)$ and then check that $\bigcup_{x\in \mathcal X_\ep}\mathcal B_\ep(x;D_h)$ contains this $\frac{\ep}{2}$-cover. Let us first compute the conditional probability that an LQG metric ball $\mathcal B_{\ep/2}(w;D_h)$ appearing in the $\frac{\ep}{2}$-cover is contained in $\bigcup_{x\in \mathcal X_\ep} \mathcal B_\ep(x;D_h)$. If $\mathcal B_{\ep/2}(w;D_h) \cap \mathcal B_\ep(A;D_h) \neq \varnothing$, then $w\in \mathcal B_{3\ep/2}(A;D_h)$ and $\mathcal B_{\ep/2}(w;D_h) \subset \mathcal B_{2\ep}(A;D_h)$.
    Conditioned on $h$, $A$, and $w \in \mathcal B_{3\ep/2}(A;D_h)$, each $x \in \mathcal X_\ep$ has
    \begin{equation} 
        \Prob \left(\mathcal B_{\ep/2}(w;D_h) \subset \mathcal B_\ep(x;D_h) \middle| h,A,w\right) \geq  \Prob \left( x \in \mathcal B_{\ep/2}(w;D_h) \middle| h,A, w\right) = \frac{\mu_h(\mathcal B_{\ep/2}(w;D_h))}{\mu_h(\mathcal B_{2\ep}(A;D_h))}. 
    \end{equation}    
    Since the points in $\mathcal X_\ep$ are conditionally i.i.d.\ given $h$ and $A$,
    \begin{equation}\label{eqn:metric-nbd-pt}
        \Prob \left(\mathcal B_{\ep/2}(w;D_h) \subset \bigcup_{x \in \mathcal X_\ep} \mathcal B_\ep(x;D_h) \middle| h,A,w\right) \geq 1 - \left(1 - \frac{\mu_h(\mathcal B_{\ep/2}(w;D_h))}{\mu_h(\mathcal B_{2\ep}(A;D_h))}\right)^{\lfloor\ep^{-\zeta}N_\ep(A;D_h)\rfloor}.
    \end{equation}
    
    We now specify the $\frac{\ep}{2}$-cover of $\mathcal B_\ep(A;D_h)$. Let $U:= B_1(K)$ and $W:= B_2(K)$. Let $\mathcal W_\ep$ be a collection of $\lfloor (\ep/2)^{-d_\gamma - \zeta} \rfloor$ points sampled conditionally i.i.d.\ from $\mu_h|_{W}$ normalized to be a probability measure, where $\mathcal W_\ep$ and $\mathcal X_\ep$ are conditionally independent given $h$. 
    By \cite[Lemma~A.3]{gwynne-geodesic-network}, the event 
    \begin{equation}
        E_1 := \left\{U \subset \bigcup_{w\in \mathcal W_\ep}\mathcal B_{\ep/2}(w;D_h)\right\}
    \end{equation} 
    occurs with probability tending to 1 as $\ep \to 0$. Also, since $D_h$ is almost surely a continuous metric, the event
    \begin{equation}
        E_2:= \{ D_h(K, \bd U)\geq \ep\} \cap \{D_h(\bd U, \bd W) \geq \ep\}
    \end{equation}
    occurs with probability tending to 1 as $\ep \to 0$. Truncating on the event $E_1 \cap E_2$, we have $\mathcal B_\ep(A;D_h) \subset U$ and hence
    \begin{equation}\label{eqn:intermediate-cover}
        \mathcal B_\ep(A;D_h) \subset \bigcup_{w\in \widetilde {\mathcal W}_\ep}\mathcal B_{\ep/2}(w;D_h) \quad \text{where} \quad \widetilde {\mathcal W}_\ep:= \mathcal W_\ep \cap \mathcal B_{3\ep/2}(A;D_h).
    \end{equation}
    This is the $\frac{\ep}{2}$-cover we choose for $\mathcal B_\ep(A;D_h)$, which holds on the event $E_1\cap E_2$. By \cite[Theorem 1.1]{afs-metric-ball}, there exists a random $c>0$ which depends only on $W$ and $\zeta$ such that a.s.\ for every $\ep \in (0,1)$,
    \begin{equation} \label{eqn:metric-nbd-mass}
    \frac{\inf_{z \in W} \mu_h(\mathcal B_{\ep/2}(z;D_h))}{\sup_{z \in W} \mu_h(\mathcal B_{3\ep}(z;D_h))} \geq c\ep^{\zeta/2}. 
    \end{equation} 
    Note that 
    \begin{equation} \label{eqn:double-radius-cover}
        \mu_h(\mathcal B_{2\ep}(A;D_h)) \leq N_\ep(A;D_h) \left( \sup_{z\in \mathcal B_\ep(A;D_h)} \mu_h(\mathcal B_{3\ep}(z;D_h)) \right)
    \end{equation}
    since we can cover $\mathcal B_{2\ep}(A;D_h)$ by first choosing an $\ep$-cover of $A$ and then blowing up the radius of every metric ball in this cover to $3\ep$. Combining \eqref{eqn:metric-nbd-mass} and \eqref{eqn:double-radius-cover}, we almost surely have
    \begin{equation}\label{eqn:volume-ratio-to-covering-num}
        \inf_{w\in \mathcal B_{3\ep/2}(A;D_h)}\frac{\mu_h(\mathcal B_{\ep/2}(w;D_h))}{\mu_h(\mathcal B_{2\ep}(A;D_h))} \geq \frac{1}{N_\ep(A;D_h)} \cdot \frac{\inf_{z \in W} \mu_h(\mathcal B_{\ep/2}(z;D_h))}{\sup_{z \in W} \mu_h(\mathcal B_{3\ep}(z;D_h))} \geq \frac{c\ep^{\zeta/2}}{N_\ep(A;D_h)}
    \end{equation}
    truncated on the event $E_2$ so that $\mathcal B_{2\ep}(A;D_h) \subset W$. Combining \eqref{eqn:metric-nbd-pt} with \eqref{eqn:volume-ratio-to-covering-num}, for each $w\in \mathcal W_\ep$ we have
    \begin{equation}\label{eqn:intermediate-cover-individual}
    \begin{aligned}
        &\Prob\left(\mathcal B_{\ep/2}(w;D_h) \subset \bigcup_{x\in \mathcal X_\ep} \mathcal B_\ep(x;D_h) \text{ or }w \notin \mathcal B_{3\ep/2}(A;D_h)\middle| h,A, E_2 \right) \\ &  \geq 1 - \left(1 - \frac{c\ep^{-\zeta/2}}{\ep^{-\zeta}N_\ep(A;D_h)}\right)^{\ep^{-\zeta}N_\ep(A;D_h)}.
    \end{aligned}
    \end{equation}
    Since $\ep^{-\zeta}N_\ep(A;D_h) \geq \ep^{-\zeta}$, the conditional probability in \eqref{eqn:intermediate-cover-individual} becomes superpolynomially high as $\ep\to 0$ at a rate uniform on $h$ and $A$. Since there are $\ep^{-d_\gamma - \zeta}$ points in $\mathcal W_\ep$, an intersection of \eqref{eqn:intermediate-cover-individual} over the points $w\in \mathcal W_\ep$ implies that truncated on the event $E_2$, we have $\bigcup_{w\in \widetilde{W}_\ep} \mathcal B_{\ep/2}(w;D_h) \subset \bigcup_{x\in \mathcal X_\ep} \mathcal B_\ep(x;D_h)$ with superpolynomially high probability as $\ep \to 0$. Recalling our choice of the $\frac{\ep}{2}$-cover of $\mathcal B_\ep(A;D_h)$ in \eqref{eqn:intermediate-cover}, we conclude the proof of the lemma by observing that $\Prob\left\{ \mathcal B_\ep(A;D_h) \subset \bigcup_{x\in \mathcal X_\ep} \mathcal B_\ep(x;D_h) \right\}$ is bounded below by
    \begin{equation}
         \Prob\left(E_1 \cap E_2 \cap \left\{ \bigcup_{w\in \widetilde{\mathcal W}_\ep} \mathcal B_{\ep/2}(w;D_h) \subset \bigcup_{x\in \mathcal X_\ep} \mathcal B_\ep(x;D_h) \right\}\right),
    \end{equation}
    which tends to 1 as $\ep\to 0$. 
\end{proof}

We are now ready to complete the proof of Lemma~\ref{lem:sle-nbhd-zero-content}. 

\begin{proof}[Proof of Lemma~\ref{lem:sle-nbhd-zero-content}]
    It suffices to show $\lim_{\ep \to 0} \ep^{d_\gamma} N_\ep(\bd_\ep \eta[0,1]) = 0$ almost surely, due to Proposition~\ref{prop:qc-mmspace-invariance}. Let $(z,h)$ be sampled from $\mathbf{1}_{\ud}(z) \mu_h(dz) dh$ normalized to be a probability measure as described in Lemma~\ref{lem:zooming-in}. For each constant $C$, let $\phi_C$ be the random translation and scaling such that $h^C:=(h+C)\circ \phi_C + Q\log|\phi_C'|$ is the field under the circle average embedding of $(\C,h+C,z,\infty)$. Given $\beta>0$, choose a large constant $R>0$ such that the event
    \begin{equation}
        E_3:= \{\eta[0,1]\subset B_R(0) \text{ and } D_{h^\gamma}(\bd B_R(0), \bd B_{2R}(0)) \geq 1\}
    \end{equation}
    has probability at least $1-\beta/3$. Then, choose a large constant $C >0$ such that the following two conditions are satisfied. 
    \begin{enumerate}[(i)]
        \item The total variation distance between the laws of the fields $h^\gamma$ and $h^C$ restricted to $B_{2R}(0)$ is at most $\beta/3$.
        \item The event $E_4:=\{\phi_C(B_R(0))\subset B_2(0)\}$ has probability at least $1-\beta/3$.
    \end{enumerate}
    For sufficiently large $C$, the first item holds by Lemma~\ref{lem:zooming-in} and the second item holds by the conditional law of $h(\cdot-z)$ as described in \cite[Lemma~A.10]{dms-lqg-mating}.

    Fix $\zeta \in (0,d_\gamma/2)$. Couple $h^C$ and $h^\gamma$ so that their restrictions to $B_{2R}(0)$ agree on an event of probability $1-\beta/3$, which we call $E_5$. Substitute $A = \phi_C(\bd\eta[0,1])$ and $K = \overline{B_2(0)}$ in Lemma~\ref{lem:epsilon-nbhd-covering}. Then, truncated on $E_3 \cap E_4 \cap E_5$, the event
    \begin{equation}\label{eqn:zoomed-in-covering-numbers}
        N_{\ep C^\xi}(\mathcal B_{\ep C^\xi}(\phi_C(\bd\eta[0,1]);D_h);D_h)
        \leq (\ep C^\xi)^{-\zeta} N_{\ep C^\xi}(\phi_C(\bd\eta[0,1]);D_h)
    \end{equation}
    occurs with probability increasing to the full probability of $E_3 \cap E_4 \cap E_5$ as $\ep \to 0$. By Lemmas~\ref{lem:translation-metric-invariance} and \ref{lem:scaling-metric-covariance}, we almost surely have
    \begin{equation} \begin{aligned}
        N_{\ep C^\xi}(\mathcal B_{\ep C^\xi}(\phi_C(\bd\eta[0,1]);D_h);D_h) & = N_\ep(\mathcal B_\ep(\phi_C(\bd\eta[0,1]);D_{h+C});D_{h+C}) \\ & = N_\ep(\mathcal B_\ep(\bd\eta[0,1];D_{h^C});D_{h^C}) \end{aligned}
    \end{equation}
    on the left-hand side of \eqref{eqn:zoomed-in-covering-numbers} and 
    \begin{equation} \begin{aligned}
        (\ep C^\xi)^{-\zeta} N_{\ep C^\xi}(\phi_C(\bd\eta[0,1]);D_h) & = (\ep C^\xi)^{-\zeta} N_\ep(\phi_C(\bd\eta[0,1]);D_{h+C}) \\ & = (\ep C^\xi)^{-\zeta}N_\ep(\bd\eta[0,1]);D_{h^C}) \end{aligned}
    \end{equation}
    on the right-hand side. On the event $E_3 \cap E_5$, since $D_{h^C}(\bd\eta[0,1], \bd B_{2R}(0)) \geq 1$, both $N_\ep(\bd\eta[0,1];D_{h^C})$ and $N_\ep(\mathcal B_\ep(\bd\eta[0,1];D_{h^C});D_{h^C})$ for $\ep \in(0, 1/2)$ are almost surely determined by $h^C|_{B_{2R}(0)}$. Hence, \eqref{eqn:zoomed-in-covering-numbers} implies 
    \begin{equation}\label{eqn:boundary-covering-conclusion}
        \ep^{d_\gamma} N_\ep(\bd_\ep \eta[0,1];D_{h^\gamma}) \leq  C^{-\xi\zeta} \ep^{d_\gamma-\zeta} N_\ep(\bd \eta[0,1];D_{h^\gamma}).
    \end{equation}
    Since $d_\gamma -\zeta > d_\gamma/2$, we deduce from Lemma~\ref{lem:sle-dim-1/2} that, truncated on $E_3\cap E_4\cap E_5$, the right-hand side of \eqref{eqn:boundary-covering-conclusion} tends to 0 almost surely as $\ep \to 0$. This proves the lemma since $E_3\cap E_4\cap E_5$ occurs with probability at least $1-\beta$, and our choice of $\beta$ was arbitrary.
\end{proof}

\subsection{Identifying the Minkowski content process}\label{sec:minkowski-content-determinstic}

For the remainder of this section, let $(X_I: I \in \mathcal I_\Q)$ be a weak limit of $(\mathfrak b_{\ep_n}^{-1} N_{\ep_n} (\eta(I)): I \in \mathcal I_\Q)$ for some sequence $\ep_n$ that tends to 0. As described in the proof overview for Proposition~\ref{prop:main-thm-qc-sle}, we define the ``Minkowski content process" $\{Y_t\}_{t\in \Q}$ by
    \begin{equation}\label{eqn:minkowski-content-process} 
    Y_t := \begin{cases} X_{[0,t]} & t>0, \\ 0 & t=0, \\ -X_{[t,0]} & t < 0. \end{cases}  
    \end{equation}
By Proposition~\ref{prop:finite-additivity}, $    X_{[s,t]} = Y_t - Y_s$ a.s.\ for all $[s,t]\in \mathcal I_\Q$. The next step is to show that $\{Y_t\}_{t\in \Q}$ extends to a two-sided Brownian motion with drift.

\begin{prop}\label{prop:minkowski-content-bm}
    The process $\{Y_t\}_{t\in \Q}$ defined in \eqref{eqn:minkowski-content-process} extends to a L\'evy process with almost surely continuous paths: i.e., a two-sided Brownian motion with drift.
\end{prop}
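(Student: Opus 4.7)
The plan is to verify that $\{Y_t\}_{t\in \Q}$ satisfies the three properties which together characterize a two-sided Brownian motion with drift: continuous paths, stationary increments, and independent increments. Kolmogorov's continuity criterion yields the continuous extension to $\R$: combining Proposition~\ref{prop:covering-num-moment-bound} with Corollary~\ref{cor:normalization-constant-comparison} gives the uniform bound $\Exp[(\mathfrak b_\ep^{-1}N_\ep(\eta[s,t]))^p] \leq C_p|t-s|^p$ for every $p\geq 1$ and $\ep \in (0,1)$, and passing to the weak limit along $\ep_n$ via Fatou yields $\Exp[|Y_t-Y_s|^p]\leq C_p|t-s|^p$ for rational $s<t$; Kolmogorov (with $p>1$) then extends $Y$ to an almost surely continuous process $\widetilde Y$ on $\R$. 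Stationary increments, including joint stationarity over finitely many intervals, follow from Proposition~\ref{prop:qc-mmspace-invariance}(ii): for any rational $t$ and any finite collection of rational intervals $[s_1,u_1],\dots,[s_k,u_k]$, the joint law of $(N_\ep(\eta[s_i+t,u_i+t];D_{h^\gamma}))_i$ equals that of $(N_\ep(\eta[s_i,u_i];D_{h^\gamma}))_i$, and this invariance passes to the limit in $(X_{[s_i,u_i]})_i$.

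For independent increments, given disjoint rational intervals $[s_1,t_1]$ and $[s_2,t_2]$ with $t_1 < s_2$, fix $\tau \in (t_1,s_2)\cap \Q$ and write $U_{\tau-}$ and $U_{\tau+}$ for the interiors of $\eta(-\infty,\tau]$ and $\eta[\tau,\infty)$, respectively. By Proposition~\ref{prop:left-right-independent-mmspace} applied at the split time $\tau$ (using the translation in Proposition~\ref{prop:qc-mmspace-invariance}(ii) to move $\eta(\tau)$ to the origin), the curve-decorated internal metric measure spaces on $U_{\tau-}$ and $U_{\tau+}$ are independent. In particular, the internal covering numbers $N_\ep(\eta[s_1,t_1];D_{h^\gamma}^{U_{\tau-}})$ and $N_\ep(\eta[s_2,t_2];D_{h^\gamma}^{U_{\tau+}})$ are independent. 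The main obstacle is a comparison lemma asserting that along $\ep_n$, the rescaled internal covering numbers $\mathfrak b_{\ep_n}^{-1}N_{\ep_n}(\eta[s_i,t_i];D_{h^\gamma}^{U_{\tau\mp}})$ converge in probability to the same limits as their rescaled external counterparts $\mathfrak b_{\ep_n}^{-1}N_{\ep_n}(\eta[s_i,t_i];D_{h^\gamma})$. The difficulty is that $\eta[s_1,t_1]\cap \bd U_{\tau-}$ may contain non-trivial flow-line arcs (from the merger of the boundary flow lines of $\eta[s_1,t_1]$ into those of $\eta(-\infty,\tau]$), so $D_{h^\gamma}^{U_{\tau-}}$ does not agree with $D_{h^\gamma}$ everywhere on $\eta[s_1,t_1]$. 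I would prove the comparison by a variant of Lemma~\ref{lem:sle-nbhd-zero-content}: the covering number of the $\ep$-neighborhood of $\eta[s_1,t_1]\cap \bd U_{\tau-}$ should be $o(\mathfrak b_\ep)$ because $\bd U_{\tau-}$ is a union of SLE$_\kappa$-type flow lines of Minkowski dimension strictly less than $d_\gamma$ (an analogue of Lemma~\ref{lem:sle-dim-1/2} for the wedge $U_{\tau-}$); outside this $\ep$-neighborhood, the argument in the proof of Proposition~\ref{prop:finite-additivity} lets us choose the centers of covering balls for $N_\ep^\circ$ in $\eta[s_1,t_1]$, and for such a center at $D_{h^\gamma}$-distance greater than $\ep$ from $\bd U_{\tau-}$ the external $\ep$-ball is contained in $U_{\tau-}$, whence it coincides with the internal $\ep$-ball.

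Granting this comparison, $X_{[s_1,t_1]}$ and $X_{[s_2,t_2]}$ inherit the independence from the internal versions. Combined with stationarity and continuity, $\widetilde Y$ is a continuous process on $\R$ with stationary and independent increments, hence a L\'evy process with continuous paths, which is a two-sided Brownian motion with drift.
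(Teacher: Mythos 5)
Your overall architecture is the same as the paper's: Kolmogorov continuity from the uniform moment bounds (Proposition~\ref{prop:covering-num-moment-bound} plus Corollary~\ref{cor:normalization-constant-comparison} and Fatou), stationarity from Proposition~\ref{prop:qc-mmspace-invariance}, and independence of increments from the left/right wedge independence of Proposition~\ref{prop:left-right-independent-mmspace} after splitting at a time $\tau$ between the intervals. The continuity and stationarity parts are fine. In the independence step, however, the comparison lemma as you state it has a gap. You ask that $\mathfrak b_{\ep_n}^{-1}N_{\ep_n}(\eta[s_i,t_i];D_{h^\gamma}^{U_{\tau\mp}})$ have the same limit as $\mathfrak b_{\ep_n}^{-1}N_{\ep_n}(\eta[s_i,t_i];D_{h^\gamma})$. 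Since the internal metric dominates the ambient one, the internal covering number is the larger of the two, so you need an \emph{upper} bound on it: you must cover the near-boundary strip $\{z\in\eta[s_1,t_1]: D_{h^\gamma}(z,\bd U_{\tau-})\le\ep\}$ by $o(\mathfrak b_\ep)$ balls \emph{of the internal metric} $D_{h^\gamma}^{U_{\tau-}}$. Neither Lemma~\ref{lem:sle-nbhd-zero-content} nor your proposed analogue of Lemma~\ref{lem:sle-dim-1/2} for the wedge boundary gives this: those control ambient-metric covering numbers, and internal balls near $\bd U_{\tau-}$ can be much smaller than ambient balls (paths are forced to stay in the wedge; for $\gamma\in(\sqrt2,2)$ the internal metric is not even defined across beads). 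So the step "internal covering numbers converge to the same limits" does not follow from the argument you sketch.

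The repair is exactly the device you half-invoke at the end, and it is what the paper does: never upper-bound an internal covering number. Work with $N_\ep^\circ(\eta(I))=N_\ep(\eta(I)\setminus\bd_\ep\eta(I);D_{h^\gamma})$, i.e.\ the \emph{ambient} covering number of the set of points at $D_{h^\gamma}$-distance $\ge\ep$ from $\bd\eta(I)$. A geodesic argument (any ball counted there is centered in the interior of $\eta(I)$, and geodesics to the $\ep$-far points cannot touch $\bd\eta(I)$) shows $N_\ep^\circ(\eta(I))$ equals an internal-metric covering number of an internally defined set, hence is a.s.\ determined by $(U_{\tau-},D_{h^\gamma}^{U_{\tau-}},\mu_{h^\gamma}|_{U_{\tau-}},\eta|_{(-\infty,\tau]})$ when $I\subset(-\infty,\tau]$; and $N_\ep^\circ\le N_\ep\le N_\ep^\circ+N_\ep(\bd_\ep\eta(I))$ with the error $o(\ep^{-d_\gamma})$ by Lemma~\ref{lem:sle-nbhd-zero-content}, which also means no new wedge-boundary lemma is needed (indeed $\eta[s_1,t_1]\cap\bd U_{\tau-}\subset\bd\eta[s_1,t_1]$, so your neighborhood is already covered by the existing lemma). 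Two further small points: since the $X_I$ are only subsequential weak limits, phrase the comparison as "the difference tends to $0$ in probability" and pass to joint weak limits via Lemma~\ref{lem:diag-weak-convergence} (as in \eqref{eqn:covering-num-joint-convergence}), rather than "converges in probability to $X_I$"; and for a L\'evy process you need joint independence of $m$ increments, which follows by applying the prefix/suffix independence at every split time $t_k$, not just for two intervals.
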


\begin{proof} 
    We first show that $\{Y_t\}_{t\in\Q}$ extends to a continuous process defined on $\R$. Let $p>1$. For every rational $s<t$, by Fatou's lemma, Corollary~\ref{cor:normalization-constant-comparison}, and Proposition~\ref{prop:covering-num-moment-bound},
    \begin{equation} 
        \begin{aligned} \Exp [|Y_t- Y_s|^p]  = \Exp[|X_{[s,t]}|^p] &\leq \liminf_{n\to \infty} \Exp[|\mathfrak b_{\ep_n}^{-1} N_{\ep_n}(\eta[s,t])|^p] \\& \leq C_1 \liminf_{n\to \infty} \Exp[| \ep_n^{d_\gamma} N_{\ep_n}(\eta[s,t])|^p] \leq C_2 |t-s|^p \end{aligned}
    \end{equation}
    for constants $C_1,C_2>0$ depending only on $\gamma$. We deduce from the Kolmogorov continuity criterion that $\{Y_t\}_{t\in \Q}$ can be extended to a process $\{Y_t\}_{t\in \R}$ whose paths are almost surely continuous.

    It now suffices to check that $\{Y_t\}_{t\in\Q}$ has stationary and independent increments since these properties extend to $\{Y_t\}_{t\in\R}$ by continuity.
    
    \begin{itemize}
    \item \textit{Stationary increments.} We found in Corollary~\ref{cor:covering-number-relation} that $N_\ep(\eta[s,t]) \stackrel{d}{=} N_\ep(\eta[0,t-s])$ for every fixed $s<t$ and $\ep>0$. Hence, $Y_t - Y_s = X_{[s,t]}$ and $Y_{t-s} = X_{[0,t-s]}$ agree in law for every rational $s<t$.

    \item \textit{Independent increments.} For each $s\in \R$, denote by $U_{s-}$ and \ $U_{s+}$ the interiors of $\eta(-\infty,s]$ and $\eta[s,\infty)$, respectively. We claim that for any $\ep>0$ and any fixed interval $I\in \mathcal I_\Q$, if $I \subset (-\infty,s]$ (resp.\ $I \subset [s,\infty)$), then the quantity $N_\ep^\circ(\eta(I))$ is almost surely determined by the curve-decorated metric measure space $(U_{s-}, D_h^{U_{s-}}, \mu_h|_{U_{s-}}, \eta|_{(-\infty,s]})$ (resp.\ $(U_{s+}, D_h^{U_{s+}}, \mu_h|_{U_{s+}}, \eta|_{[s,\infty)]})$).
    
    Let us prove the claim. Without loss of generality, assume $I \subset (-\infty,s]$. Recall the definition 
    \begin{equation}N_\ep^\circ(\eta(I)) = N_\ep(\eta(I) \setminus \bd_\ep \eta(I);D_{h^\gamma}) \end{equation} where $\eta(I) \setminus \bd_\ep \eta(I) := \{z\in \eta(I): D_{h^\gamma}(z, \bd \eta(I)) \geq \ep\}$. Suppose $\mathcal B_\ep(w;D_{h^\gamma})$ is a ball counted in $N_\ep^\circ(\eta(I))$. Then $\mathcal B_\ep(w;D_{h^\gamma})$ has a nonempty intersection with $\eta(I)\setminus\bd_\ep \eta(I)$, so $w$ must be in the interior of $\eta(I)$. Now suppose $z$ is any point within the intersection of $\mathcal B_\ep(w;D_{h^\gamma})$ and $\eta(I) \setminus\bd_\ep\eta(I)$. Then, the $D_{h^\gamma}$-geodesic from $z$ to $w$ must be contained in the interior of $\eta(I)$, or otherwise its $D_{h^\gamma}$-length would be at least \begin{equation} D_{h^\gamma}(z,\bd \eta(I)) + D_{h^\gamma}(w,\bd\eta(I)) \geq \ep. \end{equation} Since $\mathrm{int}(\eta(I))\subset U_{s-}$, we have $D_{h^\gamma}(z,w) = D_{h^\gamma}^{U_{s-}}(z,w)$. Since this holds for any $z$ in the intersection of $\mathcal B_\ep(w;D_{h^\gamma})$ and $\eta(I) \setminus\bd_\ep\eta(I)$, we have
    \begin{equation} \label{eqn:internal-metric-covered-points}
        \mathcal B_\ep(w;D_{h^\gamma}^{U_{s-}}) \cap (\eta(I)\setminus \bd_\ep\eta(I)) = \mathcal B_\ep(w;D_{h^\gamma}) \cap (\eta(I)\setminus \bd_\ep\eta(I)).
    \end{equation}    
    Since \eqref{eqn:internal-metric-covered-points} holds for any $w$ in the interior of $\eta(I)$, we obtain
    \begin{equation}\label{eqn:independence-covering-num-locality}
        N_\ep^\circ(\eta(I)) = N_\ep(\eta(I)\setminus\bd_\ep\eta(I); D_{h^\gamma}^{U_{s-}}).
    \end{equation}
    Observe that 
    \begin{equation} \eta(I)\setminus \bd_\ep\eta(I) = \{z \in \eta(I): \mathcal B_\ep(z; D_{h^\gamma}^{U_{s-}} ) \subset \eta(I)\}.
    \end{equation}
    Hence, the right-hand side of \eqref{eqn:independence-covering-num-locality} is a.s.\ determined by $(U_{s-}, D_h^{U_{s-}}, \mu_h|_{U_{s-}}, \eta|_{(-\infty,s]})$.

    Let us now deduce from the claim that $\{Y_t\}_{t\in\Q}$ has independent increments. Let $t_0 < t_1 < \dots<t_m$ be a fixed collection of finitely many rational times. Setting $s = t_k$, we see from the claim that for each $\ep_n$, the random vector $(\mathfrak b_{\ep_n}^{-1} N_{\ep_n}^\circ(\eta[t_{j-1},t_j]): 1\leq j\leq k)$ is a.s.\ determined by $(U_{s-}, D_h^{U_{s-}}, \mu_h|_{U_{s-}}, \eta|_{(-\infty,s]})$, and $(\mathfrak b_{\ep_n}^{-1} N_{\ep_n}^\circ(\eta[t_{j-1},t_j]): k+1\leq j\leq m)$ is a.s.\ determined by $(U_{s+}, D_h^{U_{s+}}, \mu_h|_{U_{s+}}, \eta|_{[s,\infty)})$. These two random vectors are thus independent by Proposition~\ref{prop:left-right-independent-mmspace} combined with the translation invariance of Proposition~\ref{prop:qc-mmspace-invariance}. Recalling \eqref{eqn:covering-num-joint-convergence}, we conclude that $(X_{[t_{j-1},t_j]}: 1\leq j\leq k)$ and $(X_{[t_{j-1},t_j]}: k+1 \leq j\leq m)$ are independent. This works for any $1\leq k\leq m$, so the random variables $X_{[t_0,t_1]},\dots,X_{[t_{m-1},t_m]}$ are independent.
    \end{itemize}\vspace{-20pt}
\end{proof}

We conclude the proof of Proposition~\ref{prop:main-thm-qc-sle} by checking that the Minkowski content process has almost surely positive increments.

\begin{prop}[Positive increments]\label{prop:positive-increments}
    Almost surely, $X_{[s,t]}>0$ for every rational $s<t$.
\end{prop}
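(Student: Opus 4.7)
The plan is to use the scaling relation from Corollary~\ref{cor:covering-number-relation} combined with Proposition~\ref{prop:covering-number-lower-bound} to establish a deterministic linear lower bound $X_{[s,t]} \geq c_0(t-s)$ almost surely for each fixed rational $s<t$. Taking a countable intersection over $\mathcal I_\Q$ then yields the proposition.

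Fix rational $s<t$. By Corollary~\ref{cor:covering-number-relation},
\[ \ep^{d_\gamma} N_\ep(\eta[s,t]) \stackrel{d}{=} \ep^{d_\gamma} N_{\ep (t-s)^{-1/d_\gamma}}(\eta[0,1]) = (t-s)\,\bigl(\ep(t-s)^{-1/d_\gamma}\bigr)^{d_\gamma} N_{\ep(t-s)^{-1/d_\gamma}}(\eta[0,1]). \]
Proposition~\ref{prop:covering-number-lower-bound} provides a constant $c = c(\gamma) > 0$ with $\Prob\{\delta^{d_\gamma} N_\delta(\eta[0,1]) > c\} \to 1$ as $\delta \to 0$, so setting $\delta = \ep(t-s)^{-1/d_\gamma}$ gives
\[ \lim_{\ep \to 0} \Prob\{\ep^{d_\gamma} N_\ep(\eta[s,t]) > c(t-s)\} = 1. \]
By Corollary~\ref{cor:normalization-constant-comparison}, there exists $C>0$ with $\mathfrak b_\ep \leq C\ep^{-d_\gamma}$ for all $\ep \in (0,1)$, so with $c_0 := c/C$,
\[ \lim_{\ep \to 0} \Prob\{\mathfrak b_\ep^{-1} N_\ep(\eta[s,t]) > c_0 (t-s)\} = 1. \]

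Specialising to the subsequence $\ep_n$ and applying the Portmanteau theorem to the open set $(-\infty,\,c_0(t-s))$ together with the subsequential weak convergence $\mathfrak b_{\ep_n}^{-1} N_{\ep_n}(\eta[s,t]) \Rightarrow X_{[s,t]}$, we obtain
\[ \Prob\{X_{[s,t]} < c_0(t-s)\} \leq \liminf_{n\to \infty} \Prob\{\mathfrak b_{\ep_n}^{-1} N_{\ep_n}(\eta[s,t]) < c_0(t-s)\} = 0, \]
so $X_{[s,t]} \geq c_0(t-s) > 0$ almost surely for this fixed pair $(s,t)$. Since $\mathcal I_\Q$ is countable, intersecting these full-probability events over all rational $s<t$ delivers the conclusion.

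There is no serious obstacle at this step: the substantive content has already been invested in Proposition~\ref{prop:covering-number-lower-bound}, whose proof exploits the exact independence from Proposition~\ref{prop:left-right-independent-mmspace} to produce many disjoint $D_{h^\gamma}$-balls inside $\eta[0,1]$, and in the scaling/stationarity in Corollary~\ref{cor:covering-number-relation}. The present proposition simply transfers those inputs through the subsequential weak limit via Portmanteau, with the quantitative comparison $\mathfrak b_\ep \asymp \ep^{-d_\gamma}$ ensuring that the $\ep^{-d_\gamma}$-normalisation and the $\mathfrak b_\ep$-normalisation yield the same qualitative conclusion up to a constant.
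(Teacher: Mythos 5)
Your argument is correct and follows the paper's proof essentially verbatim: both combine Proposition~\ref{prop:covering-number-lower-bound}, the scaling/stationarity of Corollary~\ref{cor:covering-number-relation}, and the comparison $\mathfrak b_\ep \asymp \ep^{-d_\gamma}$ from Corollary~\ref{cor:normalization-constant-comparison}, then pass the lower bound through the subsequential weak limit (you simply make the Portmanteau step and the countable intersection over $\mathcal I_\Q$ explicit, which the paper leaves implicit).
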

\begin{proof} 
By Proposition~\ref{prop:covering-number-lower-bound}, we can find a deterministic constant $c>0$ such that with probability tending to 1 as $\ep \rightarrow 0$, we have $ N_\ep(\eta[0,1]) \geq c \ep^{-d_\gamma}$. By Corollary~\ref{cor:covering-number-relation}, for any fixed rational times $s<t$, it also holds with probability tending to 1 as $\ep\rightarrow 0$ that $ N_\ep(\eta[s,t]) \geq c \ep^{-d_\gamma} (t-s)$. 
Combining this with Corollary~\ref{cor:normalization-constant-comparison}, we deduce that $X_{[s,t]} > 0$ almost surely for every rational $s<t$.
\end{proof}

\begin{proof}[Proof of Proposition~\ref{prop:main-thm-qc-sle}]

We proved that $\{\ep^{d_\gamma} N_\ep(\eta[s,t])\}_{0 < \ep < (t-s)^{1/d_\gamma} }$ is tight for any $s<t$ in Proposition~\ref{prop:covering-num-moment-bound}. By Corollary~\ref{cor:normalization-constant-comparison}, $\{\mathfrak b_\ep^{-1} N_\ep(\eta[s,t])\}_{0 < \ep < (t-s)^{1/d_\gamma}}$ is also tight. Therefore, given any sequence of $\ep$ decreasing to 0, we can find a subsequence $\ep_n$ for which the corresponding sequence of $\R^{\mathcal I_\Q}$-valued random variables $(\mathfrak b_{\ep_n}^{-1} N_{\ep_n}(\eta(I)): I \in \mathcal I_\Q)$ converges in distribution w.r.t.\ the product topology on $\R^{\mathcal I_\Q}$.

Now suppose $\ep_n$ is any sequence decreasing to 0 such that $(\mathfrak b_{\ep_n}^{-1} N_{\ep_n}(\eta(I)): I \in \mathcal I_\Q)$ has a weak limit $(X_I:I\in \mathcal I_\Q)$. Define the process $\{Y_t\}_{t\in\Q}$ as in \eqref{eqn:minkowski-content-process}. By Proposition~\ref{prop:minkowski-content-bm}, $\{Y_t\}_{t\in\Q}$ extends to a Brownian motion with drift. Since $\{Y_t\}_{t\in\Q}$ has positive increments (Proposition~\ref{prop:positive-increments}), the variance of the Brownian motion must be zero. That is, there exists a deterministic constant $a>0$ such that almost surely, $Y_t = at$ for all $t$. In other words, for every rational $s<t$, the subsequence $\mathfrak b_{\ep_n}^{-1} N_{\ep_n}(\eta[s,t])$ converges in distribution to $a(t-s)$ as $n\to \infty$. Since $a(t-s)$ is a deterministic constant, the convergence is in probability. In fact, for every $p\geq 1$, the convergence is also in $L^p$ since $\sup_{0 < \ep < |t-s|^{1/d_\gamma}} \mathbb E[|\mathfrak b_\ep^{-1} N_\ep(\eta[s,t])|^p]<\infty$ (Proposition~\ref{prop:covering-num-moment-bound}). By the definition~\eqref{eqn:scaling-constant} of $\mathfrak b_\ep$, we have $\Exp[\mathfrak b_{\ep}^{-1} N_{\ep}(\eta[0,1])] = 1$ for all $\ep>0$. Hence, $a = 1$ regardless of the choice of the subsequence $\ep_n$. 
 
In summary, we have that given any sequence of $\ep$ decreasing to 0, there exists a subsequence $\ep_n$ such that $\lim_{n\to \infty} \mathfrak b_{\ep_n}^{-1} N_{\ep_n}(\eta[s,t]) = t-s$ in probability for all rational $s<t$. This proves \eqref{eqn:sle-minkowski-content} for rational $s<t$. Now suppose $s<t$ are any fixed real numbers. If $s_1 < s_0 < t_0 < t_1$ are rational numbers such that $[s_0,t_0] \subset [s,t] \subset [s_1,t_1]$, then 
\begin{equation}
    N_\ep(\eta[s_0,t_0]) \leq N_\ep(\eta[s,t]) \leq N_\ep(\eta[s_1,t_1]).
\end{equation}
By taking $s_1,s_0$ close to $s$ and $t_0,t_1$ close to $t$, we conclude that $\lim_{n\to\infty} \mathfrak b_{\ep}^{-1}N_\ep(\eta[s,t]) = t-s$ in probability.
\end{proof}

\section{Generalization to other sets and fields}\label{sec:generalization}

In this section, we show that the LQG measure $\mu_h$ can be recovered from $D_h$ in terms of Minkowski content with respect to $\mathfrak b_\ep$, not only for the $\gamma$-quantum cone but also for other variants of the whole-plane GFF. 

In general, Minkowski content is not a measure. Accordingly, we do not expect that the Minkowski content of an arbitrary bounded Borel set $A \subset \C$ is equal to its LQG measure $\mu_h(A)$. The sufficient condition we impose is that $A$ is a random bounded Borel set such that 
\begin{equation}\label{eqn:sufficient-condition}
    \mu_{h}(\bd A) = 0 \quad \text{a.s.}
\end{equation}
Since $\bd A$ is closed, this condition is equivalent to 
\begin{equation}\label{eqn:sufficient-condition-euclidean} 
    \lim_{\delta \to 0} \mu_h(B_\delta(\bd A)) = 0 \quad \text{a.s.}
\end{equation}
Since the LQG metric induces the Euclidean topology, another equivalent condition is
\begin{equation}\label{eqn:sufficient-condition-lqg} 
    \lim_{\delta \to 0} \mu_h(\mathcal B_\delta(\bd A; D_h)) = 0 \quad \text{a.s.} 
\end{equation}

\begin{rem}\label{rem:small-quantum-dim}
    One sufficient condition for \eqref{eqn:sufficient-condition} is that the LQG Minkowski dimension of $A$ is bounded above by $d_\gamma-\zeta$ for some deterministic constant $\zeta>0$. That is, $\lim_{\ep \to 0} \ep^{d_\gamma-\zeta} N_\ep(\bd A;D_h) = 0$ almost surely. Note that this condition does not reference the LQG measure $\mu_h$. Indeed, suppose that $h$ is a whole-plane GFF with $h_1(0)=0$. Theorem~\ref{thm:metric-ball-volume} states that 
    \begin{equation}
       \sup_{\ep \in (0,1)} \sup_{z \in B_r(0)} \frac{\mu_h(\mathcal B_\ep(z;D_h))}{\ep^{d_\gamma - \frac{\zeta}{2}}} < \infty \quad \text{a.s.}
    \end{equation}
    Then, on the event $A \subset B_r(0)$,
    \begin{equation}
       \mu_h(\bd A) \leq \limsup_{\ep \to 0} \left( N_\ep(\bd A;D_h)\cdot \sup_{z \in B_r(0)} \mu_h(\mathcal B_\ep(z;D_h) )\right) = 0 \quad \text{a.s.}
    \end{equation}  
    Letting $r\to \infty$, we obtain \eqref{eqn:sufficient-condition} since $A$ is almost surely bounded.
    This implication holds even when we replace $h$ with a whole-plane GFF plus continuous function because $\mu_{h+f}$ is almost surely absolutely continuous with respect to $\mu_h$ for any random continuous function $f:\C\to \R$.
\end{rem}

\begin{rem}\label{rem:determinsitic-lebesgue-zero}
    Let $A\subset \C$ be a deterministic bounded Borel set whose boundary has zero Lebesgue measure, and let $h$ be a whole-plane GFF plus continuous function. We claim that $A$ satisfies \eqref{eqn:sufficient-condition}. Indeed, let $r > 0$ such that $\overline A\subset B_r(0)$. Let $h^0$ be a zero-boundary GFF on $B_r(0)$. From \cite[Proposition~1.2]{shef-kpz}, we have $\Exp[\mu_{h^0}(\bd A)] = \int_{\bd A} \mathrm{crad}(z;B_r(0))^{\gamma^2/2}\,d^2z$ where $\mathrm{crad}(z;B_r(0))$ is the conformal radius of $B_r(0)$ viewed from $z$. Since the Lebesgue measure of $\bd A$ is zero, we get $\Exp[\mu_{h^0}(\bd A)] = 0$ and $\mu_{h^0}(\bd A) = 0$ almost surely. By the Markov property of the whole-plane GFF~\cite[Proposition 2.8]{ig4}, we can couple $h^0$ and $h$ so that $(h-h^0)|_{B_r(0)}$ is a continuous function. Since adding a continuous function to the field results in weighting the Liouville measure by a continuous function, we also get the desired statement for $h$.
\end{rem}

\subsection{General sets on the quantum cone}

In Proposition~\ref{prop:main-thm-qc-sle}, we found that whenever $s<t$ are fixed real numbers, $\mathfrak b_\ep^{-1} N_\ep(\eta[s,t];D_{h^\gamma})$ converges in probability to $t-s=\mu_{h^\gamma}(\eta[s,t])$. The goal of this subsection is to replace $\eta[s,t]$ with more general subsets of $\C$; in particular, we consider bounded Borel sets satisfying \eqref{eqn:sufficient-condition}. We can also consider random subsets of $\C$ coupled with the field $h^\gamma$. The idea is to bound such a set from inside and outside by unions of finitely many SLE cells --- i.e., $\bigcup_{I\subset \mathcal I} \eta(I)$ where $\mathcal I\subset \mathcal I_\Q := \{[s,t]: s,t\in\Q, s<t\}$ is a random collection of finitely many closed intervals with dyadic rational endpoints.

\begin{prop}\label{prop:main-thm-qc-general}
    Let $h^\gamma$ be the field of a $\gamma$-quantum cone in the circle average embedding. Let $A\subset\C$ be either a deterministic bounded Borel set or a random compact set coupled with $h^\gamma$ and $\eta$ (as in Theorem~\ref{thm:main-thm}) such that $\mu_{h^\gamma}(\bd A) = 0$ almost surely. Then,
    \begin{equation}
        \lim_{\ep \to 0} \mathfrak b_\ep^{-1} N_\ep(A;D_{h^\gamma}) = \mu_{h^\gamma}(A)
    \end{equation}
    in probability.
\end{prop}
\begin{proof}
    As introduced in the above proof idea, we first consider the case $A = \bigcup_{I\in \mathcal I}\eta(I)$ where $\mathcal I$ is a random collection of finitely many closed bounded intervals with dyadic rational endpoints. Concretely, fix a positive integer $k$ and consider a random collection $\mathcal I \subset \{[j/2^k, (j+1)/2^k]: j \in \Z\}$ coupled with $h^\gamma$ and $\eta$ which a.s.\ has finitely many elements. We work in the same way we proved the finite additivity of Minkowski content of SLE cells (Proposition~\ref{prop:finite-additivity}). Recall that $\bd_\ep \eta(I) = \{z \in \eta(I): D_{h^\gamma}(z,\bd \eta(I)) < \ep\}$. For such $\mathcal I$, for each $\ep>0$, we almost surely have
    \begin{equation}
    	\begin{aligned} &\sum_{I} \boldsymbol{1}_{\{I \in \mathcal I\}} \mathfrak b_\ep^{-1} [N_\ep(\eta(I);D_{h^\gamma}) - N_\ep(\bd_\ep \eta(I);D_{h^\gamma})] \\ & \hspace{1in} \leq \mathfrak b_\ep^{-1} N_\ep\left(\bigcup_{I\in \mathcal I}\eta(I);D_{h^\gamma}\right) \leq \sum_{I} \boldsymbol{1}_{\{I \in \mathcal{I}\}} \mathfrak b_\ep^{-1} N_\ep(\eta(I);D_{h^\gamma})\end{aligned}
    \end{equation}
    where the two sums are over all intervals $I$ of the form $[j/2^k, (j+1)/2^k]$. Both sums converge in probability as $\ep \to 0$ to $\sum_{I \in \mathcal I} \mu_{h^\gamma}(\eta(I)) = \mu_{h^\gamma}(\bigcup_{I\in \mathcal I}\eta(I))$ since each term in these sums converges in probability to $\boldsymbol{1}_{\{I \in \mathcal I\}} \eta(I)$ as seen in Proposition~\ref{prop:main-thm-qc-sle} and Lemma~\ref{lem:sle-nbhd-zero-content}. Therefore,
    \begin{equation} \label{eqn:random-union-minkowski-content}
        \lim_{\ep \to 0} \mathfrak b_{\ep}^{-1} N_{\ep} \left(\bigcup_{I\in \mathcal I}\eta(I);D_{h^\gamma}\right) = \mu_{h^\gamma} \left(\bigcup_{I\in \mathcal I}\eta(I)\right) \quad \text{in probability.} 
    \end{equation}

    Let $A\subset \C$ now be a deterministic bounded Borel set or a random compact set coupled with $h^\gamma$ and $\eta$ such that $\mu_{h^\gamma}(\bd A) = 0$ almost surely. For each integer $k$, let 
    \begin{equation} 
        \mathcal I_k := \left\{\left[j/2^k, (j+1)/2^k\right]: j \in \Z, \eta\left(\left[j/2^k, (j+1)/2^k\right]\right) \subset A\right\}
    \end{equation}
    and
    \begin{equation} 
        \mathcal J_k := \left\{\left[j/2^k, (j+1)/2^k\right]: j \in \Z, \eta\left(\left[j/2^k, (j+1)/2^k\right]\right) \cap A \neq \varnothing\right\}.
    \end{equation}
    That is, $\bigcup_{I\in\mathcal I_k} \eta(I)$ and $\bigcup_{I \in \mathcal J_k} \eta(I)$ are approximations of $A$ from inside and outside, respectively. For each $j$ and $k$, note that $\eta([j/2^k, (j+1)/2^k])$ is a random compact subset of $\C$ that is measurable w.r.t.\ the Borel $\sigma$-algebra generated by the Hausdorff distance on compact subsets of $\C$; hence, $\mathcal I_k$ and $\mathcal J_k$ are measurable. Almost surely, $\mathcal I_k$ and $\mathcal J_k$ contain finitely many intervals since $A$ is bounded and $\lim_{t\to \pm\infty}\eta(t) = \infty$. Since $\bigcup_{I \in \mathcal I_k} \eta(I) \subset A \subset \bigcup_{I\in \mathcal J_k} \eta(I)$, we have 
    \begin{equation}
        \mathfrak b_\ep^{-1} N_\ep \left( \bigcup_{I \in \mathcal I_k} \eta(I);D_{h^\gamma}\right) \leq \mathfrak b_\ep^{-1} N_\ep(A) \leq \mathfrak b_\ep^{-1} N_\ep\left(\bigcup_{I \in \mathcal J_k} \eta(I);D_{h^\gamma}\right)  
    \end{equation}
    almost surely for each $\ep>0$. As in \eqref{eqn:random-union-minkowski-content}, we have
    \begin{equation} \label{eqn:approx-from-inside}
        \lim_{\ep \to 0}\mathfrak b_\ep^{-1} N_\ep \left( \bigcup_{I \in \mathcal I_k} \eta(I);D_{h^\gamma}\right)= \mu_{h^\gamma} \left( \bigcup_{I \in \mathcal I_k} \eta(I) \right) 
    \end{equation} and 
    \begin{equation} \label{eqn:approx-from-outside} 
    	\lim_{\ep \to 0}\mathfrak b_\ep^{-1} N_\ep \left( \bigcup_{I \in \mathcal J_k} \eta(I);D_{h^\gamma}\right) = \mu_{h^\gamma} \left( \bigcup_{I \in \mathcal J_k} \eta(I) \right) 
    \end{equation}
    in probability for each $k$.
    
    We claim 
    \begin{equation}  
        \lim_{k\to \infty} \mu_{h^\gamma} \left( \bigcup_{I \in \mathcal I_k} \eta(I)\right) = \mu_{h^\gamma} (A) =\lim_{k\to \infty} \mu_{h^\gamma}\left(\bigcup_{I \in \mathcal J_k} \eta(I)\right) 
    \end{equation}
    almost surely, which, in combination with \eqref{eqn:approx-from-inside} and \eqref{eqn:approx-from-outside}, completes the proof of the proposition. Note that for each integer $k$,
    \begin{equation}  
        \mu_{h^\gamma} \left( \bigcup_{I \in \mathcal I_k} \eta(I)\right) \leq \mu_{h^\gamma} (A) \leq \mu_{h^\gamma}\left(\bigcup_{I \in \mathcal J_k} \eta(I)\right) . 
    \end{equation}
    Because $\bigcup_{I \in \mathcal I_k} I$ increases and $\bigcup_{I \in \mathcal J_k} I$ decreases as $k$ increases, it suffices to prove 
    \begin{equation} 
        \lim_{k\to \infty} \left[ \mu_{h^\gamma}\left(\bigcup_{I \in \mathcal J_k} \eta(I)\right) - \mu_{h^\gamma}\left(\bigcup_{I \in \mathcal I_k} \eta(I)\right) \right] = \lim_{k\to \infty} \mu_{h^\gamma}\left(\bigcup_{I \in \mathcal J_k \setminus \mathcal I_k} \eta(I)\right) = 0
    \end{equation}
    almost surely. Note that 
    \begin{equation}  
        \mathcal J_k \setminus \mathcal I_k = \left\{\left[j/2^k, (j+1)/2^k\right]: j \in \Z, \eta\left(\left[j/2^k, (j+1)/2^k\right]\right) \cap \bd A \neq \varnothing  \right\}. 
    \end{equation}
    Since $A$ is bounded and $t \mapsto \eta(t)$ is continuous, 
    \begin{equation}  
        \lim_{k\to \infty} \max_{I \in \mathcal J_k} \mathrm{diam}(\eta(I);D_{h^\gamma}) = 0 
    \end{equation}
    with probability one. Hence, 
    \begin{equation}  
        \lim_{k\to \infty} \mu_{h^\gamma}\left( \bigcup_{I \in \mathcal J_k \setminus \mathcal I_k} \eta(I) \right) \leq \lim_{\delta\to 0} \mu_{h^\gamma}( \mathcal B_\delta(\bd A;D_{h^\gamma})).
    \end{equation}
    almost surely. As discussed in the beginning of this section, because $D_{h^\gamma}$ induces the Euclidean topology, $\Prob\{\mu_{h^\gamma}(\bd A) = 0\}=1$ implies $\lim_{\delta\to 0} \mu_{h^\gamma}( \mathcal B_\delta(\bd A;D_{h^\gamma}))=0$ almost surely and thus our claim.
\end{proof}

\subsection{Generalization to other GFF variants}

The final technical detail that allows us to replace $h^\gamma$ in Proposition~\ref{prop:main-thm-qc-general} with any whole-plane GFF plus a continuous function is that $\mathfrak b_\ep$ is regularly varying with index $-d_\gamma$.

\begin{proof}[Proof of Proposition~\ref{prop:scaling-constant-properties}]
    We proved \eqref{eqn:scaling-constant-bounds} in Corollary~\ref{cor:normalization-constant-comparison}. It remains to show \eqref{eqn:scaling-constant-regularly-varying}.
    Recall from Corollary~\ref{cor:covering-number-relation} that $N_{\ep}(\eta[0,r^{-d_\gamma}];D_{h^\gamma}) \stackrel{d}{=} N_{r\ep}(\eta[0,1];D_{h^\gamma})$ for every $\ep>0$. Hence, by Proposition~\ref{prop:main-thm-qc-sle},
    \begin{equation} 
        r^{-d_\gamma} = \frac{\lim_{\ep \to 0} \mathfrak b_\ep^{-1} N_\ep(\eta[0,r^{-d_\gamma}];D_{h^\gamma})}{\lim_{\ep \to 0}\mathfrak b_{r\ep}^{-1}  N_{r\ep}(\eta[0,1];D_{h^\gamma})} = \frac{\lim_{\ep \to 0} \mathfrak b_\ep^{-1} N_{r\ep}(\eta[0,1];D_{h^\gamma})}{\lim_{\ep \to 0}\mathfrak b_{r\ep}^{-1}  N_{r\ep}(\eta[0,1];D_{h^\gamma})} = \lim_{\ep \to 0} \frac{\mathfrak b_{r\ep}}{\mathfrak b_\ep}.
    \end{equation}
\end{proof}

\begin{proof}[Proof of Theorem~\ref{thm:main-thm}]
    The proof proceeds by standard arguments based on the fact that the Minkowski content depends locally on $h$ and behaves nicely under adding a constant to $h$. Specifically, if $f$ is a deterministic smooth function with compact support on $\C$ and $h$ is a whole-plane GFF with circle average normalization $h_r(z)=0$ on a fixed circle $\bd B_r(z)$ disjoint from the support of $f$, then the laws of $h+f$ and $h$ are mutually absolutely continuous \cite[Proposition~2.9]{ig4}. Using this fact, we transfer our results from a $\gamma$-quantum cone to a whole-plane GFF in steps, introducing more generality in our choice of the field $h$ and the deterministic bounded Borel or random compact set $A$. In each stage, we assume $\mu_h(\bd A) = 0$ almost surely.
    
    \begin{enumerate}
    \item \label{general-step1} \textit{The field $h$ is a whole-plane GFF with $h_1(0)=0$, and $A\subset B_{1/4}(1/2)$ almost surely.}

    Note that $\mu_h(A)$, $\mu_h(\bd A)$, and $\lim_{\ep \to 0} N_\ep(A;D_h)$ are a.s.\ determined by $h|_{B_{1/3}(1/2)}$. Since $h - \gamma \log |\cdot|$ and $h^\gamma$ agree in law when restricted to $\ud$ (Definition~\ref{def:quantum-cone}), the laws of $h^\gamma$ and $h$ restricted to $B_{1/3}(1/2)$ are mutually absolutely continuous. We deduce from Proposition~\ref{prop:main-thm-qc-general} that $\lim_{\ep\to 0} \mathfrak b_\ep^{-1}N_\ep(A;D_h) = \mu_h(A)$ in probability.

    \item \label{general-step2} \textit{The field $h$ is a whole-plane GFF with normalization $h_{4r}(-2r)=0$, and $A \subset B_r(0)$ a.s.\ for some fixed $r>0$.} 
    
    Denote $\tilde h:= h(4r\cdot-2r)$. By \eqref{eqn:gff-translate}, $\tilde h$ is a whole-plane GFF with $\tilde h_1(0)=0$. Since $A \subset B_r(0)$, we have $(4r)^{-1}A + 1/2 \subset B_{1/4}(1/2)$. We saw in Step~\ref{general-step1} that if $\mu_{\tilde h}((4r)^{-1}A + 1/2) = 0$ almost surely, then 
    \begin{equation}\label{eqn:scaled-translated-mc-convergence}
        \lim_{\ep \to 0} \mathfrak b_\ep^{-1} N_\ep((4r)^{-1}A + 1/2;D_{\tilde h}) = \mu_{\tilde h}((4r)^{-1}A + 1/2)
    \end{equation}
    in probability. From the coordinate change axiom \eqref{eqn:coordinate-change-axiom} for deterministic translation and scaling, we almost surely have
    \begin{equation}
        D_h(4ru-2r, 4rv-2r) = D_{\tilde h + Q\log (4r)}(u,v) = (4r)^{\xi Q} D_{\tilde h}(u,v) \quad \forall u,v\in \mathbb C.
    \end{equation}
    Then, almost surely,
        \begin{equation}\label{eqn:covering-number-wpgff-scaled}
        N_{\ep(4r)^{\xi Q}}(A;D_h) = N_\ep((4r)^{-1}A +1/2;D_{\tilde h})\quad \forall \ep>0.
    \end{equation}
    On the other hand, from the coordinate change rule \eqref{eqn:lqg-measure-coordinate-change} for the LQG measure, almost surely,
    \begin{equation}\label{eqn:lqg-measure-wpgff-scaled}
        \mu_h(A) = \mu_{\tilde h + Q\log (4r) }((4r)^{-1}(A+2r)) = (4r)^{\gamma Q} \mu_{\tilde h}((4r)^{-1}A+1/2).
    \end{equation}
    One consequence of \eqref{eqn:lqg-measure-wpgff-scaled} is that $\mu_h(\bd A) = 0$ implies $\mu_{\tilde h}((4r)^{-1}(\bd A) + 1/2) = 0$. Hence, we deduce from \eqref{eqn:scaled-translated-mc-convergence} combined with \eqref{eqn:covering-number-wpgff-scaled} and \eqref{eqn:lqg-measure-wpgff-scaled} that
    \begin{equation}
        \lim_{\ep \to 0} \mathfrak b_\ep^{-1} N_{\ep (4r)^{\xi Q}} (A;D_h) = (4r)^{-\gamma Q} \mu_h(A)
    \end{equation}
    in probability. Since we have 
    \begin{equation}
        \lim_{\ep \to 0} \frac{\mathfrak b_{\ep (4r)^{\xi Q}}}{\mathfrak b_\ep} = (4r)^{-\gamma Q}
    \end{equation}
    from Proposition~\ref{prop:scaling-constant-properties}, we conclude that $\lim_{\ep\to0} \mathfrak b_\ep^{-1} N_\ep(A;D_h) = \mu_h(A)$ in probability.

    \item \textit{The field $h$ is any whole-plane GFF plus a continuous function, but $A\subset B_r(0)$ for some fixed $r>0$.}

    We can write $h = \hat h + f$ where $\hat h$ is a whole-plane GFF with normalization $\hat h_{4r}(-2r) = 0$ and $f$ is a random continuous function (which is not necessarily independent from $\hat h$). Again, the goal is to show that for any Borel $A\subset B_r(0)$ with $\mu_{\hat h+f}(\bd A) = 0$ almost surely,
    \begin{equation} \label{eqn:main-thm-gffplus} 
        \lim_{\ep \to 0} \mathfrak b_\ep^{-1}N_\ep(A;D_{\hat h+f}) = \mu_{\hat h+f}(A) 
    \end{equation} 
    in probability. Since $f$ is a.s.\ bounded on $\overline{B_r(0)}$, we have $\mu_{\hat h}(\bd A) = 0$ almost surely. The idea is to use that $f$ is a.s.\ locally uniformly continuous. To this end, consider the collection of dyadic squares $\mathcal S_k = \{ [\frac{m}{2^k}, \frac{m+1}{2^k}] + [\frac{n}{2^k}, \frac{n+1}{2^k}]i \subset \overline{B_r(0)}: m,n\in \Z \}$ and define 
    \begin{equation}
    \mathcal A_k = \{ S \in \mathcal S_k: S \subset A \} \quad \text{and} \quad \mathcal A^k = \{S \in \mathcal S_k: S \cap A \neq \varnothing\}
    \end{equation}
    to be the subcollections whose unions approximate $A$ from inside and outside, respectively. For each dyadic square $S = [\frac{m}{2^k}, \frac{m+1}{2^k}] + [\frac{n}{2^k}, \frac{n+1}{2^k}]i \in \mathcal S_k$, denote its union with all adjacent dyadic squares by $\hat S = [\frac{m-1}{2^k}, \frac{m+2}{2^k}] + [\frac{n-1}{2^k}, \frac{n+2}{k}]i$. Denote 
    \begin{equation}
        m_{S} = \min_{\hat S} f \quad \text{and} \quad M_{S} = \max_{\hat S} f . 
    \end{equation}
    In Step~\ref{general-step2}, we established \eqref{eqn:main-thm-gffplus} when $f=0$. By Remark~\ref{rem:determinsitic-lebesgue-zero},  $\mu_{\hat h}(\bd S) = 0$ a.s.\ for all $S\in \bigcup_{k\geq 1} \mathcal S_k$. Also note $\lim_{\ep\to 0} \mathfrak b_{\ep \exp(-\xi M_S)}/\mathfrak b_\ep = e^{\gamma M_S}$ from \eqref{eqn:scaling-constant-regularly-varying}. Given any sequence of $\ep$ decreasing to 0, we can choose a subsequence $\ep_n$ such that almost surely,
    \begin{equation}\label{eqn:convergence-from-above}
        e^{\gamma M_S} \mu_{\hat h}(S) = \lim_{n\to \infty} \mathfrak b_{\ep_n}^{-1} N_{\ep_n \exp(-\xi M_S)}(S;D_{\hat h}) \quad \text{for all } S \in \bigcup_{k\geq 1} \mathcal S_k.
    \end{equation}
    For $S = [\frac{m}{2^k}, \frac{m+1}{2^k}] + [\frac{n}{2^k}, \frac{n+1}{2^k}]i\in \mathcal S_k$ and $j\in \N$, denote \begin{equation}S_j := \left[\frac{m}{2^k} + \frac{1}{2^{k+j}}, \frac{m+1}{2^k}-\frac{1}{2^{k+j}}\right] + \left[\frac{n}{2^k}+\frac{1}{2^{k+j}}, \frac{n+1}{2^k}-\frac{1}{2^{k+j}}\right]i. \end{equation} We again have $\mu_{\hat h}(\bd S_j) = 0$ by Remark~\ref{rem:determinsitic-lebesgue-zero}. Take a further subsequence of $\ep_n$ such that, almost surely, 
    \begin{equation}\label{eqn:convergence-from-below}
        e^{\gamma m_S} \mu_{\hat h}(S_j) = \lim_{n\to \infty} \mathfrak b_{\ep_n}^{-1} N_{\ep_n \exp(-\xi M_S)}(S_j;D_{\hat h}) \quad \text{for all } S \in \bigcup_{k\geq 1} \mathcal S_k,\; j\in \N.
    \end{equation}
    By the Weyl scaling axiom \eqref{eqn:weyl-scaling}, for each $S\in \mathcal S_k$ and $j\in \N$, we almost surely have 
    \begin{equation} \label{eqn:gff-plus-min-max-covering}
    N_{\ep \exp(-\xi m_S)}(S_j;D_{\hat h}) \leq N_\ep(S_j;D_h) \leq N_\ep(S;D_h) \leq N_{\ep \exp(-\xi M_S)}(S;D_{\hat h}) 
    \end{equation}
    for all sufficiently small $\ep>0$. (The threshold is random; it depends on $\hat h$ and $f$.) 
    On one hand, from \eqref{eqn:convergence-from-above} and \eqref{eqn:gff-plus-min-max-covering}, we almost surely have
    \begin{equation}
    \begin{aligned}\label{eqn:covering-number-above-summed}
        \sum_{S \in \mathcal A^k} e^{\gamma M_S} \mu_{\hat h}(S) &= \sum_{S \in \mathcal A^k} \lim_{n\to\infty} \mathfrak b_{\ep_n}^{-1} N_{\ep_n \exp(-\xi M_S)}(S;D_{\hat h}) \geq \sum_{S \in \mathcal A^k} \lim_{n\to\infty} \mathfrak b_{\ep_n}^{-1} N_{\ep_n}(S;D_h) \\&= \lim_{n\to\infty} \mathfrak b_{\ep_n}^{-1} \sum_{S \in \mathcal A^k} N_{\ep_n}(S;D_h) \geq \limsup_{n\to\infty} \mathfrak b_{\ep_n}^{-1} N_{\ep_n}(A;D_h).
    \end{aligned}
    \end{equation}
    On the other hand, for each $k$ and $j$, we almost surely have 
    \begin{equation}
        \sum_{S \in \mathcal A_k} N_\ep(S_j;D_h) \leq N_\ep(A;D_h)
    \end{equation} 
    for all sufficiently small $\ep>0$, since $\inf_{S, \tilde S \in \mathcal S_k, S\neq \tilde S} D_h(S_j, \tilde S_j) > 0$. Combining this with \eqref{eqn:convergence-from-below} and \eqref{eqn:gff-plus-min-max-covering}, we almost surely have
    \begin{equation}\label{eqn:covering-number-below-summed}
    \begin{aligned}
        \sum_{S \in \mathcal A_k} e^{\gamma m_S} \mu_{\hat h}(S_j) &= \sum_{S \in \mathcal A_k} \lim_{n\to\infty} \mathfrak b_{\ep_n}^{-1} N_{\ep_n \exp(-\xi m_S)}(S_j;D_{\hat h}) \leq \sum_{S \in \mathcal A_k} \lim_{n\to\infty} \mathfrak b_{\ep_n}^{-1} N_{\ep_n}(S_j;D_h) \\&= \lim_{n\to\infty} \mathfrak b_{\ep_n}^{-1} \sum_{S \in \mathcal A_k} N_{\ep_n}(S_j;D_h) \leq \liminf_{n\to\infty} \mathfrak b_{\ep_n}^{-1} N_{\ep_n}(A;D_h).
    \end{aligned}
    \end{equation}
    For each $S\in \mathcal S_k$, since $\mu_{\hat h}(\bd S) = 0$, we have $\lim_{j\to \infty} \mu_{\hat h}(S_j) = \mu_{\hat h}(S)$ almost surely.
    Combining \eqref{eqn:covering-number-above-summed} and \eqref{eqn:covering-number-below-summed} and letting $j\to \infty$, we almost surely have that 
    \begin{equation}\label{eqn:minkowski-dyadic-sum}
        \begin{aligned} \sum_{S\in \mathcal A_k} e^{\gamma m_S} \mu_{\hat h}(S) &\leq \liminf_{n\to\infty} \mathfrak b_{\ep_n}^{-1} N_{\ep_n}(A;D_h)\\&\leq \limsup_{n\to \infty} \mathfrak b_{\ep_n}^{-1} N_{\ep_n}(A;D_h) \leq \sum_{S\in \mathcal A^k} e^{\gamma M_S} \mu_{\hat h}(S).
        \end{aligned}
    \end{equation}
    Since $\mathcal A^k \setminus \mathcal A_k = \{S \in \mathcal S_k: S \cap \bd A \neq \varnothing\}$, it follows from the  equivalence between \eqref{eqn:sufficient-condition} and \eqref{eqn:sufficient-condition-euclidean} that $\lim_{k\to \infty}\sum_{S \in \mathcal A^k \setminus \mathcal A_k} \mu_{\hat h}(S) = 0$. In addition, 
    \begin{equation}
    M:= \sup_{k \in \mathbb N} \max_{S \in \mathcal S_k} M_S \leq \sup_{B_{r+1}(0)} f < \infty \quad \text{and} \quad \lim_{k\to \infty} \max_{S \in \mathcal S_k}(\exp(\gamma M_S) - \exp(\gamma m_S)) = 0
    \end{equation}
    almost surely because $f$ is and uniformly continuous on $B_R(0)$. Hence, the right-hand side of the inequality
    \begin{equation} 
        \sum_{S \in \mathcal A^k} e^{\gamma M_S} \mu_{\hat h}(S) - \sum_{S\in \mathcal A_k} e^{\gamma m_S} \mu_{\hat h}(S) \leq e^{\gamma M} \sum_{S \in \mathcal A^k \setminus \mathcal A_k} \mu_{\hat h}(S) + \sum_{S \in \mathcal A_k} (e^{\gamma M_S} - e^{\gamma m_S}) \mu_{\hat h}(S) .
    \end{equation}
    converges almost surely to 0 as $k\to \infty$. By this together with~\eqref{eqn:minkowski-dyadic-sum}, since $\ep_n$ is a subsequence of an arbitrary sequence of $\ep$ decreasing to 0,
    \begin{equation}\label{eqn:dyadic-sum-penultimate} \lim_{k\to \infty} \sum_{S \in \mathcal A_k} e^{\gamma m_S} \mu_{\hat h}(S) = \lim_{\ep \to 0} \mathfrak b_\ep^{-1}N_\ep(A; D_h) = \lim_{k\to \infty} \sum_{S \in \mathcal A^k} e^{\gamma M_S} \mu_{\hat h}(S) \quad \text{in probability.} \end{equation}
    On the other hand, since $\mu_h(\bd S)=0$ and thus $\mu_h = \mu_{\hat h+f}$ is finitely additive for $S\in \mathcal S_k$,
    \begin{equation}\label{eqn:dyadic-sum-final} \sum_{S \in \mathcal A_k} e^{\gamma m_S} \mu_{\hat h}(S) \leq \sum_{S \in \mathcal A_k} \mu_h(S) \leq \mu_h(A) \leq \sum_{S \in \mathcal A^k} \mu_h(S) \leq \sum_{S \in \mathcal A^k} e^{\gamma M_S} \mu_{\hat h}(S)\end{equation}
    almost surely. We found in \eqref{eqn:dyadic-sum-penultimate} the leftmost and the rightmost terms of this inequality converge to the same limit in probability as $k\to \infty$, which must be equal to $\mu_h(A)$ almost surely. Therefore, we conclude that
    \begin{equation}
        \mu_h(A) = \lim_{\ep \to 0} \mathfrak b_\ep^{-1}N_\ep(A; D_h)
    \end{equation}
    in probability.
    
    \item \textit{The field $h$ is any whole-plane GFF plus a continuous function and $A \subset \mathbb C$ is any random bounded Borel set.} 
    
    Let $E_k$ be the event $\{A \subset B_k(0)\}$. Given any sequence of $\ep$ decreasing to 0, using a diagonal argument we can find a subsequence $\ep_n$ such that $\lim_{n\to \infty} \mathfrak b_{\ep_n}^{-1} N_{\ep_n}(A;D_h) = \mu_h(A)$ a.s.\ on $E_k$ for all integer $k$. Since $\Prob(\bigcup_{k\in \mathbb N}E_k) = 1$, we conclude that the limit $\lim_{n\to \infty} \mathfrak b_\ep^{-1}N_\ep(A;D_h) = \mu_h(A)$ holds in probability. 
    \end{enumerate}
    
    This completes the proof of \eqref{eqn:main-theorem} for general Gaussian field $h$ and set $A$.
\end{proof}

We finally show that the pointed metric space $(\C, 0, D_h)$ almost surely determines the marked quantum surface $(\C,h,0)$.

\begin{proof}[Proof of Corollary~\ref{cor:forgetting-parameterization}] 

    Let $\mathbb L$ (resp.\ $\mathbb M$) be the space of isometry classes of complete, locally compact length spaces with a marked point (resp.\ a marked point and a locally finite Borel measure), endowed with the local Gromov--Hausdorff (resp.\ local Gromov--Hausdorff--Prokhorov) topology. We consider $\mathbb{L}$ (resp.\ $\mathbb{M}$) as a \textit{complete} probability space endowed with the probability measure $\Prob_h^{\mathbb{L}}$ (resp.\ $\Prob_h^{\mathbb{M}}$) corresponding to $(\C, 0, D_h)$ (resp.\ $(\C, 0, D_h, \mu_h))$. Let $f: \mathbb{M} \to \mathbb{L}$ be the natural projection. We claim that $f$ is injective on a set $E\subset \mathbb{M}$ with $\Prob_h^{\mathbb{M}}(E) = 1$. If so, since $\mathbb{M}$ is a Polish space \cite{adh-ghp-distance}, by the measurable selection theorem (see, e.g., \cite[Theorem~6.9.1]{bogachev-measure-theory}), there is a measurable function $g:\mathbb{L} \to \mathbb{M}$ such that $g \circ f$ is the identity map on $E$. This means that $(\C, 0, D_h)$ almost surely determines $(\C, 0, D_h, \mu_h)$. 

    To this end, consider a sequence $A_1, A_2, \dots$ of measurable functions from continuous metrics on $\C$ to compact subsets of $\C$. Let $\Pi(D):= \{A_1(D), A_2(D),\dots\}$ be the collection of these sets without ordering: i.e., $\Pi$ is a function from the set of continuous metrics on $\C$ to the power set of compact subsets of $\C$. For now, we prescribe the following set of properties that these functions should satisfy; we shall construct an explicit sequence of such maps at the end of the proof.
    \begin{itemize}
    	\item Almost surely, $\mu_h(\bd A_j(D_h)) = 0$ for every $j$.
    	\item Almost surely, $\Pi(D_h)$ is a $\pi$-system which generates the Borel $\sigma$-algebra on $\C$.
    	\item If $D$ and $D'$ are continuous metrics on $\C$ such that $\phi:(\C,D)\to(\C,D')$ is an isometry preserving 0, then $\phi(\Pi(D)) = \Pi(D')$.
    \end{itemize}

    By Theorem~\ref{thm:main-thm} and a standard diagonalization argument, the first property implies that there is a sequence $\ep_n$ decreasing to 0 such that, almost surely, 
    \begin{equation}\label{eqn:pi-system-convergence}
    	\lim_{n\to\infty} \mathfrak b_{\ep_n}^{-1} N_{\ep_n}(A_j(D_h);D_h)  = \mu_h(A_j(D_h)) \quad \text{for every } j.
    \end{equation}
    Hence, there exists a Borel subset $\widetilde E$ of the product space of continuous metrics on $\C$ and Borel measures on $\C$ with $\Prob\{(D_h,\mu_h)\in \widetilde E\}=1$ on which the first two bulleted properties as well as \eqref{eqn:pi-system-convergence} hold. By the $\pi$-$\lambda$ theorem, if two Borel measures agree on a $\pi$-system, then they must be identical. Hence, the projection $(D,\mu) \mapsto D$, where $D$ is a continuous metric on $\C$ and $\mu$ is a Borel measure on $\C$, is injective on $\widetilde E$.

    Let $E$ be the image of $\widetilde E$ under the ``forget the embedding'' map: i.e., $(D,\mu) \mapsto (\C,0,D,\mu)$ where the latter is a pointed metric measure space.\footnote{It is straightforward to check that the natural embedding $(D,\mu) \mapsto (\C,0,D,\mu)$ is a continuous map whose domain is a Polish space, so $E$ is an analytic set. Since $\mathbb P_h^{\mathbb M}$ is a complete probability measure, $E$ is $\mathbb P_h^{\mathbb M}$-measurable (see, e.g., \cite[Theorem~21.10]{descriptive-set-theory}). }
    Let $(D,\mu), (D',\mu') \in \widetilde E$ be any two elements which are mapped into the same pointed metric space $(\C,0,D) = (\C,0,D')$ under the natural embedding $(D,\mu) \mapsto (\C,0,D)$. That is, there is an isometry $\phi:(\C,D) \to (\C,D')$ fixing 0. By the last bulleted property, $\Pi(D') = \phi(\Pi(D))$ and 
    \begin{equation}
    	\mu'(\phi(A)) = \lim_{n\to\infty} \mathfrak b_{\ep_n}^{-1} N_{\ep_n}(\phi(A);D') = \lim_{n\to\infty} \mathfrak b_{\ep_n}^{-1} N_{\ep_n}(A;D) = \mu(A)
    \end{equation}
    for every $A \in \Pi(D)$. Since $\Pi(D')$ is a $\pi$-system, we conclude that $\mu' = \phi_*\mu$. That is, the pointed metric measure spaces $(\C,0,D,\mu)$ and $(\C,0,D',\mu')$ are identical. Therefore, the natural projection $f:\mathbb M \to \mathbb L$, $(\C,0,D,\mu)\mapsto (\C,0,D)$ is injective on $E$.
    
    It remains to show the existence of measurable maps $A_1, A_2, \dots$ with the bulleted properties. For each continuous metric $D$ on $\C$, define
    \begin{equation}
    	\mathcal Z(D) = \{z \in \C: \text{there are exactly three distinct } D\text{-geodesics from } 0 \text{ to } z\}.
    \end{equation}
    Let $\mathcal K(D)$ be the collection of all compact $K \subset \C$ that are of the following form: 
    \begin{enumerate}
    	\item Let $z_1,z_2,\dots,z_n$ be any finite subset of $\mathcal Z(D)$;
    	\item For $k=1,\dots,n$, let $\eta_k$ be any $D$-geodesic between $z_{k-1}$ and $z_k$ (where $z_0 = z_n$);
    	\item Let $\eta = \eta_1 \cup \eta_2 \cup \dots \eta_n$ be the closed curve formed by concatenating the geodesics;
    	\item Let $K$ be the union of $\eta$ and all bounded components of $\C \setminus \eta$.
    \end{enumerate}
    Finally, let $\Pi(D)$ consist of any finite intersections of sets in $\mathcal K(D)$. These sets are defined only using the pointed metric structure $(\C,0,D)$ so that if $\phi:(\C,D) \to (\C,D')$ is an isometry fixing 0, then $\phi(\mathcal Z(D)) = \mathcal Z(D')$, $\phi(\mathcal K(D)) = \mathcal K(D')$, and $\phi(\Pi(D))=\Pi(D')$.
    
    For the $\gamma$-LQG metric $D_h$, almost surely, the set $\mathcal Z(D_h)$ is countable and dense \cite[Theorem~1.2]{gwynne-geodesic-network}, and any two points in $\mathcal Z(D_h)$ are joined by finitely many $D_h$-geodesics \cite[Theorem~1.7]{gwynne-geodesic-network}. Moreover, since $D_h$ is a.s.\ a continuous metric, we can a.s.\ find for every $z\in \Q^2$ and $r\in \Q_{>0}$ a set $K \in \mathcal K(D_h)$ such that $K$ contains the Euclidean ball $B_r(z)$ and is contained in the Euclidean ball $B_{2r}(z)$. Hence, $\Pi(D_h)$ is a.s.\ a countable $\pi$-system generating the Borel $\sigma$-algebra on $\C$. Using the measurable selecction theorem inductively, we can find a sequence of measurable functions $A_1,A_2,\dots$ such that $\Pi(D) = \{A_1(D), A_2(D),\dots\}$ almost surely.
    
    Finally, we claim that the quantum Minkowski dimension of $\bd A_j(D_h)$ is a.s.\ no more than 1 for every $j$. Since $D_h$ is a continuous metric on $\C$, the lengths of $D_h$-geodesics between all points in $\mathcal Z(D_h)$ (which are equal to the $D_h$-distances between these points) are finite. Because $(\C,D_h)$ is a length space, a curve $\eta$ with $D_h$-length $L<\infty$ satisfies $N_\ep(\eta;D_h) \leq L/\ep $. This is since if $\eta$ is parameterized by $D_h$-length, then $\{\mathcal B_\ep(P(\ep k);D_h): k = 1,2,\dots,\lfloor L/\ep \rfloor\}$ covers $\eta$. For every $j$, since $\bd A_k(D_h)$ is a finite union of $D_h$-geodesics between points in $\mathcal Z(D_h)$, its LQG Minkowski dimension is at most 1 as claimed. Therefore, by Remark~\ref{rem:small-quantum-dim}, we have that $\mu_h(\bd A_j(D_h)) = 0$ a.s.\ for every $j$.
    
    As these functions $A_1,A_2,\dots$ satisfy all three bulleted properties stated above, we conclude that $(\C,0,D_h)$ a.s.\ determines $(\C,0,D_h,\mu_h)$. The rest of the corollary follows since the pointed metric measure space $(\C,0,D_h,\mu_h)$ a.s.\ determines the field $h$ up to rotation and scaling of the complex plane centered at the origin \cite[Theorem~1.3]{afs-metric-ball}. 
\end{proof}

\bibliographystyle{alphaurl}
\bibliography{lqg-minkowski-content}

\end{document}